\DeclareMathAlphabet{\mathpzc}{OT1}{pzc}{m}{it}
\providecommand\@dotsep{5}
\renewcommand{\listoftodos}[1][\@todonotes@todolistname]{%
  \@starttoc{tdo}{#1}}
\crefname{table}{table}{tables}
\crefname{listing}{Program-code}{Program-codes}  
\Crefname{listing}{Program-code}{Program-codes}
\crefname{subsection}{subsection}{subsections}
\theoremstyle{plain}
\newtheorem{Conj}{Conjecture}
\newtheorem{Thm}{Theorem}[section]
\newtheorem{Cor}[Thm]{Corollary}
\newtheorem{Prop}[Thm]{Proposition}
\newtheorem{Lem}[Thm]{Lemma}
\theoremstyle{definition}
\newtheorem{Remark}[Thm]{Remark}
\newtheorem{Def}[Thm]{Definition}
\numberwithin{equation}{section}
\newcommand{\Places}{\mathcal{P}} 
\DeclareMathOperator{\zfun}{\xi} 
\DeclareMathOperator{\zint}{\mathcal{Z}} 
\DeclareMathOperator{\Lfun}{\mathcal{L}} 
\newcommand{\st}{\operatorname{\mathfrak{st}}} 
\newcommand{\coset}[1]{\left[ #1 \right]}  
\newcommand{\gen}[1]{\left\langle #1 \right\rangle}  
\newcommand{\FNorm}[1]{\left\vert #1 \right\vert} 
\newcommand{\Nm}{\operatorname{Nm}}
\newcommand{\Ind}{\operatorname{Ind}}
\newcommand{\Gal}{\operatorname{Gal}}
\newcommand{\Aut}{\operatorname{Aut}}
\newcommand{\sgn}{\operatorname{sgn}}
\newcommand{\Res}{\operatorname{Res}}
\newcommand{\C}{\mathbb C}
\newcommand{\A}{\mathbb{A}}
\newcommand{\Q}{\mathbb{Q}}
\newcommand{\Z}{\mathbb{Z}}
\newcommand{\R}{\mathbb{R}}
\newcommand{\N}{\mathbb{N}}
\newcommand{\mO}{\mathcal{O}}
\newcommand {\integral}[1]{\int\limits_{{#1}\bk{F}\backslash {#1}\bk{\A}}}
\newcommand{\bk}[1]{\left(#1\right)} 
\newcommand{\bm}{\begin{multline*}}
\newcommand{\tu}{\end  {multline*}}
\DeclareMathOperator{\Id}{\mathbbm{1}} 
\newcommand{\Ga}{\mathbb{G}_a} 
\newcommand{\Gm}{\mathbb{G}_m} 
\newcommand{\Eisen}{\mathcal{E}}
\DeclareMathOperator{\unif}{\varpi} 
\newcommand{\modf}[1]{\mathcal{\delta}_{#1}} 
\renewcommand{\check}[1]{#1 ^{\vee}} 
\DeclareMathOperator{\Real}{\mathfrak{Re}} 
\newcommand{\piece}[1]{\left\{\begin{matrix} #1 \end{matrix}\right.} 
\newcommand{\set}[1]{\left\{ #1 \right\}} 
\newcommand{\mvert}{\mathrel{}\middle\vert\mathrel{}} 
\newcommand{\res}[1]{\Bigg\vert_{#1}}
\newcommand{\suml}{\sum\limits}
\newcommand{\prodl}{\, \prod\limits}
\newcommand{\intl}{\int\limits}
\newcommand{\ldual}[1]{{^L}#1}
\newcommand{\rmod}{/}
\newcommand{\lmod}{\backslash}
\newcommand{\placestimes}{\displaystyle\operatorname*{\otimes}_{\nu\in\Places}}
\def\imod#1{\allowbreak\mkern10mu({\operator@font mod}\,\,#1)}
\renewcommand\section{\@startsection{section}{1}{\z@}%
                                  {-3.5ex \@plus -1ex \@minus-.2ex}%
                                  {2.3ex \@plus.2ex}%
                                  {\center\normalfont\large\bfseries}}
\renewcommand\subsection{\@startsection{subsection}{2}{\z@}%
	{-3.5ex \@plus -1ex \@minus-.2ex}%
	{2.3ex \@plus.2ex}%
	{\normalfont\large\bfseries}}
\renewcommand\subsubsection{\@startsection{subsubsection}{3}{\z@}%
	{-3.5ex \@plus -1ex \@minus-.2ex}%
	{2.3ex \@plus.2ex}%
	{\normalfont\large\bfseries}}
\newtheorem*{rep@theorem}{\rep@title} \newcommand{\newreptheorem}[2]{%
\newenvironment{rep#1}[1]{%
\def\rep@title{\bf #2 \ref{##1} }%
\begin{rep@theorem} }%
{\end{rep@theorem} } }
\protected\def\ignorethis#1\endignorethis{}
\let\endignorethis\relax
\def\TOCstop{\addtocontents{toc}{\ignorethis}}
\def\TOCstart{\addtocontents{toc}{\endignorethis}}
\newcounter{NoTableEntry}
\renewcommand*{\theNoTableEntry}{NTE-\the\value{NoTableEntry}}
\newcommand*{\notableentry}{%
  \kern-\tabcolsep
  \stepcounter{NoTableEntry}%
  \vadjust pre{\zsavepos{\theNoTableEntry t}}
  \vadjust{\zsavepos{\theNoTableEntry b}}
  \zsavepos{\theNoTableEntry l}
  \raisebox{%
    \dimexpr\zposy{\theNoTableEntry b}sp
    -\zposy{\theNoTableEntry l}sp\relax
  }[0pt][0pt]{%
    \setlength{\unitlength}{1pt}%
    \edef\w{%
      \strip@pt\dimexpr\zposx{\theNoTableEntry r}sp%
      -\zposx{\theNoTableEntry l}sp\relax
    }%
    \edef\h{%
      \strip@pt\dimexpr\zposy{\theNoTableEntry t}sp%
      -\zposy{\theNoTableEntry b}sp\relax
    }%
    \ifdim\w pt=0pt 
    \else
      \begin{picture}(0,0)%
        \edef\x{%
          \noexpand\put(0,0){\noexpand\line(\w,\h){\w}}%
          \noexpand\put(0,\h){\noexpand\line(\w,-\h){\w}}%
        }\x
      \end{picture}%
    \fi
  }%
  \hspace{0pt plus 1filll}%
  \zsavepos{\theNoTableEntry r}
  \kern-\tabcolsep
}
\providecommand*{\cupdot}{%
	\mathbin{%
		\mathpalette\@cupdot{}%
	}%
}
\newcommand*{\@cupdot}[2]{%
	\ooalign{%
		$\m@th#1\cup$\cr
		\sbox0{$#1\cup$}%
		\dimen@=\ht0 %
		\sbox0{$\m@th#1\cdot$}%
		\advance\dimen@ by -\ht0 %
		\dimen@=.5\dimen@
		\hidewidth\raise\dimen@\box0\hidewidth
	}%
}
\providecommand*{\bigcupdot}{%
	\mathop{%
		\vphantom{\bigcup}%
		\mathpalette\@bigcupdot{}%
	}%
}
\newcommand*{\@bigcupdot}[2]{%
	\ooalign{%
		$\m@th#1\bigcup$\cr
		\sbox0{$#1\bigcup$}%
		\dimen@=\ht0 %
		\advance\dimen@ by -\dp0 %
		\sbox0{\scalebox{2}{$\m@th#1\cdot$}}%
		\advance\dimen@ by -\ht0 %
		\dimen@=.5\dimen@
		\hidewidth\raise\dimen@\box0\hidewidth
	}%
}
\title[Degenerate Eisenstein Series for Quasi-Split Forms of $Spin_8$]{The Degenerate Eisenstein Series Attached to the Heisenberg Parabolic Subgroups of Quasi-Split Forms of $Spin_8$}
\author{Avner Segal${^{1,2}}$}
\address{${^1}$ School of Mathematics, Ben Gurion University of the Negev, POB 653, Be'er Sheva 84105, Israel}
\address{${^2}$ School of Mathematical Sciences, Tel Aviv University, Tel Aviv 69978, Israel}
\email{avners@math.bgu.ac.il, avners@post.tau.ac.il}
\begin{document}

\begin{abstract}
In \cite{MR3284482} and \cite{SegalRSGeneral} a family of Rankin-Selberg integrals were shown to represent the twisted standard $\mathcal{L}$-function $\Lfun\bk{s,\pi,\chi,\st}$ of a cuspidal representation $ \pi$ of the exceptional group of type $G_2$.
These integral representations bind the analytic behavior of this $\mathcal{L}$-function with that of a family of degenerate Eisenstein series for quasi-split forms of $Spin_8$ associated to an induction from a character on the Heisenberg parabolic subgroup.

This paper is divided into two parts.
In part 1 we study the poles of this degenerate Eisenstein series in the right half plane $\Real\bk{s}>0$.
These solves a conjecture made by J. Hundley and D. Ginzburg in \cite{MR3359720}.
In part 2 we use the results of part 1 to give a criterion for $\pi$ to be a {\bf CAP} representation with respect to the Borel subgroup of $G_2$ in terms of the analytic behavior of $\Lfun\bk{s,\pi,\chi,\st}$ at $s=\frac{3}{2}$.
\end{abstract}

\maketitle

\begin{center}
Mathematics Subject Classification: 11F70 (11M36, 32N10)
\end{center}

\tableofcontents

\section{Introduction}
In \cite{MR3284482} and \cite{SegalRSGeneral} a family of integrals representing for the standard twisted $\Lfun$-function $\Lfun\bk{s,\pi,\chi,\st}$ of a cuspidal representation $\pi$ of $G_2$ was considered.
These integral representations bind the analytic properties of $\Lfun\bk{s,\pi,\chi,\st}$ with that of a family of degenerate Eisenstein series $\Eisen_E\bk{\chi, f_s, s, g}$ attached to a degenerate principal series induced from a character of the Heisenberg parabolic subgroup of a quasi-split form $H_E$ of $Spin_8$.
In this paper, we study the possible poles of $\Eisen_E\bk{\chi, f_s, s, g}$ and draw a few corollaries connecting the analytic properties of $\Lfun\bk{s,\pi,\chi,\st}$ and properties of $\pi$.

More precisely, let $F$ be a number field and $\A_F$ be its ring of adeles.
The isomorphism classes of quasi-split forms of $Spin_8$ are parametrized by \'etale cubic algebras over $F$.
Such an algebra $E$ has one of the following types.
\begin{enumerate}
\item $E=F\times F\times F$, this is called the split case.
\item $E=F\times K$, where $K$ is a quadratic field extension of $F$.
\item $E$ is a cubic field extension of $F$, either Galois or non-Galois.
\end{enumerate}
If $E$ is an \'etale cubic algebra over $F$ which is not a non-Galois field extension, we call it a \emph{Galois \'etale cubic algebra over $F$} and denote by $\chi_E$ the Hecke character of $\A_F^\times$ associated to $E$ by global class field theory.
In particular $\chi_{F\times F\times F}=\Id$ and $\chi_{F\times K}=\chi_K$.

For an \'etale cubic algebra $E$ over $F$ there exists a quasi split form of $Spin_8$ denoted by $H_E$.
We fix the Heisenberg parabolic subgroup $P_E=M_E\cdot U_E$ with Levi subgroup $M_E$ and unipotent radical $U_E$.
Since
\[
M_E\cong \bk{\Res_{E\rmod F} GL_2}^0 = \set{g\in \Res_{E\rmod F} GL_2 \mvert \det\bk{g}\in\Gm } ,
\]
a determinant $\operatorname{det}_{M_E}$ of $M_E$ is defined.
For a Hecke character $\chi$ of $F^\times\lmod \A_F^\times$ we form the unnormalized parabolic induction
\begin{equation}
I_{P_E}\bk{\chi,s} = \Ind_{P_E\bk{\A_F}}^{H_E\bk{\A_F}} \bk{\chi\otimes \FNorm{\cdot}^{s+\frac{5}{2}}} \circ \operatorname{det}_{M_E} .
\end{equation}
For a standard section $f_s\in I_{P_E}\bk{\chi,s}$ we define the following Eisenstein series
\[
\Eisen_E\bk{\chi, f_s, s, g} = \suml_{\gamma\in P_E\bk{F}\lmod H_E\bk{F}} f_s\bk{\gamma g} .
\]
This series converges for $\Real\bk{s}\gg 0$ and admits a meromorphic continuation to the whole complex plane.
We say that $\Eisen_E\bk{\chi, \cdot , s, \cdot}$ admits a pole of order $n$ at $s_0$ if
\begin{equation}
\sup \set{\operatorname{ord}_{s=s_0} \Eisen_E\bk{\chi, f_s, s, g} \mvert f_s\in I_{P_E}\bk{\chi,s},\ g\in H_E\bk{\A_F}} = n ,
\end{equation}
where the order $\operatorname{ord}_{s=s_0}h\bk{s}$ of a pole of a complex function $h\bk{s}$ at $s_0$ is the unique integer $n$ such that
\[
\lim\limits_{s\to s_0}\bk{s-s_0}^n h\bk{s}\in\C^\times .
\]
In part 1 of this paper we prove the following theorem

\begin{reptheorem}{Thm: Poles of Eisenstein series}
\emph{
The order of the poles of $\Eisen_E\bk{\chi, \cdot , s, \cdot }$ for $\Real\bk{s} > 0$ are bounded by the following numbers:}
\begin{table}[H]
	\begin{tabular}{|c|c|c|c|c|c|c|c|}
		\hline
		& \multicolumn{3}{c|}{$s=\frac{1}{2}$} & \multicolumn{2}{c|}{$s=\frac{3}{2}$} & $s=\frac{5}{2}$ \\ \hline
		& $\chi=\Id$ & $\chi=\chi_E$ & $\chi\neq \Id, \chi_E$ quad. & $\chi=\Id$ & $\chi=\chi_E$ &  $\chi=\Id$ \\ 
		\hline
		$E=F\times F\times F$ & \multicolumn{2}{c|}{3} & 1 & \multicolumn{2}{c|}{2} & 1 \\ \hline
		$E=F\times K$ & 2 & 2 & 1 & 1 & 1  & 1 \\ \hline
		$E$ Galois field extension & 1 & 1 & 1 & 0 & 1 & 1 \\ \hline
		$E$ non-Galois field extension & 1 & \notableentry & 1 & 0 & \notableentry & 1 \\ \hline
	\end{tabular}
\end{table}

\emph{
Assuming that for $\nu=\infty$ the degenerate principal series representation $I_{P_E}\bk{\Id_\nu,\frac{1}{2}}$ is generated by the spherical vector, the bounds can be improved as follows:
}
\begin{table}[H]
	\begin{tabular}{|c|c|c|c|c|c|c|c|}
		\hline
		& $s=\frac{1}{2}$ & \multicolumn{2}{c|}{$s=\frac{3}{2}$} & $s=\frac{5}{2}$ \\ 
		\hline
		& $\chi$ quad. & $\chi=\Id$ & $\chi=\chi_E$ &  $\chi=\Id$ \\ \hline
		$E=F\times F\times F$  & 1 & \multicolumn{2}{c|}{2} & 1 \\ \hline
		$E=F\times K$ & 1  & 1  & 1 & 1 \\ \hline
		$E$ Galois field extension  & 1 & 0 & 1 & 1 \\ \hline
		$E$ non-Galois field extension  & 1 & 0 & \notableentry & 1 \\ \hline
	\end{tabular}
	\caption{Bounds on the Order of Poles of $\Eisen_E\bk{\chi, f_s, s, g}$}
	\label{Poles of Eisenstein Series}
\end{table}
\emph{
And the orders in the last table are attained by some section.
In particular, when $\chi$ is everywhere unramified a pole of the above-mentioned order is attained by the spherical vector.
For any other triple $\bk{E,\chi,s_0}$, not appearing in the table, with $\Real\bk{s_0}\geq 0$ the degenerate Eisenstein series $\Eisen_E\bk{\chi, f_s, s, g}$ is holomorphic at $s_0$.
}
\end{reptheorem}

Furthermore, we find out which of the associated residual representations are square-integrable.
The residual representation at $s=\frac{5}{2}$ with $\chi=\Id$ is the trivial representation.
The residual representation at $s=\frac{3}{2}$ is computed in \cite{MR1918673} for $\chi=\chi_E$ and in \cite{RallisSchiffmannPaper} for $E=F\times K$ and $\chi=\Id$.
The study of the residual representation at $s=\frac{1}{2}$ for various $\chi$ is a work in progress.
In the case where $E\rmod F$ is a cubic field extension, the study of the (non-degenerate) residual spectrum is carried out in \cite{LaoResidualSpectrum}.


Part 1 of this paper is dedicated to the proof of \Cref{Thm: Poles of Eisenstein series}.
In order to determine the orders of the poles of $\Eisen_E\bk{\chi,f_s,s,g}$ we compute its constant term along the Borel subgroup.
This constant term is a sum of various intertwining operators.
The poles of these intertwining operators are the possible poles of $\Eisen_E\bk{\chi,\cdot,s,\cdot}$.
We proceed to check the possible cancellation of the poles of these intertwining operators.
We note that while usually poles of such intertwining operators are canceled in pairs, it happens that the cancellation of poles here is in triples or quintuples of intertwining operators.


Part 2 of this paper is devoted to applications of \Cref{Thm: Poles of Eisenstein series} to the study of  cuspidal representations of $G_2$.
For a cuspidal representation $\pi=\placestimes \pi_\nu$ of $G_2$  and a Hecke character $\chi=\placestimes \chi_\nu$ as above, we fix a finite set of places $S$ of $F$ such that $\pi_\nu$ and $\chi_\nu$ are unramified for any $\nu\notin S$.
For $\nu\notin S$ we let $t_{\pi_\nu}\in G_2\bk{\C}$ denote the Satake parameter of $\pi_\nu$.
We also fix $\st$ to denote the standard 7-dimensional representation of $G_2\bk{\C}$.
We define the standard twisted partial $\Lfun$-function of $\pi$ to be
\begin{equation}
\Lfun^S\bk{s,\pi,\chi,\st} = \prodl_{\nu\notin S}\frac{1}{\operatorname{det}\bk{1-\chi\bk{\unif_\nu} \st\bk{t_{\pi_\nu}} q_\nu^{-s }}},
\end{equation}
where $\unif_\nu$ is a uniformizer of $F_\nu$ and $q_\nu$ is the cardinality of the residue field of $F_\nu$.

For an irreducible cuspidal form $\varphi\in \pi$ and a standard section $f_s\in I_{P_E}\bk{\chi,s}$ it is proven in \cite{SegalRSGeneral} that
\begin{equation}
\label{eq:IntegralRepresentation}
\intl_{G_2\bk{F}\lmod G_2\bk{\A_F}} \Eisen_E\bk{\chi, f_s, s, g} \varphi\bk{g} dg = \Lfun^S\bk{s,\pi,\chi,\st} d_{E,S}\bk{s,f_S,\varphi_S} .
\end{equation}
Where $d_{E,S}$ is a given meromorphic function.
Furthermore, for any $\pi$ there exists an \'etale cubic algebra $E$ over $F$ such that the integral on the left hand side of \Cref{eq:IntegralRepresentation} is non-zero.
In this case, $d_{E,S}\not\equiv 0$ and for any $s_0\in \C$ one can choose $f_S$, $\varphi_S$ such that $d_{E,S}\bk{s,f_S,\varphi_S}$ is analytic and non-vanishing in a neighborhood of $s_0$.
This proves the meromorphic continuation of $\Lfun\bk{s,\pi,\chi,\st}$ and moreover we have
\begin{equation}
\label{eq: inequality on orders of poles}
ord_{s=s_0}\bk{\Lfun^S\bk{s,\pi,\chi,\st}} \leq
ord_{s=s_0}\bk{\Eisen_E\bk{\chi, \cdot , s, \cdot}} .
\end{equation}

The integral in \Cref{eq:IntegralRepresentation} can be used in order to characterize the image of functorial lifts in terms of the analytic behavior of $\Lfun^S\bk{s,\pi,\chi,\st}$.
Here we apply \Cref{Thm: Poles of Eisenstein series} and \Cref{eq: inequality on orders of poles} to classify \textbf{CAP} representations with respect to the Borel subgroup $B$ of $G_2$.

We recall that a cuspidal representation $\pi$ of $G_2$ is called \textbf{CAP} (\emph{cuspidal associated to parabolic}) with respect to $B$ if there exists an automorphic character $\tau$ of the torus $T$ such that $\pi$ is nearly equivalent to a subquotient of $\Ind_{B\bk{\A_F}}^{G_2\bk{\A_F}} \tau$.

We also recall that non-degenerate characters of the Heisenberg parabolic $P=M\cdot U$ of $G_2$ are parametrized by \'etale cubic algebras over $F$ as explained in \cite{SegalRSGeneral}.
Given a non-degenerate character $\Psi:U\bk{F}\lmod U\bk{\A}\to\C^\times$ we say that a cuspidal representation $\pi$ of $G_2$ supports the $\Psi$-Fourier coefficient if
\[
\exists \varphi\in\pi: \ \intl_{U\bk{F}\lmod U\bk{\A}} \varphi\bk{ug} \overline{\Psi\bk{u}}\ du\not\equiv 0 .
\]
We denote by $\mathcal{WF}\bk{\pi}$ the set of all non-degenerate \'etale cubic algebras $E$ over $F$ such that $\pi$ supports the corresponding Fourier coefficient along $U$.
We call the set $\mathcal{WF}\bk{\pi}$ \emph{the wave front of $\pi$ along $U$}; by \cite[Theorem 3.1]{MR2181091} $\mathcal{WF}\bk{\pi}$ is non-empty.

Given an \'etale cubic algebra $E$ over $F$ we let $S_E=\Aut_F\bk{E}$ and recall the dual reductive pair
\[
G_2\times S_E \hookrightarrow H_E\rtimes S_E .
\]
We denote the corresponding $\theta$-lift from $G_2$ to $S_E$ by $\theta_{S_E}$.
In \Cref{Sec: CAP representation} we prove
\begin{reptheorem}{Thm: CAP representations}
Let $\pi$ be a cuspidal representation of $G\bk{\A}$ supporting a Fourier coefficient along $U$ corresponding to an \'etale cubic algebra $E$ over $F$ which is not a non-Galois field extension.
The following are equivalent:
\begin{enumerate}
\item $\pi$ is a \textbf{CAP} representation with respect to $B$.
\item The partial $\Lfun$-function $\Lfun^S\bk{s,\pi,\chi_E,\st}$ has a pole, of order $2$ if $E=F\times F\times F$ or $1$ otherwise, at $s=2$.
\item 
The $\theta$-lift $\theta_{S_E}\bk{\pi}$ of $\pi$ to $S_E=\Aut_F\bk{E}$ is non-zero.
In particular $\pi$ is nearly equivalent to the $\theta$-lift $\theta_{S_E}\bk{\Id}$, where $\Id$ here is the automorphic trivial representation of $S_E\bk{\A_F}$.
\end{enumerate}
\end{reptheorem}


{\bf Acknowledgments.} 
First and foremost I would like to thank Nadya Gurevich for many helpful discussion during the course of my work on my Ph.D. and especially this paper.

	I would also like to thank Wee Teck Gan and Gordan Savin for a few very helpful discussions, for spotting a few errors in an early version of this manuscript and for providing me with an early version of \cite{Gan-Savin-D4} which provided me with a lot of intuition regarding the location of poles of $\Eisen_E\bk{\chi,f_s,s,g}$.

This work consists of parts from the Ph.D. thesis of the author.
The author was partially supported by grants 1691/10 and 259/14 from the Israel Science Foundation.

{\LARGE \part{The Degenerate Eisenstein Series}}
\label{Part 1} \mbox{}
\section{Definitions and Notations}
Let $F$ be a number field and let $\Places$ be its set of places.
For any $\nu\in\Places$ we denote by $F_\nu$ the completion of $F$ at $\nu$.
If $\nu\nmid\infty$ we denote by $\mO_\nu$ the ring of integers of $F_\nu$, by $\unif_\nu$ a uniformizer of $F_\nu$ and by $q_\nu$ the cardinality of the residue field of $F_\nu$.
We also denote by $\A=\A_F$ the ring of adeles of $F$.
Also, through out this paper we denote the trivial character of $\A^\times$ by $\Id$ and the trivial character of $F_\nu$ by $\Id_\nu$ or $\Id$ if there is no source of confusion.

We also note that, in this paper, parabolic induction $\Ind_P^G$ for a parabolic subgroup $P$ of a group $G$ is unnormalized.

\subsection{Quasi-Split Forms of $Spin_8$}
\label{Subsec: Quasi-Split Forms of D4}

Recall, from \cite[Section 3]{MR546587}, the following parametrization of quasi-split forms of a split simply-connected algebraic group $H$ defined over $F$: 
\[
\set{\text{Quasi-split forms of $H$ over $F$}} \longleftrightarrow \set{\varphi:\Gal\bk{\overline{F}\rmod F}\to\Aut\bk{Dyn\bk{H}}},
\]
where $Dyn\bk{H}$ is the Dynkin diagram of $H$.

The Dynkin diagram of type $D_4$ is given as follows
\[
\xygraph{
	!{<0cm,0cm>;<0cm,1cm>:<1cm,0cm>::}
	!{(0,-1)}*{\bigcirc}="1"
	!{(0.4,-1)}*{\alpha_1}="label1"
	!{(0,0)}*{\bigcirc}="2"
	!{(0.4,0)}*{\alpha_2}="label2"
	!{(0,1)}*{\bigcirc}="3"
	!{(0.4,1)}*{\alpha_3}="label3"
	!{(-1,0)}*{\bigcirc}="4"
	!{(-1,0.4)}*{\alpha_4}="label4"
	"1"-"2" "2"-"3" "2"-"4"
} \quad .
\]
We restrict ourselves to the case $H=Spin_8$, the split simply-connected group of type $D_4$.
The quasi-split forms of $H$ were described in \cite{MR2268487}.
Since $\Aut\bk{Dyn\bk{Spin_8}}\cong S_3$ we have
\[
\set{\text{Quasi-split forms of $Spin_8$ over $F$}}
\longleftrightarrow
\set{\varphi:\Gal\bk{\overline{F}\rmod F}\to S_3}.
\longleftrightarrow
\set{\begin{matrix} \text{Isomorphism classes of} \\ \text{\'etale cubic algebras over $F$} \end{matrix}}.
\]
For any cubic algebra $E$ let $S_E=Aut_F(E)$, which is a twisted form of $S_3$.
An action of $S_E$ on the algebraic group $Spin_8$ determines  
a simply-connected quasi-split form $H_E=Spin_8^E$ of the split group $Spin_8$ over $F$. 
We fix a Chevalley-Steinberg system of \'epinglage \cite[Sections 4.1.3-4.1.4]{MR756316}
\[
\set{T_E, B_E, x_\gamma:\Ga\rightarrow\bk{H_E}_\gamma,\gamma\in \Phi_{D_4}} \ ,
\]
where $T_E\subset B_E$ is a maximal torus contained in a Borel subgroup (both defined over $F$) and $\Phi_{D_4}$ are the roots of $H_E\otimes \overline{F}\cong Spin_8\bk{\overline{F}}$.
For any $\gamma$ in the reduced root system of $H_E$ we denote by $F_\gamma$ the field of definition of $\gamma$.

We now recall, from \cite[eq. (1.8)]{MR1637485}, that an \'etale cubic algebra over $F$ is one of the following
\begin{enumerate}
\item $F\times F\times F$.
\item $F\times K$, where $K$ is a quadratic field extension of $F$.
\item $E$, where $E$ is a cubic Galois field extension of $F$.
\item $E$, where $E$ is a cubic non-Galois field extension of $F$.
\end{enumerate}

We call the first three \textbf{Galois \'etale cubic algebras over $F$}.
We also refer to $F\times F\times F$ as the \textbf{split cubic algebra over $F$}.

For a Galois \'etale cubic algebra $E$ we attach an automorphic character $\chi_E$ of $F^\times\lmod\A_F^\times$ as follows:
\begin{enumerate}
	\item If $E=F\times F\times F$ then $\chi_E=\Id$.
	\item If $E=F\times K$ when $K$ is a field then $\chi_E=\chi_K$, where $\chi_K$ is the quadratic automorphic character attached to $K$ by global class field theory.
	\item If $E$ is a field then $\chi_E$ is the cubic automorphic character attached to $E$ by global class field theory.
	Note that $\chi_E^2$ satisfy the same properties, and indeed along this paper all statements regarding $\chi_E$ are also true for $\chi_E^2$.
\end{enumerate}

We now give a more detailed description $H_E$ for the different kinds of \'etale cubic algebras over $F$ in terms of the action of $\Gal\bk{\overline{F}\rmod F}$ on $H_E\bk{\overline{F}}$.

\begin{enumerate}
	\item\underline{$E=F\times F\times F$:}
	In this case $H_E$ is the split reductive simply-connected group of type $D_4$ over $F$.
	It corresponds to the trivial action of $\Gal\bk{\overline{F}\rmod F}$.
	In this case we denote $\Gamma_E=\set{1}$.
	Also, in this case
	\[
	F_{\alpha_1} = F_{\alpha_2} = F_{\alpha_3} = F_{\alpha_4} = F .
	\]
	
	\item\underline{$E=F\times K$:}
	This is the case where $E=F\times K$ with $K$ a quadratic (and hence Galois) extension of $F$.
	It is enough to define an action of $\Gamma_E=\Gal\bk{K\rmod F}=\gen{\sigma}$ on $Spin_8\bk{K}$.
	This action is determined by
	\begin{align*}
	\sigma\bk{x_{\alpha_1}\bk{k}}&=x_{\alpha_1}\bk{\sigma\bk{k}} \\
	\sigma\bk{x_{\alpha_2}\bk{k}}&=x_{\alpha_2}\bk{\sigma\bk{k}} \\
	\sigma\bk{x_{\alpha_3}\bk{k}}&=x_{\alpha_4}\bk{\sigma\bk{k}} \\
	\sigma\bk{x_{\alpha_4}\bk{k}}&=x_{\alpha_3}\bk{\sigma\bk{k}} .
	\end{align*}
	In this case
	\[
	F_{\alpha_1} = F_{\alpha_2} = F, \quad F_{\alpha_3} = F_{\alpha_4} = K .
	\]
	
	Here, we single out the root $\alpha_1$ from $\alpha_3$ and $\alpha_4$.
	
	\item\underline{$E$ is a cubic Galois field extension:}
	It is enough to define an action of $\Gamma_E=\Gal\bk{E\rmod F}=\gen{\sigma\mvert \sigma^3=1}$ on $Spin_8\bk{E}$.
	This action is determined by
	\begin{align*}
	\sigma\bk{x_{\alpha_2}\bk{e}}&=x_{\alpha_2}\bk{\sigma\bk{e}} \\
	\sigma\bk{x_{\alpha_1}\bk{e}}&=x_{\alpha_3}\bk{\sigma\bk{e}} \\
	\sigma\bk{x_{\alpha_3}\bk{e}}&=x_{\alpha_4}\bk{\sigma\bk{e}} \\
	\sigma\bk{x_{\alpha_4}\bk{e}}&=x_{\alpha_1}\bk{\sigma\bk{e}} .
	\end{align*}	
	In this case
	\[
	F_{\alpha_2} = F, \quad F_{\alpha_1} = F_{\alpha_3} = F_{\alpha_4} = E .
	\]
	
	\item \underline{$E$ is a cubic non-Galois field extension:}
	Here we assume that $E$ is a cubic non-Galois extension of $F$. In order to define $H_E\bk{F}$ we first consider the Galois closure $L$ of $E$ over $F$, this is a Sextic Galois extension with $\Gal\bk{L\rmod F}=\gen{\sigma,\ \tau\mvert \sigma^3=1,\ \tau^2=1,\ \bk{\sigma\tau}^2=1}$.
	Note that $L$ is also a Galois extension of $E$.
	We consider the following tower of extensions
	\[
	\xymatrix{
	&L \ar@{-}[ld]_{\gen{\tau}} \ar@{-}[d] \ar@{-}[rd] \ar@{-}[rrd]^{\gen{\sigma}} \\
	E \ar@{-}[rd] \ar@/^/[r]^{\sigma}& E_{\sigma} \ar@{-}[d] \ar@/^/[r]^{\sigma} & E_{\sigma^2} \ar@{-}[ld] \ar@/^1pc/[ll]^-(0.7){\sigma}|!{[l];[dl]}\hole & K \ar@{-}[lld]^{\gen{\tau}} \\
	& F }
	\]
	Where $K=L^{\gen{\sigma}}$ and $E$, $E_\sigma=L^{\gen{\sigma\tau\sigma^2}}$ and $E_{\sigma^2}=L^{\gen{\sigma^2\tau\sigma}}$ are the $\sigma$-conjugates of $E$ in $L$.
	
	
	The action of $\Gamma_E=\Gal\bk{L\rmod F}$ on $Spin_8\bk{L}$ is determined by
	\begin{align*}
	\sigma\bk{x_{\alpha_2}\bk{l}}&=x_{\alpha_2}\bk{\sigma\bk{l}}, \quad \tau\bk{x_{\alpha_2}\bk{l}}=x_{\alpha_2}\bk{\tau\bk{l}} \\
	\sigma\bk{x_{\alpha_1}\bk{l}}&=x_{\alpha_3}\bk{\sigma\bk{l}}, \quad \tau\bk{x_{\alpha_1}\bk{l}}=x_{\alpha_3}\bk{\tau\bk{l}} \\
	\sigma\bk{x_{\alpha_3}\bk{l}}&=x_{\alpha_4}\bk{\sigma\bk{l}}, \quad \tau\bk{x_{\alpha_3}\bk{l}}=x_{\alpha_1}\bk{\tau\bk{l}} \\
	\sigma\bk{x_{\alpha_4}\bk{l}}&=x_{\alpha_1}\bk{\sigma\bk{l}}, \quad \tau\bk{x_{\alpha_4}\bk{l}}=x_{\alpha_4}\bk{\tau\bk{l}} .
	\end{align*}
	Here we singled out $\alpha_4$ from $\alpha_1$ and $\alpha_3$ this is akin to distinguishing $\tau$ from $\sigma\tau\sigma^2$ and $\sigma^2\tau\sigma$.
	In this case
	\[
	F_{\alpha_2} = F, \quad F_{\alpha_1} = E, \quad F_{\alpha_3} = E^\sigma,\quad F_{\alpha_4} = E^{\sigma^2} .
	\]
	
\end{enumerate}

For any root $\alpha$ in the Chevalley-Steinberg system of $H_E$ described above, we denote by $L_\alpha$ the field of definition of $\alpha$.
We denote the cardinality of the residue field of $L_\alpha$ by $q_\alpha$.

Let $P_E$ be the (standard) Heisenberg parabolic subgroup of $H_E$ with Levi decomposition $P_E=M_E\cdot U_E$ such that
\[
M_E\cong \bk{\Res_{E\rmod F}GL_2}^0 = \set{g\in \Res_{E\rmod F}GL_2\mvert \operatorname{det}\bk{g}\in \Gm}
\]
is generated by the simple roots $\alpha_1$, $\alpha_3$ and $\alpha_4$.
In particular, the determinant character $\operatorname{det}_{M_E}$ associated to the Levi subgroup $M_E$ is well defined over $F$.
Restricted to the torus, $det_{M_E}\res{T_E}$ equals the highest root in $\Phi_{H_E}$.
For a more detailed account on the structure of $H_E$ please consider \cite[Section 2]{SegalRSGeneral}.

\subsection{The Degenerate Eisenstein Series}

Fix a finite order Hecke character $\chi:F^\times\lmod\A^\times\to \C^\times$.
We consider the following induced representation
\begin{equation}
I_{P_E}\bk{\chi,s} = \Ind_{P_E\bk{\A}}^{H_E\bk{\A}} \bk{\chi\circ{\operatorname{det}}_{M_E}} \otimes \FNorm{{\operatorname{det}}_{M_E}}^{s+\frac{5}{2}},
\end{equation}
where, as mentioned above, the induction on the right hand side is unnormalized.
We note that $\FNorm{{\operatorname{det}}_{M_E}}^{\frac{5}{2}}$ is the normalization factor \footnote{For the modulus character of $P_E$ it holds that $\modf{P_E}\res{M_E}=\FNorm{\operatorname{det}_{M_E}}^5$.} of the parabolic induction and hence the induced representation on the left hand side is normalized.

For any holomorphic section $f_s\in I_{P_E}\bk{\chi,s}$ we define the following degenerate Eisenstein series
\begin{equation}
\Eisen_E\bk{\chi, f_s, s, g} = \suml_{\gamma\in P_E\bk{F}\lmod H_E\bk{F}} f_s\bk{\gamma g} .
\end{equation}

This series converges for $\Real\bk{s}\gg0$ and admits a meromorphic continuation to the whole complex plane.
For any $s_0\in\C$ we write the Laurent expansion of $\Eisen_E\bk{\chi, f_s, s, g}$:
\[
\Eisen_E\bk{\chi, f_s, s, g} = \suml_{k=-\infty}^\infty \bk{s-s_0}^k \coset{\Lambda_k\bk{\chi,s_0}f_s}\bk{g},
\]
where for each $k$ the coefficient $\Lambda_k\bk{\chi,s_0}$ is an intertwining map
\[
\Lambda_k\bk{\chi,s_0}:I_{P_E}\bk{\chi,s_0} \to Im\bk{\Lambda_{k-1}\bk{\chi,s_0}} \mathcal{A}\bk{H_E},
\]
where $\mathcal{A}\bk{H_E}$ is the space of automorphic forms of $H_E\bk{\A}$.
In particular, if the order of $\Eisen_E\bk{\chi, \cdot, s, \cdot}$ at $s_0$ is $n$ then $\Lambda_{-n}\bk{\chi,s_0}$ is an intertwining map from $I_{P_E}\bk{\chi,s_0}$ to $\mathcal{A}\bk{H_E}$.

Part 1 of this paper is devoted to the study of the analytic properties of $\Eisen_E\bk{\chi, f_s, s, g}$ in the right half-plane $\Real\bk{s}>0$.

\section{Background Theory on Eisenstein Series and Intertwining Operators}

In this section we recall some general information regarding the theory of Eisenstein series.
Most of the results quoted in this section can be found in \cite{MR1361168}.

\subsection{Intertwining Operators and the Constant Term}

We start by noting that
\begin{equation}
I_{P_E}\bk{\chi,s} \hookrightarrow I_{B_E}\bk{\chi_s} = \Ind_{B_E\bk{\A}}^{H_E\bk{\A}} \modf{B_E}^{\frac{1}{2}}\chi_s,
\end{equation}
where
\[
\chi_s=\modf{B_E}^{-\frac{1}{2}} \otimes \bk{\chi\circ{\operatorname{det}}_{M_E}} \otimes \FNorm{{\operatorname{det}}_{M_E}}^{s+\frac{5}{2}} .
\]
Note that, as above, the induction on the right hand side is unnormalized while the induced representation on the left hand side is normalized.

For any $w\in W$ we define the intertwining operator
\[
M\bk{w,\chi_s}: I_{B_E}\bk{\chi_s} \to I_{B_E}\bk{w^{-1}\cdot \chi_s}
\]
by
\begin{equation}
\label{Eq: definition of degenerate intertwining operator}
M\bk{w,\chi_s}f_s\bk{g} = \intl_{N_E\bk{\A}\cap w^{-1}N_E\bk{\A}w\lmod N_E\bk{\A}} f_s\bk{wng} dn .
\end{equation}
This integral converges for $\Real\bk{s}\gg 0$ and admits a meromorphic continuation to $\C$.
When there is no source of confusion we denote $M\bk{w,\chi_s}$ by $M\bk{w}$ or $M_w$.
We also denote $w_{i_1,...,i_k}$ or $w\coset{i_1,...,i_k}$ for $w_{\alpha_{i_1}}\cdots w_{\alpha_{i_k}}$, where $w_{\alpha_{i}}\in W_{H_E}$ denotes the simple reflection associated with the simple root $\alpha_{i}$.

\begin{Remark}
	Note that the definition here is slightly different from the definition given in \cite{MR1361168}.
	As a consequence, if $w=w'w''$ then a cocycle equation is satisfied:
	\begin{equation}
	\label{Eq: Functional equation of intertwining operators}
	M\bk{w,\chi_s} = M\bk{w'',w'^{-1}\cdot\chi_s}\circ M\bk{w',\chi_s} .
	\end{equation}
\end{Remark}

The constant term of $\Eisen_E\bk{\chi, f_s, s, g}$ along $N_E$ is defined to be
\begin{equation}
\Eisen_E\bk{\chi, f_s, s, t}_{B_E} = \intl_{N_E\bk{F}\lmod N_E\bk{\A}} \Eisen_E\bk{\chi, f_s, s, ut} du \quad \forall t\in T_E\bk{\A}.
\end{equation}
By a standard computation, as in \cite{MR1469105}, we obtain
\begin{equation}
\label{Eq: Constant term}
\Eisen_E\bk{\chi, f_s, s, t}_{B_E} = \suml_{w\in W\bk{P_E,H_E}} \bk{M_w\bk{s}f_s}\res{T_E}\bk{t} \quad \forall t\in T_E\bk{\A},
\end{equation}
where $W\bk{P_E,H_E}=\set{w\in W_{H_E}\mvert w\bk{\alpha_2}>0}$ is a set of distinguished representatives for $P_E\lmod H_E\rmod B_E \cong W_{P_E}\lmod W_{H_E}$, given by the shortest representative of each coset.

\begin{Thm}
	\label{Thm: poles of Eisenstein series are the poles of the constant term}
	The degenerate Eisenstein series $\Eisen_E\bk{\chi, f_s, s, g}$ admits a pole of order $n$ at $\bk{\chi,s_0}$ if and only if its constant term $\Eisen_E\bk{\chi, f_s, s, g}_{B_E}$ admits a pole of order $n$ at $\bk{\chi,s_0}$.
\end{Thm}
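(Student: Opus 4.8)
The plan is to exploit the fact that an Eisenstein series and its constant term differ by a term that is ``controlled'' — in the sense that it contributes no additional poles — and that, conversely, the constant term cannot have a pole that is not visible in the full series. One direction is essentially formal: since $\Eisen_E\bk{\chi,f_s,s,t}_{B_E}$ is obtained from $\Eisen_E\bk{\chi,f_s,s,g}$ by integrating over the compact quotient $N_E\bk{F}\lmod N_E\bk{\A}$, any pole of the constant term at $s_0$, of order $n$, forces the integrand — hence $\Eisen_E\bk{\chi,f_s,s,g}$ for some $g$ in the relevant domain — to have a pole of order at least $n$ at $s_0$. Thus $\operatorname{ord}_{s=s_0}\Eisen_E\bk{\chi,\cdot,s,\cdot}_{B_E}\le \operatorname{ord}_{s=s_0}\Eisen_E\bk{\chi,\cdot,s,\cdot}$.

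The substantive direction is the reverse inequality: a pole of the full series is already detected by the constant term along the Borel. Here I would invoke the standard theory of Eisenstein series as in \cite{MR1361168} (and the square-integrability/automorphy setup recalled in this section). The key input is that $\Eisen_E\bk{\chi,f_s,s,g}$ is an automorphic form in $g$ for each fixed $s$ away from its poles, and more precisely that the Laurent coefficients $\Lambda_k\bk{\chi,s_0}f_s$ are automorphic forms; an automorphic form that is identically zero along all its constant terms (here it suffices to use the constant term along $B_E$, since $P_E \supset B_E$ and the cuspidal support of the degenerate principal series is the torus) is itself zero. Concretely, suppose $\Eisen_E\bk{\chi,\cdot,s,\cdot}$ has a pole of order exactly $n$ at $s_0$, so $\Lambda_{-n}\bk{\chi,s_0}$ is a nonzero intertwining map into $\mathcal{A}\bk{H_E}$. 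Its image consists of automorphic forms whose cuspidal exponents force a nonzero constant term along $B_E$; taking the constant term commutes with the Laurent expansion because the $N_E\bk{F}\lmod N_E\bk{\A}$-integration is over a fixed compact domain and can be interchanged with $\lim_{s\to s_0}(s-s_0)^n$. Hence the order-$n$ Laurent coefficient of $\Eisen_E\bk{\chi,f_s,s,t}_{B_E}$ is $\bk{\Lambda_{-n}\bk{\chi,s_0}f_s}_{B_E}$, which is nonzero for suitable $f_s$, giving $\operatorname{ord}_{s=s_0}\Eisen_E\bk{\chi,\cdot,s,\cdot}_{B_E}\ge n$.

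Combining the two inequalities yields equality of the orders, which is the assertion. The main obstacle is the nonvanishing step in the second direction — ensuring that the residual (top Laurent coefficient) automorphic form is not annihilated by the constant-term-along-$B_E$ functional. This is where one uses that $\Eisen_E$ is induced from the \emph{Borel-type} data: $I_{P_E}\bk{\chi,s}$ embeds into $I_{B_E}\bk{\chi_s}$, its constant term along $N_E$ is the sum of intertwining operators in \Cref{Eq: Constant term}, and an automorphic form arising as a residue of such a series has all its Eisenstein/cuspidal support on the torus, so it is determined by — in particular detected by — its constant term along $B_E$. I would phrase this last point by noting that if $\bk{\Lambda_{-n}\bk{\chi,s_0}f_s}_{B_E}\equiv 0$ for all $f_s$, then the residual form would be a square-integrable (indeed cuspidal-support-free) automorphic form with vanishing constant term along $B_E$, hence cuspidal; but a residue of a degenerate Eisenstein series attached to $P_E$ cannot be cuspidal unless it is zero, contradicting $\Lambda_{-n}\bk{\chi,s_0}\ne 0$. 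This reduces the theorem to the cited general facts about Eisenstein series together with the explicit constant-term formula \Cref{Eq: Constant term}.
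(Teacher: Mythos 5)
The paper does not actually prove this statement; it is quoted from the general theory of Eisenstein series in \cite{MR1361168}, and your reconstruction of that standard argument is essentially correct: the easy inequality comes from integrating over the compact quotient $N_E\bk{F}\lmod N_E\bk{\A}$, and the converse from the fact that the leading Laurent coefficient is a nonzero automorphic form lying in the summand of $\mathcal{A}\bk{H_E}$ attached to the cuspidal datum $\bk{T_E,\chi_s}$, and such forms are detected by their constant term along $B_E$. The one step to phrase more carefully is ``vanishing constant term along $B_E$, hence cuspidal'': vanishing of $\varphi_{B_E}$ alone does not yield cuspidality (that requires all maximal parabolics, and $\varphi_{B_E}$ is obtained from $\varphi_Q$ by further integration, not conversely), but for forms whose cuspidal support is the minimal datum the constant term along $B_E$ carries all the relevant cuspidal components, which is exactly the cuspidal-support point you already invoke.
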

Indeed, in \Cref{Sec: Poles of the Eisenstein series} we study the poles of $\Eisen_E\bk{\chi, f_s, s, g}$ via the poles of $\Eisen_E\bk{\chi, f_s, s, g}_{B_E}$, using \Cref{Eq: Constant term}.

\subsection{Rank-one Intertwining Operators and Local Factors}
\label{Subsec: Rank-one}
In many instances, the study of Eisenstein series and intertwining operators relies on reduction to the rank-one case via the functional equation, \Cref{Eq: Functional equation of intertwining operators}.
In this section, we recall some useful facts about the rank-one case and the reduction to it.

We fix a number field extension $L\rmod F$ and let $D_L$ be the discriminant of $L\rmod \Q$. 
Let $\zeta_L\bk{s}$ be the completed $\zeta$-function of $L$.
Following \cite{MR1174424}, we define
\[
\zfun_L\bk{s} = \FNorm{D_L}^{\frac{s}{2}}\zeta_L\bk{s} .
\]
The normalized function $\zfun_L$ then satisfies the functional equation
\begin{equation}
\label{Functional Equation - Zeta Function}
\zfun_L\bk{s}=\zfun_L\bk{1-s} .
\end{equation}

Let $\mathcal{B}=\mathcal{T}\cdot\mathcal{N}$ be the Borel subgroup of $SL_2$ with torus $\mathcal{T}$ and unipotent radical $\mathcal{N}$.
Also let $w_0=\begin{pmatrix}0&-1\\1&0\end{pmatrix}$ be the generator of the Weyl group of $SL_2$.
We recall some facts about the intertwining operator $M_{w_0}$ defined on representations of $SL_2$.
Fix a Hecke character $\sigma = \placestimes \sigma_\nu$ of $\mathcal{T}\bk{\A}$, it can be considered as a representation of $\mathcal{B}\bk{\A}$.
For a section $f_s\in \Ind_{\mathcal{B}\bk{\A}}^{SL_2\bk{\A}}\sigma \modf{\mathcal{B}}^{s+\frac{1}{2}}$ we let
\begin{equation}
M\bk{w_0,\sigma,s}f_s\bk{g} = \intl_{\mathcal{N}\bk{\A}\cap w_0^{-1}\mathcal{N}\bk{\A}w_0\lmod \mathcal{N}\bk{\A}} f_s\bk{w_0ng} dn = \intl_{\mathcal{N}\bk{\A}} f_s\bk{w_0ng} dn.
\end{equation}
This integral converges for $\Real\bk{s}\gg0$ and admits meromorphic continuation to the whole complex plane.
The intertwining operator $M_{w_0}$ is factorizable in the sense that if $f_s=\otimes f_{s,\nu}$ then
$M\bk{w_0,\sigma,s}f_s=\placestimes M\bk{w_0,\sigma_\nu,s}f_{s,\nu}$, where for $\Real\bk{s}\gg 0$
\begin{equation}
M\bk{w_0,\sigma_\nu,s}f_{s,\nu}\bk{g} = \intl_{\mathcal{N}\bk{F_\nu}} f_{s,\nu}\bk{w_0ng} dn.
\end{equation}
This integral admits a meromorphic continuation to $\C$.
For a spherical section $f_{s,\nu}^0$ of $\Ind_{\mathcal{B}\bk{F_\nu}}^{SL_2\bk{F_\nu}}\sigma_\nu \modf{\mathcal{B}}^{s+\frac{1}{2}}$ it holds that
\begin{equation}
\label{Eq: Rank one Gindikin Karpelevich}
M\bk{w_0,\sigma_\nu,s} f_{s,\nu}^0 = \frac{\Lfun_{F_\nu}\bk{2s,\sigma_\nu}}{\Lfun_{F_\nu}\bk{2s+1,\sigma_\nu}} f_{-s,\nu}^0 ,
\end{equation}
where:
\begin{itemize}
	\item
	For $\nu\nmid\infty$
	\[
	\Lfun_{F_\nu}\bk{s,\sigma_\nu} = \frac{1}{1-\sigma_\nu\bk{\unif_\nu}q_\nu^{-s}} ,
	\]
	for a uniformizer $\unif_\nu$ of $L_\nu$ and $q_\nu$ the cardinality of the residue field of $F_\nu$.
	This function is a non-vanishing meromorphic function on $\C$ with simple poles at $s=\frac{\log\bk{\sigma_\nu\bk{\unif_\nu}}+2\pi i n}{\log\bk{q_\nu}}$ for all $n\in\Z$.
	
	\item
	The only finite order characters $\sigma_\nu$ of $\R^\times$ are either the trivial one or the sign character.
	Let
	\[
	\epsilon_\nu = \left\{\begin{matrix} 0,& \sigma_\nu=\Id \\ 1,& \sigma_\nu=\sgn \end{matrix} \right.
	\]
	and
	\[
	\Lfun_{\R}\bk{s,\sigma_\nu} = \pi^{-\frac{s+\epsilon_\nu}{2}}\Gamma\bk{\frac{s+\epsilon_\nu}{2}}.
	\]
	\item
	The only finite order character $\sigma_\nu$ of $\C^\times$ is the trivial one.
	For $n\in\Z$ let
	\[
	\sigma_{n,\nu}\bk{z} = \bk{\frac{z}{\FNorm{z}}}^n .
	\]
	Note that any continuous complex character of $\C^\times$ is of the form $\sigma_n\bk{z}\FNorm{z}^s$ for some $n\in\Z$ and $s\in\C$.
	Let
	\[
	\Lfun_{\C}\bk{s,\sigma_{n,\nu}} = 2\bk{2\pi}^{-\bk{s+\frac{\FNorm{n}}{2}}}\Gamma\bk{s+\frac{\FNorm{n}}{2}} .
	\]
\end{itemize}

Recall that $\Gamma\bk{z}$ is a non-vanishing meromorphic function on $\C$ whose only poles are simple, appearing at the points $z=-n$ for $n\geq 0$.

We fix an additive character $\psi=\placestimes\psi_{\nu}:F\lmod\A\to\C^\times$.
For simplicity, we assume that $\psi_\nu$ has conductor $0$ at all finite places.We also fix a global measure $dx=\prodl_{\nu\in\Places}dx_\nu$ such that
\[
\intl_{\mO_\nu} dx_\nu = 1 \quad \forall\nu\in\Places_\infty .
\]
Let $\epsilon_{L_\nu}\bk{s,\sigma_\nu,\psi_\nu}$ be the local $\epsilon$-factor as defined in  \cite[Corollary 3.7]{MR1990377}.
We recall a few facts regarding $\epsilon_{L_\nu}$:
\begin{itemize}
	\item $\epsilon_{L_{\nu}}\bk{s,\sigma_\nu,\psi_\nu}$ is entire in $s$.
	\item For any finite $\nu$ such that $\sigma_\nu$ is unramified, it holds that $\epsilon_{L_{\nu}}\bk{s,\sigma_\nu,\psi_\nu}=1$.
	\item For any $\nu$ it holds that $\epsilon_{L_{\nu}}\bk{s,\sigma_\nu,\psi_\nu}\epsilon_{L_{\nu}}\bk{1-s,\widetilde{\sigma_\nu},\psi_\nu}=1$.
\end{itemize}
\begin{Remark}
	\label{Eq: Global functional equation of Hecke L-function and epsilon-factor}
	We recall the global functional equation
	\begin{equation}
		\Lfun_{L}\bk{s,\sigma} = \epsilon_L\bk{s,\sigma} \Lfun_L\bk{1-s,\overline{\sigma}},
	\end{equation}
	where
	\[
	\Lfun_{L}\bk{s,\sigma} = \prodl_{\nu\in\Places} \Lfun_{L_\nu}\bk{s,\sigma_\nu},\quad
	\epsilon_{L}\bk{s,\sigma} = \prodl_{\nu\in\Places} \epsilon_{L_\nu}\bk{s,\sigma_\nu,\psi_\nu} .
	\]
	Note that if $\sigma$ is unitary then $\sigma^{-1}=\overline{\sigma}$.
	Also note that fixing a finite subset $S\subset\Places$ such that all data is unramified outside of $S$, it holds that
	\[
	\epsilon_{L}\bk{s,\sigma} = \prodl_{\nu\in S} \epsilon_{L_\nu}\bk{s,\sigma_\nu,\psi_\nu} .
	\]
\end{Remark}

\begin{Remark}
	\label{Rem: Local functional equation and gamma-factors}
	We also recall the local functional equation \cite[eq. 3.26]{MR1990377}
	\begin{equation}
	\label{Eq: Local functional equation}
	\Lfun_{L_\nu}\bk{s,\sigma_\nu} = \frac{\epsilon_{L_\nu}\bk{s,\sigma_\nu,\psi_\nu}}{\gamma_{L_\nu}\bk{s,\sigma_\nu,\psi_\nu}} \Lfun_{L_\nu}\bk{1-s,\sigma_\nu^{-1}} ,
	\end{equation}
	where $\gamma_{L_\nu}\bk{s,\sigma_\nu,\psi_\nu}$ is the local $\gamma$-factor as defined in \cite{MR1990377}.
	In particular
	\[
	\prodl_{\nu\in\Places} \gamma_{L_\nu}\bk{s,\sigma_\nu,\psi_\nu} = 1 .
	\]
\end{Remark}
Studying the analytic behavior of $M\bk{w_0,\sigma_\nu,s}$, we have the following lemma (\cite{MR517138} for $\nu\nmid\infty$ and \cite{MR563369} for $\nu\vert\infty$):
\begin{Lem}
	\label{Lem: Holomorphicity of Intertwining operator over L-factor}
	For any $\sigma_\nu:F_\nu^\times\to\C^\times$ the operator $\frac{1}{\Lfun_L\bk{2s,\sigma_\nu}} M\bk{w_0,\sigma_\nu,s}$ is entire and non-vanishing.
\end{Lem}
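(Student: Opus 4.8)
The statement is purely local — it concerns only the rank-one group $SL_2$ — so I would fix a place $\nu$ and the character $\sigma_\nu$ of $F_\nu^\times$ and argue place by place, the global assertion following from the factorization of $M\bk{w_0,\sigma,s}$. The argument splits into two essentially independent parts: first, that every pole of $M\bk{w_0,\sigma_\nu,s}$ is ``absorbed'' by $\Lfun_L\bk{2s,\sigma_\nu}$, so that the normalized operator is entire; and second, that the normalized operator is never the zero map. In each part the non-archimedean and the archimedean places are handled by different explicit computations, following \cite{MR517138} and \cite{MR563369} respectively.

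\emph{Holomorphy.} For $\nu\nmid\infty$ I would compute $M\bk{w_0,\sigma_\nu,s}$ on an arbitrary standard section by the usual unfolding: the integral $\intl_{\mathcal N\bk{F_\nu}}f_s\bk{w_0 n g}\,dn$ breaks into the contribution of $\mathcal N\bk{\mO_\nu}$, which is a finite integral of a holomorphic family and hence entire, together with the contributions of the annuli $\unif_\nu^{-k}\mO_\nu^\times$, $k\geq 1$. The latter form a geometric series in $\sigma_\nu\bk{\unif_\nu}q_\nu^{-2s}$ with coefficients that are finite character sums; it converges for $\Real\bk{s}\gg 0$, and its meromorphic continuation has denominator $1-\sigma_\nu\bk{\unif_\nu}q_\nu^{-2s}$, i.e. it equals $\Lfun_L\bk{2s,\sigma_\nu}$ times an entire operator (and is outright entire when $\sigma_\nu$ is ramified, where $\Lfun_L\bk{2s,\sigma_\nu}\equiv 1$). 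Dividing by $\Lfun_L\bk{2s,\sigma_\nu}$ therefore yields an entire operator. For $\nu\vert\infty$ I would instead decompose the induced representation into its isotypic components for a maximal compact subgroup; on each such component $M\bk{w_0,\sigma_\nu,s}$ acts by an explicit quotient of $\Gamma$-functions, and one checks directly that every pole of each such quotient already occurs among the poles of $\Lfun_L\bk{2s,\sigma_\nu}$, so again the normalized operator is holomorphic on all of $\C$.

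\emph{Non-vanishing for generic $s$.} Since $w_0$ normalizes the torus and $w_0^2$ is central in $SL_2$ (hence acts trivially), the target of $M\bk{w_0,\sigma_\nu,s}$ is the principal series with data $\bk{\sigma_\nu^{-1},-s}$, and the reverse operator $M\bk{w_0,\sigma_\nu^{-1},-s}$ composes with $M\bk{w_0,\sigma_\nu,s}$ to a scalar operator $j\bk{\sigma_\nu,s}\cdot\id$ — for generic $s$ the induced representation is irreducible, so this is Schur's lemma, and $j$ then continues meromorphically. Evaluating on the spherical vector via \Cref{Eq: Rank one Gindikin Karpelevich}, and in general via the local functional equation \Cref{Eq: Local functional equation}, one finds that the two normalized operators compose to
\[
\bk{\Lfun_L\bk{2s+1,\sigma_\nu}\,\Lfun_L\bk{-2s+1,\sigma_\nu^{-1}}}^{-1}\cdot\id ,
\]
an entire scalar function that vanishes only on a discrete set $Z$, namely at the poles of $\Lfun_L\bk{2s+1,\sigma_\nu}$ or of $\Lfun_L\bk{-2s+1,\sigma_\nu^{-1}}$. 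For $s\notin Z$ the composite is a nonzero scalar, so $\tfrac{1}{\Lfun_L\bk{2s,\sigma_\nu}}M\bk{w_0,\sigma_\nu,s}$ is invertible, in particular nonzero.

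\emph{The exceptional points — the main obstacle.} What remains, and what I expect to be the delicate part, is to rule out vanishing of the normalized operator at the points of $Z$, together with the complementary fact that $M\bk{w_0,\sigma_\nu,s}$ has a genuine simple pole at every pole of $\Lfun_L\bk{2s,\sigma_\nu}$, so that dividing by that factor is sharp rather than over-correcting. Both are local reducibility statements: the relevant $s_0$ are exactly the points where the induced representation fails to be irreducible, and at each of them one must exhibit a vector on which $M\bk{w_0,\sigma_\nu,s_0}$ — or, at a pole of $\Lfun_L\bk{2s,\sigma_\nu}$, its leading Laurent coefficient — is visibly nonzero: a suitable vector in a single compact isotypic component at the archimedean places, and a section supported on the big Bruhat cell at the finite places. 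This is precisely the case-by-case local analysis of \cite{MR517138} and \cite{MR563369}; combined with the holomorphy statement and the functional-equation argument above, it yields the lemma.
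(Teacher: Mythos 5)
Your outline is correct and is essentially the standard argument; note, though, that the paper itself does not prove this lemma at all — it simply quotes it from the cited references (\cite{MR517138} for $\nu\nmid\infty$, \cite{MR563369} for $\nu\mid\infty$), so there is no in-paper proof to compare against. Your three-step structure (absorb the pole into $\Lfun_L\bk{2s,\sigma_\nu}$ by the big-cell/geometric-series computation at finite places and by $K$-isotypic $\Gamma$-ratios at archimedean ones; get generic non-vanishing from the composition identity $M\bk{w_0,\sigma_\nu^{-1},-s}\circ M\bk{w_0,\sigma_\nu,s}$; handle the exceptional points by exhibiting explicit vectors) is exactly how those references proceed, and your deferral of the case-by-case analysis at the reducibility points is no worse than what the paper does. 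One cosmetic slip: $w_0^2=-I$ acts on the induced representation by the scalar $\sigma_\nu\bk{-1}$, not trivially, but this does not affect the Schur-lemma argument.
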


The \emph{normalized intertwining operator} is defined to be
\[
N\bk{w_0,\sigma_\nu,s} = \frac{\Lfun_{F_\nu}\bk{2s+1,\sigma_\nu}}{\Lfun_{F_\nu}\bk{2s,\sigma_\nu} \epsilon_{F_\nu}\bk{2s,\sigma_\nu,\psi_\nu}} M\bk{w_0,\sigma_\nu,s}
\]
It follows from \Cref{Eq: Rank one Gindikin Karpelevich} that 
\begin{equation}
N\bk{w_0,\sigma_\nu,s} f_{s,\nu}^0 = f_{-s,\nu}^0
\end{equation}

For $\nu\nmid\infty$ it holds that (from the above and \cite[Section 11]{MR2908042}):
\begin{itemize}
	\item The operator $M_{\nu}\bk{w_0,\sigma_\nu,s}$ is entire for $\sigma_\nu$ ramified.
	The normalized intertwining operator $N\bk{w_0,\sigma_\nu,s}$ admits poles at points where $q_\nu^{2s+1}=\sigma_\nu\bk{\unif_\nu}$, in particular $\Real\bk{s}=-\frac{1}{2}$.
	\item If $\sigma_\nu$ is unramified then $M_{\nu}\bk{w_0,\sigma_\nu,s}$ is meromorphic with a simple poles at $\frac{\log\bk{\sigma_\nu\bk{\unif_\nu}}+2\pi i n}{\log\bk{q_\nu}}$ for all $n\in\Z$.
	The normalized intertwining operators $N\bk{w_0,\sigma_\nu,s}$ admits a unique simple pole at $s=-\frac{1}{2}+\frac{\log\bk{\sigma_\nu\bk{\unif_\nu}}+2\pi i n}{\log\bk{q_\nu}}$ for all $n\in\Z$.
	\item Furthermore, when $\sigma_\nu=\Id$ then $M_{\nu}\bk{w_0,\sigma_\nu,s}$ is not injective at $s=\frac{1}{2}$ and $s=-\frac{1}{2}$.
	The normalized intertwining operator $N\bk{w_0,\sigma_\nu,s}$ is not injective at $s=\frac{1}{2}$ and its residue is not injective at $s=-\frac{1}{2}$.
	\item
	In particular, for $\sigma_\nu=\Id$ we have
	\begin{align*}
	& 0\longrightarrow 1 \longrightarrow \Ind_{\mathcal{B}\bk{F_\nu}}^{SL_2\bk{F_\nu}} 1 \overset{M_{w_0}}{\longrightarrow} St \longrightarrow 0 \\
	& 0\longrightarrow St \longrightarrow \Ind_{\mathcal{B}\bk{F_\nu}}^{SL_2\bk{F_\nu}} \modf{\mathcal{B}}^{1} \overset{M_{w_0}}{\longrightarrow} 1 \longrightarrow 0 .
	\end{align*}
	
	\item Whenever $\sigma_\nu\neq \Id$, the induced representation $\Ind_{\mathcal{B}\bk{F_\nu}}^{SL_2\bk{F_\nu}}\sigma \modf{\mathcal{B}}^{s+\frac{1}{2}}$ is reducible if and only if $\sigma_\nu^2=\Id$ and $s=0$.
	In this case, $\Ind_{\mathcal{B}\bk{F_\nu}}^{SL_2\bk{F_\nu}}\sigma \modf{\mathcal{B}}^{s+\frac{1}{2}}=\pi^{\bk{1}}_\nu \oplus \pi^{\bk{-1}}_\nu$ where $\pi^{\bk{1}}_\nu$ and $\pi^{\bk{-1}}_\nu$ are irreducible and if $\sigma_\nu$ is unramified then $\pi^{\bk{1}}_\nu$ is also unramified.
	On the other hand, $\pi^{\bk{-1}}$ is an irreducible representation unramified with respect to the compact subgroup $d\cdot SL_2\bk{\mO_\nu}\cdot d^{-1}$, where 
	\[d=\begin{pmatrix}1&0\\0&\unif_\nu\end{pmatrix} .\]
	Furthermore, $M_{\nu}\bk{w_0,\sigma_\nu,0}$ is bijective and acts as multiplication by a scalar on $\pi^{\bk{1}}_\nu$ and $\pi^{\bk{-1}}_\nu$.
	The normalized intertwining operator $N_\nu\bk{w_0,\chi\boxtimes\chi, s}$ acts on $\pi^{\bk{\epsilon}}$ as $\epsilon Id$.
\end{itemize}

We now discuss the case $\nu\vert\infty$ (from the above and \cite[Chapters II and VII]{MR1880691}).

If $F_\nu=\R$ then $\Pi_{\epsilon_\nu, s}=\Ind_{\mathcal{B}\bk{F_\nu}}^{SL_2\bk{F_\nu}}\sigma_{\nu}\modf{B}^{s+\frac{1}{2}}$ is reducible if and only if $2s=n\in\Z$ and 
\[
\epsilon_\nu \equiv n+1 \mod{2}
\]
In which case, the decomposition series for $\Pi_{\epsilon_\nu, s}$ is as follows:
\begin{itemize}
	\item
	For $s=0$ it holds that
	\[
	\Pi_{\epsilon_\nu, s} = \mathcal{D}_1^{+} \oplus \mathcal{D}_1^{-},
	\]
	where $\mathcal{D}_1^{+}$ and $\mathcal{D}_1^{-}$ are irreducible representations known as the holomorphic and non-holomorphic limits of discrete series (respectively).
	\item
	For $2s=n\in\N$ it holds that
	\[
	\mathcal{D}_{n-1}^{+} \oplus \mathcal{D}_{n-1}^{-} \hookrightarrow \Pi_{\epsilon_\nu, s} \twoheadrightarrow \Phi_{n-1},
	\]
	where $\Phi_{n-1}$ is the unique irreducible representation of $SL_2\bk{\R}$ of dimension $n-1$ and $\mathcal{D}_{n-1}^{+}$, $\mathcal{D}_{n-1}^{-}$ are the irreducible representations known as the holomorphic and non-holomorphic discrete series of highest weight $n-1$.
	\item
	For $-2s=n\in\N$ it holds that
	\[
	\Phi_{n-1} \hookrightarrow \Pi_{\epsilon_\nu, s} \twoheadrightarrow \mathcal{D}_{n-1}^{+} \oplus \mathcal{D}_{n-1}^{-} .
	\]
\end{itemize}

If $F_\nu=\C$ then $\Pi_{n,s}=\Ind_{\mathcal{B}\bk{F_\nu}}^{SL_2\bk{F_\nu}}\sigma_{n,\nu}\FNorm{\cdot}^s$ is reducible if and only if $n=l-k$ and $4s=2+k+l$ or $n=k-l$ and $4s=-\bk{2+k+l}$ for $k,l\in\N\cup\set{0}$, in which case
\begin{itemize}
	\item
	If $n=l-k$ and $4s=2+k+l$ then
	\[
	\Phi_{k,l} \hookrightarrow \Pi_{n,s} \twoheadrightarrow \mathcal{E}_{n-1}^{+} \oplus \mathcal{E}_{n-1}^{-} ,
	\]
	where $\Phi_{k,l}$ is the finite-dimensional representation realized as polynomials in the complex variables $\bk{z_1,z_2,\overline{z_1},\overline{z_2}}$, homogeneous of degree $k$ in $\bk{z_1,z_2}$ and homogeneous of degree $l$ in $\bk{\overline{z_1},\overline{z_2}}$.
	$\mathcal{E}_{n-1}^{+}$ and $\mathcal{E}_{n-1}^{-}$ are analogous to $\mathcal{D}_{n-1}^{+}$ and $\mathcal{D}_{n-1}^{-}$.
\end{itemize}

The following lemma follows from the above discussion.
\begin{Lem}
	\label{Lem: Local composition of rank-one intertwining operators}
	For a place $\nu\in\Places$, it holds that
	\[
	N\bk{w_0,s,\sigma_\nu} \circ N\bk{w_0, -s,\overline{\sigma_\nu}} =\operatorname{Id} \quad \forall s\in\C .
	\]
	For $s_0\in\R$ such that $N\bk{w_0,s,\sigma_\nu}$ admits a pole at $s_0$ or $-s_0$ this should be understood as
	\[
	\lim_{s\to s_0} N\bk{w_0,s,\sigma_\nu} \circ N\bk{w_0, -s,\overline{\sigma_\nu}} =\operatorname{Id} \quad \forall s\in\C .
	\]
\end{Lem}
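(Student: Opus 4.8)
The plan is to reduce the claimed identity to a scalar computation by Schur's lemma, and then pin down the scalar. First, by \Cref{Lem: Holomorphicity of Intertwining operator over L-factor} together with the fact that the local $\Lfun$- and $\epsilon$-factors are nowhere-zero meromorphic functions, each of $N\bk{w_0,\sigma_\nu,s}$ and $N\bk{w_0,\overline{\sigma_\nu},-s}$ is a meromorphic family of nonzero intertwining operators, the first mapping $\Ind_{\mathcal B\bk{F_\nu}}^{SL_2\bk{F_\nu}}\sigma_\nu\modf{\mathcal B}^{s+\frac{1}{2}}\to\Ind_{\mathcal B\bk{F_\nu}}^{SL_2\bk{F_\nu}}\overline{\sigma_\nu}\modf{\mathcal B}^{-s+\frac{1}{2}}$ and the second mapping the reverse direction (here $\overline{\sigma_\nu}=\sigma_\nu^{-1}$, as $\sigma_\nu$ is unitary). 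Hence $T\bk{s}:=N\bk{w_0,\sigma_\nu,s}\circ N\bk{w_0,\overline{\sigma_\nu},-s}$ is a meromorphic family of self-intertwining operators of $\Pi_s:=\Ind_{\mathcal B\bk{F_\nu}}^{SL_2\bk{F_\nu}}\overline{\sigma_\nu}\modf{\mathcal B}^{-s+\frac{1}{2}}$. By the reducibility statements recalled above, $\Pi_s$ is irreducible for all $s$ outside a discrete set, so there Schur's lemma gives $T\bk{s}=c\bk{s}\operatorname{Id}$ for a meromorphic scalar $c$; it suffices to show $c\equiv 1$, the limiting identity at the poles then following by continuity.

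Next I would compute $c\bk{s}$. When $\sigma_\nu$ is unramified this is immediate: the normalization in the definition of $N$ was chosen, via the Gindikin--Karpelevich formula \Cref{Eq: Rank one Gindikin Karpelevich}, precisely so that $N\bk{w_0,\tau,t}$ carries the spherical vector of $\Ind_{\mathcal B}^{SL_2}\tau\modf{\mathcal B}^{t+\frac{1}{2}}$ to the spherical vector of $\Ind_{\mathcal B}^{SL_2}\tau^{-1}\modf{\mathcal B}^{-t+\frac{1}{2}}$ for every unramified $\tau$; applying this twice to the (nonzero) spherical vector of $\Pi_s$ forces $c\bk{s}=1$. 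When $\sigma_\nu$ is ramified there is no spherical vector, and I would instead evaluate the \emph{unnormalized} composition $M\bk{w_0,\sigma_\nu,s}\circ M\bk{w_0,\overline{\sigma_\nu},-s}$ directly: a rank-one Bruhat-cell computation (the same manipulation that underlies Tate's local functional equation / the Knapp--Stein reciprocity) shows, after meromorphic continuation, that it acts by an explicit scalar assembled from local $\Lfun$-, $\gamma$- and $\epsilon$-factors, and the local functional equation \Cref{Eq: Local functional equation} together with the normalizing factor defining $N$ makes this scalar collapse to $1$. At the archimedean places the same scheme applies, with the explicit $\Gamma$-factor identities replacing the $p$-adic computation.

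It remains to handle the finitely many $s_0$ at which $\Pi_{s_0}$ is reducible and/or one of the two operators has a pole, and this is where the real content sits. At $s_0=0$ with $\sigma_\nu^2=\Id$ one uses the decomposition $\Pi_0=\pi^{\bk{1}}_\nu\oplus\pi^{\bk{-1}}_\nu$ recalled above and the fact that $N\bk{w_0,\sigma_\nu,0}$ acts on $\pi^{\bk{\epsilon}}_\nu$ by $\epsilon$, so $T\bk{0}$ acts by $\epsilon^2=1$ on each summand. At $s_0=\pm\frac{1}{2}$ with $\sigma_\nu=\Id$ (and at the analogous archimedean reducibility points) one of the two factors fails to be injective while the other has a simple pole; using the explicit two-term composition series recalled above and the known action of $N$, respectively of its residue, on the constituents ($1$ and $St$), one checks that the kernel of the first factor is exactly cancelled by the pole of the second, so that $\lim_{s\to s_0}T\bk{s}=\operatorname{Id}$.

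The main obstacle is this last point: away from the reducibility locus the statement is a soft consequence of Schur's lemma plus the normalization, but at the boundary points ``$=\operatorname{Id}$'' is only a limiting identity, and verifying it requires matching the order of the pole of one normalized operator against the failure of injectivity of the other on each irreducible piece — which is precisely why the explicit decomposition series and the explicit description of the normalized operators and their residues at these points were recorded above. A secondary, purely computational nuisance is that the iterated unipotent integral defining $M\bk{w_0}\circ M\bk{w_0}$ converges on no common half-plane, so its identification with an explicit scalar must be organized through analytic continuation and a careful treatment of the Bruhat big cell rather than a naive application of Fubini's theorem.
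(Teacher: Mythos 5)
Your argument is correct and is essentially the proof the paper has in mind: the paper offers no explicit argument, stating only that the lemma ``follows from the above discussion'', i.e.\ from exactly the facts you invoke (Schur's lemma on the irreducible locus, the spherical normalization respectively the local functional equation for the $\gamma$-factor to pin the scalar down to $1$, and the explicit constituents and actions of $N$ at the reducibility points recalled in \Cref{Subsec: Rank-one}). One simplification worth noting: once $c(s)\equiv 1$ on the dense irreducible locus, the limiting identity at the exceptional points $s_0$ follows at once from the identity theorem for meromorphic functions applied $K$-type by $K$-type to $s\mapsto T(s)f-f$, so the constituent-by-constituent matching of pole orders against failures of injectivity that you single out as the main obstacle is a consistency check rather than a logically necessary step.
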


We finish the discussion of the rank-one case by recalling two results regarding the global intertwining operator, one of them is the global analog of \Cref{Lem: Local composition of rank-one intertwining operators}.
As $M\bk{w_0,s,\sigma}\circ M\bk{w_0,-s,\sigma}$ is an endomorphism of irreducible representations for all $s\in\C$ such that $2s\notin\Z$, it equals a constant.

\begin{Lem}[\cite{MR0579181}, Lemma 6.3]
	\label{Langlands identity for composition of intertwining operators}
	It holds that
	\[
	M\bk{w_0,s,\sigma} \circ M\bk{w_0, -s,\overline{\sigma}} =\operatorname{Id} \quad \forall s\in\C .
	\]
	For $s=\pm \frac{1}{2}$ with $\sigma=\Id$ this should be understood as
	\[
	\lim_{s\to\pm\frac{1}{2}} M\bk{w_0,s,\sigma} \circ M\bk{w_0, -s,\overline{\sigma}} =\operatorname{Id} .
	\]
\end{Lem}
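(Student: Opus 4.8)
The plan is to combine Schur's lemma with the Gindikin--Karpelevich formula, after which the identity reduces to a bookkeeping of completed $L$- and $\epsilon$-factors.

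First I would record the reduction already indicated before the statement. By the cocycle relation \Cref{Eq: Functional equation of intertwining operators} applied to $w_0\cdot w_0$, the operator $M\bk{w_0,s,\sigma}\circ M\bk{w_0,-s,\overline\sigma}$ maps $M\bk{w_0,-s,\overline\sigma}\colon \Ind_{\mathcal{B}\bk{\A}}^{SL_2\bk{\A}}\overline\sigma\,\modf{\mathcal{B}}^{-s+\frac12}\to \Ind_{\mathcal{B}\bk{\A}}^{SL_2\bk{\A}}\sigma\,\modf{\mathcal{B}}^{s+\frac12}$ and then back, so it is an endomorphism of $\Ind_{\mathcal{B}\bk{\A}}^{SL_2\bk{\A}}\overline\sigma\,\modf{\mathcal{B}}^{-s+\frac12}$; for $2s\notin\Z$ this representation is irreducible, so by Schur's lemma the composite is a scalar $c\bk{s,\sigma}$, a meromorphic function of $s$. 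Since $M\bk{w_0,\cdot}$ is factorizable, $c\bk{s,\sigma}=\prodl_{\nu}c_\nu\bk{s,\sigma_\nu}$ where $c_\nu$ is the analogous local scalar.

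Next I would compute the local scalars. Writing each $M\bk{w_0,\sigma_\nu,s}$ in terms of the normalized operator $N\bk{w_0,\sigma_\nu,s}$ and invoking \Cref{Lem: Local composition of rank-one intertwining operators} (i.e. $N\bk{w_0,s,\sigma_\nu}\circ N\bk{w_0,-s,\overline{\sigma_\nu}}=\operatorname{Id}$), one gets
\[
c_\nu\bk{s,\sigma_\nu}=\frac{\Lfun_{F_\nu}\bk{2s,\sigma_\nu}\,\epsilon_{F_\nu}\bk{2s,\sigma_\nu,\psi_\nu}}{\Lfun_{F_\nu}\bk{2s+1,\sigma_\nu}}\cdot\frac{\Lfun_{F_\nu}\bk{-2s,\overline{\sigma_\nu}}\,\epsilon_{F_\nu}\bk{-2s,\overline{\sigma_\nu},\psi_\nu}}{\Lfun_{F_\nu}\bk{-2s+1,\overline{\sigma_\nu}}},
\]
which at unramified places is exactly the ratio produced by \Cref{Eq: Rank one Gindikin Karpelevich} on the spherical vector (there $\epsilon_{F_\nu}=1$). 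Taking the product over all $\nu$, the $\Lfun_{F_\nu}$ assemble into a completed Hecke $L$-function and the $\epsilon_{F_\nu}$ into the global $\epsilon$-factor, and the global functional equation from \Cref{Eq: Global functional equation of Hecke L-function and epsilon-factor} lets one replace $\Lfun_F\bk{2s,\sigma}$ by $\epsilon_F\bk{2s,\sigma}\Lfun_F\bk{1-2s,\overline\sigma}$ and similarly for the remaining factors; the $L$-functions then cancel in pairs and the surviving $\epsilon$-factors collapse, by $\epsilon_F\bk{u,\sigma}\epsilon_F\bk{1-u,\overline\sigma}=1$ together with the triviality of $\sigma$ on $F^\times$ (so that the residual root-number contribution is $\sigma\bk{-1}=1$), to $c\bk{s,\sigma}\equiv1$. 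Concretely it is cleanest to run this last cancellation with the discriminant-normalized functions $\zfun_F$ of \Cref{Subsec: Rank-one}, whose functional equation \Cref{Functional Equation - Zeta Function} (extended to $\sigma$) makes the telescoping transparent: $\zfun_F\bk{-2s,\overline\sigma}=W\bk{\sigma}\,\zfun_F\bk{1+2s,\sigma}$ and $\zfun_F\bk{-2s+1,\overline\sigma}=W\bk{\sigma}\,\zfun_F\bk{2s,\sigma}$, leaving $c\bk{s,\sigma}=W\bk{\sigma}W\bk{\overline\sigma}=1$.

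Finally, for $s=\pm\frac12$ with $\sigma=\Id$, where each of $M\bk{w_0,\pm s,\Id}$ has a pole coming from the pole of $\zfun_F$ at $1$ (resp.\ $0$), I would observe that $c\bk{s,\Id}$ is a meromorphic function identically equal to $1$ off the discrete bad set, hence holomorphic and equal to $1$ there, so the composite --- interpreted as $\lim_{s\to\pm\frac12}M\bk{w_0,s,\Id}\circ M\bk{w_0,-s,\Id}$ --- is $\operatorname{Id}$; equivalently, $c\equiv1$ forces the leading Laurent coefficients of the two operators at these points to be mutually inverse, so no pole survives. The hard part will be the bookkeeping in the middle step: one must fix the Haar measure on $N_E\bk{\A}$ (equivalently, the normalization of $M\bk{w,\chi_s}$) consistently with the local normalizations of \Cref{Subsec: Rank-one} so that the global product genuinely telescopes to $1$ rather than to a power of $\FNorm{D_F}$ or to a root number --- this is forced by using the self-dual (Tamagawa) normalization, under which the completed functions that appear are precisely the $\zfun_F$'s with clean functional equation. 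An alternative route, which sidesteps all measure issues, is to deduce the identity by comparing the constant terms of a rank-one Eisenstein series $E\bk{f_s}$ and $E\bk{M\bk{w_0,s}f_s}$ via the functional equation of the rank-one Eisenstein series, but that presupposes the latter.
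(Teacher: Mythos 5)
The paper does not actually prove this lemma: it quotes it from Langlands \cite{MR0579181} and only records, in the sentence preceding the statement, that the composite is scalar on the irreducible locus $2s\notin\Z$. Your proposal therefore supplies a proof where the paper supplies a citation, and your route --- Schur's lemma plus factorization plus the rank-one Gindikin--Karpelevich formula plus the global functional equation --- is the standard direct verification. It is genuinely different from Langlands' own argument, which derives the identity from the functional equation and spectral theory of the rank-one Eisenstein series (essentially your ``alternative route'', which you correctly observe presupposes that functional equation). Your reduction to a scalar, the factorization $c\bk{s,\sigma}=\prod_\nu c_\nu\bk{s,\sigma_\nu}$, and the extension to all $s$ by meromorphic continuation are all fine.

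There is, however, one concrete error in the middle step. After substituting the global functional equation into
\[
c\bk{s,\sigma}=\frac{\Lfun_F\bk{2s,\sigma}\epsilon_F\bk{2s,\sigma}}{\Lfun_F\bk{2s+1,\sigma}}\cdot\frac{\Lfun_F\bk{-2s,\overline\sigma}\epsilon_F\bk{-2s,\overline\sigma}}{\Lfun_F\bk{1-2s,\overline\sigma}},
\]
the $\Lfun$-factors do cancel, but what survives is $\epsilon_F\bk{2s,\sigma}^2\epsilon_F\bk{-2s,\overline\sigma}^2$, and this does \emph{not} collapse via $\epsilon_F\bk{u,\sigma}\epsilon_F\bk{1-u,\overline\sigma}=1$: the arguments $2s$ and $-2s$ are not related by $u\mapsto 1-u$. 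Since $\epsilon_F\bk{u,\sigma}=W\bk{\sigma}A^{\frac{1}{2}-u}$ with $A$ the conductor--discriminant, one finds $\epsilon_F\bk{2s,\sigma}\epsilon_F\bk{-2s,\overline\sigma}=A$, so your bookkeeping yields $c=A^2$, which is $1$ only when $\sigma$ is everywhere unramified (under the paper's normalization of $\psi$). This is exactly the normalization issue you flag at the end, and your fallback is the correct repair: run the entire computation with the symmetrically completed functions $\zfun$ satisfying $\zfun\bk{u,\sigma}=W\bk{\sigma}\zfun\bk{1-u,\overline\sigma}$, where the ratio telescopes to $1$ on the nose. But note that this changes the normalization of the local operators, so the input ``$N\circ N=\operatorname{Id}$'' must then be verified for \emph{that} normalization at the ramified and archimedean places (equivalently, one checks that the local composite $M\bk{w_0,s,\sigma_\nu}\circ M\bk{w_0,-s,\overline{\sigma_\nu}}$ is the reciprocal of a product of local $\gamma$-factors and invokes $\prod_\nu\gamma_{F_\nu}=1$); as written, your argument uses the unramified normalization of $N$ globally while quietly needing the self-dual one, and these differ by precisely the constant $A^2$ at issue.
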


We would also like to recall \cite[Lemma 1.5]{MR1174424}:
\begin{Lem}
	\label{Lem: Ikeda for SL2}
	For $\sigma=\Id$, the operator $M\bk{w_0,s,\Id}$ is holomorphic at $s_0=0$ and is equal to the scalar multiplication by -1 at $s_0=0$.
\end{Lem}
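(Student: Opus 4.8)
The plan is to deduce both assertions from Schur's lemma, the key structural input being that the principal series on which $M\bk{w_0,s,\Id}$ acts is \emph{irreducible} at $s=0$. Put $I\bk s=\Ind_{\mathcal{B}\bk{\A}}^{SL_2\bk{\A}}\Id\,\modf{\mathcal{B}}^{s+\frac{1}{2}}$ and $V=I\bk 0=\Ind_{\mathcal{B}\bk{\A}}^{SL_2\bk{\A}}\modf{\mathcal{B}}^{\frac{1}{2}}$, so that $M\bk{w_0,s,\Id}\colon I\bk s\to I\bk{-s}$ and we must analyze this family near $s=0$. First I would verify that $V$ is irreducible by factoring $V\cong\bigotimes'_{\nu}V_\nu$ as a restricted tensor product, with $V_\nu=\Ind_{\mathcal{B}\bk{F_\nu}}^{SL_2\bk{F_\nu}}\modf{\mathcal{B}}^{\frac{1}{2}}$: each $V_\nu$ is irreducible, this being precisely the $\sigma_\nu=\Id$, $s=0$ instance of the local classification recalled in \Cref{Subsec: Rank-one} (for $\nu\nmid\infty$ reducibility for the trivial character occurs only at $s=\pm\frac{1}{2}$; for $F_\nu=\R$ the parameter $\epsilon_\nu=0$ violates $\epsilon_\nu\equiv n+1\pmod 2$ with $n=2s=0$; for $F_\nu=\C$ the equations $4s=\pm\bk{2+k+l}$ have no solution at $s=0$), and a restricted tensor product of irreducible admissible representations is irreducible.

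Next I would rule out a pole of $M\bk{w_0,s,\Id}$ at $s_0=0$. In the standard flat-section model all $I\bk s$ are realized on one fixed space, on which $M\bk{w_0,s,\Id}$ becomes a meromorphic family of operators satisfying the intertwining identity $M\bk{w_0,s,\Id}\circ\rho_s\bk g=\rho_{-s}\bk g\circ M\bk{w_0,s,\Id}$. Were this family to have a pole of order $k\geq1$ at $s=0$, then comparing leading Laurent coefficients in this identity would show the leading coefficient $A$ commutes with the $SL_2\bk{\A}$-action on $V$, whence $A=c\cdot\operatorname{Id}$ with $c\neq0$ by Schur. On the other hand, evaluating on the everywhere-spherical standard section $f^0_s$ and multiplying the local Gindikin--Karpelevich identities \Cref{Eq: Rank one Gindikin Karpelevich} over all $\nu$ (with measures as normalized in \Cref{Subsec: Rank-one}) yields
\[
M\bk{w_0,s,\Id}f^0_s=\frac{\zfun_F\bk{2s}}{\zfun_F\bk{2s+1}}\,f^0_{-s},
\]
and because $\zfun_F$ has only \emph{simple} poles at $w=0$ and $w=1$ this quotient is holomorphic at $s=0$; so $A$ would annihilate $f^0_0\neq0$, a contradiction. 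Hence $M\bk{w_0,s,\Id}$ is holomorphic at $s_0=0$.

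With holomorphy in hand, $M\bk{w_0,0,\Id}$ is an $SL_2\bk{\A}$-intertwining endomorphism of the irreducible $V$, so $M\bk{w_0,0,\Id}=c\cdot\operatorname{Id}$ by Schur, and evaluating again on $f^0_s$ identifies $c=\lim_{s\to0}\zfun_F\bk{2s}/\zfun_F\bk{2s+1}$. By the functional equation $\zfun_F\bk w=\zfun_F\bk{1-w}$ of \Cref{Functional Equation - Zeta Function} one has $\Res_{w=0}\zfun_F=-\Res_{w=1}\zfun_F$, and matching the simple poles of the numerator and denominator at $s=0$ gives this limit as $-1$; \Cref{Langlands identity for composition of intertwining operators} then furnishes the independent check $M\bk{w_0,0,\Id}^2=\operatorname{Id}$, i.e.\ $c^2=1$. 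The one place where care is genuinely needed is the normalization bookkeeping in this last step — keeping the discriminant factor in $\zfun_F$, the self-dual Haar measures, and the conductor of $\psi$ mutually consistent so that the scalar comes out to exactly $-1$ rather than merely a negative real number; short of that, the sign $c<0$ can still be read off from the positivity of $\zfun_F$ just to the right of its pole at $w=1$, which together with $c^2=1$ already forces $c=-1$.
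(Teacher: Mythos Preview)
The paper does not supply its own proof of this lemma; it is simply quoted from \cite[Lemma~1.5]{MR1174424}. Your argument is correct and follows the standard route: local irreducibility of $I\bk{0}$ at every place (hence global irreducibility), Schur's lemma first to exclude a pole---the leading Laurent coefficient would be a nonzero scalar on $V$, but the spherical scalar is holomorphic---and then to reduce $M\bk{w_0,0,\Id}$ to a constant $c$, and finally $c^2=1$ from \Cref{Langlands identity for composition of intertwining operators} together with $c<0$ from the sign of the residue ratio to conclude $c=-1$.

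The normalization caveat you flag is well placed and worth making explicit. With the measures $\intl_{\mO_\nu}dx_\nu=1$ at finite places, the product of the local factors in \Cref{Eq: Rank one Gindikin Karpelevich} is $\zeta_F\bk{2s}/\zeta_F\bk{2s+1}$ rather than $\zfun_F\bk{2s}/\zfun_F\bk{2s+1}$, and since $\zeta_F\bk{s}=\FNorm{D_F}^{1/2-s}\zeta_F\bk{1-s}$ one finds $\lim_{s\to 0}\zeta_F\bk{2s}/\zeta_F\bk{2s+1}=-\FNorm{D_F}^{1/2}$, not $-1$. So the direct limit computation by itself only gives $c=-1$ under the measure for which \Cref{Langlands identity for composition of intertwining operators} holds on the nose (equivalently $\operatorname{vol}\bk{\mathcal{N}\bk{F}\lmod\mathcal{N}\bk{\A}}=1$, which absorbs the discriminant). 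Your fallback---$c^2=1$ from \Cref{Langlands identity for composition of intertwining operators} and $c<0$ from the residue sign---is exactly what makes the conclusion independent of this bookkeeping, and is the cleanest way to finish.
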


\subsection{Intertwining Operators for Induced Representations of $H_E$}

At this point, it will be beneficial to consider a more general point of view.
Let $\mathfrak{a}_\C^\ast=X^\ast\bk{T_E}\otimes\C$.
We equip $\mathfrak{a}_\C^{\ast}$ with the following system of coordinates:
\begin{itemize}
	\item If $E=F\times F\times F$ we have $\mathfrak{a}_\C^\ast\cong\C^4$ and we write $\lambda=\bk{s_1,s_2,s_3,s_4}\in\mathfrak{a}_\C^\ast$ for
	\[
	\lambda\bk{h_{\alpha_1}\bk{t_1}h_{\alpha_2}\bk{t_2}h_{\alpha_3}\bk{t_3}h_{\alpha_4}\bk{t_4}} = \FNorm{t_1}_F^{s_1} \FNorm{t_2}_F^{s_2}\FNorm{t_3}_F^{s_3}\FNorm{t_4}_F^{s_4} \quad \forall t_1,t_2,t_3,t_4\in F^\times .
	\]
	\item If $E=F\times K$ we have $\mathfrak{a}_\C^\ast\cong\C^3$ and we write $\lambda=\bk{s_1,s_2,s_3}\in\mathfrak{a}_\C^\ast$ for
	\[
	\lambda\bk{h_{\alpha_1}\bk{t_1}h_{\alpha_2}\bk{t_2}h_{\alpha_3}\bk{t_3}h_{\alpha_4}\bk{t_3^\sigma}} = \FNorm{t_1}_F^{s_1} \FNorm{t_2}_F^{s_2}\FNorm{t_3}_K^{s_3} \quad \forall t_1,t_2\in F^\times, \ \forall t_3\in K^\times .
	\]	
	\item If $E$ is a field we have $\mathfrak{a}_\C^\ast\cong\C^2$ and we write $\lambda=\bk{s_1,s_2}\in\mathfrak{a}_\C^\ast$ for 
	\[
	\lambda\bk{h_{\alpha_1}\bk{t_1}h_{\alpha_2}\bk{t_2}h_{\alpha_3}\bk{t_1^\sigma}h_{\alpha_4}\bk{t_1^{\sigma^2}}} = \FNorm{t_1}_E^{s_1} \FNorm{t_2}_F^{s_2} \quad \forall t_2\in F^\times,\ \forall t_1\in E^\times.
	\]
\end{itemize}
For any finite order character $\chi=\placestimes\chi_\nu$ of $T_E\bk{\A}$ and any $\lambda\in \mathfrak{a}_\C^\ast$ we let
\begin{align*}
& I_{B_E}\bk{\chi,\lambda} = \Ind_{B_E\bk{\A}}^{H_E\bk{\A}} \bk{\chi\circ\operatorname{det}_{M_E}} \cdot\bk{\lambda+\rho_{B_E}} = \placestimes I_{B_E}\bk{\chi_\nu,\lambda} \\
& I_{B_E}\bk{\chi_\nu,\lambda} = \Ind_{B_E\bk{F_\nu}}^{H_E\bk{F_\nu}} \bk{\chi_\nu\circ\operatorname{det}_{M_E}} \cdot \bk{\lambda+\rho_{B_E}} ,
\end{align*}
where $\rho_{B_E}$ is half the sum of positive roots in $H_E$ with respect to $B_E$.
We note that, as above, the induction on the right hand side is unnormalized while the induced representation on the left hand side is normalized.
This is not the most general principal series representation but it will suffice for our needs.
We note that
\[
I_{P_E}\bk{\chi,s} \hookrightarrow I_{B_E}\bk{\chi_s} = I_{B_E}\bk{\chi,\lambda_s},
\]
where
\begin{equation}
\label{Eq: lambda-s}
\lambda_s = \piece{\bk{-1,s+\frac{3}{2},-1,-1},& E=F\times F\times F \\ \bk{-1,s+\frac{3}{2},-1},& E=F\times K \\ \bk{-1,s+\frac{3}{2}}& E\rmod F \text{ is a cubic field extension}} .
\end{equation}
For $w\in W$ and a holomorphic section $f_\lambda\in I_B\bk{\chi,\lambda}$ let
\begin{equation}
M\bk{w,\chi,\lambda}f_\lambda\bk{g} = \intl_{N_E\bk{\A}\cap w^{-1}N_E\bk{\A}w\lmod N_E\bk{\A}} f_\lambda\bk{wng} dn .
\end{equation}
This integral converges absolutely to an analytic function in the positive Weyl chamber
\[
C^{+} = \set{\lambda\in\mathfrak{a}_\C^\ast \mvert \Real\gen{\lambda,\check{\alpha}}>0\ \forall \alpha>0}
\]
and admits a meromorphic continuation to $\mathfrak{a}_\C^\ast$.

\begin{Remark}
	\label{Rem: Connection between two types of intertwining operators}
	Due to the choice of representatives in $W\bk{P_E,H_E}$, the intertwining operators $M\bk{w,\chi_s}$ defined in \Cref{Eq: definition of degenerate intertwining operator} are generically (at points of holomorphicity) restrictions of $M\bk{w,\chi,\lambda}$ to the line $\lambda_s$ as above.
\end{Remark}

Note that by abuse of notation, for a Hecke character $\chi$ we identify $\chi$ and $\chi\circ\operatorname{det}_{M_E}$.

We recall that $M\bk{w,\chi,\lambda}$ and $M\bk{w,\chi_s}$ can be decomposed as follows
\begin{equation}
\begin{split}
& M\bk{w,\chi_\nu,\lambda} = \placestimes M_\nu\bk{w,\chi_{\nu},\lambda} \\
& M\bk{w,\chi_s} = \placestimes M_\nu\bk{w,\chi_{\nu,s}} ,
\end{split}
\end{equation}
where for any $\nu\in\Places$, $\lambda\in C^{+}$ and $\Real\bk{s}\gg 0$, the local intertwining operators $M\bk{w,\chi_\nu,\lambda}$ and $M_\nu\bk{w,\chi_s}$ are defined via
\begin{equation}
\begin{split}
& M\bk{w,\chi_\nu,\lambda} f_{\lambda,\nu}\bk{g} = \intl_{N_E\bk{F_\nu}\cap w^{-1}N_E\bk{F_\nu}w\lmod N_E\bk{F_\nu}} f_{\lambda,\nu}\bk{wng} dn \\
& M\bk{w,\chi_{s,\nu}} f_{s,\nu}\bk{g} = \intl_{N_E\bk{F_\nu}\cap w^{-1}N_E\bk{F_\nu}w\lmod N_E\bk{F_\nu}} f_{s,\nu}\bk{wng} dn .
\end{split}
\end{equation}
These integrals converge for $\lambda\in C^{+}$ and $\Real\bk{s}\gg 0$ respectively and admits a meromorphic continuation to $\mathfrak{a}_\C^\ast$ and $\C$ respectively.

We now recall the connection between the rank-one case and the intertwining operators $M_{w_\alpha}$, where $w_\alpha$ is the simple reflection with respect to a simple root $\alpha$.
For any simple root $\alpha$, we have an embedding $\iota_\alpha:SL_2\to H_E$, defined over $F_\alpha$, so that
\[
\iota_\alpha\bk{\begin{pmatrix}t&0\\0&t^{-1}\end{pmatrix}} = h_\alpha\bk{t},\quad
\iota_\alpha\bk{\begin{pmatrix} 1&x\\0 &1 \end{pmatrix}} = x_\alpha\bk{x},\quad
\iota_\alpha\bk{\begin{pmatrix} 1&0\\x &1 \end{pmatrix}} = x_{-\alpha}\bk{x},\quad 
\iota_\alpha\bk{\begin{pmatrix}0&1\\-1&0\end{pmatrix}} = w_\alpha .
\]
We denote by $T_\alpha$ the image of $h_\alpha$.

\begin{Lem}
	\label{Lemma: intertwining operator of simple reflections}
	The following diagram is commutative
	\[
	\xymatrix{
		I_{B_E}\bk{\chi_{\nu},\lambda} \ar@{->}[r]^{M_\nu\bk{w_\alpha,\chi_\nu,\lambda}} \ar@{->}[d]_{\iota_\alpha^\ast} &
		I_{B_E}\bk{w_\alpha\cdot \chi_\nu, w_\alpha\cdot \lambda} \ar@{->}[d]^{\iota_\alpha^\ast} \\
		\Ind_{\mathcal{B}\bk{F_\nu}}^{SL_2\bk{F_\nu}} \bk{\coset{\chi_{\nu} \otimes \lambda}\res{T_\alpha}} \ar@{->}[r]^{M_{w_0}} &
		\Ind_{\mathcal{B}\bk{F_\nu}}^{SL_2\bk{F_\nu}} \bk{w_\alpha\cdot\coset{\chi_{\nu} \otimes \lambda}\res{T_\alpha}}
		} ,
	\]	
	where the vertical maps should be understood as the pull-back map.
	By restriction to $I_{P_E}\bk{\chi,s}$, this is also true for $M\bk{w_\alpha,\chi_s}$.
\end{Lem}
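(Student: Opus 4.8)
The statement is the familiar reduction to rank one along a simple reflection, and the plan is to unwind both composites of the square at an arbitrary point and recognize the rank-one integral of \Cref{Subsec: Rank-one} transported through the embedding $\iota_\alpha:SL_2\to H_E$ introduced above. I would run the argument for $M_\nu\bk{w_\alpha,\chi_\nu,\lambda}$ on $I_{B_E}\bk{\chi_\nu,\lambda}$ and then obtain the statement for $M\bk{w_\alpha,\chi_s}$ at the end by specializing $\lambda=\lambda_s$, using \Cref{Rem: Connection between two types of intertwining operators}, and restricting to $I_{P_E}\bk{\chi,s}$; the general framework is that of \cite{MR1361168}.

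\emph{Step 1: the integration domain.} Since $\alpha$ is simple, $w_\alpha$ permutes among themselves the positive roots lying off the $\Gal\bk{\overline F\rmod F}$-orbit of $\alpha$ and sends that orbit to negatives. Hence the domain $N_E\bk{F_\nu}\cap w_\alpha^{-1}N_E\bk{F_\nu}w_\alpha\lmod N_E\bk{F_\nu}$ occurring in the definition of $M_\nu\bk{w_\alpha,\chi_\nu,\lambda}$ collapses to the root subgroup $U_\alpha\bk{F_\nu}$, which by the Chevalley-Steinberg \'epinglage fixed in \Cref{Subsec: Quasi-Split Forms of D4} is exactly $\iota_\alpha\bk{\mathcal{N}\bk{F_\nu}}$, where $\mathcal{N}$ is the unipotent radical of $\mathcal{B}\subset SL_2$ (read over the field of definition $F_\alpha$ of $\alpha$). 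I would normalize the Haar measure on $U_\alpha\bk{F_\nu}$ to be the one transported from $\mathcal{N}\bk{F_\nu}$ through $\iota_\alpha$, in keeping with the global measure already fixed.

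\emph{Step 2: transport through $\iota_\alpha$.} Since $\iota_\alpha$ is a group homomorphism carrying the standard Weyl element of $SL_2$ to $w_\alpha$, substituting $n\mapsto\iota_\alpha\bk n$ in the domain of Step 1 gives, for every $h\in SL_2\bk{F_\nu}$,
\begin{multline*}
\bk{M_\nu\bk{w_\alpha,\chi_\nu,\lambda}f_{\lambda,\nu}}\bk{\iota_\alpha\bk h}
= \intl_{\mathcal{N}\bk{F_\nu}} f_{\lambda,\nu}\bk{\iota_\alpha\bk{w_0 n h}}\,dn \\
= \intl_{\mathcal{N}\bk{F_\nu}} \bk{\iota_\alpha^\ast f_{\lambda,\nu}}\bk{w_0 n h}\,dn
= \bk{M_{w_0}\,\iota_\alpha^\ast f_{\lambda,\nu}}\bk h ,
\end{multline*}
which is precisely the commutativity of the square, once the vertical maps are seen to land where claimed.

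\emph{Step 3: the vertical maps and the conclusion.} As $\alpha>0$, $\iota_\alpha\bk{\mathcal{B}}\subseteq B_E$, so $\iota_\alpha^\ast$ sends a section of $I_{B_E}\bk{\chi_\nu,\lambda}$ to a function on $SL_2\bk{F_\nu}$ equivariant under $\mathcal{B}\bk{F_\nu}$; from $\iota_\alpha\bk{\diag\bk{t,t^{-1}}}=h_\alpha\bk t$ its inducing character is $\coset{\chi_\nu\otimes\lambda}\res{T_\alpha}$, and the modulus normalizations agree since $\iota_\alpha^\ast\modf{B_E}^{1/2}=\modf{\mathcal{B}}^{1/2}$ (equivalently $\langle\rho_{B_E},\check\alpha\rangle=1$), so indeed $\iota_\alpha^\ast f_{\lambda,\nu}\in\Ind_{\mathcal{B}\bk{F_\nu}}^{SL_2\bk{F_\nu}}\bk{\coset{\chi_\nu\otimes\lambda}\res{T_\alpha}}$; the identical check with $w_\alpha\cdot\lambda$ identifies the bottom-right vertex, using that $w_\alpha$ acts on $T_\alpha$ by inversion, i.e.\ as $w_0$. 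All the identities above hold between holomorphic sections for $\lambda$ in the common domain of convergence $C^{+}$ and extend to all of $\mathfrak{a}_\C^\ast$ by meromorphic continuation. Specializing to $\lambda=\lambda_s$ and invoking \Cref{Rem: Connection between two types of intertwining operators} yields the diagram for $M\bk{w_\alpha,\chi_s}$, which can then be further restricted to the subrepresentation $I_{P_E}\bk{\chi,s}$.

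\emph{Main obstacle.} The computation itself is formal; the only point needing real care is the twisting in Steps 1 and 3 --- that ``$SL_2$'' and ``$\mathcal{B}$'' must be understood over the field of definition $F_\alpha$ (so that the single $\nu$-local integral over $N_E\bk{F_\nu}$ in the definition of $M_\nu\bk{w_\alpha,\cdot}$ is literally an $SL_2$-level integral rather than one that must be reassembled place by place over $F_\alpha$), that $U_\alpha$ is the corresponding restriction-of-scalars root subgroup, and that $w_\alpha$ really does stabilize the set of positive roots off the orbit of $\alpha$. All three follow at once from the \'epinglage of \cite{MR756316} as set up in \Cref{Subsec: Quasi-Split Forms of D4}, after which nothing beyond Step 2 is needed.
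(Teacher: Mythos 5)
Your proof is correct and follows essentially the same route as the paper's: identify the integration domain $N_E\bk{F_\nu}\cap w_\alpha^{-1}N_E\bk{F_\nu}w_\alpha\lmod N_E\bk{F_\nu}$ with $\iota_\alpha\bk{\mathcal{N}\bk{F_\nu}}$ and transport the rank-one integral through $\iota_\alpha$ by a change of variables. Your Step 3 (checking where the vertical pull-back maps land and the modulus normalization) is left implicit in the paper but is a welcome addition.
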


\begin{proof}
	We note that 
	\[
	N_E\bk{F_\nu}\cap w_\alpha^{-1}N_E\bk{F_\nu}w_\alpha\lmod N_E\bk{F_\nu}= 
	\iota_\alpha \bk{\mathcal{N}\bk{F_\nu}\cap w_0^{-1}\mathcal{N}\bk{F_\nu}w_0\lmod \mathcal{N}\bk{F_\nu}}
	\]
	and that
	\[
	\mathcal{N}\bk{F_\nu}\cap w_0^{-1}\mathcal{N}\bk{F_\nu}w_0\lmod \mathcal{N}\bk{F_\nu} \cong \mathcal{N}\bk{F_\nu} .
	\]
	Consequently, for $f_{s,\nu}\in I_{B_E}\bk{\chi_{\nu,s}}$ and $g\in SL_2\bk{F_\nu}$ it holds that
	\begin{align*}
	M_{w_0}\iota_\alpha^\ast \bk{f_{s,\nu}} \bk{g} & = 
	\intl_{\mathcal{N}\bk{F_\nu}} \iota_\alpha^\ast \bk{f_{s,\nu}} \bk{w_0ng} dn \\
	& = 
	\intl_{\mathcal{N}\bk{F_\nu}} \bk{f_{s,\nu}} \bk{\iota_\alpha\bk{w_0ng}} dn \\
	& = 
	\intl_{N_E\bk{F_\nu}\cap w_\alpha^{-1}N_E\bk{F_\nu}w_\alpha\lmod N_E\bk{F_\nu}} \bk{f_{s,\nu}} \bk{w_\alpha n' \iota_\alpha\bk{g}} dn' \\
	& = \bk{M_\nu\bk{w_\alpha,\chi_{s,\nu}} f}\bk{\iota_\alpha\bk{g}}  = \iota_\alpha^\ast\bk{M_\nu\bk{w_\alpha,\chi_{s,\nu}} f}\bk{g} .
	\end{align*}
\end{proof}

The following is a corollary of the previous lemma and \Cref{Eq: Rank one Gindikin Karpelevich}.
\begin{Cor}[The Gindikin-Karpelevich formula]
	\label{Gindikin-Karpelevich formula}
	Let $\nu\in\Places$ be a place such that $\chi_\nu$ is unramified.
	Also, let $w\in W$.
	
	\begin{itemize}
		\item
		Let $f_\nu^0 \in I_{B_E}\bk{\chi_\nu,\lambda}$ be an unramified vector.
		It then holds that
		\begin{equation}
		M_\nu\bk{w,\chi_\nu,\lambda} f_\nu^0 = \prodl_{\alpha>0,\ w^{-1}\alpha<0} \frac{\Lfun_{F_{\alpha,\nu}}\bk{\gen{\lambda,\check{\alpha}} ,\chi_\nu\circ\operatorname{det}_{M_E}\circ\check{\alpha}}}{\Lfun_{F_{\alpha,\nu}}\bk{\gen{\lambda,\check{\alpha}}+1 ,\chi_\nu\circ\operatorname{det}_{M_E}\circ\check{\alpha}}} f_\nu^0 .
		\end{equation}
		
		\item
		Let $f_\nu^0 \in I_{B_E}\bk{\chi_{s,\nu}}$ be an unramified vector.
		It then holds that
		\begin{equation}
		M_\nu\bk{w,\chi_{s,\nu}} f_\nu^0 = \prod_{\alpha>0,\ w^{-1}\alpha<0} \frac{\Lfun_{F_{\alpha,\nu}}\bk{\chi_{s,\nu}\circ\check{\alpha}}}{\Lfun_{F_{\alpha,\nu}}\bk{q_{\alpha,\nu}^{-1}\chi_{s,\nu}\circ\check{\alpha}}} f_\nu^0 .
		\end{equation}
	\end{itemize}
\end{Cor}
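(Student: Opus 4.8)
The plan is to reduce the computation to the rank-one situation of \Cref{Subsec: Rank-one} by means of the cocycle relation \Cref{Eq: Functional equation of intertwining operators} together with \Cref{Lemma: intertwining operator of simple reflections}, and then to carry out the standard bookkeeping of positive roots. First I would fix a reduced decomposition $w = w_{\alpha_{i_1}}\cdots w_{\alpha_{i_k}}$, so that $k = \ell\bk{w} = \Card{\set{\alpha>0 \mvert w^{-1}\alpha<0}}$, and recall that this set of roots is precisely $\set{\beta_1,\dots,\beta_k}$, where $\beta_j = w_{\alpha_{i_1}}\cdots w_{\alpha_{i_{j-1}}}\bk{\alpha_{i_j}}$, and that the $\beta_j$ are pairwise distinct. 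Iterating \Cref{Eq: Functional equation of intertwining operators} then expresses $M_\nu\bk{w,\chi_\nu,\lambda}$ as the composition of the rank-one operators $M_\nu\bk{w_{\alpha_{i_j}},\cdot}$, where at the $j$-th stage the data is obtained from $\bk{\chi_\nu,\lambda}$ by applying $w_{\alpha_{i_1}}\cdots w_{\alpha_{i_{j-1}}}$. Because $\chi_\nu$ is unramified, by the rank-one formula proved in the next step every intermediate section is spherical, so only the scalars need to be tracked.

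The second step treats a single simple reflection $w_\alpha$. Here $\set{\beta>0\mvert w_\alpha^{-1}\beta<0} = \set{\alpha}$, and by \Cref{Lemma: intertwining operator of simple reflections} the operator $M_\nu\bk{w_\alpha,\chi_\nu,\lambda}$ is, after pull-back along $\iota_\alpha:SL_2\to H_E$, identified with the $SL_2$-operator $M_{w_0}$ attached to $\bk{\chi_\nu\otimes\lambda}\res{T_\alpha}$. Since $\iota_\alpha$ is defined over $F_\alpha$, this rank-one computation takes place over the \'etale algebra $F_{\alpha,\nu} = F_\alpha\otimes_F F_\nu$; writing $F_{\alpha,\nu}$ as a product of local fields and applying \Cref{Eq: Rank one Gindikin Karpelevich} componentwise, together with the identification $\bk{\chi_\nu\otimes\lambda}\res{T_\alpha} = \bk{\chi_\nu\circ\operatorname{det}_{M_E}\circ\check{\alpha}}\cdot\FNorm{\cdot}^{\gen{\lambda,\check{\alpha}}}$ under $\iota_\alpha$, produces the scalar
\[
\frac{\Lfun_{F_{\alpha,\nu}}\bk{\gen{\lambda,\check{\alpha}},\chi_\nu\circ\operatorname{det}_{M_E}\circ\check{\alpha}}}{\Lfun_{F_{\alpha,\nu}}\bk{\gen{\lambda,\check{\alpha}}+1,\chi_\nu\circ\operatorname{det}_{M_E}\circ\check{\alpha}}} .
\]

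Finally I would assemble the factors. At the $j$-th stage the rank-one scalar is the one attached to $\alpha_{i_j}$, but with $\lambda$ replaced by $w_{\alpha_{i_1}}\cdots w_{\alpha_{i_{j-1}}}\lambda$ (the unramified character $\chi_\nu$ staying unramified under the Weyl action). Using $\gen{w_{\alpha_{i_1}}\cdots w_{\alpha_{i_{j-1}}}\lambda,\check{\alpha_{i_j}}} = \gen{\lambda,\check{\beta_j}}$, the equality $\operatorname{det}_{M_E}\circ\check{\beta_j}$ obtained by transporting $\operatorname{det}_{M_E}\circ\check{\alpha_{i_j}}$ through the $F$-rational Weyl element, and the fact that the field of definition of $\alpha_{i_j}$ is carried to that of $\beta_j$, the $j$-th scalar is exactly the factor indexed by $\beta_j$ in the asserted product. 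As $j$ runs over $1,\dots,k$ the roots $\beta_j$ exhaust $\set{\alpha>0\mvert w^{-1}\alpha<0}$ without repetition, which gives the first formula. The second formula is the specialization $\lambda = \lambda_s$ of \Cref{Eq: lambda-s} — legitimate by \Cref{Rem: Connection between two types of intertwining operators} — after rewriting $\Lfun_{F_{\alpha,\nu}}\bk{\gen{\lambda_s,\check{\alpha}}+t,\cdots}$ in terms of the character $\chi_{s,\nu}\circ\check{\alpha}$ and $q_{\alpha,\nu}$.

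The main obstacle is precisely the bookkeeping in the quasi-split setting: one must verify that the $F$-rational partial products of simple reflections carry the field of definition $F_{\alpha_{i_j}}$ of the $j$-th simple root to $F_{\beta_j}$, and keep track of the fact that $\iota_\alpha$ is only defined over $F_\alpha$, so that at $\nu$ the relevant $L$-factor $\Lfun_{F_{\alpha,\nu}}$ is the product of the $L$-factors attached to the local components of $F_\alpha\otimes_F F_\nu$. The unramifiedness hypothesis on $\chi_\nu$ is what ensures that every intermediate vector remains spherical, so that the entire computation collapses to a product of scalars.
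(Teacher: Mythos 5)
Your proposal is correct and follows exactly the route the paper intends: the paper states the corollary as an immediate consequence of \Cref{Lemma: intertwining operator of simple reflections} and the rank-one formula \Cref{Eq: Rank one Gindikin Karpelevich}, and your argument supplies the standard missing details — induction along a reduced word via the cocycle relation \Cref{Eq: Functional equation of intertwining operators}, the enumeration $\set{\alpha>0\mvert w^{-1}\alpha<0}=\set{\beta_j}$, and the observation that unramifiedness keeps every intermediate vector spherical so only scalars accumulate. The attention to the quasi-split bookkeeping (relative roots, fields of definition $F_\alpha$, and $\Lfun_{F_{\alpha,\nu}}$ as a product over the components of $F_\alpha\otimes_F F_\nu$) is exactly what is needed and consistent with the paper's tables.
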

We denote the Gindikin-Karpelevich term by 
\begin{equation}
\begin{split}
& J_\nu\bk{w,\chi,\lambda} = \prod_{\alpha>0,\ w^{-1}\alpha<0} \frac{\Lfun_{F_{\alpha,\nu}}\bk{\gen{\lambda,\check{\alpha}} ,\chi_\nu\circ\operatorname{det}_{M_E}\circ\check{\alpha}}}{\Lfun_{F_{\alpha,\nu}}\bk{\gen{\lambda,\check{\alpha}}+1 ,\chi_\nu\circ\operatorname{det}_{M_E}\circ\check{\alpha}}} \\
& J_\nu\bk{w,\chi_s} = \prod\limits_{\alpha>0,\ w^{-1}\alpha<0} \frac{\Lfun_{F_{\alpha,\nu}}\bk{\chi_{s,\nu}\circ\check{\alpha}}}{\Lfun_{F_{\alpha,\nu}}\bk{q_{\alpha,\nu}^{-1}\chi_{s,\nu}\circ\check{\alpha}}} .
\end{split}
\end{equation}
Denote
\begin{equation}
J\bk{w,\chi_s} = \prod_{\nu\in\Places} J_\nu\bk{w,\chi_s}.
\end{equation}
We list the various Gindikin-Karpelevich terms and their poles in the tables in \Cref{Sec: Tables}.

The following is a corollary of \Cref{Lem: Ikeda for SL2} and \Cref{Lemma: intertwining operator of simple reflections}.
We note that it can also be viewed as the application of \cite[Proposition 6.3]{MR944102} to a simple reflection associated to a simple root.
\begin{Cor}
	\label{Cor: Keys-Shahidi}
	Let $\alpha$ be a simple root and $w_\alpha$ the associated simple reflection and fix $w\in W$.
	Further assume that
	\[
	w_\alpha^{-1}\cdot \coset{w^{-1}\cdot \bk{\lambda_0\otimes\chi\circ\operatorname{det}_{M_E}}} =
	w^{-1}\cdot \bk{\lambda_0\otimes\chi\circ\operatorname{det}_{M_E}} .
	\]
	Then $M\bk{w_\alpha,\Id,\lambda_0}$ is holomorphic at $\lambda_0$ and
	\[
	M\bk{w_\alpha,\Id,\lambda_0}: Ind_{B_E\bk{\A}}^{H_E\bk{\A}} w^{-1}\cdot \bk{\lambda_0\otimes\chi\circ\operatorname{det}_{M_E}} \to Ind_{B_E\bk{\A}}^{H_E\bk{\A}} w^{-1}\cdot \bk{\lambda_0\otimes\chi\circ\operatorname{det}_{M_E}}
	\]
	acts as a scalar multiplication by $-1$.
\end{Cor}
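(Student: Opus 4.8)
The plan is to reduce the statement to the rank-one group attached to $\alpha$ through the embedding $\iota_\alpha$ and then to invoke \Cref{Lem: Ikeda for SL2}. Write $\mu=w^{-1}\cdot\bk{\lambda_0\otimes\chi\circ\operatorname{det}_{M_E}}$ for the inducing datum; by hypothesis $w_\alpha\cdot\mu=\mu$, so $M\bk{w_\alpha,\Id,\lambda_0}$ is an $H_E\bk{\A}$-equivariant endomorphism of $I_{B_E}\bk{\mu}$ and the assertion makes sense. The first thing I would pin down is the rank-one datum attached to $\alpha$: the $w_\alpha$-invariance forces the $\alpha$-coordinate $\gen{\mu,\check\alpha}$ to vanish (so the $SL_2$-parameter is $s=0$), and moreover $\mu$ is trivial on the rank-one torus $T_\alpha$ --- the finite-order part causes no trouble because $\operatorname{det}_{M_E}\circ\check\alpha$ is the trivial cocharacter for the simple roots $\alpha$ occurring here, so $\chi\circ\operatorname{det}_{M_E}\circ\check\alpha=\Id$ regardless of $\chi$; this is precisely what the middle slot of $M\bk{w_\alpha,\Id,\lambda_0}$ records. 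Consequently, in the notation of \Cref{Lemma: intertwining operator of simple reflections}, $\iota_\alpha^\ast$ carries $I_{B_E}\bk{\mu}$ to $\Ind_{\mathcal{B}\bk{\A}}^{SL_2\bk{\A}}\modf{\mathcal{B}}^{\frac12}$, the principal series of $SL_2$ (over the field of definition of $\alpha$) with trivial character at $s=0$ --- exactly the representation treated in \Cref{Lem: Ikeda for SL2}.

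Next I would establish, for every holomorphic section $f\in I_{B_E}\bk{\mu}$ and every $g\in H_E\bk{\A}$, the identity
\[
\bk{M\bk{w_\alpha,\Id,\lambda_0}f}\bk{g}=\bk{M\bk{w_0,0,\Id}\,\iota_\alpha^\ast\bk{R\bk{g}f}}\bk{e},
\]
where $R\bk{g}$ denotes right translation, $\bk{R\bk{g}f}\bk{x}=f\bk{xg}$, and $M\bk{w_0,s,\Id}$ is the global $SL_2$-intertwining operator over the field of definition of $\alpha$. This is exactly the computation in the proof of \Cref{Lemma: intertwining operator of simple reflections} read at the single element $g$: since $N_E\bk{\A}\cap w_\alpha^{-1}N_E\bk{\A}w_\alpha\lmod N_E\bk{\A}=\iota_\alpha\bk{\mathcal{N}\bk{\A}}$ and $w_\alpha=\iota_\alpha\bk{w_0}$, the defining integral \Cref{Eq: definition of degenerate intertwining operator} of $M\bk{w_\alpha}$, evaluated at the identity, involves only the restriction of the section to $\iota_\alpha\bk{SL_2\bk{\A}}$, and a general $g$ is reached by translating first; the identity holds in the positive chamber $C^{+}$ and then everywhere by meromorphic continuation. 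Now \Cref{Lem: Ikeda for SL2} gives that $M\bk{w_0,s,\Id}$ is holomorphic at $s=0$ and acts there as $-\operatorname{Id}$; substituting,
\[
\bk{M\bk{w_\alpha,\Id,\lambda_0}f}\bk{g}=-\bk{\iota_\alpha^\ast\bk{R\bk{g}f}}\bk{e}=-\bk{R\bk{g}f}\bk{\iota_\alpha\bk{e}}=-f\bk{g},
\]
so $M\bk{w_\alpha,\Id,\lambda_0}=-\operatorname{Id}$. For $\lambda$ near $\lambda_0$ the same identity presents $M\bk{w_\alpha,\Id,\lambda}$ as $M\bk{w_0,s,\Id}$ applied to a holomorphically varying family of $SL_2$-sections with $s$ near $0$, so holomorphy of $M\bk{w_0,s,\Id}$ at $s=0$ yields holomorphy of $M\bk{w_\alpha,\Id,\cdot}$ at $\lambda_0$.

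I expect the only delicate point to be the identification made in the first paragraph --- that the $w_\alpha$-invariance hypothesis genuinely places the rank-one datum at the \emph{trivial}-character, $s=0$ point of $SL_2$, rather than at a point where a nontrivial quadratic character survives on $T_\alpha$ (which is a priori all that invariance of a finite-order twist forces). In that hypothetical bad case $\Ind_{\mathcal{B}}^{SL_2}$ would be reducible at $s=0$ and $M\bk{w_0}$ would act by $+1$ on one summand and $-1$ on the other rather than by the scalar $-1$; this is why the middle slot in the statement is $\Id$. Thus the genuine bookkeeping, to be done for each simple root $\alpha$ that arises in the applications, is to confirm --- using the explicit description of $\operatorname{det}_{M_E}$ on $T_E$ --- that the character pulled back to $T_\alpha$ is honestly trivial. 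Everything beyond that is the formal rank-one reduction above, which is the special case of \cite[Proposition 6.3]{MR944102} for a simple reflection, together with \Cref{Lem: Ikeda for SL2}.
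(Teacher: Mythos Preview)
Your proposal is correct and follows essentially the same approach as the paper: the paper states this corollary as an immediate consequence of \Cref{Lem: Ikeda for SL2} and \Cref{Lemma: intertwining operator of simple reflections} (and notes it is the special case of \cite[Proposition~6.3]{MR944102} for a simple reflection), which is exactly your rank-one reduction through $\iota_\alpha$ followed by Ikeda's lemma. Your explicit discussion of why the pulled-back datum lands at the \emph{trivial} character of $T_\alpha$ (rather than merely a quadratic one) is a useful elaboration that the paper leaves implicit in the notation $M\bk{w_\alpha,\Id,\lambda_0}$.
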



\subsection{Normalized Intertwining Operators}

It is customary to define the \emph{normalized intertwining operator} to be
\begin{equation}
N_\nu\bk{w,\chi_\nu,\lambda} =
\frac{J_\nu\bk{w,\chi_s}^{-1}}{\prodl_{\alpha>0,\ w^{-1}\alpha<0} \epsilon_{F_{\alpha,\nu}}\bk{\gen{\lambda,\check{\alpha}},\chi_\nu\circ\operatorname{det}_{M_E}\circ\check{\alpha}, \psi_\nu}}
M\bk{w,\chi_\nu,\lambda} .
\end{equation}

\begin{Lem}
	The normalized intertwining operator satisfy the local functional equation
	\[
	N_\nu\bk{ww',\chi_\nu,\lambda} = N_\nu\bk{w',w^{-1}\cdot\chi_\nu,w^{-1}\cdot\lambda}\circ N_\nu\bk{w,\chi_\nu,\lambda}
	\quad \forall w,w'\in W_{H_E}
	\]
\end{Lem}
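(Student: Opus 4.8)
The plan is to reduce the asserted cocycle identity for the normalized operators to the cocycle identity for the unnormalized operators together with a matching multiplicativity of the normalizing scalars. Write
\[
N_\nu\bk{w,\chi_\nu,\lambda} = r_\nu\bk{w,\chi,\lambda}^{-1}\, M\bk{w,\chi_\nu,\lambda},\qquad
r_\nu\bk{w,\chi,\lambda} = J_\nu\bk{w,\chi,\lambda}\prodl_{\alpha>0,\ w^{-1}\alpha<0}\epsilon_{F_{\alpha,\nu}}\bk{\gen{\lambda,\check{\alpha}},\chi_\nu\circ\operatorname{det}_{M_E}\circ\check{\alpha},\psi_\nu}.
\]
The local analogue of the cocycle equation \Cref{Eq: Functional equation of intertwining operators}, in its $\lambda$-dependent form
\[
M\bk{ww',\chi_\nu,\lambda} = M\bk{w',w^{-1}\cdot\chi_\nu,w^{-1}\cdot\lambda}\circ M\bk{w,\chi_\nu,\lambda},
\]
is proved exactly as in \Cref{Lemma: intertwining operator of simple reflections}: substituting the factorization of the domain of integration into the double integral defining the composite collapses it to the single integral defining the left-hand side. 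All the families involved are meromorphic on $\mathfrak{a}_\C^\ast$, so it suffices to work on a non-empty open subset. Dividing out, the statement left to prove is the scalar identity $r_\nu\bk{ww',\chi,\lambda} = r_\nu\bk{w',w^{-1}\cdot\chi,w^{-1}\cdot\lambda}\cdot r_\nu\bk{w,\chi,\lambda}$.

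First I would establish this when $\ell\bk{ww'}=\ell\bk{w}+\ell\bk{w'}$. The scalar $r_\nu\bk{w,\chi,\lambda}$ is a product of the local $L$- and $\epsilon$-factors attached to the (relative) roots in the inversion set $\set{\alpha>0 : w^{-1}\alpha<0}$, and for a length-additive product this set decomposes as the disjoint union
\[
\set{\alpha>0 : \bk{ww'}^{-1}\alpha<0} = \set{\alpha>0 : w^{-1}\alpha<0}\ \bigsqcup\ w\cdot\set{\beta>0 : w'^{-1}\beta<0} .
\]
For a root $\alpha=w\beta$ in the second piece one has $\check{\alpha}=w\check{\beta}$, hence $\gen{\lambda,\check{\alpha}}=\gen{w^{-1}\cdot\lambda,\check{\beta}}$; moreover $F_\alpha=F_\beta$, and $\chi_\nu\circ\operatorname{det}_{M_E}\circ\check{\alpha}$ coincides with the corresponding character built from $w^{-1}\cdot\chi$ and $\check{\beta}$. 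Therefore the $L$-factor and the $\epsilon$-factor attached to $\alpha$ relative to $\bk{\chi,\lambda}$ equal those attached to $\beta$ relative to $\bk{w^{-1}\cdot\chi,w^{-1}\cdot\lambda}$, so $J_\nu$ and the $\epsilon$-product, and with them $r_\nu$, are multiplicative; combined with the $M$-cocycle this gives the desired identity for $N_\nu$ whenever lengths add. (At an unramified place this multiplicativity also follows, more concretely, from applying the $M$-cocycle to the spherical vector and using \Cref{Gindikin-Karpelevich formula}.)

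Next I would remove the length hypothesis by induction on the defect $\ell\bk{w}+\ell\bk{w'}-\ell\bk{ww'}\geq 0$, the base case being the length-additive one just treated. If the defect is positive, pick a simple root $\alpha$ with $\ell\bk{ww_\alpha}=\ell\bk{w}-1$ and $\ell\bk{w_\alpha w'}=\ell\bk{w'}-1$ and factor $w=\bk{ww_\alpha}w_\alpha$, $w'=w_\alpha\bk{w_\alpha w'}$, both products length-additive. Expanding by the length-additive cocycle and cancelling the middle occurrence of $w_\alpha\cdot w_\alpha$ via the rank-one relation
\[
N_\nu\bk{w_\alpha,w_\alpha^{-1}\cdot\chi_\nu,w_\alpha^{-1}\cdot\lambda}\circ N_\nu\bk{w_\alpha,\chi_\nu,\lambda} = \operatorname{Id},
\]
which is \Cref{Lem: Local composition of rank-one intertwining operators} transported to $H_E$ through \Cref{Lemma: intertwining operator of simple reflections}, reduces the identity for $\bk{w,w'}$ to the one for $\bk{ww_\alpha,w_\alpha w'}$, whose defect is strictly smaller.

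The step I expect to be the main obstacle is the bookkeeping in the second paragraph in the genuinely quasi-split cases, in particular when $E$ is a non-Galois cubic field extension: one must verify that the inversion-set decomposition is compatible with the $\Gamma_E$-action on the absolute root system, that $F_{w\beta}=F_\beta$ for the roots in play (this uses that $w\in W_{H_E}$ is defined over $F$, hence commutes with $\Gamma_E$), and that the arguments $\gen{\lambda,\check{\alpha}}$ and $\chi_\nu\circ\operatorname{det}_{M_E}\circ\check{\alpha}$ of the local $L$- and $\epsilon$-factors transform under $\alpha=w\beta$ exactly so as to yield the term-by-term matching used above; once this is in place, the rest is the formal manipulation of the unnormalized cocycle and the rank-one facts recalled in \Cref{Subsec: Rank-one}.
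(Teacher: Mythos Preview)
The paper states this lemma without proof, treating it as a standard fact about normalized intertwining operators; so there is no paper argument to compare against. Your overall strategy---reduce to the unnormalized cocycle plus multiplicativity of the normalizing scalar $r_\nu$, handle the length-additive case via the inversion-set decomposition, then induct to the general case using the rank-one identity $N_\nu\bk{w_\alpha}\circ N_\nu\bk{w_\alpha}=\operatorname{Id}$---is indeed the standard one, and your length-additive step (including the verification that $F_{w\beta}=F_\beta$ and the matching of $L$- and $\epsilon$-arguments under $\alpha\mapsto w\beta$) is correct.

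There is, however, a genuine gap in your induction on the defect. You assert that when $\ell\bk{ww'}<\ell\bk{w}+\ell\bk{w'}$ one can ``pick a simple root $\alpha$ with $\ell\bk{ww_\alpha}=\ell\bk{w}-1$ and $\ell\bk{w_\alpha w'}=\ell\bk{w'}-1$''. This is false in general. In $W=S_3$ take $w=w'=s_1s_2$; then $ww'=\bk{s_1s_2}^2=s_2s_1$ has length $2$, so the defect is $2>0$, yet $\ell\bk{ws_\alpha}=\ell\bk{w}-1$ forces $\alpha=\alpha_2$ while $\ell\bk{s_\alpha w'}=\ell\bk{w'}-1$ forces $\alpha=\alpha_1$, and no simple root satisfies both. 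So the inductive reduction you describe cannot always be carried out.

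The fix is to induct on $\ell\bk{w'}$ rather than on the defect. Write $w'=w_\alpha w''$ with $\ell\bk{w'}=1+\ell\bk{w''}$ (this always exists). First prove the case $\ell\bk{w'}=1$ directly: if $\ell\bk{ww_\alpha}=\ell\bk{w}+1$ it is length-additive, while if $\ell\bk{ww_\alpha}=\ell\bk{w}-1$ write $w=\bk{ww_\alpha}w_\alpha$ length-additively and cancel the resulting $N_\nu\bk{w_\alpha}\circ N_\nu\bk{w_\alpha}$ via \Cref{Lem: Local composition of rank-one intertwining operators}. For the inductive step, apply the $\ell\bk{w'}=1$ case to $\bk{w,w_\alpha}$, the inductive hypothesis to $\bk{ww_\alpha,w''}$, and the length-additive cocycle to $\bk{w_\alpha,w''}$; chaining these three identities yields the cocycle for $\bk{w,w'}$. (A minor aside: your appeal to \Cref{Lemma: intertwining operator of simple reflections} for the $M$-cocycle is a mis-reference---that lemma is the $SL_2$-pullback compatibility, not the cocycle; the unnormalized cocycle in the length-additive case follows from the standard change-of-variables argument for the defining integral.)
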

For simplicity we write:
\[
N_\nu\bk{w,\chi_s} = N_\nu\bk{w,\chi_\nu,\lambda_s} .
\]

By \Cref{Gindikin-Karpelevich formula} and \Cref{Eq: Global functional equation of Hecke L-function and epsilon-factor}, it holds that
\begin{equation}
\label{Eq: Global Gindikin-Karpelevich}
\begin{split}
M\bk{w,\chi_s} f_{s}
& = \bk{\displaystyle\operatorname*{\otimes}_{\nu\in S}M_\nu\bk{w,\chi_s} f_{s,\nu} }\bigotimes \bk{\displaystyle\operatorname*{\otimes}_{\nu\notin S} J_\nu\bk{w,\chi_s} f_{s,\nu}^0} \\
& = J\bk{w,\chi_s} \bk{\displaystyle\operatorname*{\otimes}_{\nu\in S}  J_\nu\bk{w,\chi_s}^{-1} M_\nu\bk{w,\chi_s}f_{s,\nu} }\bigotimes \bk{\displaystyle\operatorname*{\otimes}_{\nu\notin S} f_{s,\nu}^0} \\
& = \bk{\prodl_{\alpha>0,\ w^{-1}\alpha<0} \epsilon_{F_\alpha}\bk{\gen{\lambda,\check{\alpha}},\chi_\nu\circ\operatorname{det}_{M_E}\circ\check{\alpha}}} J\bk{w,\chi_s} \bk{\displaystyle\operatorname*{\otimes}_{\nu\in S}  N_\nu\bk{w,\chi_s}f_{s,\nu} }\bigotimes \bk{\displaystyle\operatorname*{\otimes}_{\nu\notin S} f_{s,\nu}^0} \ .
\end{split}
\end{equation}
Hence the analytic behavior of $M\bk{w,\chi_s} f_{s}\bk{g}$ is governed by that of $J\bk{w,\chi_s}$ and $N_\nu\bk{w,\chi_s}f_{s,\nu}\bk{g}$ for $\nu\in S$.
Note that according to \Cref{Lem: Holomorphicity of Intertwining operator over L-factor}, $N_\nu\bk{w,\chi_s}f_{s,\nu}$ is holomorphic whenever $\Real\bk{\gen{\chi_s,\check{\alpha}}}>-1$ for all $\alpha>0$ such that $w\cdot\alpha<0$.
In light of Tables
\ref{Constant Term along the Borel, Cubic, General Character}, \ref{Constant Term along the Borel, Quadratic, Gneral Character}, and \ref{Constant Term along the Borel, Split, Gneral Character} and the discussion in \Cref{Subsec: Rank-one} the following holds:

\begin{Lem}
	\label{Lem: Normalized intertwining oeprator is non-vanishing}
	For any $\Real\bk{s_0}>0$ and $\nu\in\Places$ it holds that	$N\bk{w,\chi_{s,\nu}} f_{s,\nu}$
	is analytic at $s_0$.
	Moreover, there exists an $f_{s,\nu}$ such that $N\bk{w,\chi_{s,\nu}} f_{s,\nu}$ is non-zero at $s_0$.
\end{Lem}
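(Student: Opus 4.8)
The plan is to reduce the statement to the rank-one situation of \Cref{Subsec: Rank-one} by factoring $w$ into simple reflections and using the cocycle relation for the normalized intertwining operator. First I would write $w = w_{\alpha^{(1)}}\cdots w_{\alpha^{(\ell)}}$ as a reduced word, so that by the local functional equation
\[
N_\nu\bk{w,\chi_s} = N_\nu\bk{w_{\alpha^{(\ell)}}, w_{\ell-1}^{-1}\cdots\chi_s}\circ\cdots\circ N_\nu\bk{w_{\alpha^{(1)}},\chi_s},
\]
and each factor, via \Cref{Lemma: intertwining operator of simple reflections} and \Cref{Lem: Holomorphicity of Intertwining operator over L-factor}, is the pull-back of a rank-one normalized operator $N(w_0,\sigma_\nu,s')$ attached to an $SL_2$ embedded along the simple root $\alpha^{(j)}$, with parameter $s' = \tfrac12\gen{w_{j-1}^{-1}\cdots\lambda_s,\check{\alpha}^{(j)}}$. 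The key point is then: whenever $\Real(s_0)>0$, every intermediate parameter $s'$ occurring in a reduced word realizing an element of $W(P_E,H_E)$ satisfies $\Real\gen{\,\cdot\,,\check\alpha}>-1$ (equivalently $\Real(s')>-\tfrac12$), so each rank-one normalized operator is holomorphic at the relevant point by \Cref{Lem: Holomorphicity of Intertwining operator over L-factor}; composing finitely many holomorphic operators gives holomorphy of $N_\nu\bk{w,\chi_s}f_{s,\nu}$ at $s_0$.

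For the holomorphy claim, I would verify the inequality $\Real\gen{w_{j-1}^{-1}\cdots\lambda_s,\check{\alpha}^{(j)}}>-1$ directly from the explicit form of $\lambda_s$ in \Cref{Eq: lambda-s} and the combinatorics of $W(P_E,H_E)$; this is exactly the content read off from Tables \ref{Constant Term along the Borel, Cubic, General Character}, \ref{Constant Term along the Borel, Quadratic, Gneral Character}, \ref{Constant Term along the Borel, Split, Gneral Character}, which list the arguments $\gen{\chi_{s,\nu},\check\alpha}$ of the $L$-factors appearing in $M_w$ — all of the form $2s+c$ or $-2s+c$ with $c\in\{1,2,3,4,5\}$ (or the analogous field-twisted versions), and for $\Real(s)>0$ the only way to reach $\Real\le -1$ in the denominator would be $-2s+1$ with $\Real(s)\ge 1$, which does not occur among the roots sent negative by an element of $W(P_E,H_E)$ in this range. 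At the archimedean places the same inequality together with the explicit $\Gamma$-factor description and \cite[Chapters II and VII]{MR1880691} gives holomorphy of the rank-one normalized operator.

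For non-vanishing, I would argue by the global analogue of \Cref{Lem: Local composition of rank-one intertwining operators}: applying \Cref{Langlands identity for composition of intertwining operators} simple-reflection by simple-reflection, the composition $N_\nu\bk{w^{-1},w^{-1}\cdot\chi_s}\circ N_\nu\bk{w,\chi_s}$ equals the identity (in the limiting sense where a factor has a pole), so $N_\nu\bk{w,\chi_s}$ has a one-sided inverse near $s_0$ and in particular cannot be identically zero on $I_{B_E}\bk{\chi_{s,\nu}}$ at $s_0$; since the representation is admissible, choosing $f_{s,\nu}$ in a finite-dimensional $K_\nu$-type on which the operator is nonzero and taking a holomorphic section through it gives the required $f_{s,\nu}$. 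The main obstacle I expect is the bookkeeping in the first paragraph: one must check that \emph{for every} $w\in W(P_E,H_E)$ and \emph{every} reduced expression, none of the intermediate rank-one parameters lands on a pole of the normalized rank-one operator (the poles at $\Real(s')=-\tfrac12$ for ramified $\sigma_\nu$, and at the archimedean reducibility points), uniformly in $\Real(s_0)>0$; this is a finite but genus-by-genus case check that leans on the root-system tables, and it is where the shortest-representative choice for $W(P_E,H_E)$ is essential.
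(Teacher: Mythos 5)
Your proposal matches the paper's own justification: the paper proves this lemma precisely by reducing $N_\nu\bk{w,\chi_{s,\nu}}$ to rank-one normalized operators, invoking \Cref{Lem: Holomorphicity of Intertwining operator over L-factor} to get holomorphy whenever $\Real\gen{\chi_s,\check{\alpha}}>-1$ for the relevant roots, and checking that inequality for $\Real\bk{s_0}>0$ against Tables \ref{Constant Term along the Borel, Cubic, General Character}, \ref{Constant Term along the Borel, Quadratic, Gneral Character} and \ref{Constant Term along the Borel, Split, Gneral Character}, with non-vanishing coming from the rank-one facts of \Cref{Subsec: Rank-one} (in particular the composition identity of \Cref{Lem: Local composition of rank-one intertwining operators}, which forces injectivity). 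Your route is essentially identical, so no further comparison is needed.
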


\section{Poles of the Eisenstein Series}
\label{Sec: Poles of the Eisenstein series}

In this section we make use of \Cref{Eq: Constant term} to study the poles of $\Eisen_E\bk{\chi, f_s, s, g}_{B_E}$.
By \Cref{Thm: poles of Eisenstein series are the poles of the constant term}, these are the poles of $\Eisen_E\bk{\chi, f_s, s, g}$.
We start by considering the poles of the various intertwining operators, thus getting a bound on the order of the poles.
In the following table we list the possible triples $\bk{E,\chi,s_0}$ for which $\Eisen_E\bk{\chi,f_s,s,g}$ might admit a pole at $s_0$ and give bounds on the orders of the poles at these points.
Here $E$ is an \'etale cubic algebra over $F$, $\chi$ is a Hecke character of $F^\times\lmod \A^\times$ and $s_0\in\C$ with $\Real\bk{s_0}>0$.
More precisely, due to \Cref{Thm: poles of Eisenstein series are the poles of the constant term} and \Cref{Eq: Constant term}, for a given \'etale cubic algebra $E$ over $F$ and a finite order automorphic character $\chi$ we have
\[
\set{\text{Poles of } \Eisen_E\bk{\chi,\cdot,s,\cdot}} =
\set{\text{Poles of } \Eisen_E\bk{\chi,\cdot,s,\cdot}_{B_E}} \subseteq \set{\text{Poles of } M\bk{w,\chi_s} \mvert w\in W\bk{P_E,H_E}} .
\]
We note that for $\Real\bk{s}>0$ the poles of $M\bk{w,\chi_s}$ for various values of $w\in W\bk{P_E,H_E}$ and $\chi$ can occur only at $s_0\in\set{\frac{1}{2}, \frac{3}{2}, \frac{5}{2}}$.
For such triples $\bk{E,\chi,s_0}$, the following table lists
\[
\max\set{\operatorname{ord}_{s=s_0} M\bk{w,\chi_s}f_s\bk{g} \mvert w\in W\bk{P_E,H_E},\ f_s\in I_{P_E}\bk{\chi,s},\ g\in H_E\bk{\A}} .
\]
If this has positive value, we list this value in Table \ref{Trivial Bounds on the order of Poles of Eisenstein Series} in the cell corresponding  to $\bk{E,\chi,s_0}$.
The orders of poles of the intertwining operators are given by Tables \ref{Constant Term along the Borel, Cubic}, \ref{Constant Term along the Borel, Quadratic} and \ref{Constant Term along the Borel, Split} in \Cref{Sec: Tables}.

Note that whenever $E\rmod F$ is a non-Galois field extension the character $\chi_E$ is not defined.
Indeed, if $E\rmod F$ is a non Galois extension and $\chi\circ\Nm_{E\rmod F}=\Id$ then $\chi=\Id$.

\begin{table}[H]
	\begin{tabular}{|c|c|c|c|c|c|c|c|}
		\hline
		& \multicolumn{3}{c|}{$s=\frac{1}{2}$} & \multicolumn{2}{c|}{$s=\frac{3}{2}$} & $s=\frac{5}{2}$ \\ \hline
		& $\chi=\Id$ & $\chi=\chi_E$ & $\chi\neq 1, \chi_E$ and $\chi^2=\Id$ & $\chi=\Id$ & $\chi=\chi_E$ &  $\chi=\Id$ \\ 
		\hline
		$E=F\times F\times F$ & \multicolumn{2}{c|}{4} & 1 & \multicolumn{2}{c|}{3} & 1 \\ \hline
		$E=F\times K$ & 3 & 2 & 1 & 2 & 1  & 1 \\ \hline
		$E$ Galois field extension & 2 & 1 & 1 & 1 & 1 & 1 \\ \hline
		$E$ non-Galois field extension & 2 & \notableentry & 1 & 1 & \notableentry & 1 \\ \hline
	\end{tabular}
	\caption{Trivial Bounds on the Order of Poles of $\Eisen_E\bk{\chi, f_s, s, g}$}
	\label{Trivial Bounds on the order of Poles of Eisenstein Series}
\end{table}

\begin{Thm}
	\label{Thm: Poles of Eisenstein series}
	The order of the poles of $\Eisen_E\bk{\chi, \cdot , s, \cdot }$ for $\Real\bk{s} > 0$ are bounded by the following numbers:
	\begin{table}[H]
		\begin{tabular}{|c|c|c|c|c|c|c|c|}
			\hline
			& \multicolumn{3}{c|}{$s=\frac{1}{2}$} & \multicolumn{2}{c|}{$s=\frac{3}{2}$} & $s=\frac{5}{2}$ \\ \hline
			& $\chi=\Id$ & $\chi=\chi_E$ & $\chi\neq 1, \chi_E$ and $\chi^2=\Id$ & $\chi=\Id$ & $\chi=\chi_E$ &  $\chi=\Id$ \\ 
			\hline
			$E=F\times F\times F$ & \multicolumn{2}{c|}{3} & 1 & \multicolumn{2}{c|}{2} & 1 \\ \hline
			$E=F\times K$ & 2 & 2 & 1 & 1 & 1  & 1 \\ \hline
			$E$ Galois field extension & 1 & 1 & 1 & 0 & 1 & 1 \\ \hline
			$E$ non-Galois field extension & 1 & \notableentry & 1 & 0 & \notableentry & 1 \\ \hline
		\end{tabular}
	\end{table}
	
	Assuming that for $\nu=\infty$ the degenerate principal series representation $I_{P_E}\bk{\Id_\nu,\frac{1}{2}}$ is generated by the spherical vector, the orders of these poles are bounded by the following numbers:
\begin{table}[H]
	\begin{tabular}{|c|c|c|c|c|c|c|c|}
		\hline
		& $s=\frac{1}{2}$ & \multicolumn{2}{c|}{$s=\frac{3}{2}$} & $s=\frac{5}{2}$ \\ 
		\hline
		& $\chi^2=\Id$ & $\chi=\Id$ & $\chi=\chi_E$ &  $\chi=\Id$ \\ \hline
		$E=F\times F\times F$  & 1 & \multicolumn{2}{c|}{2} & 1 \\ \hline
		$E=F\times K$ & 1  & 1  & 1 & 1 \\ \hline
		$E$ Galois field extension  & 1 & 0 & 1 & 1 \\ \hline
		$E$ non-Galois field extension  & 1 & 0 & \notableentry & 1 \\ \hline
	\end{tabular}
	\caption{Bounds on the Order of Poles of $\Eisen_E\bk{\chi, f_s, s, g}$}
	\label{Poles of Eisenstein Series}
\end{table}
	And the orders in the table are attained by some section.
	In particular, when $\chi$ is everywhere unramified a pole of the above-mentioned order is obtained for the spherical vector.
	For any triple $\bk{E,\chi,s_0}$ in the table, the pole of the prescribed order is attained.
	For any other triple $\bk{E,\chi,s_0}$, not appearing in the table, with $\Real\bk{s_0}\geq 0$ the degenerate Eisenstein series $\Eisen_E\bk{\chi, f_s, s, g}$ is holomorphic at $s_0$.
	
	Furthermore, for a triple $\bk{E,\chi,s_0}$ appearing in Table \Cref{Poles of Eisenstein Series}, the residual representation of $\Eisen_E\bk{\chi, \cdot , s, \cdot }$ is square-integrable with the exception of the following cases:
	\begin{itemize}
		\item $E=F\times K$ where $K$ is a field:
		\begin{itemize}
			\item $s=\frac{1}{2}$ with $\chi=\Id,\chi_K$.
			\item $s=\frac{3}{2}$ with $\chi=\Id$.
		\end{itemize}
		\item $E=F\times F\times F$, $s=\frac{1}{2}$ with $\chi=\Id$.
	\end{itemize}

\end{Thm}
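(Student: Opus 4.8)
The plan is to deduce everything from the constant term along the Borel. By \Cref{Thm: poles of Eisenstein series are the poles of the constant term} and \Cref{Eq: Constant term}, the order of the pole of $\Eisen_E\bk{\chi,\cdot,s,\cdot}$ at a point $s_0$ equals that of $\suml_{w\in W\bk{P_E,H_E}} M\bk{w,\chi_s}$. First I would list $W\bk{P_E,H_E}$ in each of the four cases for $E$ and, for every $w$ in it, read off the Gindikin--Karpelevich factor $J\bk{w,\chi_s}$ as a product of ratios $\zfun_{L_\alpha}\bk{\cdots}/\zfun_{L_\alpha}\bk{\cdots}$ via \Cref{Gindikin-Karpelevich formula}. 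Combined with \Cref{Eq: Global Gindikin-Karpelevich} and \Cref{Lem: Normalized intertwining oeprator is non-vanishing}, this already shows that for $\Real\bk{s}>0$ poles can occur only at $s_0\in\set{\frac12,\frac32,\frac52}$, and counting the poles of the $\zfun$-factors (tabulated in \Cref{Sec: Tables}) produces the crude bounds recorded in \Cref{Trivial Bounds on the order of Poles of Eisenstein Series}.

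The core of the proof is to sharpen these bounds, i.e. to detect cancellation among the leading Laurent coefficients $\Lambda_{-n}\bk{\chi,s_0}$ of the individual operators $M\bk{w,\chi_s}$. The strategy I would follow is to group the summands $w$ according to the character $w^{-1}\cdot\chi_{s_0}$ of $T_E$ that they produce in the leading term; within such a group, passing from one reduced decomposition to an adjacent one amounts to composing with a simple reflection $w_\alpha$ fixing the relevant exponent, so \Cref{Cor: Keys-Shahidi} forces $M\bk{w_\alpha,\Id,\cdot}$ to act there by the scalar $-1$, while \Cref{Langlands identity for composition of intertwining operators} and \Cref{Lem: Ikeda for SL2} let me compute the leading coefficients of the composites explicitly through the cocycle relation \Cref{Eq: Functional equation of intertwining operators}. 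Summing over each group one finds that the top-order contributions cancel: in some cases in pairs, but --- for $E=F\times F\times F$ and $E=F\times K$ at $s_0=\frac12$ and $s_0=\frac32$ --- only in triples or quintuples of intertwining operators. This lowers the pole order to the values in the first table of \Cref{Thm: Poles of Eisenstein series}. The additional improvement in the second table at the entries with $\chi=\Id$ and $s_0=\frac12$ uses the hypothesis that $I_{P_E}\bk{\Id_\infty,\frac12}$ is generated by its spherical vector: this identifies the image of the relevant Archimedean normalized operator and removes the one surviving $\Gamma$-factor pole that would otherwise contribute a further unit to the order.

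Next I would establish attainment and holomorphicity. When $\chi$ is everywhere unramified, take $f_s=f_s^0$ the spherical section; then $\Eisen_E\bk{\chi,f_s^0,s,e}_{B_E}=\suml_{w\in W\bk{P_E,H_E}} J\bk{w,\chi_s}$, a concrete finite sum of products of completed Hecke $\zfun$-functions, and using the functional equation \Cref{Functional Equation - Zeta Function} together with the known poles, zeros and non-vanishing of $\zfun_L$ one verifies that at each triple $\bk{E,\chi,s_0}$ in \Cref{Poles of Eisenstein Series} the leading Laurent coefficient of this sum is non-zero and of exactly the predicted order. For a general (ramified) $\chi$ I would combine this with \Cref{Lem: Normalized intertwining oeprator is non-vanishing}, choosing at each place $\nu\in S$ a local section on which the normalized operators surviving in the leading term are non-zero, so that the global leading coefficient is again non-zero. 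Holomorphicity at every other $s_0$ with $\Real\bk{s_0}\ge 0$ is then immediate: by the Gindikin--Karpelevich analysis and \Cref{Lem: Normalized intertwining oeprator is non-vanishing}, each $M\bk{w,\chi_s}$ is holomorphic there, hence so is the constant term, hence so is $\Eisen_E$.

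Finally, once the exact pole orders are known, the residual representation at $\bk{\chi,s_0}$ is $\Image\bk{\Lambda_{-n}\bk{\chi,s_0}}$, and I would test its square-integrability by Langlands' criterion: it is square-integrable precisely when every exponent of $T_E\bk{\A}$ occurring in its constant term along $B_E$ --- i.e. every $w^{-1}\cdot\chi_{s_0}$ with $w$ contributing to the leading term --- satisfies the strict inequalities of the criterion (each exponent, after the $\rho_{B_E}$-shift, decays strictly in every direction). Reading these exponents off the constant-term tables of \Cref{Sec: Tables}, the inequality holds in all cases of \Cref{Poles of Eisenstein Series} except the ones singled out, where some exponent falls on a wall of the cone. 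I expect the main obstacle to be the cancellation step of the second paragraph: correctly organizing the many intertwining operators into the relevant triples and quintuples and checking, through the rank-one reductions, that their leading Laurent coefficients genuinely cancel is where essentially all the combinatorial and computational difficulty lies; the careful use of the Archimedean hypothesis to extract the last unit of cancellation at $s_0=\frac12$ is a close second.
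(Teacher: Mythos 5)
Your overall architecture matches the paper's: reduce to the constant term via \Cref{Thm: poles of Eisenstein series are the poles of the constant term} and \Cref{Eq: Constant term}, bound crudely by the Gindikin--Karpelevich factors, cancel leading Laurent coefficients, and finish with Langlands' square-integrability criterion on the exponents. However, you have misplaced the mechanism of the deep cancellations, and this is a genuine gap rather than a presentational difference. The cancellations in triples and quintuples (e.g.\ reducing the order from $4$ to $1$ for $E=F\times F\times F$, $\chi=\Id$, $s_0=\tfrac12$, and from $3$ to $1$ for $E=F\times K$) are \emph{not} obtained by pairing Weyl elements and invoking \Cref{Cor: Keys-Shahidi}; that device only ever cancels terms in pairs and only lowers the order by one, which is exactly why the first table of the theorem (proved without the Archimedean hypothesis) stops at $3$ and $2$ respectively. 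The passage to the second table is achieved by the paper's ``Reason 1'': \Cref{Prop: Representation at 1/2 is generated by spherical vector} together with \Cref{Conj: Archimedean places1} force the residual representation to be generated by the spherical vector, so the pole order equals the order on $f_s^0$, and one then verifies a numerical identity among the first several Laurent coefficients $\gamma_{-1},\gamma_0,\gamma_1,\delta_{-1},\dots$ of the completed zeta functions (e.g.\ the sum of the four numerators $\zfun(s\pm\tfrac12)^j\zfun(s\mp\tfrac12)^{3-j}\zfun(2s)$ has its order-$4$, $3$ \emph{and} $2$ parts vanish). Your description of the hypothesis as ``removing the one surviving $\Gamma$-factor pole'' is both mechanically and numerically wrong --- the improvement at the split entry is by two orders, not one --- and without the spherical-generation reduction you have no way to carry out this multi-term cancellation for a general section.

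A second gap: several of the pairwise cancellations and non-cancellations cannot be settled by \Cref{Cor: Keys-Shahidi} or by your equivalence-class argument, because the second element of the pair differs from the first by a \emph{non-simple} reflection (e.g.\ $w_{21212}=w_{212}\cdot w_{21}$ type factorizations, and $w_{2132132}=w_{232}\cdot w_{2321}$). For $E$ a field with $\chi=\chi_E$ and for $E=F\times K$ with $\chi=\chi_K$ at $s_0=\tfrac12$, the paper must show $M\bk{w_{212},w_{21}^{-1}\cdot\chi_s}$ (resp.\ $M\bk{w_{232},w_{2321}^{-1}\cdot\chi_s}$) acts as $-\Id$ on the relevant image; this requires the global $\epsilon$-factor computation in \Cref{Eq: Global Gindikin-Karpelevich} combined with the local eigenvalue statements of \Cref{Thm: Results regarding local intertwining operators}, none of which appears in your outline. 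Conversely, for $E=F\times K$ with $\chi^2=\Id$, $\chi\neq\Id,\chi_K$ at $s_0=\tfrac12$, all six elements of $\Sigma$ pair up, so your ``isolated equivalence class'' criterion fails and the spherical section need not exist ($\chi$ may be ramified); the paper must construct a factorizable section, using the local decomposition $\Pi_1\oplus\Pi_{-1}$ and a finite place where the image under the normalized operator is \emph{not} an eigenvector of $R_\nu\bk{w_{232},\chi_\nu}$, to see that the simple pole survives. Neither of your two non-vanishing mechanisms covers this case, so as written your argument cannot certify the entry $1$ in the column $\chi\neq\Id,\chi_E$, $\chi^2=\Id$ for $E=F\times K$.
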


The proof of \Cref{Thm: Poles of Eisenstein series} relies on a conjecture, \Cref{Conj: Archimedean places1}, regarding the structure of the degenerate principal series at the Archimedean places.
Before proving \Cref{Thm: Poles of Eisenstein series} we wish to describe the key ideas of the proof and during this discussion we shall present \Cref{Conj: Archimedean places1}.
It is worth pointing out that the proof of \Cref{Thm: CAP representations} is independent from \Cref{Conj: Archimedean places1} and can be carried out using only the bounds in Table \ref{Trivial Bounds on the order of Poles of Eisenstein Series}.

In the course of the proof we use \Cref{Eq: Constant term} to evaluate the constant term and check the cancellation of the poles of the various intertwining operators.
Fix a triple $\bk{E,\chi,s_0}$ as above such that 
\[
\max\set{\operatorname{ord}_{s=s_0} M\bk{w,\chi_s}f_s\bk{g} \mvert w\in W\bk{P_E,H_E},\ f_s\in I_{P_E}\bk{\chi,s},\ g\in H_E\bk{\A}} = n > 0 .
\]
We denote this integer by $ord_{s=s_0} M\bk{w,\chi_s}$.
For $0 < m\leq n$ let
\[
\Sigma_{\bk{E,\chi,s_0,m}} = \set{w\in W\bk{P_E,H_E}\mvert \operatorname{ord}_{s=s_0} M\bk{w,\chi_s} \geq m} .
\]
We say that the pole of order $m$ cancels if
\[
\lim\limits_{s\to s_0} \bk{s-s_0}^m \suml_{w\in W\bk{P_E,H_E}} M\bk{w,\chi_s}\res{I_{P_E}\bk{\chi,s}} = \lim\limits_{s\to s_0} \bk{s-s_0}^m \suml_{w\in \Sigma_{\bk{E,\chi,s_0,m}}} M\bk{w,\chi_s}\res{I_{P_E}\bk{\chi,s}} \equiv 0 .
\]


The cancellation of poles in the proof happens for two reasons:

\paragraph{$\bigstar$ \underline{\textbf{Reason 1 for cancellation of  poles:}}}
If $I_{P_E}\bk{\chi,s}$ is generated by the global spherical section $f_s^0$ we evaluate the leading terms of $\Eisen_E\bk{\chi, f_s^0,s,t}_{B_E}$ at $s_0$.
By doing this, one can determine the order of the pole of $\Eisen_E\bk{\chi, f_s^0,s,t}$ at $s_0$ since the residual representation at this point is a quotient of $I_{P_E}\bk{\chi,s}$ and hence must be generated by the spherical vector.

We evaluate the terms in the Laurent series of $\Eisen_E\bk{\chi, f_s^0,s,t}_{B_E}$ by an application of \Cref{Eq: Constant term}, The Gindikin-Karpelevich formula (\Cref{Gindikin-Karpelevich formula}), the Functional Equation (\Cref{Functional Equation - Zeta Function}) and the Laurent expansion of $\zfun_L$ for various extensions $L$ of $F$. 
In fact, the first terms of the various Gindikin-Karpelevich factors cancel due to simple algebraic computations.
	
This reason is applied to the cases where $\chi=\Id$, $s_0=\frac{1}{2}$ and $E$ is not a field.
In these cases we prove a reduction of poles of order 3 and 4 to a pole of order 1.	
We note that usually poles of intertwining operators cancel in pairs; here it happens that the cancellation of poles is in triples or quintuples of intertwining operators.

The global representation $I_{P_E}\bk{\chi,s}$ is generated by the spherical vector if and only if $I_{P_E}\bk{\chi_\nu,s}$ is generated by the spherical vector for any $\nu\in\Places$.
Indeed, for $\nu\nmid\infty$, $\chi_\nu=\Id_\nu$, $s=\frac{1}{2}$ and $E_\nu$ not a field we have
\begin{Prop}[\cite{Gan-Savin-D4}]
	\label{Prop: Representation at 1/2 is generated by spherical vector}
	For $\nu\nmid\infty$ and $E_\nu$ not a field the degenerate principal series representation $I_{P_E}\bk{\Id_\nu,\frac{1}{2}}$ has a unique irreducible quotient which is unramified.
	In particular it is generated by the spherical vector.
\end{Prop}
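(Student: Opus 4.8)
The plan is to treat this as a purely local assertion for the $p$-adic group $H_E\bk{F_\nu}$ and its maximal (Heisenberg) parabolic $P_E=M_E U_E$, where $E_\nu$ is $F_\nu\times F_\nu\times F_\nu$ or $F_\nu\times K_\nu$, so $M_E\cong\bk{\Res_{E_\nu\rmod F_\nu}GL_2}^0$ and the inducing datum is the one-dimensional character $\FNorm{\operatorname{det}_{M_E}}^{s+\frac{5}{2}}$ at $s=\frac{1}{2}$, i.e.\ $\FNorm{\operatorname{det}_{M_E}}^{3}$, which in normalized terms is the strictly positive unramified character $\FNorm{\operatorname{det}_{M_E}}^{\frac{1}{2}}$. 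Write $I=I_{P_E}\bk{\Id_\nu,\frac{1}{2}}$. Since the datum is unramified and $P_E$ is defined over $F_\nu$, the Iwasawa decomposition gives $\dim I^{H_E\bk{\mO_\nu}}=1$, and since $I$ embeds in an unramified principal series it has exactly one irreducible unramified subquotient $\pi_0$, of multiplicity one. What must be shown is that the cosocle of $I$ is irreducible and equals $\pi_0$; the last sentence of the proposition then follows formally, since the submodule of $I$ generated by the spherical vector surjects onto $\pi_0$ (the spherical line maps onto that of any unramified quotient by exactness of $H_E\bk{\mO_\nu}$-invariants), while the corresponding quotient of $I$ has no spherical vector and so, by uniqueness of the irreducible quotient, must vanish.

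First I would compute the normalized Jacquet module $r_{B_E}\bk{I}$ along the Borel using the geometric lemma of Bernstein--Zelevinsky: as $I$ is parabolically induced from a one-dimensional character of $M_E$, its Jacquet module along $B_{M_E}$ is again that character, so $r_{B_E}\bk{I}$ is glued from the $\lvert W\bk{P_E,H_E}\rvert$ unramified characters $\set{w\cdot\bk{\modf{B_E}^{\frac{1}{2}}\,\lambda_{\frac{1}{2}}}\mvert w\in W\bk{P_E,H_E}}$ of $T_E$, with $\lambda_{\frac{1}{2}}$ as in \Cref{Eq: lambda-s} (there are $24$ of them in the split case). The decisive step --- entirely finite --- is to list these characters and record: (i) by second adjointness, $\Hom_{H_E}\bk{I,\pi}=\Hom_{M_E}\bk{\FNorm{\operatorname{det}_{M_E}}^{\frac{1}{2}},r_{\overline{P_E}}\bk{\pi}}$, so after a further Jacquet restriction to $T_E$ the irreducible quotients of $I$ are exactly the constituents $\pi$ of $I$ whose Jacquet module contains a distinguished exponent $\eta_0$; (ii) $\eta_0$ occurs in $r_{B_E}\bk{I}$ with multiplicity exactly one, whence there is at most one irreducible quotient and $\dim\Hom_{H_E}\bk{I,\pi_0}\leq 1$.

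It then remains to exhibit a nonzero map $I\twoheadrightarrow\pi_0$. For this I would take the standard intertwining operator $M\bk{w^\ast,\chi_s}$, with $w^\ast$ the longest element of $W\bk{P_E,H_E}$ (of length $\dim U_E$), which carries $I$ into the degenerate principal series of the associate Heisenberg parabolic at $s=-\frac{1}{2}$; dividing it by the explicit Gindikin--Karpelevich scalar of \Cref{Gindikin-Karpelevich formula}, whose order at $s=\frac{1}{2}$ is read off the tables in \Cref{Sec: Tables}, produces a holomorphic nonzero operator sending the spherical vector to the spherical vector, so its image is a nonzero unramified quotient of $I$ and hence contains $\pi_0$; together with the multiplicity-one statement (ii) and exactness of $r_{B_E}$, that image is exactly $\pi_0$ and is the full cosocle. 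The hard part is precisely this last point: controlling the submodule lattice of $I$ at the reducibility point so as to exclude an irreducible quotient invisible to $\eta_0$, or the normalized operator degenerating and its image picking up extra constituents. I expect to handle it by matching the order of the pole of the Gindikin--Karpelevich factor of $M\bk{w^\ast,\cdot}$ at $s=\frac{1}{2}$ with the exponent multiplicities in $r_{B_E}\bk{I}$, via a rank-one argument along each simple reflection in the spirit of \Cref{Cor: Keys-Shahidi} --- the same mechanism used for Siegel-parabolic degenerate principal series \`a la Kudla--Rallis --- or else by quoting the explicit list of constituents of these quasi-split $D_4$ degenerate principal series from \cite{Gan-Savin-D4}.
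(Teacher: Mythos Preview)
The paper does not give its own proof of this proposition: it is simply quoted from \cite{Gan-Savin-D4}, with no argument supplied. So there is no in-paper proof for your sketch to be compared against.

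As a standalone outline your proposal is largely sound. The deduction of ``$f^0$ generates $I$'' from ``unique irreducible quotient, and it is unramified'' is correct and cleanly argued via exactness of $\bk{\cdot}^{K}$. Step~(ii) is also right: the exponent attached to $w=1$ in Tables~\ref{Table: Action of W on characters, Quadratic} and~\ref{Table: Action of W on characters, Split} occurs with multiplicity one in $r_{B_E}\bk{I}$, and second adjointness forces it into the Jacquet module of every irreducible quotient, so the cosocle is simple.

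The genuine gap is precisely the one you flag: showing that this unique irreducible quotient is the unramified constituent $\pi_0$. Your intertwining-operator argument produces a nonzero quotient of $I$ that contains a spherical vector, but this does not by itself force the cosocle to be spherical---the spherical line could lie in a proper submodule of the image, and then die under the further surjection $\text{image}\twoheadrightarrow\pi_1$. Put differently, once the cosocle is known to be simple, the statements ``the cosocle is unramified'' and ``$f^0$ generates $I$'' are equivalent, so neither can be bootstrapped from the other; one needs independent structural input on where $\pi_0$ sits in the submodule lattice of $I$. That input is exactly the composition-series analysis of these degenerate principal series carried out in \cite{Gan-Savin-D4}, and your stated fallback---cite that reference---is what the paper does.
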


As for the Archimedean places:
\begin{Conj}
	\label{Conj: Archimedean places1}
	For $\nu\vert\infty$, the degenerate principal series representation $I_{P_E}\bk{\Id_\nu,\frac{1}{2}}$ is generated by the spherical vector.
\end{Conj}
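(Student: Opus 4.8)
The plan is to reduce the generation statement to a finite computation of the submodule lattice of a single principal series of $H_E\bk{F_\nu}$ at the reducibility point $s=\frac{1}{2}$. The first step is an elementary criterion: for a finite length Harish--Chandra module $\pi$ of $H_E\bk{F_\nu}$, $\pi$ is generated by its spherical vector $v_{sph}$ if and only if every irreducible quotient of $\pi$ is spherical. Indeed, if $v_{sph}$ generates $\pi$ and $q\colon\pi\twoheadrightarrow\tau$ is any irreducible quotient, then $q\bk{v_{sph}}$ generates $\tau$ and lies in $\tau^{K_\nu}$, so $\tau$ is spherical; conversely, since $K_\nu$ is compact the functor of $K_\nu$-invariants is exact, forcing the induced map $\pi^{K_\nu}\to\tau^{K_\nu}$ to be surjective, so when every irreducible quotient is spherical no proper submodule can contain $v_{sph}$. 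By Frobenius reciprocity, the Iwasawa decomposition $H_E\bk{F_\nu}=P_E\bk{F_\nu}K_\nu$, and the triviality of $\Id_\nu\circ\operatorname{det}_{M_E}$ on $M_E\cap K_\nu$, the space $I_{P_E}\bk{\Id_\nu,\frac12}^{K_\nu}$ is one-dimensional; hence there is a unique spherical subquotient, of multiplicity one. The goal therefore becomes to show that the cosocle (maximal semisimple quotient) of $I_{P_E}\bk{\Id_\nu,\frac12}$ is irreducible and equals this spherical constituent.

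The second step is to determine this cosocle. By \Cref{Eq: lambda-s} we have the embedding $I_{P_E}\bk{\Id_\nu,\frac12}\hookrightarrow I_{B_E}\bk{\lambda_{1/2}}$ with $\lambda_{1/2}=\bk{-1,2,-1,-1}$ in the split case and the analogous parameters otherwise; applying the longest element $w_M=w_{1,3,4}$ of the Weyl group of $M_E$, which turns the trivial representation of $M_E$ from a subrepresentation into a quotient, the same representation is realized as a quotient of $I_{B_E}\bk{\mu_{1/2}}$ with $\mu_{1/2}=w_M\cdot\lambda_{1/2}=\bk{1,-1,1,1}$. I would then compute the submodule lattice of $I_{B_E}\bk{\mu_{1/2}}$ by factoring the standard intertwining operators $M\bk{w,\cdot}$ into rank-one operators via \Cref{Lemma: intertwining operator of simple reflections}, reading off their poles and kernels from the explicit $SL_2\bk{\R}$ and $SL_2\bk{\C}$ decomposition series and the Gindikin--Karpelevich factors recorded in \Cref{Subsec: Rank-one} and \Cref{Gindikin-Karpelevich formula}; locating the unique spherical constituent and verifying that it sits at the top of the resulting filtration yields the claim. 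The three archimedean local groups are treated separately: for $F_\nu=\C$ one has the split complex group $\Spin_8\bk{\C}$, where the module structure of $I_{B_E}\bk{\mu_{1/2}}$ is governed by a block of category $\mathcal{O}$ (Zhelobenko--Duflo) and the analysis is most transparent; for $F_\nu=\R$ with $E_\nu\cong\R^3$ one has the split real form, and with $E_\nu\cong\R\times\C$ the quasi-split non-split real form, each requiring an explicit $\bk{\mathfrak{g},K_\nu}$-module computation.

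The main obstacle, and the reason the statement is left as a conjecture, is that at $s=\frac12$ the Borel parameter $\mu_{1/2}=\bk{1,-1,1,1}$ is neither dominant nor regular: its $\alpha_2$-coordinate is negative and the infinitesimal character is singular. Consequently the Langlands quotient theorem does not apply to produce a unique irreducible quotient directly, in sharp contrast with the points $s=\frac52$, where $\mu_{5/2}=\bk{1,1,1,1}=\rho_{B_E}$ is dominant regular and gives the trivial representation, and $s=\frac32$, where $\mu_{3/2}=\bk{1,0,1,1}$ lies on a single wall. One must instead control the entire submodule lattice at a singular non-dominant parameter, and the genuinely quasi-split non-split real form $\bk{E_\nu\cong\R\times\C}$ admits no reduction to split or $GL_2$-type data, making it the hardest case. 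A complete proof would therefore require either a uniform structural input, such as a deformation argument from a regular dominant parameter or an analysis of the relevant Jacquet modules in the Casselman--Wallach sense, or a careful case-by-case determination of the cosocle for each archimedean form. The analogous finite-place statement, \Cref{Prop: Representation at 1/2 is generated by spherical vector} of Gan--Savin, together with the expected matching of spherical unitary data across all places, is the principal evidence supporting the conjecture.
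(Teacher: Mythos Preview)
The paper does not prove this statement; it is explicitly labeled a \emph{Conjecture} and is used as a standing hypothesis in the proof of \Cref{Thm: Poles of Eisenstein series}. There is therefore no proof in the paper to compare your proposal against.

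Your proposal is not a proof either, and you say so yourself: you reduce the question to showing that the cosocle of $I_{P_E}\bk{\Id_\nu,\frac{1}{2}}$ is the unique spherical constituent, you correctly observe that the Langlands quotient theorem does not apply at the singular non-dominant parameter $\mu_{1/2}$, and you then state that ``a complete proof would therefore require'' either a deformation argument or a case-by-case determination of the submodule lattice for each archimedean form. That is the gap. The reduction in your first paragraph is fine and standard; the second paragraph is a reasonable sketch of how one might attack the problem; but the third paragraph is an admission that the key step has not been carried out. In particular, for the genuinely quasi-split real form with $E_\nu\cong\R\times\C$ you offer no mechanism for computing the cosocle, only the remark that it is ``the hardest case''.

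So your write-up is an accurate diagnosis of \emph{why} the statement is a conjecture and what would be needed to settle it, which matches the paper's own stance. But it should not be presented as a proof proposal: it is a strategy with an explicitly identified missing ingredient, and that ingredient is the entire content of the conjecture.
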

We note that the proof of \Cref{Thm: Poles of Eisenstein series} relies on \Cref{Conj: Archimedean places1} in the following cases:
\begin{itemize}
	\item $E$ is a field, $s=\frac{1}{2}$ with $\chi=\chi_E$ and $s=\frac{3}{2}$ with $\chi=\Id$.
	\item $E=F\times K$, $s=\frac{1}{2}$ with $\chi^2=\Id$.
	\item $E=F\times F\times F$, $s=\frac{1}{2}$ with $\chi=\Id$.
\end{itemize}

\paragraph{$\bigstar$ \underline{\textbf{Reason 2 for cancellation of poles:}}}
Assume that we can decompose $\Sigma_{\bk{E,\chi,s_0,n}}$ into a disjoint union of pairs $\set{w',w'w''}$.
We then have
\[
M_{w'w''} = M_{w''} \circ M_{w'} .
\]
Assume that for any such pair we can further show that $M_{w''}$ is an endomorphism of the image of $M_{w'}$ acting as $-Id$; this is done using \Cref{Cor: Keys-Shahidi} or \Cref{Thm: Results regarding local intertwining operators}.
Then
\[
\lim\limits_{s\to s_0} \bk{s-s_0}^n \coset{M_{w'}+M_{w'w''}} \equiv 0 .
\]
It then follows that the pole of order $n$ cancels.

\begin{Remark}
	As mentioned above, parts of the proof require data on the behavior of some local intertwining operators.
	 obtained in \Cref{Thm: Results regarding local intertwining operators}.
	So as not to disturb the discussion of the proof of \Cref{Thm: Poles of Eisenstein series} this discussion is postponed to \Cref{Thm: Results regarding local intertwining operators} in  \Cref{App: Local Intertwining Operators}.
\end{Remark}

\begin{Remark}
The two reasons for cancellations of poles are not unrelated and not redundant.
The first reason is applicable only when the global degenerate principle series is generated by the spherical vector and under this assumption it allows us to reduce more than one order of the pole.
On the other hand, the second reason is applicable regardless of the structure of the degenerate principle series but can only be used to reduce the order of the pole by one.
\end{Remark}

After, maybe, cancellation of higher orders of a pole, we wish to determine its actual order.
Namely, for $0<m\leq n$ assume that
\[
\lim\limits_{s\to s_0} \bk{s-s_0}^{m+1} \suml_{w\in W\bk{P_E,H_E}} M\bk{w,\chi_s} \res{I_{P_E}\bk{\chi,s}} \equiv 0 .
\]
Then $\Eisen_E\bk{\chi, \cdot, s, \cdot}$ attains a pole of order $m$ at $s_0$ if
\[
\lim\limits_{s\to s_0} \bk{s-s_0}^m \suml_{w\in W\bk{P_E,H_E}} M\bk{w,\chi_s}\res{I_{P_E}\bk{\chi,s}} =
\lim\limits_{s\to s_0} \bk{s-s_0}^m \suml_{w\in \Sigma_{\bk{E,\chi,s_0,m}}} M\bk{w,\chi_s}\res{I_{P_E}\bk{\chi,s}} \not\equiv 0.
\]
In particular, for any holomorphic section $f_s\in I_{P_E}\bk{\chi,s}$ and any $t\in T_E\bk{\A}$ it holds that
\[
\lim\limits_{s\to s_0} \bk{s-s_0}^m \suml_{w\in W\bk{P_E,H_E}} M\bk{w,\chi_s}f_s\bk{t} \in \C 
\]
and the limit is non-zero for some $f_s$ and $t$.

We prove this using one of the following reasonings:
\paragraph{$\bigstar$ \underline{\textbf{Reason 1 for non-vanishing of the leading term:}}}
One can prove the non-vanishing of the leading term by providing a section $f_s\in I_{P_E}\bk{\chi,s}$ such that
\[
\lim\limits_{s\to s_0} \bk{s-s_0}^m \suml_{w\in \Sigma_{\bk{E,\chi,s_0,m}}} M\bk{w,\chi_s}f_s \not\equiv 0 .
\]

\begin{Remark}
	A global spherical section exists if and only if $\chi$ is everywhere unramified.
	In case that $\chi$ is everywhere unramified one can check that the orders of poles in Table \ref{Poles of Eisenstein Series} are realized by the global spherical section.
\end{Remark}


\paragraph{$\bigstar$ \underline{\textbf{Reason 2 for non-vanishing of the leading term:}}}
The representation
\[
Res\bk{s_0,\chi,E}_{B_E} = \operatorname{Span}_\C \set{ \lim_{s\to s_0} \bk{s-s_0}^m \Eisen_E\bk{\chi,f_s,s,t}_{B_E} \mvert f_s\in I_{P_E}\bk{\chi,s}}
\]
decomposes into a sum of copies of the one dimensional representations of $T_E$
\[
\set{w^{-1}\cdot\chi_s \mvert w\in \Sigma_{\bk{E,\chi,s_0,m}}} .
\]
The elements $M\bk{w,\chi_s}f_s\bk{t}$ lie in the representation $w^{-1}\cdot\chi_s$ as representations of $T_E$.
We define an equivalence class on $\Sigma_{\bk{E,\chi,s_0,m}}$ by:
\begin{equation}
\label{Equivalence relation on Sigma-m}
w\sim_{s_0} w' \Longleftrightarrow w^{-1}\cdot\chi_{s_0}=w'^{-1}\cdot\chi_{s_0} .
\end{equation}
Clearly, cancellations of poles of intertwining operators can occur only within the same equivalence class.
And so, if there exists $w\in \Sigma_{\bk{E,\chi,s_0,m}}$ such that $w\not\sim_{s_0} w'$ \textbf{for all} $w\neq w'\in \Sigma_{\bk{E,\chi,s_0,m}}$ then the term $\lim\limits_{s\to s_0} \bk{s-s_0}^m M\bk{w,\chi_s}f_s$ cannot by canceled by other terms in the sum, while it is non-zero due to \Cref{Eq: Global Gindikin-Karpelevich} and \Cref{Lem: Normalized intertwining oeprator is non-vanishing}.


\begin{Remark}
In fact, Reason 2 for non-vanishing of the leading term 
\end{Remark}

\begin{proof}[Proof of \Cref{Thm: Poles of Eisenstein series}]

For any $E$ the poles corresponding to the triple $\bk{E,\chi_E,\frac{3}{2}}$ are treated in \cite{MR1918673}.
Also, since the poles at $s=\frac{5}{2}$ and $\chi=\Id$ arise only from the intertwining operator associated with the longest element of the Weyl group and hence they cannot be canceled.

In what follows we treat the rest of the points in \Cref{Trivial Bounds on the order of Poles of Eisenstein Series}.
We leave the discussion of the square integrability of the residual representations to the end of this section.
In what follows, we denote by $t$ an element of $T_E\bk{\A_F}$ of the  form
\[
t = \piece{
	h_{\alpha_1}\bk{t_1} h_{\alpha_2}\bk{t_2} h_{\alpha_3}\bk{t_3} h_{\alpha_4}\bk{t_4},& E=F\times F\times F,& t_1, t_2, t_3, t_4\in \A_F^\times \\
	h_{\alpha_1}\bk{t_1} h_{\alpha_2}\bk{t_2} h_{\alpha_3}\bk{t_3} h_{\alpha_4}\bk{t_3^{\sigma}},& E=F\times K,& t_1, t_2, \in \A_F^\times,\ t_3\in \A_K^\times \\
	h_{\alpha_1}\bk{t_1} h_{\alpha_2}\bk{t_2} h_{\alpha_3}\bk{t_1^\sigma} h_{\alpha_4}\bk{t_1^{\sigma^2}},& E\text{ is a field},& t_1\in \A_E^\times,\ t_2\in \A_F^\times
	}
\]

\subsection{$E$ a field, $s=\frac{1}{2}$, $\chi=\Id$}
The intertwining operators in this case have poles at most of order $2$.
We show that the pole of order 2 cancels and that the pole of order 1 does not.
\begin{enumerate}
	\item We have
	\[
	\Sigma_{\bk{E,\Id,\frac{1}{2},2}} = \set{w_{212}, w_{2121}} .
	\]
	Since $w_{212}^{-1}\cdot\chi_{\frac{1}{2}}\bk{t} = w_{212}^{-1}\cdot\chi_{\frac{1}{2}}\bk{t} = \frac{1}{\FNorm{t_2}_F}$ we have $w_{212}\sim_{s_0} w_{2121}$.
	We write $w_{2121}=w_{212}w_1$.
	It follows from \Cref{Cor: Keys-Shahidi} that
	\begin{equation*}
	\lim\limits_{s\to \frac{1}{2}} \bk{s-\frac{1}{2}}^2 M\bk{w_{2121},\chi_s} =  - \lim\limits_{s\to \frac{1}{2}} \bk{s-\frac{1}{2}}^2 M\bk{w_{212},\chi_s} .
	\end{equation*}
	Following reason 1 for cancellation of poles
	\begin{equation*}
	\lim\limits_{s\to \frac{1}{2}} \bk{s-\frac{1}{2}}^2 \Eisen_E\bk{\chi, f_s, s, g}_{B_E} = 0 \qquad  \forall f_s\in I_{P_E}\bk{\chi,s} .
	\end{equation*}
	Thus, the pole of order $2$ is canceled.
	
	\item
	We have
	\[
	\Sigma_{\bk{E,\Id,\frac{1}{2},1}} = \set{w_{21}, w_{212}, w_{2121}, w_{21212}} .
	\]
	We note that $w_{21}^{-1}\cdot\chi_{\frac{1}{2}}\bk{t} = w_{21}^{-1}\cdot\chi_{\frac{1}{2}}\bk{t} = \frac{\FNorm{t_2}_F}{\FNorm{t_1}_E}$.
	We prove that $\lim\limits_{s\to \frac{1}{2}} \bk{s-\frac{1}{2}} \Eisen_E\bk{\chi, f_s, s, g}_{B_E} \not\equiv 0$ by proving that for the global spherical section $f_s^0$ it holds that $\lim\limits_{s\to\frac{1}{2}} \bk{s-\frac{1}{2}} \bk{M_{w_{21}}+M_{w_{21212}}}f_s^0 \neq 0$ thus applying both reason 1 and 2 for non-vanishing of the leading term.
	Indeed, we write
	\[
	\zfun_F\bk{s} = \frac{\gamma_{-1}}{s-1} + \gamma_0+...,\quad 
	\zfun_K\bk{s} = \frac{\epsilon_{-1}}{s-1} + \epsilon_0+...
	\]
		
	It holds that
	\begin{align*}
	&\lim\limits_{s\to \frac{1}{2}} \bk{s-\frac{1}{2}} \bk{M_{w_{21}}+M_{w_{21212}}}f_s^0 \\
	& = \lim\limits_{s\to \frac{1}{2}} \bk{s-\frac{1}{2}} \bk{
		\frac{\zfun_F\bk{s+\frac{3}{2}}\zfun_E\bk{s+\frac{1}{2}}}{\zfun_F\bk{s+\frac{5}{2}}\zfun_E\bk{s+\frac{3}{2}}} +
		\frac{\zfun_F\bk{s-\frac{3}{2}}\zfun_F\bk{s+\frac{3}{2}}\zfun_E\bk{s-\frac{1}{2}}\zfun_F\bk{2s}}{\zfun_F\bk{s-\frac{1}{2}}\zfun_F\bk{s+\frac{5}{2}}\zfun_E\bk{s+\frac{3}{2}}\zfun_F\bk{2s+1}} } \\
	& = \frac{\zfun_F\bk{2}}{\zfun_F\bk{3}\zfun_E\bk{2}} \lim\limits_{s\to \frac{1}{2}} \bk{s-\frac{1}{2}} \bk{\zfun_F\bk{s+\frac{1}{2}} + \frac{\zfun_F\bk{-1}}{\zfun_F\bk{2}} \frac{\zfun_E\bk{s-\frac{1}{2}}\zfun_F\bk{2s}}{\zfun_F\bk{s-\frac{1}{2}}}}\\
	& = \frac{\zfun_F\bk{2}}{\zfun_F\bk{3}\zfun_E\bk{2}} \bk{-\gamma_{-1} + \frac{\epsilon_{-1} \frac{1}{2}\gamma_{-1}}{-\gamma_{-1}}} = \frac{\zfun_F\bk{2}}{\zfun_F\bk{3}\zfun_E\bk{2}} \bk{\gamma_{-1} + \frac{1}{2}\epsilon_{-1}} \neq 0 .
	\end{align*}
	Here we use the fact that $\gamma_{-1}$ and $\epsilon_{-1}$ are positive numbers due to the class number formula \cite[pg. 37]{MR1421575}.
\end{enumerate}

\subsection{$E$ a field, $s=\frac{1}{2}$, $\chi=\chi_E$}
The intertwining operators in this case have poles of order at most $1$.
We show that the Eisenstein series is in fact holomorphic at this point.
We have
\[
\Sigma_{\bk{E,\chi_E,\frac{1}{2},1}} = \set{w_{21}, w_{212}, w_{2121}, w_{21212}}
\]
and
\begin{align*}
& w_{21}^{-1}\cdot\chi_{\frac{1}{2}}\bk{t} = w_{21212}^{-1}\cdot\chi_{\frac{1}{2}}\bk{t}  = \overline{\chi_E\bk{t_2}} \frac{\FNorm{t_2}_F}{\FNorm{t_1}_E} \\
& w_{212}^{-1}\cdot\chi_{\frac{1}{2}}\bk{t} = w_{2121}^{-1}\cdot\chi_{\frac{1}{2}}\bk{t} = \chi_E\bk{t_2} \frac{1}{\FNorm{t_2}_F} .
\end{align*}

We write $w_{2121}=w_{212}w_1$.
It follows from \Cref{Cor: Keys-Shahidi} and reason 2 for cancellation of poles that
\begin{equation*}
\lim\limits_{s\to \frac{1}{2}} \bk{s-\frac{1}{2}} \coset{M\bk{w_{2121},\chi_s} + M\bk{w_{212},\chi_s}} = 0 .
\end{equation*}
On the other hand,
\begin{equation*}
M\bk{w_{21212},\chi_s} = M\bk{w_{212},w_{21}^{-1}\cdot\chi_s}\circ M\bk{w_{21},\chi_s} .
\end{equation*}

As in \Cref{App: Local Intertwining Operators}, for $\nu\in\Places$ denote
\[
R_\nu=R_\nu\bk{w_{212}, \chi_{E_\nu}} = \lim\limits_{s\to\frac{1}{2}} N_\nu\bk{w_{212},w_{21}^{-1}\cdot\chi_{E_\nu,s}} .
\]
Fix factorizable data $f_s=\placestimes f_{s,\nu}$ such that $M\bk{w_{21},\chi_s}f_s=\placestimes \widetilde{f}_{s,\nu}$ and assume that $S\subseteq\Places$ is a finite set of places such that $f_{s,\nu}=f_{s,\nu}^0$ for $\nu\notin S$.
It follows from \Cref{Thm: Results regarding local intertwining operators} that $R_\nu \widetilde{f}_{\frac{1}{2},\nu} = \widetilde{f}_{\frac{1}{2},\nu}$ for any $\nu\in S$.

It follows from \Cref{Eq: Functional equation of intertwining operators} that
\[
M\bk{w_{21212},\chi_s} = M\bk{w_{212},w_{21}^{-1}\cdot\chi_s}\circ M\bk{w_{21},\chi_s}
\]
On the other, by \Cref{Eq: Global functional equation of Hecke L-function and epsilon-factor},  and \Cref{Eq: Global Gindikin-Karpelevich}, it holds that
\begin{align*}
& M\bk{w_{21212},\chi_s} f_s = \frac{J\bk{w_{212},w_{21}^{-1}\cdot\chi_s}}{\prodl_{\nu\in S} \epsilon_{F_\nu}\bk{-1,\chi_{E_\nu}, \psi_\nu} \epsilon_{F_\nu}\bk{1,\chi_{E_\nu}^2, \psi_\nu}} M\bk{w_{21},\chi_s} f_s \\
& = \frac{\zfun_E\bk{0} \Lfun_F\bk{-1,\chi_E} \Lfun_F\bk{1,\chi_E^2}}{\zfun_E\bk{1} \Lfun_F\bk{0,\chi_E} \Lfun_F\bk{2,\chi_E^2} \prodl_{\nu\in S} \epsilon_{F_\nu}\bk{-1,\chi_{E_\nu}, \psi_\nu} \epsilon_{F_\nu}\bk{1,\chi_{E_\nu}^2, \psi_\nu}} M\bk{w_{21},\chi_s} f_s \\
& = - M\bk{w_{21},\chi_s} f_s .
\end{align*}
Namely, $M\bk{w_{212},w_{21}^{-1}\cdot\chi_{\frac{1}{2}}}$ is an endomorphism of the image of the residue of $M\bk{w_{21},\chi_{s}}$, at $s=\frac{1}{2}$, acting as $-Id$.
Following reason 2 for cancellation of poles we have
\[
\lim\limits_{s\to \frac{1}{2}} \bk{s-\frac{1}{2}} \coset{M\bk{w_{21212},\chi_s} + M\bk{w_{21},\chi_s}} = 0 .
\]

In conclusion,
\[
\lim\limits_{s\to \frac{1}{2}} \bk{s-\frac{1}{2}} \Eisen_E\bk{\chi, f_s, s, g}_{B_E} = 0
\]
and hence $\Eisen_E\bk{\chi_K, f_s, s, g}$ is holomorphic at $s=\frac{1}{2}$.

\subsection{$E$ a field, $s=\frac{1}{2}$, $\chi^2=\Id$, $\chi\neq \Id$}
The intertwining operators in this case have poles of order at most $1$.
We show that indeed the Eisenstein series admits a simple pole at this point.
We have
\[
\Sigma_{\bk{E,\chi,\frac{1}{2},1}} = \set{w_{212}, w_{2121}, w_{21212}}
\]
and
\begin{align*}
& w_{212}^{-1}\cdot\chi_{\frac{1}{2}}\bk{t} = \chi\bk{\Nm_{E\rmod F}\bk{t_1}} \frac{1}{\FNorm{t_2}_F} \\
& w_{2121}^{-1}\cdot\chi_{\frac{1}{2}}\bk{t} = \chi\bk{t_2\Nm_{E\rmod F}\bk{t_1}} \frac{1}{\FNorm{t_2}_F} \\
& w_{21212}^{-1}\cdot\chi_{\frac{1}{2}}\bk{t} = \chi\bk{t_2} \frac{\FNorm{t_2}_F}{\FNorm{t_1}_E} .
\end{align*}

Following reason 2 for non-cancellation of poles, $\Eisen_E\bk{\chi, f_s, s, g}$ admits a simple pole at $s=\frac{1}{2}$.

\subsection{$E$ a field, $s=\frac{3}{2}$, $\chi=\Id$}
The intertwining operators in this case have poles of order at most $1$.
We show that the Eisenstein series is holomorphic at this point.
We have
\[
\Sigma_{\bk{E,\Id,\frac{3}{2},1}} = \set{w_{2121}, w_{21212}} .
\]
Since $w_{2121}^{-1}\cdot\chi_{\frac{3}{2}}\bk{t} = w_{21212}^{-1}\cdot\chi_{\frac{3}{2}}\bk{t} = \frac{1}{\FNorm{t_1}_E}$ we have $w_{2121}\sim_{s_0} w_{21212}$.
We write $w_{21212}=w_{2121}w_2$.
It follows from \Cref{Cor: Keys-Shahidi} that
\begin{equation*}
\lim\limits_{s\to \frac{3}{2}} \bk{s-\frac{3}{2}}^2 M\bk{w_{21212},\chi_s} =  - \lim\limits_{s\to \frac{3}{2}} \bk{s-\frac{3}{2}}^2 M\bk{w_{2121},\chi_s} .
\end{equation*}
Following reason 1 for cancellation of poles
\begin{equation*}
\lim\limits_{s\to \frac{3}{2}} \bk{s-\frac{3}{2}}^2 \Eisen_E\bk{1, f_s, s, g}_{B_E} = 0 \qquad \forall f_s\in I_{P_E}\bk{\chi,s} .
\end{equation*}
Thus $\Eisen_E\bk{\Id, f_s, s, g}$ is holomorphic at $s=\frac{3}{2}$.

\subsection{$E=F\times K$, $K$ a field, $s=\frac{1}{2}$, $\chi=\Id$}
The intertwining operators in this case have poles of order at most $3$.
We show that the Eisenstein series admits a simple pole at this point.
We have
\begin{align*}
& \Sigma_{\bk{F\times K,\Id,\frac{1}{2},3}} = \set{w_{2132}, w_{21321}, w_{21323}, w_{213213}} \\
& \Sigma_{\bk{F\times K,\Id,\frac{1}{2},2}} \setminus \Sigma_{\bk{F\times K,\Id,\frac{1}{2},3}} = \set{ w_{213}, w_{2321}, w_{2132132}} \\
& \Sigma_{\bk{F\times K,\Id,\frac{1}{2},1}} \setminus \Sigma_{\bk{F\times K,\Id,\frac{1}{2},2}} = \set{w_{21}, w_{23}, w_{232}} .
\end{align*}
and also
\begin{align*}
& w_{2132}^{-1}\cdot\chi_{\frac{1}{2}}\bk{t} = w_{21321}^{-1}\cdot\chi_{\frac{1}{2}}\bk{t} = w_{21323}^{-1}\cdot\chi_{\frac{1}{2}}\bk{t} = w_{213213}^{-1}\cdot\chi_{\frac{1}{2}}\bk{t} = \frac{1}{\FNorm{t_2}_F} \\
& w_{213}^{-1}\cdot\chi_{\frac{1}{2}}\bk{t} = w_{2321}^{-1}\cdot\chi_{\frac{1}{2}}\bk{t} = w_{2132132}^{-1}\cdot\chi_{\frac{1}{2}}\bk{t} = \frac{\FNorm{t_2}_F}{\FNorm{t_1}_F \FNorm{t_3}_K} \\
& w_{23}^{-1}\cdot\chi_{\frac{1}{2}}\bk{t} = w_{232}^{-1}\cdot\chi_{\frac{1}{2}}\bk{t} = \frac{\FNorm{t_1}_F}{\FNorm{t_3}_K} \\
& w_{21}^{-1}\cdot\chi_{\frac{1}{2}}\bk{t} = \frac{\FNorm{t_3}_K}{\FNorm{t_1}_F \FNorm{t_2}_F} .
\end{align*}
We note that it follows from the above that
\[
\Sigma_{\bk{F\times K,\Id,\frac{1}{2},2}}\rmod\sim_{s_0} = 
\set{ \Sigma_{\bk{F\times K,\Id,\frac{1}{2},3}}, \Sigma_{\bk{F\times K,\Id,\frac{1}{2},2}} \setminus \Sigma_{\bk{F\times K,\Id,\frac{1}{2},3}}} .
\]

After proving that the poles of order 3 and 2 cancel, the fact that the simple pole is attained follows from reason 2 for non-vanishing of the leading term. More precisely, if $f_s^0$ is the global spherical vector one has
\[
\lim\limits_{s\to\frac{1}{2}} M\bk{w_{21},\chi_s}f_s^0 \neq 0 .
\]
Namely, $\Eisen_{F\times K}\bk{\Id, f_s^0, s, g}$ admits a simple pole at $s=\frac{1}{2}$.

We now turn to prove that the poles of higher order are canceled.
We first note that the pole of order 3 cancels due to reason 2 for cancellation of poles, regardless of \Cref{Conj: Archimedean places1}.
According to \Cref{Prop: Representation at 1/2 is generated by spherical vector} and \Cref{Conj: Archimedean places1} the representation $I_{P_E}\bk{\Id,\frac{1}{2}}$ is generated by $f_{\frac{1}{2}}^0$, where $f_s^0$ is the global spherical section.

We write
\[
\zfun_F\bk{s} = \frac{\gamma_{-1}}{s-1} + \gamma_0+...,\quad 
\zfun_K\bk{s} = \frac{\delta_{-1}}{s-1} + \delta_0+...
\]

The functional equation, \Cref{Functional Equation - Zeta Function}, yields
\begin{equation}
\frac{\zfun_F\bk{s-\frac{1}{2}}}{\zfun_F\bk{s+\frac{1}{2}}} = \frac{\zfun_F\bk{\frac{3}{2}-s}}{\zfun_F\bk{s+\frac{1}{2}}} \underset{s\to\frac{1}{2}}{\longrightarrow} -1
\end{equation}
and also
\begin{equation}
\gamma_{-1} = \Res\bk{\zfun_F\bk{s},1} = -\Res\bk{\zfun_F\bk{s},0},\quad 
\delta_{-1} = \Res\bk{\zfun_K\bk{s},1} = -\Res\bk{\zfun_K\bk{s},0} .
\end{equation}


We note that since $\Sigma_{\bk{F\times K,\Id,\frac{1}{2},3}}$ is an equivalence class with respect to $\sim_{s_0}$ it suffices to check the cancellations at $t=1\in T_E\bk{\A}$.

\begin{align*}
& \coset{M\bk{w_{2132}}+ M\bk{w_{21321}}+ M\bk{w_{21323}}+ M\bk{w_{213213}}} f_s^0\bk{1} \\
& = \frac{\zfun_F\bk{s+\frac{1}{2}}\zfun_K\bk{s+\frac{1}{2}}\zfun_F\bk{2s}}{\zfun_F\bk{s+\frac{5}{2}} \zfun_K\bk{s+\frac{3}{2}} \zfun_F\bk{2s+1}} + \frac{\zfun_F\bk{s-\frac{1}{2}} \zfun_K\bk{s+\frac{1}{2}} \zfun_F\bk{2s}}{\zfun_F\bk{s+\frac{5}{2}} \zfun_K\bk{s+\frac{3}{2}} \zfun_F\bk{2s+1}} \\
& + \frac{\zfun_F\bk{s+\frac{1}{2}} \zfun_K\bk{s-\frac{1}{2}} \zfun_F\bk{2s}}{\zfun_F\bk{s+\frac{5}{2}} \zfun_K\bk{s+\frac{3}{2}} \zfun_F\bk{2s+1}} + \frac{\zfun_F\bk{s-\frac{1}{2}} \zfun_K\bk{s-\frac{1}{2}} \zfun_F\bk{2s}}{\zfun_F\bk{s+\frac{5}{2}}\zfun_K\bk{s+\frac{3}{2}} \zfun_F\bk{2s+1}} .
\end{align*}
As the denumerators are equal, holomorphic and non-vanishing at $s=\frac{1}{2}$ it is enough to prove that the sum of the numerators admit at most a simple pole.
Indeed,
\begin{align*}
& = \zfun_F\bk{s+\frac{1}{2}}\zfun_K\bk{s+\frac{1}{2}}\zfun_F\bk{2s} + \zfun_F\bk{s-\frac{1}{2}} \zfun_K\bk{s+\frac{1}{2}} \zfun_F\bk{2s} \\
& + \zfun_F\bk{s+\frac{1}{2}} \zfun_K\bk{s-\frac{1}{2}} \zfun_F\bk{2s} + \zfun_F\bk{s-\frac{1}{2}} \zfun_K\bk{s-\frac{1}{2}} \zfun_F\bk{2s} \\
& = \frac{\gamma_{-1}^2\delta_{-1} - \gamma_{-1}^2\delta_{-1} - \gamma_{-1}^2\delta_{-1} + \gamma_{-1}^2\delta_{-1}}{2\bk{s-\frac{1}{2}}^3} \\
& \quad + \frac{\gamma_{-1}\bk{3\delta_{-1}\gamma_0+\gamma_{-1}\delta_0} - \gamma_{-1}\bk{\gamma_{-1}\delta_0+\delta_{-1}\gamma_0} \gamma_{-1}\bk{\gamma_{-1}\delta_0-3\delta_{-1}\gamma_0}+\gamma_{-1}\bk{\delta_{-1}\gamma_0-\gamma_{-1}\delta_0}   }{2\bk{s-\frac{1}{2}}^2} \\
& + o\bk{\bk{s-\frac{1}{2}}^{-2}} = o\bk{\bk{s-\frac{1}{2}}^{-2}} .
\end{align*}


We note that since $\Sigma_{\bk{F\times K,\Id,\frac{1}{2},3}} \setminus \Sigma_{\bk{F\times K,\Id,\frac{1}{2},2}}$ is an equivalence class with respect to $\sim_{s_0}$ it suffices to check the cancellations at $t=1\in T_E\bk{\A}$.

We consider the leading term in the Laurent series of the corresponding intertwining operators:
\begin{align*}
& \frac{\zfun_F\bk{s+\frac{1}{2}} \zfun_K\bk{s+\frac{1}{2}}}{\zfun_F\bk{s+\frac{5}{2}} \zfun_K\bk{s+\frac{3}{2}}} + \frac{\zfun_F\bk{s+\frac{3}{2}} \zfun_K\bk{s+\frac{1}{2}} \zfun_F\bk{s-\frac{1}{2}} \zfun_F\bk{2s}}{\zfun_F\bk{s+\frac{5}{2}} \zfun_K\bk{s+\frac{3}{2}} \zfun_F\bk{s+\frac{1}{2}} \zfun_F\bk{2s+1}} \\
& + \frac{\zfun_F\bk{s-\frac{3}{2}} \zfun_K\bk{s-\frac{1}{2}} \zfun_F\bk{2s}}{\zfun_F\bk{s+\frac{5}{2}} \zfun_K\bk{s+\frac{3}{2}} \zfun_F\bk{2s+1}} \\
& \quad = \frac{\zfun_F\bk{2} \gamma_{-1}\delta_{-1} - \frac{1}{2}\zfun_F\bk{2}\gamma_{-1}\delta_{-1} - \frac{1}{2}\zfun_F\bk{-1} \gamma_{-1}\delta_{-1}}{\zfun_F\bk{2} \zfun_K\bk{2} \zfun_F\bk{3} \bk{s-\frac{1}{2}}^2} + o\bk{\bk{s-\frac{1}{2}}^{-2}} \\
& = o\bk{\bk{s-\frac{1}{2}}^{-2}} .
\\
\end{align*}
In conclusion, $\Eisen_E\bk{1, f_s^0, s, g}$ admits a pole of order $1$ at $s=\frac{1}{2}$.

\begin{Remark}
Applying reason 2 for cancellation of poles one can show, independently \Cref{Conj: Archimedean places1}, that the pole here is of order at most $2$.
\end{Remark}

\subsection{$E=F\times K$, $K$ a field, $s=\frac{1}{2}$, $\chi=\chi_K$}
The intertwining operators in this case have poles of order at most $2$.
We show that the Eisenstein series admits a simple pole at this point.
We have
\begin{align*}
\Sigma_{\bk{F\times K,\chi_K,\frac{1}{2},2}} = \set{w_{2321}, w_{2132}, w_{21321}, w_{21323}, w_{213213}, w_{2132132}}
\end{align*}
and
\begin{align*}
& w_{2321}^{-1}\cdot\chi_{\frac{1}{2}}\bk{t} = w_{2132132}^{-1}\cdot\chi_{\frac{1}{2}}\bk{t} = \chi_K\bk{t_2} \frac{\FNorm{t_2}_F}{\FNorm{t_1}_F \FNorm{t_3}_K} \\
& w_{2132}^{-1}\cdot\chi_{\frac{1}{2}}\bk{t} = w_{21323}^{-1}\cdot\chi_{\frac{1}{2}}\bk{t} = \chi_K\bk{t_1} \frac{1}{\FNorm{t_2}_F} \\ 
& w_{21321}^{-1}\cdot\chi_{\frac{1}{2}}\bk{t} = w_{213213}^{-1}\cdot\chi_{\frac{1}{2}}\bk{t} = \chi_K\bk{t_1t_2} \frac{1}{\FNorm{t_2}_F} .
\end{align*}
We write $w_{21323}=w_{2132}w_3$ and $w_{213213}=w_{21321}w_3$.
It follows from \Cref{Cor: Keys-Shahidi} that
\begin{align*}
& \lim\limits_{s\to \frac{1}{2}} \bk{s-\frac{1}{2}}^2 \coset{M\bk{w_{21323},\chi_s} + M\bk{w_{2132},\chi_s}} = 0 \\
& \lim\limits_{s\to \frac{1}{2}} \bk{s-\frac{1}{2}}^2 \coset{M\bk{w_{213213},\chi_s} + M\bk{w_{21321},\chi_s}} = 0 .
\end{align*}

On the other hand,
\[
M\bk{w_{2132132},\chi_s} = M\bk{w_{232},w_{2321}^{-1}\cdot\chi_s}\circ M\bk{w_{2321},\chi_s} .
\]
As in \Cref{App: Local Intertwining Operators}, for $\nu\in\Places$ denote
\[
R_\nu=R_\nu\bk{w_{232}, \chi_{K_\nu}} = \lim\limits_{s\to\frac{1}{2}} N_\nu\bk{w_{232},w_{2321}^{-1}\cdot\chi_{K_\nu,s}} .
\]
Fix factorizable data $f_s=\placestimes f_{s,\nu}$ such that $M\bk{w_{2321},\chi_s}f_s=\placestimes \widetilde{f}_{s,\nu}$ and assume that $S\subseteq\Places$ is a finite set of places such that $f_{s,\nu}=f_{s,\nu}^0$ for $\nu\notin S$.
It follows from \Cref{Thm: Results regarding local intertwining operators} that $R_\nu \widetilde{f}_{s,\nu} = \widetilde{f}_{s,\nu}$ for any $\nu\in S$.
It follows from \Cref{Eq: Global functional equation of Hecke L-function and epsilon-factor}, \Cref{Eq: Functional equation of intertwining operators} and \Cref{Eq: Global Gindikin-Karpelevich} that
\begin{align*}
& M\bk{w_{2132132},\chi_s} f_s = \frac{J\bk{w_{232},w_{2321}^{-1}\cdot\chi_s}}{\prodl_{\nu\in S} \epsilon_{F_\nu}\bk{-1,\chi_{K_\nu}, \psi_\nu} \epsilon_{F_\nu}\bk{1,\chi_{K_\nu}, \psi_\nu} } M\bk{w_{2321},\chi_s} f_s \\
& = \frac{\zfun_K\bk{0} \Lfun_F\bk{-1,\chi_K} \Lfun_F\bk{1,\chi_K}}{\zfun_K\bk{1} \Lfun_F\bk{0,\chi_K} \Lfun_F\bk{2,\chi_K} \prodl_{\nu\in S} \epsilon_{F_\nu}\bk{-1,\chi_{K_\nu}, \psi_\nu} \epsilon_{F_\nu}\bk{1,\chi_{K_\nu}, \psi_\nu}} M\bk{w_{2321},\chi_s} f_s \\
& = - M\bk{w_{2321},\chi_s} f_s .
\end{align*}
Namely, $M\bk{w_{232}}$ is an endomorphism of the image of $M\bk{w_{2321}}$ acting as $-Id$.
Following reason 2 for cancellation of poles we have
\[
\lim\limits_{s\to \frac{1}{2}} \bk{s-\frac{1}{2}}^2 \coset{M\bk{w_{2132132},\chi_s} + M\bk{w_{2321},\chi_s}} = 0 .
\]
In conclusion,
\[
\lim\limits_{s\to \frac{1}{2}} \bk{s-\frac{1}{2}}^2 \Eisen_E\bk{\chi, f_s, s, g}_{B_E} = 0 .
\]

We now turn to prove that the simple pole does not cancel.
It holds that
\begin{align*}
& \Sigma_{\bk{F\times K,\chi_K,\frac{1}{2},1}} = \set{w_{23}, w_{232}, w_{213}} \\
& w_{23}^{-1}\cdot\chi_{\frac{1}{2}}\bk{t} = \chi_K\bk{t_1t_2} \frac{\FNorm{t_1}_F}{\FNorm{t_3}_K},\quad w_{232}^{-1}\cdot\chi_{\frac{1}{2}}\bk{t} = \chi_K\bk{t_2} \frac{\FNorm{t_1}_F}{\FNorm{t_3}_K},\quad w_{213}^{-1}\cdot\chi_{\frac{1}{2}}\bk{t} = \chi_K\bk{t_1} \frac{\FNorm{t_2}_F}{\FNorm{t_1}_F \FNorm{t_3}_K} .
\end{align*}
Reason 2 for non-vanishing of the leading term then implies that $\Eisen_{F\times K}\bk{\chi_K, f_s^0, s, g}$ admits a simple pole at $s=\frac{1}{2}$.

\subsection{$E=F\times K$, $K$ a field, $s=\frac{1}{2}$, $\chi^2=\Id$, $\chi\neq \Id,\chi_K$}
The intertwining operators in this case have poles at most of order $1$.
We show that indeed the Eisenstein series admits a simple pole at this point.
We apply reason 1 for non-cancellation of poles, namely we construct a section $f_s\in I_{P_E}\bk{\chi,s}$ such that $\Eisen\bk{\chi,f_s,s,g}$ admits a simple pole at $s=\frac{1}{2}$.
We have
\[
\Sigma_{\bk{E,\chi,\frac{1}{2},1}} = \set{w_{2321}, w_{2132}, w_{21321}, w_{21323}, w_{213213}, w_{2132132}}
\]
and
\begin{align*}
& w_{2321}^{-1}\cdot\chi_{\frac{1}{2}}\bk{t} = w_{2132132}^{-1}\cdot\chi_{\frac{1}{2}}\bk{t}  = \chi\bk{t_2} \frac{\FNorm{t_2}_F}{\FNorm{t_1}_F \FNorm{t_3}_K} \\
& w_{2132}^{-1}\cdot\chi_{\frac{1}{2}}\bk{t} = w_{21323}^{-1}\cdot\chi_{\frac{1}{2}}\bk{t} = \chi\bk{t_1} \frac{1}{\FNorm{t_2}_F} \\
& w_{21321}^{-1}\cdot\chi_{\frac{1}{2}}\bk{t} = w_{213213}^{-1}\cdot\chi_{\frac{1}{2}}\bk{t} = \chi\bk{t_1t_2} \frac{1}{\FNorm{t_2}_F} .
\end{align*}

We write $w_{21323}=w_{2132}w_3$ and $w_{213213}=w_{21321}w_3$.
It follows from \Cref{Cor: Keys-Shahidi} that
\begin{align*}
& \lim\limits_{s\to \frac{1}{2}} \bk{s-\frac{1}{2}} \coset{M\bk{w_{21323},\chi_s} + M\bk{w_{2132},\chi_s}} = 0 \\
& \lim\limits_{s\to \frac{1}{2}} \bk{s-\frac{1}{2}} \coset{M\bk{w_{213213},\chi_s} + M\bk{w_{21321},\chi_s}} = 0 .
\end{align*}

On the other hand, 
\[
M\bk{w_{2132132},\chi_s} = M\bk{w_{232},w_{2321}^{-1}\cdot \chi_s}\circ M\bk{w_{2321},\chi_s} .
\]


We shall prove the existence of an holomorphic section $f_s\in I_{P_E}\bk{\chi,s}$ such that
\[
0\neq\lim\limits_{s\to\frac{1}{2}} \bk{s-\frac{1}{2}} M\bk{w_{2321},\chi_s} f_s \neq 
\lim\limits_{s\to\frac{1}{2}} \bk{s-\frac{1}{2}} M\bk{w_{2321232},\chi_s} f_s
\]
and hence
\[
\lim\limits_{s\to\frac{1}{2}} \bk{s-\frac{1}{2}} \coset{M\bk{w_{2321},\chi_s} + M\bk{w_{2321232},\chi_s}} f_s \neq 0 .
\]

For any $\nu\in\Places$ we denote
\[
R_\nu\bk{w_{232},\chi_{\nu}} = \lim\limits_{s\to\frac{1}{2}} N_\nu\bk{w_{232},w_{2321}^{-1}\cdot\chi_{s}} .
\]

In \Cref{App: Local Intertwining Operators} we prove:
\begin{itemize}
\item
For any $\nu\in\Places$ such that $\chi_\nu\neq \Id_\nu,\chi_{K_\nu}$ we have
\[
\Ind_{B_E\bk{F_\nu}}^{H_E\bk{F_\nu}} \bk{w_{2321}^{-1}\cdot\bk{\chi_\nu\circ \operatorname{det}_{M_E}}\otimes\lambda_{\frac{1}{2}}} = \Pi_{1,\nu} \oplus \Pi_{-1,\nu},
\]
where $N_\nu\bk{w_{2321},\chi_{\frac{1}{2}}}\bk{\Pi_{\epsilon,\nu}}$ is the $\epsilon$-eigenspace of $R_\nu\bk{w_{232},\chi_\nu}$; namely, $R_\nu\bk{w_{232},\chi_\nu}$ acts on $N_\nu\bk{w_{2321},\chi_{\frac{1}{2}}}\bk{\Pi_{\epsilon,\nu}}$ as $\epsilon Id$.

\item
For any $\nu$ such that $\bk{\chi\circ\chi_{K\rmod F}}_\nu$ is unramified then
\[
\exists v\in N_\nu\bk{w_{2321},\chi_\nu,\lambda_{\frac{1}{2}}}\bk{I_{P_E}\bk{\chi_\nu,\frac{1}{2}}}:\quad
R_\nu\bk{w_{232},\chi}v_\nu=v_\nu .
\]

\item
If $\nu\nmid\infty$ then there exists $v_\nu\in I_{P_E}\bk{\chi_\nu,\frac{1}{2}}$ such that $N_\nu\bk{w_{2321},\chi_\nu,\lambda_{\frac{1}{2}}}v_\nu\neq 0$ is not an eigenvector of $R_\nu\bk{w_{232},\chi_\nu}$.
\end{itemize}
For a place $\nu\mid\infty$ such that $K_\nu=\R\times\R$ and $\chi_\nu=\sgn_\nu$ we fix any $v_\nu \in I_{P_E}\bk{\sgn_\nu,\frac{1}{2}}$ such that $v_\nu\notin Ker\bk{N_\nu\bk{w_{2321},\chi_\nu,\lambda_{\frac{1}{2}}}}$.
By the computation in \Cref{App: Local Intertwining Operators}, there exists such $v_\nu$.

For any place $\nu\in\Places$ we let $f_{s,\nu}$ denote the standard section such that $f_{\frac{1}{2},\nu}=v_\nu$, where $v_\nu$ is as in the list above.
We denote the restricted tensor product $f_s=\placestimes f_{s,\nu}$ and note that $f_s$ is a standard section of $I_{P_E}\bk{\chi,s}$.

We recall that given two vectors $\placestimes x^{\bk{1}}_\nu\neq 0$ and $\placestimes x^{\bk{2}}_\nu$ we have $\placestimes x^{\bk{1}}_\nu=-\placestimes x^{\bk{1}}_\nu$ if and only if $x^{\bk{1}} = \alpha_\nu x^{\bk{2}}_\nu$ for any $\nu\in\Places$ and $\prodl_{\nu\in\Places} \alpha_\nu = -1$

We consider the character $\chi\circ\Nm_{K\rmod F}$ of $\Res_{K\rmod F}\bk{\A_F^\times}$, by the assumption $\chi\circ\Nm_{K\rmod F}\neq\Id$.
By the Strong Multiplicity One Theorem, \cite{MR546599}, there are infinitely many places $\nu$ such that $\chi_{\nu_0}\circ\Nm_{K_{\nu_0}\rmod F_{\nu_0}}\neq\Id_\nu$.
We fix $\nu_0\nmid \infty$ such that $\chi_\nu\neq\Id_\nu,\chi_{K_\nu}$.

It follows from the discussion that
\[
\lim\limits_{s\to\frac{1}{2}} \bk{s-\frac{1}{2}} M\bk{w_{2321},\chi_s} f_s \neq
- J\bk{w_{232},w_{2321}^{-1}\cdot\chi_{\frac{1}{2}}} \lim\limits_{s\to\frac{1}{2}} \bk{s-\frac{1}{2}} M\bk{w_{2321232},\chi_s} f_s .
\]
Indeed, note that the two vectors are pure tensors and write
\[
\placestimes x^{\bk{1}}_\nu = \lim\limits_{s\to\frac{1}{2}} \bk{s-\frac{1}{2}} M\bk{w_{2321},\chi_s} f_s,\quad
\placestimes x^{\bk{2}}_\nu = \lim\limits_{s\to\frac{1}{2}} \bk{s-\frac{1}{2}} M\bk{w_{2321232},\chi_s} f_s .
\]
By construction, it holds that:
\begin{itemize}
\item $\placestimes x^{\bk{1}}_\nu\neq 0$.
\item $\lim_{s\to\frac{1}{2}} J\bk{w_{232},w_{2321}^{-1}\cdot\chi_s}=1$.
\item There is no $\alpha_\nu\in\C$ such that $x^{\bk{1}}_{\nu_0} = \alpha_\nu x^{\bk{2}}_{\nu_0}$ .
\end{itemize}
Hence, the claim follows.


\begin{Remark}
If $\chi$ is unramified at all places then the global spherical section realizes the pole.
\end{Remark}

\subsection{$E=F\times K$, $K$ a field, $s=\frac{3}{2}$, $\chi=\Id$}
The intertwining operators in this case have poles at most of order $2$.
We show that the Eisenstein series admits a simple pole at this point.
We have
\begin{align*}
\Sigma_{\bk{F\times K,\chi_K,\frac{1}{2},2}} = \set{w_{213213}, w_{2132132}}
\end{align*}
and
\begin{align*}
w_{213213}^{-1}\cdot\chi_{\frac{1}{2}}\bk{t} = w_{2132132}^{-1}\cdot\chi_{\frac{1}{2}}\bk{t} = \frac{1}{\FNorm{t_1}_F \FNorm{t_3}_K} .
\end{align*}

We write $w_{2132132}=w_{213213}w_2$.
It follows from \Cref{Cor: Keys-Shahidi} that
\[
\lim\limits_{s\to \frac{3}{2}} \bk{s-\frac{3}{2}}^2 \coset{M\bk{w_{2132132},\chi_s} + M\bk{w_{213213},\chi_s}} = 0 .
\]
and hence $\lim\limits_{s\to \frac{3}{2}} \bk{s-\frac{3}{2}}^2 \Eisen_E\bk{\Id, f_s, s, g}_{B_E} = 0$.

We now turn to prove that the simple pole does not cancel.
It holds that
\begin{align*}
& \Sigma_{\bk{F\times K,\chi_K,\frac{1}{2},1}} = \set{w_{232}, w_{2321}, w_{21321}, w_{21323}} \\
& w_{232}^{-1}\cdot\chi_{\frac{1}{2}}\bk{t} = \frac{\FNorm{t_1}_F^3}{\FNorm{t_2}_F \FNorm{t_3}_K}, \\
& w_{2321}^{-1}\cdot\chi_{\frac{1}{2}}\bk{t} = \frac{\FNorm{t_2}_F^2}{\FNorm{t_1}_F^3 \FNorm{t_3}_K}, \\
& w_{21321}^{-1}\cdot\chi_{\frac{1}{2}}\bk{t} = \frac{\FNorm{t_3}_K}{\FNorm{t_1t_2^2}_F}, \\
& w_{21323}^{-1}\cdot\chi_{\frac{1}{2}}\bk{t} = \frac{\FNorm{t_1}_F}{\FNorm{t_2}_F \FNorm{t_3}_K} .
\end{align*}
Reason 2 for non-vanishing of the leading term then implies that $\Eisen_{F\times K}\bk{\chi_K, f_s^0, s, g}$ admits a simple pole at $s=\frac{1}{2}$.

\subsection{$E=F\times F\times F$, $s=\frac{1}{2}$, $\chi=\Id$}
The intertwining operators in this case have poles at most of order $4$.
We show that the Eisenstein series admits a simple pole at this point.
We have
\begin{align*}
& \Sigma_{\bk{F\times F\times F,\Id,\frac{1}{2},4}} = \set{w_{21342}, w_{213421}, w_{213423}, w_{213424}, w_{2134213}, w_{2134214}, w_{2134234}, w_{21342134}} \\
& \Sigma_{\bk{F\times F\times F,\Id,\frac{1}{2},3}} \setminus \Sigma_{\bk{F\times F\times F,\Id,\frac{1}{2},4}} = \set{w_{2134}, w_{21324}, w_{21423}, w_{23421}, w_{213421342}} \\
& \Sigma_{\bk{F\times F\times F,\Id,\frac{1}{2},2}} \setminus \Sigma_{\bk{F\times F\times F,\Id,\frac{1}{2},3}} = \set{w_{213}, w_{214}, w_{234}, w_{2132}, w_{2142}, w_{2342}} \\
& \Sigma_{\bk{F\times F\times F,\Id,\frac{1}{2},1}} \setminus \Sigma_{\bk{F\times F\times F,\Id,\frac{1}{2},2}} = \set{w_{21}, w_{23}, w_{24}} .
\end{align*}
and also
\begin{align*}
& w_{21342}^{-1}\cdot\chi_{\frac{1}{2}}\bk{t} = w_{213421}^{-1}\cdot\chi_{\frac{1}{2}}\bk{t} = w_{213423}^{-1}\cdot\chi_{\frac{1}{2}}\bk{t} = w_{213424}^{-1}\cdot\chi_{\frac{1}{2}}\bk{t} \\
& = w_{2134213}^{-1}\cdot\chi_{\frac{1}{2}}\bk{t} = w_{2134214}^{-1}\cdot\chi_{\frac{1}{2}}\bk{t} = w_{2134234}^{-1}\cdot\chi_{\frac{1}{2}}\bk{t} = w_{21342134}^{-1}\cdot\chi_{\frac{1}{2}}\bk{t} = \FNorm{\frac{1}{t_2}} \\
& w_{2134}^{-1}\cdot\chi_{\frac{1}{2}}\bk{t} = w_{23421}^{-1}\cdot\chi_{\frac{1}{2}}\bk{t} = w_{21423}^{-1}\cdot\chi_{\frac{1}{2}}\bk{t} = w_{21324}^{-1}\cdot\chi_{\frac{1}{2}}\bk{t} = w_{213421342}^{-1}\cdot\chi_{\frac{1}{2}}\bk{t} = \FNorm{\frac{t_2}{t_1t_3t_4}} \\
& w_{21}^{-1}\cdot\chi_{\frac{1}{2}}\bk{t} = \FNorm{\frac{t_3t_4}{t_1t_2}}, \quad w_{23}^{-1}\cdot\chi_{\frac{1}{2}}\bk{t} = \FNorm{\frac{t_1t_4}{t_2t_3}}, \quad w_{24}^{-1}\cdot\chi_{\frac{1}{2}}\bk{t} = \FNorm{\frac{t_1t_3}{t_2t_4}} \\
& w_{213}^{-1}\cdot\chi_{\frac{1}{2}}\bk{t} = w_{2132}^{-1}\cdot\chi_{\frac{1}{2}}\bk{t} = \FNorm{\frac{t_4}{t_1t_3}} \\
& w_{214}^{-1}\cdot\chi_{\frac{1}{2}}\bk{t} = w_{2142}^{-1}\cdot\chi_{\frac{1}{2}}\bk{t} = \FNorm{\frac{t_3}{t_1t_4}} \\
& w_{234}^{-1}\cdot\chi_{\frac{1}{2}}\bk{t} = w_{2342}^{-1}\cdot\chi_{\frac{1}{2}}\bk{t} = \FNorm{\frac{t_1}{t_3t_4}} .
\end{align*}
We note that it follows from the above that
\begin{align*}
\Sigma_{\bk{F\times F\times F,\Id,\frac{1}{2},2}}\rmod\sim_{s_0} = 
& \left\{ \Sigma_{\bk{F\times F\times F,\Id,\frac{1}{2},4}}, \Sigma_{\bk{F\times F\times F,\Id,\frac{1}{2},3}}\setminus\Sigma_{\bk{F\times F\times F,\Id,\frac{1}{2},4}}, \right. \\
& \left. \set{w_{213}, w_{2132}}, \set{w_{214}, w_{2142}}, \set{w_{234}, w_{2342}} \right\} .
\end{align*}
After proving that the poles of order 4, 3 and 2 cancel, the fact that the simple pole is attained follows from reason 2 for non-vanishing of the leading term. More precisely, if $f_s^0$ is the global spherical vector one has
\[
\lim\limits_{s\to\frac{1}{2}} \bk{s-\frac{1}{2}} M\bk{w_{21},\chi_s}f_s^0 \neq 0 .
\]
Namely, $\Eisen_{F\times F\times F}\bk{1, f_s^0, s, g}$ admits a simple pole at $s=\frac{1}{2}$.
We now turn to prove that the poles of higher order are canceled.
According to \Cref{Prop: Representation at 1/2 is generated by spherical vector} and \Cref{Conj: Archimedean places1} the representation $I_{P_E}\bk{\Id,\frac{1}{2}}$ is generated by $f_{\frac{1}{2}}^0$ where $f_s^0$ is the global spherical section.

We write
\[
\zfun\bk{s} = \frac{\gamma_{-1}}{s-1} + \gamma_0+\gamma_1\bk{s-1}...
\]
The functional equation, \Cref{Functional Equation - Zeta Function}, yields
\begin{equation}
\zfun\bk{s}=\zfun\bk{1-s} = -\frac{\gamma_{-1}}{s} + \gamma_0 - \gamma_1 - ...
\end{equation}
and also
\begin{equation}
\gamma_{-1} = \Res\bk{\zfun\bk{s},1} = -\Res\bk{\zfun\bk{s},0} .
\end{equation}

We note that for $w\in \Sigma_{\bk{F\times F\times F,\Id,\frac{1}{2},4}}$ one has
\[
M\bk{w} f_{s_0}^0\bk{t} = \frac{1}{\FNorm{t_2}_F} M\bk{w} f_{s_0}^0\bk{1} \quad \forall t\in T_E\bk{\A} .
\]
Hence, it is enough to consider the coefficients of the Laurent series of the corresponding intertwining operators at $t=1$.

\begin{align*}
\suml_{w\in \Sigma_{\bk{F\times F\times F,\Id,\frac{1}{2},4}}} M\bk{w}f_{s}^0\bk{1}
& = \frac{\zfun\bk{s+\frac{1}{2}}^3 \zfun\bk{2s}}{\zfun\bk{s+\frac{3}{2}}^2 \zfun\bk{s+\frac{5}{2}} \zfun\bk{2s+1}} +
3 \frac{\zfun\bk{s-\frac{1}{2}} \zfun\bk{s+\frac{1}{2}}^2 \zfun\bk{2s}}{\zfun\bk{s+\frac{3}{2}}^2 \zfun\bk{s+\frac{5}{2}} \zfun\bk{2s+1}} \\
& + 3 \frac{\zfun\bk{s-\frac{1}{2}}^2 \zfun\bk{s+\frac{1}{2}} \zfun\bk{2s}}{\zfun\bk{s+\frac{3}{2}}^2 \zfun\bk{s+\frac{5}{2}} \zfun\bk{2s+1}} +
\frac{\zfun\bk{s-\frac{1}{2}}^3 \zfun\bk{2s}}{\zfun\bk{s+\frac{3}{2}}^2 \zfun\bk{s+\frac{5}{2}} \zfun\bk{2s+1}} .
\end{align*}
As the denumerators are equal, holomorphic and non-vanishing at $s=\frac{1}{2}$, it is enough to prove that the sum of the numerators admits at most a simple pole.
Indeed,
\begin{align*}
& \zfun\bk{s+\frac{1}{2}}^3 \zfun\bk{2s} +
3 \zfun\bk{s-\frac{1}{2}} \zfun\bk{s+\frac{1}{2}}^2 \zfun\bk{2s} \\
& + 3 \zfun\bk{s-\frac{1}{2}}^2 \zfun\bk{s+\frac{1}{2}} \zfun\bk{2s} +
\zfun\bk{s-\frac{1}{2}}^3 \zfun\bk{2s} \\
& = \frac{\gamma_{-1}^4 - 3\gamma_{-1}^4 + 3\gamma_{-1}^4 - \gamma_{-1}^4}{2 \bk{s-\frac{1}{2}}^4} + \frac{5\gamma_{-1}^3\gamma_0 - 9 \gamma_{-1}^3\gamma_0 + 3\gamma_{-1}^3\gamma_0+\gamma_{-1}^3\gamma_0}{2 \bk{s-\frac{1}{2}}^3} \\
& + \frac{\bk{7\gamma_{-1}^3\gamma_1 + 9\gamma_{-1}^2\gamma_0} + 3\bk{-7\gamma_{-1}^3\gamma_1 - \gamma_{-1}^2\gamma_0} + 3 \bk{7\gamma_{-1}^3\gamma_1 - 3\gamma_{-1}^2\gamma_0} + \bk{-7\gamma_{-1}^3\gamma_1 + 3\gamma_{-1}^2\gamma_0}}{2 \bk{s-\frac{1}{2}}^2} \\
& + o\bk{\bk{s-\frac{1}{2}}^{-2}} = o\bk{\bk{s-\frac{1}{2}}^{-2}} .
\end{align*}
In particular,
\[
\lim\limits_{s\to \frac{1}{2}} \bk{s-\frac{1}{2}}^4 \Eisen_E\bk{1, f_s^0, s, g}_{B_E} = 0 .
\]

Similarly, for $w\in \Sigma_{\bk{F\times F\times F,\Id,\frac{1}{2},3}}$ one has
\[
M\bk{w} f_{s_0}^0\bk{t} = \frac{\FNorm{t_2}_F}{\FNorm{t_1t_3t_4}_F} M_w f_{s_0}^0\bk{1} \quad \forall t\in T_E\bk{\A} .
\]
We consider the leading term in the Laurent series the corresponding intertwining operators:
\begin{align*}
& \suml_{w\in \Sigma_{\bk{F\times F\times F,\Id,\frac{1}{2},4}}} M\bk{w}f_{s}^0\bk{1} \\
& = \frac{\zfun\bk{s+\frac{1}{2}}^3}{\zfun\bk{s+\frac{3}{2}}^2 \zfun\bk{s+\frac{5}{2}}} +
3 \frac{\zfun\bk{s-\frac{1}{2}} \zfun\bk{s+\frac{1}{2}} \zfun\bk{2s}}{\zfun\bk{s+\frac{3}{2}} \zfun\bk{s+\frac{5}{2}} \zfun\bk{2s+1}} +
\frac{\zfun\bk{s-\frac{1}{2}}^2 \zfun\bk{s-\frac{3}{2}} \zfun\bk{2s}}{\zfun\bk{s+\frac{5}{2}} \zfun\bk{s+\frac{3}{2}}^2 \zfun\bk{2s+1}} \\
& = \frac{\zfun\bk{s+\frac{1}{2}}^3\zfun\bk{2s+1} + 3 \zfun\bk{s-\frac{1}{2}} \zfun\bk{s+\frac{1}{2}} \zfun\bk{2s}\zfun\bk{s+\frac{3}{2}} + \zfun\bk{s-\frac{1}{2}}^2 \zfun\bk{s-\frac{3}{2}} \zfun\bk{2s}}{\zfun\bk{s+\frac{5}{2}} \zfun\bk{s+\frac{3}{2}}^2 \zfun\bk{2s+1}} .
\end{align*}
It is enough to prove that the numerator admits at most a simple pole.
Indeed,
\begin{align*}
& \zfun\bk{s+\frac{1}{2}}^3\zfun\bk{2s+1} + 3 \zfun\bk{s-\frac{1}{2}} \zfun\bk{s+\frac{1}{2}} \zfun\bk{2s}\zfun\bk{s+\frac{3}{2}} \\
& + \zfun\bk{s-\frac{1}{2}}^2 \zfun\bk{s-\frac{3}{2}} \zfun\bk{2s} \\
& = \frac{\zfun\bk{2}\gamma_{-1}^3 - \frac{3}{2} \zfun\bk{2} \gamma_{-1}^3 + \frac{1}{2}\zfun\bk{-1}\gamma_{-1}^3}{\bk{s-\frac{1}{2}}^3} \\
& + \frac{\bk{3a_0\gamma_{-1}^2\gamma_0 + 2a_1\gamma_{-1}^3} - 3 \bk{a_0\gamma_{-1}^2\gamma_0+\frac{1}{2}\gamma_{-1}^3} -\frac{1}{2}a_1\gamma_{-1}^3 }{\bk{s-\frac{1}{2}}^2} \\
& + o\bk{\bk{s-\frac{1}{2}}^{-2}} = o\bk{\bk{s-\frac{1}{2}}^{-2}}.
\end{align*}
Here
\[
\zfun\bk{2+\epsilon} = a_0 + a_1\epsilon + o\bk{\epsilon},\quad a_0 = \zfun\bk{2} .
\]

We now turn to the operators with a double pole at $s=\frac{1}{2}$.
Here we have three equivalence classes of $\sim_{s_0}$ in $\Sigma_{\bk{F\times F\times F,\Id,\frac{1}{2},2}} \setminus \Sigma_{\bk{F\times F\times F,\Id,\frac{1}{2},3}}$.
We then have
\begin{align*}
& \coset{M\bk{w_{213}} + M\bk{w_{2132}}}f_s^0\bk{1} \\
& = \coset{M\bk{w_{214}} + M\bk{w_{2142}}}f_s^0\bk{1} \\
& = \coset{M\bk{w_{234}} + M\bk{w_{2342}}}f_s^0\bk{1} \\
& = \frac{\zfun\bk{s+\frac{1}{2}}^2}{\zfun\bk{s+\frac{3}{2}} \zfun\bk{s+\frac{5}{2}}} + \frac{\zfun\bk{s+\frac{1}{2}}\zfun\bk{s-\frac{1}{2}}}{\zfun\bk{s+\frac{5}{2}}\zfun\bk{s+\frac{3}{2}}}
\end{align*}
As the denumerators are equal, holomorphic and non-vanishing at $s=\frac{1}{2}$, it is enough to prove that the sum of the numerators admits at most a simple pole.
Indeed,
\begin{align*}
& \zfun\bk{s+\frac{1}{2}}^2 + \zfun\bk{s+\frac{1}{2}}\zfun\bk{s-\frac{1}{2}} \\
& = \frac{\zfun\bk{2}^2\gamma_{-1}^2 - \zfun\bk{2}^2\gamma_{-1}^2}{\bk{s-\frac{1}{2}}^2} + o\bk{\bk{s-\frac{1}{2}}^{-2}} = o\bk{\bk{s-\frac{1}{2}}^{-2}}.
\end{align*}

In conclusion, $\Eisen_E\bk{1, f_s^0, s, g}$ admits a pole of order $1$ at $s=\frac{1}{2}$.

\begin{Remark}
Applying reason 2 for cancellation of poles one can show, independently \Cref{Conj: Archimedean places1}, that the pole here is of order at most $3$.
\end{Remark}

\subsection{$E=F\times F\times F$, $s=\frac{1}{2}$, $\chi^2=\Id$, $\chi\neq \Id$}
The intertwining operators in this case have poles of order at most $1$.
We show that indeed the Eisenstein series admits a simple pole at this point.
Here we apply reason 2 for non-cancellation of poles.
We have
\begin{align*}
\Sigma_{\bk{F\times F\times F,\chi,\frac{1}{2},1}}
= & \left\{ w_{21324}, w_{21423}, w_{23421}, w_{21342}, w_{213421}, w_{213423}, w_{213424}, \right. \\
& \left. w_{2134213}, w_{2134214}, w_{2134234}, w_{21342134}, w_{213421342} \right\}
\end{align*}
and
\begin{align*}
& w_{21324}^{-1}\cdot\chi_{\frac{1}{2}}\bk{t} = w_{21423}^{-1}\cdot\chi_{\frac{1}{2}}\bk{t} = w_{23421}^{-1}\cdot\chi_{\frac{1}{2}}\bk{t} = w_{213421342}^{-1}\cdot\chi_{\frac{1}{2}}\bk{t} =\chi\bk{t_2}\FNorm{\frac{t_2}{t_1t_3t_4}} \\
& w_{21342}^{-1}\cdot\chi_{\frac{1}{2}}\bk{t} = w_{2134213}^{-1}\cdot\chi_{\frac{1}{2}}\bk{t} = w_{2134214}^{-1}\cdot\chi_{\frac{1}{2}}\bk{t} = w_{2134234}^{-1}\cdot\chi_{\frac{1}{2}}\bk{t} =\chi\bk{t_1t_3t_4}\FNorm{\frac{1}{t_2}} \\
& w_{213421}^{-1}\cdot\chi_{\frac{1}{2}}\bk{t} = w_{213423}^{-1}\cdot\chi_{\frac{1}{2}}\bk{t} = w_{213424}^{-1}\cdot\chi_{\frac{1}{2}}\bk{t} = w_{21342134}^{-1}\cdot\chi_{\frac{1}{2}}\bk{t} =\chi\bk{t_1t_2t_3t_4}\FNorm{\frac{1}{t_2}} .
\end{align*}

We write
We note that
\begin{align*}
& w_{213421342} = w_{21324} w_{2132} = w_{21423} w_{2142} = w_{23421} w_{2342} \\
& w_{21342134} = w_{213424} w_{13} = w_{213423} w_{14} = w_{213421} w_{34} \\
& w_{2134213} = w_{21342} w_{13} \\
& w_{2134213} = w_{21342} w_{14} \\
& w_{2134213} = w_{21342} w_{34} .
\end{align*}

According to \Cref{Cor: Keys-Shahidi}, we conclude that
\begin{align*}
& \lim\limits_{s\to \frac{1}{2}} \bk{s-\frac{1}{2}} M\bk{w_{21342},\chi_s} = \lim\limits_{s\to \frac{1}{2}} \bk{s-\frac{1}{2}} M\bk{w_{2134213},\chi_s} \\
&\qquad\qquad = \lim\limits_{s\to \frac{1}{2}} \bk{s-\frac{1}{2}} M\bk{w_{2134214},\chi_s} = \lim\limits_{s\to \frac{1}{2}} \bk{s-\frac{1}{2}} M\bk{w_{2134234},\chi_s} , \\
& \lim\limits_{s\to \frac{1}{2}} \bk{s-\frac{1}{2}} M\bk{w_{213421},\chi_s} = \lim\limits_{s\to \frac{1}{2}} \bk{s-\frac{1}{2}} M\bk{w_{213423},\chi_s} \\
&\qquad\qquad = \lim\limits_{s\to \frac{1}{2}} \bk{s-\frac{1}{2}} M\bk{w_{213424},\chi_s} = \lim\limits_{s\to \frac{1}{2}} \bk{s-\frac{1}{2}} M\bk{w_{21342134},\chi_s} .
\end{align*}

We do not treat the rest of the terms as they have different exponents; in particular, the pole of order $1$ does not cancel.

\subsection{Square Integrability of the Residual Representations}

We now determine, for a point where $\Eisen_E\bk{\chi, f_s, s, g}$ admits a pole, whether the residual representation is square-integrable or not.
Before doing so, we recall the following criterion from \cite[pg. 104]{MR0579181}.

For $w\in W\bk{P_E,H_E}$ the element $\Real\bk{w^{-1}\cdot\chi_s}\in \mathfrak{a}_\R^\ast$ is known as the \emph{exponent of $I_{P_E}\bk{\chi,s}$ corresponding to $w$}.

Assume $\Eisen_E\bk{\chi, f_s, s, g}$ admits a pole of order $n$ at $s_0$.
We recall the equivalence relation defined in \Cref{Equivalence relation on Sigma-m} and define the quotient set
\[
\Sigma_{s_0} = \Sigma_{\bk{E,\chi,s_0,n}}\rmod \sim_{s_0} .
\]
Note that the exponent is well defined for equivalence classes, namely $\Real\bk{w^{-1}\cdot\chi_s}=\Real\bk{w'^{-1}\cdot\chi_s}$ when $w\sim_{s_0} w'$.
And we consider the elements contributing to the residual representation at $s_0$, namely:
\[
\Sigma_{s_0}^0 = \set{\Omega \in \Sigma_{s_0} \mvert \lim\limits_{s\to s_0} \bk{s-s_0}^n \suml_{w\in \Omega} M\bk{w,\chi_s} \not\equiv 0 } .
\]

\begin{Lem}[Langlands' Criterion for Square Integrability]
	Assume $\Eisen_E\bk{\chi, f_s, s, g}$ admits a pole of order $n$ at $s_0$.
	The residual representation $\Res_{s=s_0} \Eisen_E\bk{\chi, f_s, s, g}$ is a square-integrable representation if and only if $\Real\bk{\Omega^{-1}\cdot\chi_s}<0$ for all $\Omega\in \Sigma_{s_0}^0$.
\end{Lem}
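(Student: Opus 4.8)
The plan is to read off the exponents of the residual representation from the constant-term formula \Cref{Eq: Constant term} and then to invoke Langlands' square-integrability criterion \cite[pg.~104]{MR0579181}; there is essentially no new analytic input here, and the work lies entirely in organizing these exponents correctly.

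First, I would use \Cref{Thm: poles of Eisenstein series are the poles of the constant term} to realize $\Res_{s=s_0}\Eisen_E\bk{\chi,f_s,s,g}$ inside $\mathcal{A}\bk{H_E}$ as the image of the leading Laurent coefficient $\Lambda_{-n}\bk{\chi,s_0}$, and then to compute its constant term along $B_E$ as the leading Laurent coefficient at $s_0$ of $\Eisen_E\bk{\chi,f_s,s,t}_{B_E}$, namely
\[
\lim\limits_{s\to s_0}\bk{s-s_0}^n \suml_{w\in W\bk{P_E,H_E}} \bk{M_w\bk{s}f_s}\res{T_E}\bk{t} .
\]
Only the $w\in\Sigma_{\bk{E,\chi,s_0,n}}$ survive this limit; for such $w$ the limit transforms under $T_E\bk{\A}$ by the character $w^{-1}\cdot\chi_{s_0}$, and by \Cref{Equivalence relation on Sigma-m} two of these coincide precisely when $w\sim_{s_0}w'$. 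Grouping by $\sim_{s_0}$, the constant term of the residue is a sum over the classes $\Omega\in\Sigma_{s_0}$ of a function transforming by $\Omega^{-1}\cdot\chi_{s_0}$, the non-zero summands being exactly those with $\Omega\in\Sigma_{s_0}^0$ by the very definition of the latter; since distinct classes give distinct $T_E$-characters, no further cancellation is possible, so the set of exponents of the residual representation along $B_E$ is precisely $\set{\Real\bk{\Omega^{-1}\cdot\chi_{s_0}}\mvert\Omega\in\Sigma_{s_0}^0}$.

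Second, I would apply the square-integrability criterion: a residual automorphic representation of $H_E\bk{\A}$ lies in $L^2$ if and only if, for every proper standard parabolic $Q=L_Q V_Q$, every exponent of its constant term along $Q$ — a character of the split central torus $A_{L_Q}$ of $L_Q$ — has real part in the open negative cone of $\mathfrak{a}^\ast_{L_Q,\R}$. Because $H_E$ is quasi-split and $B_E\subseteq Q$, transitivity of constant terms identifies the $A_{L_Q}$-exponents along $Q$ with (a subset of) the projections to $\mathfrak{a}^\ast_{L_Q,\R}$ of the $T_E$-exponents $\Real\bk{\Omega^{-1}\cdot\chi_{s_0}}$ found above. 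A standard convexity lemma (also in \cite{MR0579181}) then shows that all of these projections lie in the respective negative cones, for all $Q$ simultaneously, if and only if each $\Real\bk{\Omega^{-1}\cdot\chi_{s_0}}$ itself lies in the open negative cone of $\mathfrak{a}^\ast_\R$ — that is, $\Real\bk{\Omega^{-1}\cdot\chi_s}<0$ for every $\Omega\in\Sigma_{s_0}^0$. This yields the asserted equivalence.

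The step requiring the most care — rather than a genuine obstacle — is this last reduction from Langlands' criterion, which quantifies over all proper parabolics, to the single cone condition on the Borel exponents: one must check both that the negative-cone property is preserved under the projections $\mathfrak{a}^\ast_\R\to\mathfrak{a}^\ast_{L_Q,\R}$, and, conversely, that if $\Real\bk{\Omega^{-1}\cdot\chi_{s_0}}$ fails to be strictly negative for some $\Omega\in\Sigma_{s_0}^0$ then this failure is already witnessed along some $Q$ (indeed along $B_E$, once one notes the offending exponent is not annihilated by a projection). Everything else is bookkeeping, using the same tables of intertwining operators and Gindikin--Karpelevich factors already employed to determine $\Sigma_{s_0}^0$ in the case analysis above.
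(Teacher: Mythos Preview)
The paper does not prove this lemma at all: it is stated as a direct recollection of Langlands' criterion from \cite[pg.~104]{MR0579181}, with the sets $\Sigma_{s_0}$ and $\Sigma_{s_0}^0$ introduced just before the statement precisely so that the general criterion can be phrased in the notation of the present setting. Your sketch is a correct unpacking of why the general Langlands criterion specializes to this form here --- computing the $B_E$-constant term of the residue via \Cref{Eq: Constant term}, identifying its $T_E$-exponents with $\set{\Real\bk{\Omega^{-1}\cdot\chi_{s_0}}\mvert\Omega\in\Sigma_{s_0}^0}$, and then invoking the negative-cone criterion --- but this is more than the paper itself supplies, and is exactly the kind of verification the citation is meant to stand in for.
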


\begin{Cor}
	The residual representation of $\Eisen_E\bk{\chi, \cdot , s, \cdot }$ is square-integrable with the exception of the following case:
	\begin{itemize}
		\item $E=F\times K$ where $K$ is a field:
		\begin{itemize}
			\item $s=\frac{1}{2}$ with $\chi=\Id,\chi_K$.
			\item $s=\frac{3}{2}$ with $\chi=\Id$.
		\end{itemize}
		\item $E=F\times F\times F$, $s=\frac{1}{2}$ with $\chi=\Id$.
	\end{itemize}

\end{Cor}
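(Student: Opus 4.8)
The proof is a point-by-point application of the Langlands criterion just stated. For each triple $\bk{E,\chi,s_0}$ appearing in Table~\ref{Poles of Eisenstein Series} the plan is: (i) read off, from the case-by-case analysis carried out above, the set $\Sigma_{s_0}^0$ of $\sim_{s_0}$-equivalence classes whose leading Laurent coefficient does not cancel — in each subsection we have already recorded both the surviving intertwining operators and the characters $w^{-1}\cdot\chi_{s_0}$ of $T_E\bk{\A}$; and (ii) test whether the exponent $\Real\bk{w^{-1}\cdot\chi_{s_0}}$ lies strictly inside the negative cone for \emph{every} $w$ representing a class in $\Sigma_{s_0}^0$. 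Step (ii) is carried out by writing each exponent in the basis of relative simple roots of $H_E$ — the relative root system being $G_2$ when $E$ is a (Galois or non-Galois) cubic field, $B_3$ when $E=F\times K$, and $D_4$ when $E=F\times F\times F$ — using that $\operatorname{det}_{M_E}\res{T_E}$ equals the highest root as the normalization and keeping track of the factor $3$ (resp.\ $2$) relating $\FNorm{\cdot}_E$ (resp.\ $\FNorm{\cdot}_K$) to the norm on the maximal $F$-split torus.

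First I would dispose of the residual points that are already understood. For $s_0=\frac{5}{2}$, $\chi=\Id$ the residue is the trivial representation of $H_E\bk{\A}$ (coming from the longest Weyl element), which is square-integrable since $H_E\bk{F}\lmod H_E\bk{\A}$ has finite volume; alternatively its exponent $\Real\bk{w^{-1}\cdot\chi_{5/2}}$ is the most anti-dominant element, hence strictly negative. For $s_0=\frac{3}{2}$ with $\chi=\chi_E$ (all $E$), and for $s_0=\frac{3}{2}$, $\chi=\Id$ with $E=F\times F\times F$, I would quote the explicit description of the residual representation from \cite{MR1918673} and conclude square-integrability; for $s_0=\frac{3}{2}$, $\chi=\Id$ with $E=F\times K$, the residue is the one computed in \cite{RallisSchiffmannPaper}, which is not square-integrable — consistently, the surviving exponent (that of $w_{232}$) lies on a wall of the negative chamber. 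For $E$ a field and $s_0=\frac{3}{2}$, $\chi=\Id$ there is no pole by Table~\ref{Poles of Eisenstein Series}, so nothing to check.

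It remains to treat $s_0=\frac{1}{2}$. For $E$ a field: with $\chi=\Id$ the only surviving class is that of $w_{21}$ (equivalently $w_{21212}$), with exponent $\FNorm{t_2}_F\FNorm{t_1}_E^{-1}$, which equals $-3\beta_s-\beta_\ell$ in the relative $G_2$ and so is strictly negative, giving a square-integrable residue; for $\chi$ a nontrivial quadratic character the surviving exponents are the real parts of those of $w_{212},w_{2121},w_{21212}$, namely $-3\beta_s-2\beta_\ell$ (twice) and $-3\beta_s-\beta_\ell$, all strictly negative. The same argument, with the relative $G_2$, covers the Galois or non-Galois cubic-field cases entirely, so none of these appears in the exception list. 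For $E=F\times K$: with $\chi=\Id$ the surviving class is that of $w_{21}$ with exponent $\FNorm{t_3}_K\FNorm{t_1t_2}_F^{-1}$, whose expression in the relative $B_3$ has a vanishing coefficient on the short simple root, so it lies on a wall and the residue is \emph{not} square-integrable; with $\chi=\chi_K$ the surviving class is that of $w_{23}$ with exponent $\chi_K(t_1t_2)\FNorm{t_1}_F\FNorm{t_3}_K^{-1}$, again on a wall, hence not square-integrable; while for $\chi$ quadratic different from $\Id,\chi_K$ the only surviving class is that of $w_{2321}$ with exponent $\chi(t_2)\FNorm{t_2}_F\FNorm{t_1}_F^{-1}\FNorm{t_3}_K^{-1}$, which in the relative $B_3$ is strictly negative, so the residue is square-integrable. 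Finally for $E=F\times F\times F$ with $\chi=\Id$ the surviving classes are those of $w_{21},w_{23},w_{24}$, with exponents $\FNorm{t_3t_4}_F\FNorm{t_1t_2}_F^{-1}$, $\FNorm{t_1t_4}_F\FNorm{t_2t_3}_F^{-1}$, $\FNorm{t_1t_3}_F\FNorm{t_2t_4}_F^{-1}$; each equals $-\alpha-\alpha_2$ for one of the three outer simple roots $\alpha$ of $D_4$, with the other two outer-root coefficients vanishing, so it lies on a wall and the residue is not square-integrable; whereas for $\chi$ a nontrivial quadratic character the surviving exponents (those of the $w_{21324}$- and $w_{21342}$-type elements) are $-\alpha_1-\alpha_2-\alpha_3-\alpha_4$ and $-\alpha_1-2\alpha_2-\alpha_3-\alpha_4$, both strictly negative, so the residue is square-integrable. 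Collecting the failures gives exactly the list in the statement.

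The main obstacle is not any single computation but the bookkeeping: one must use precisely the set $\Sigma_{s_0}^0$ of classes whose leading coefficient is genuinely non-zero, rather than all classes of maximal pole order, since a wall-exponent contributed by a class that in fact cancels would produce a spurious \enquote{not square-integrable}; and the translation from the torus characters $w^{-1}\cdot\chi_{s_0}$ to coordinates in the relative simple roots must honour the folding $D_4\to G_2$ or $D_4\to B_3$ together with the attendant factors of $3$ or $2$. Once $\Sigma_{s_0}^0$ and these coordinates are pinned down, the sign checks are immediate and yield the claimed exceptions.
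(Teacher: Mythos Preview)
Your proposal is correct and takes essentially the same approach as the paper, whose own proof is a single sentence deferring to Langlands' criterion and the exponent Tables~\ref{Table: Exponents, Cubic}, \ref{Table: Exponents, Quadratic}, \ref{Table: Exponents, Split}; you have effectively carried out that table lookup case by case. One small inaccuracy: in the case $E=F\times K$, $s=\tfrac{3}{2}$, $\chi=\Id$, the exponent of $w_{232}$ (which is $\alpha_1-\alpha_2-\Nm\,\alpha_3$ in the simple-root basis) has a strictly positive $\alpha_1$-coefficient rather than a vanishing one, so it lies outside the closed negative cone rather than merely \enquote{on a wall}---but since exhibiting one bad exponent already suffices for the negative conclusion, this does not affect your argument.
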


This follows from the proof of \Cref{Thm: Poles of Eisenstein series}, from Langlands' criterion to square integrability and from the information in Tables \ref{Table: Exponents, Cubic}, \ref{Table: Exponents, Quadratic} and \ref{Table: Exponents, Split}.

\end{proof}

\section{Special Cases of Local Intertwining Operators}
\label{App: Local Intertwining Operators}
The proof of \Cref{Thm: Poles of Eisenstein series} requires an analysis of the behavior of the following global intertwining operators:
\begin{enumerate}
	\item $\lim_{s\to\frac{1}{2}} M\bk{w_{212},w_{21}^{-1}\cdot\chi_s}$ when $E$ is a Galois field extension of $F$ and $\chi=\chi_E$.
	\item $\lim_{s\to\frac{1}{2}} M\bk{w_{232},w_{2321}^{-1}\cdot\chi_s}$ when $E=F\times K$ and $\chi$ is a non-trivial quadratic character.
\end{enumerate}
Applying \Cref{Eq: Global Gindikin-Karpelevich} we may write
\[
\begin{split}
& M\bk{w_{212},w_{21}^{-1}\cdot\chi_s} = J\bk{w,\chi_s} \coset{\placestimes N_\nu\bk{w_{212},w_{21}^{-1}\cdot\chi_s}}, \\
& M\bk{w_{232},w_{2321}^{-1}\cdot\chi_s} = J\bk{w,\chi_s} \coset{\placestimes N_\nu\bk{w_{232},w_{2321}^{-1}\cdot\chi_s}} .
\end{split}
\]
In this section we treat the behavior of these normalized local intertwining operators at $s=\frac{1}{2}$.
We make the following notations:
\begin{enumerate}
	\item The intertwining operator $M\bk{w_{212},w_{21}^{-1}\cdot\chi_s}$ when $E$ is a field, $s=\frac{1}{2}$ and $\chi=\chi_E$ (In particular, $E\rmod F$ is a Galois extension).
	In this case, for $\nu\in\Places$, we denote
	\[
	R_\nu\bk{w_{212},\chi_{E_\nu}} = \lim\limits_{s\to\frac{1}{2}} N\bk{w_{212},w_{21}^{-1}\cdot\chi_{s,\nu}} =
	\lim_{s\to\frac{1}{2}} \coset{N\bk{w_{212},w_{21}^{-1}\cdot\lambda} \res{\lambda=\lambda_s} } .
	\]
	
	\item The intertwining operator $M\bk{w_{232},w_{2321}^{-1}\cdot\chi_s}$ when $E=F\times K$, $s=\frac{1}{2}$ and $\chi^2=\Id$ with $\chi\neq 1$.
	In this case, for $\nu\in\Places$, we denote
	\[
	R_\nu\bk{w_{232},\chi_\nu} = \lim\limits_{s\to\frac{1}{2}} N\bk{w_{232},w_{2321}^{-1}\cdot\chi_{s,\nu}} =
	\lim_{s\to\frac{1}{2}} \coset{N\bk{w_{232},w_{2321}^{-1}\cdot\lambda} \res{\lambda=\lambda_s} } .
	\]
\end{enumerate}
We calculate the action of $R_\nu\bk{w_{212},\chi_{E_\nu}}$ on $N_\nu\bk{w_{21},\lambda_{\frac{1}{2}}}\coset{I_{P_E}\bk{\chi_{E_\nu},\frac{1}{2}}}$ and the action of $R_\nu\bk{w_{232},\chi_\nu}$ on $N_\nu\bk{w_{2321},\lambda_{\frac{1}{2}}}\coset{I_{P_E}\bk{\chi_{\nu},\frac{1}{2}}}$.

We note that for $\nu\in\Places$, according to Tables \ref{Table: Action of W on characters, Cubic}, \ref{Table: Action of W on characters, Quadratic} and \ref{Table: Action of W on characters, Split}, $R_\nu\bk{w_{212},\chi_{E_\nu}}$ is an endomorphism of 
$$\Ind_{P_E^{w_{21}}\bk{F_\nu}}^{H_E\bk{F_\nu}} w_{21}^{-1}\cdot \bk{\chi_{E_\nu}\circ\operatorname{det}_{M_E}\otimes \FNorm{det_{M_E}}^{s+\frac{5}{2}}}$$
and $R_\nu\bk{w_{232},\chi_\nu}$ is an endomorphism of 
$$\Ind_{P_E^{w_{2321}}\bk{F_\nu}}^{H_E\bk{F_\nu}} w_{2321}^{-1}\cdot \bk{\chi_\nu\circ\operatorname{det}_{M_E}\otimes \FNorm{det_{M_E}}^{s+\frac{5}{2}}} . $$

In each of the two cases above, for $\nu\in\Places$, $E_\nu$ and $\chi_\nu$ can take the following values:
\begin{enumerate}
	\item
	\begin{itemize}
		\item $E_\nu=F_\nu\times F_\nu \times F_\nu$ and $\chi_{E,\nu} = \Id_\nu$.
		\item $E_\nu$ is a Galois field extension of $F_\nu$ and $\chi_{E,\nu} = \chi_{E_\nu}$.
	\end{itemize}
	\item
	If $\chi=\chi_K$:
	\begin{itemize}
		\item $K_\nu=F_\nu\times F_\nu$ and $\chi_{K,\nu} = \Id_\nu$.
		\item $K_\nu$ and $\chi_{K,\nu}=\chi_{K_\nu}$.
	\end{itemize}
	If $\chi\neq \chi_K$:
	\begin{itemize}
		\item $K_\nu=F_\nu\times F_\nu$ and $\chi_\nu=\Id_\nu$.
		\item $K_\nu=F_\nu\times F_\nu$ and $\chi_\nu^2=\Id$ with $\chi_\nu\neq \Id_\nu$.
		\item $K_\nu$ is a field and $\chi_\nu=\Id_\nu$.
		\item $K_\nu$ is a field and $\chi_\nu=\chi_{K_\nu}$.
		\item $K_\nu$ is a field and $\chi_\nu^2=\Id$ with $\chi_\nu\neq \Id_\nu, \chi_{K_\nu}$.
	\end{itemize}
\end{enumerate}

\begin{Remark}
We note that when $E_\nu=F_\nu\times F_\nu\times F_\nu$ we have
\[
\begin{split}
& R_\nu\bk{w_{212},\chi_{E_\nu}} = \lim\limits_{s\to\frac{1}{2}} N\bk{w_{21342},w_{2134}^{-1}\cdot\chi_{s,\nu}} \\
& R_\nu\bk{w_{232},\chi_{\nu}} = \lim\limits_{s\to\frac{1}{2}} N\bk{w_{2342},w_{23421}^{-1}\cdot\chi_{s,\nu}} .
\end{split}
\]
\end{Remark}

We summarize the results of this section in the following theorem:
\begin{Thm}
\label{Thm: Results regarding local intertwining operators}
Fix $\nu\in\Places$.
It holds that:
\begin{enumerate}
	\item
	If $E$ is a Galois field extension of $F$ and $\chi=\chi_E$ then
	\[
	R_\nu\bk{w_{212},\chi_{E_\nu}} v = v \quad \forall v\in v\in N_\nu\bk{w_{21},\chi_s}\bk{I_{P_E}\bk{\chi_\nu,\frac{1}{2}}} .
	\]
	 

	\item
	Assume that $E=F\times K$, where $K$ is a quadratic \'etale algebra over $F$.
	\begin{itemize}
	\item
	If $\chi_\nu=\Id_\nu$ then 
	\[
	R_\nu\bk{w_{232},\chi}v=v \quad \forall v\in N_\nu\bk{w_{2321},\chi_{\nu},\lambda_{\frac{1}{2}}}\bk{I_{P_E}\bk{\chi_\nu,\frac{1}{2}}} .
	\]
	\item 
	If $\chi_\nu$ is unramified or $\chi_\nu=\chi_{K_\nu}$ then
	\[
	\exists v\in N_\nu\bk{w_{2321},\chi_\nu,\lambda_{\frac{1}{2}}}\bk{I_{P_E}\bk{\chi_\nu,\frac{1}{2}}}:\quad
	R_\nu\bk{w_{232},\chi}v=v
	\]	
	
	\item
	If $\nu\nmid\infty$ then there exists $v\in I_{P_E}\bk{\chi_\nu,\frac{1}{2}}$ such that $N_\nu\bk{w_{2321},\chi_\nu,\lambda_{\frac{1}{2}}}v\neq 0$ is not an eigenvector of $R_\nu\bk{w_{232},\chi_\nu}$.
	\end{itemize}
\end{enumerate}
\end{Thm}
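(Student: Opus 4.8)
The plan is to prove every assertion by reducing $R_\nu(w_{212},\chi_{E_\nu})$ and $R_\nu(w_{232},\chi_\nu)$ to compositions of the rank-one normalized intertwining operators of \Cref{Subsec: Rank-one}, and then reading off the eigenvalues from the explicit structure of the (degenerate) principal series of $SL_2$ over the relevant local fields recalled there. Concretely, since $w_{212}=w_2w_1w_2$ and $w_{232}=w_2w_3w_2$ are reduced words, the cocycle relation for normalized operators (the analogue for $N$ of \Cref{Eq: Functional equation of intertwining operators}) factors $N_\nu(w_{212},\mu)$ and $N_\nu(w_{232},\mu)$ each into a composition of three rank-one normalized operators attached to simple reflections. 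By \Cref{Lemma: intertwining operator of simple reflections} each such factor is, via the pull-back $\iota_\alpha^\ast$, the rank-one operator $N(w_0,\sigma,s)$ for $SL_2$ over the field of definition $F_\alpha$; for the middle reflection ($\alpha=\alpha_1$ in part~(1), $\alpha=\alpha_3$ in part~(2)) this is $SL_2$ over $E$, resp.\ $K$, hence locally over $E_\nu$, resp.\ $K_\nu$, which may be split or a field. Using the expressions for $w_{21}^{-1}\cdot\chi_{1/2}$ and $w_{2321}^{-1}\cdot\chi_{1/2}$ already computed in the proof of \Cref{Thm: Poles of Eisenstein series}, together with the tables of the action of $W$ on characters in \Cref{Sec: Tables}, I would determine the $SL_2$-character $\sigma$ and the parameter $s$ attached to each factor and check that at $s=\frac{1}{2}$ these are all trivial or quadratic characters sitting at the reducibility point $s=0$, so that the structure theory of \Cref{Subsec: Rank-one} applies verbatim.

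Next I would pin down the image of the first operator $N_\nu(w_{21},\cdot)$, resp.\ $N_\nu(w_{2321},\cdot)$, inside the principal series. Each of the remaining two rank-one factors then either restricts to a reflection that fixes the relevant character — in which case \Cref{Cor: Keys-Shahidi}, together with \Cref{Lem: Ikeda for SL2} in the trivial-character sub-case, shows it acts by $\pm\operatorname{Id}$ on that image — or it restricts to a composition $N(w_0,s)\circ N(w_0,-s)$, which equals $\operatorname{Id}$ by \Cref{Lem: Local composition of rank-one intertwining operators}. Multiplying the resulting scalars gives that $R_\nu(w_{212},\chi_{E_\nu})$, resp.\ $R_\nu(w_{232},\chi_\nu)$ when $\chi_\nu=\Id_\nu$, acts as the identity on the full image of $N_\nu(w_{21},\cdot)$, resp.\ $N_\nu(w_{2321},\cdot)$; this proves~(1) and the first bullet of~(2). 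When $\chi_\nu$ is unramified or $\chi_\nu=\chi_{K_\nu}$, that image is no longer irreducible and splits along the $\pm1$-eigenspaces of $R_\nu(w_{232},\chi_\nu)$; here I would exhibit the required $v$ by feeding in the spherical vector (when $\chi_\nu$ is unramified, using \Cref{Eq: Rank one Gindikin Karpelevich}) or the $\chi_{K_\nu}$-fixed vector, and verifying that its image lies in the $+1$-eigenspace. For the last bullet, at a finite place $\nu$ I would produce a vector of $I_{P_E}(\chi_\nu,\frac{1}{2})$ whose image under $N_\nu(w_{2321},\cdot)$ is nonzero with nonzero components in both the $+1$- and the $-1$-eigenspace of $R_\nu(w_{232},\chi_\nu)$; this is possible because over a $p$-adic field both eigenspaces of the relevant rank-one operator $N(w_0,\sigma_\nu,0)$ are met — the $-1$-eigenspace being carried by the ``$d\cdot SL_2(\mathcal{O}_\nu)\cdot d^{-1}$-unramified'' constituent $\pi^{(-1)}$ recorded in \Cref{Subsec: Rank-one}.

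The main obstacle I expect is the bookkeeping in the reduction step: tracking, through the three rank-one factors and through the identification of $w_{\alpha_1}$ (resp.\ $w_{\alpha_3}$) with the product of Galois-conjugate reflections over $E_\nu$ (resp.\ $K_\nu$), exactly which local character and $SL_2$-parameter occurs at each stage, and — when that field is locally a field rather than a product — confirming that the restricted character still lies at a reducibility point so that the results of \Cref{Subsec: Rank-one} remain applicable. A secondary difficulty is describing the image of the first operator precisely enough to decide which eigenspace a given vector falls into; at the archimedean places this forces one to use the composition-series descriptions of the principal series of $SL_2(\R)$ and $SL_2(\C)$ in combination rather than one factor at a time, which is also why the sharp ``not an eigenvector'' statement in the last bullet of~(2) is restricted to $\nu\nmid\infty$.
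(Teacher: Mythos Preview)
Your rank-one factorization $N_\nu(w_{212},\cdot)=N_\nu(w_2)\,N_\nu(w_1)\,N_\nu(w_2)$ (resp.\ $N_\nu(w_{232},\cdot)=N_\nu(w_2)\,N_\nu(w_3)\,N_\nu(w_2)$) is precisely what the paper uses for part~(1) and for the case $\chi_\nu=\chi_{K_\nu}$ in part~(2). One technical point: \Cref{Cor: Keys-Shahidi} and \Cref{Lem: Ikeda for SL2} concern the global \emph{unnormalized} operator $M$, which acts by $-1$; the local statement you actually need for the middle factor is that at the trivial character and parameter $0$ the $SL_2$-principal series is irreducible, so by Schur the normalized $N$ is a scalar, and since $Nf^0=f^0$ that scalar is $+1$. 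This is how the paper phrases it, and it fixes the sign you leave as ``$\pm$''.

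For the first bullet of part~(2) ($\chi_\nu=\Id_\nu$) the paper takes a different route: it invokes \Cref{Prop: Representation at 1/2 is generated by spherical vector} and \Cref{Conj: Archimedean places1} to say $I_{P_E}(\Id_\nu,\tfrac12)$ is generated by its spherical vector, hence so is the image of $N_\nu(w_{2321})$, and then $Nf^0=f^0$ finishes. Your three-factor approach is arguably cleaner here, since for $\chi_\nu=\Id_\nu$ the middle factor again sees the trivial character and the same argument gives $R_\nu(w_{232},\Id)=\operatorname{Id}$ on the entire principal series without appealing to the conjecture.

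The genuine gap is in the last bullet. Knowing that the two eigenspaces $\Pi_{\pm1}$ of $R_\nu(w_{232},\chi_\nu)$ are both nonzero does not tell you that the image $N_\nu(w_{2321})\bigl(I_{P_E}(\chi_\nu,\tfrac12)\bigr)$ meets both: that image is a proper subspace of $\Pi_1\oplus\Pi_{-1}$ (the factor $N_\nu(w_1)$ is not bijective here) and could, a priori, lie entirely inside $\Pi_1$. The paper forces nontrivial intersection with both summands via a Jacquet-module count: it exhibits a character $\Lambda$ of $T_E$ occurring with multiplicity $1$ in the Jacquet module of each $\Pi_{\pm1}$ but with multiplicity $2$ in the Jacquet module of the image. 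Something of this kind---either a multiplicity argument or an explicit vector whose image you verify has nonzero projection to $\Pi_{-1}$---is needed to complete your proof.
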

For the rest of this section we fix a place $\nu\in\Places$ and drop $\nu$ from all notations.

\subsection{$E$ is a Field}
For $E$ a field, we need only to consider the case where $\chi=\chi_E$ and the intertwining operator $M\bk{w_{212},w_{21}^{-1}\cdot\bk{\chi_E}_s}$.
We note that in this case, $E$ is automatically non-Archimedean.

\begin{Lem}
\label{Lem: Local intertwining Operator E is a field}
	For any $v\in N\bk{w_{21},\chi_s}\bk{I_{P_E}\bk{\chi_E,\frac{1}{2}}}$ it holds that $R\bk{w_{212},\chi_E}v=v$.
\end{Lem}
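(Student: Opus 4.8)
The plan is to reduce the statement to the rank-one $SL_2$ situation using \Cref{Lemma: intertwining operator of simple reflections} and the cocycle relation \Cref{Eq: Functional equation of intertwining operators}. First I would decompose $w_{212} = w_2 w_1 w_2$ and examine, step by step, how $w_{21}^{-1}\cdot\lambda_s$ pairs with the relevant simple coroots, using Table \ref{Table: Action of W on characters, Cubic} to pin down the character $w_{21}^{-1}\cdot\chi_{E,s}$ restricted to each $T_{\alpha}$. The key arithmetic input is that at $s=\frac{1}{2}$ the character $\chi_E$ is quadratic and, because $E/F$ is Galois cyclic of degree $3$, we have $\chi_E\circ\Nm_{E/F}=\Id$; this forces several of the rank-one restrictions to land exactly on reducibility points of the $SL_2$ principal series.

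Next I would argue that on the image $N(w_{21},\chi_s)\bigl(I_{P_E}(\chi_E,\tfrac12)\bigr)$, which by \Cref{Lem: Normalized intertwining oeprator is non-vanishing} is a nonzero subrepresentation, the operator $R(w_{212},\chi_E)$ acts by a scalar. The natural way to see this is: the relevant principal series restricted along the $SL_2$'s is, at $s=\frac12$, built from the self-dual situation $\sigma^2=\Id$, $s=0$ of \Cref{Subsec: Rank-one}; there the induced $SL_2$-representation splits as $\pi^{(1)}\oplus\pi^{(-1)}$ and the normalized operator acts as $\epsilon\,\Id$ on $\pi^{(\epsilon)}$. I would use \Cref{Cor: Keys-Shahidi} (equivalently \cite[Proposition 6.3]{MR944102}) to identify which eigenvalue occurs, namely to show the image of $N(w_{21},\chi_s)$ lies in the $+1$-eigenspace: this is exactly the content that $w_{212} = w_{21}\cdot w''$ for a suitable $w''$ with $w''^{-1}\cdot\bigl(w_{21}^{-1}\cdot\lambda_{1/2}\otimes\chi_E\bigr)$ fixed, so that $M(w'',\Id,\cdot)$ acts as $-1$, and then carefully book-keep the two signs (one from Keys–Shahidi, one from the relation between $w_{212}$ and $w_{2121}$ used in the corresponding subsection) to conclude the composite scalar is $+1$.

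The cleanest route, and the one I would actually write, is probably to mimic the global computation already done in the subsection ``$E$ a field, $s=\tfrac12$, $\chi=\chi_E$'': there one shows $M(w_{21212},\chi_s) = -M(w_{21},\chi_s)$ on the residue, via the explicit ratio $\frac{\zfun_E(0)\Lfun_F(-1,\chi_E)\Lfun_F(1,\chi_E^2)}{\zfun_E(1)\Lfun_F(0,\chi_E)\Lfun_F(2,\chi_E^2)\prod_\nu\epsilon_\nu}= -1$. Localizing this identity: $w_{21212} = w_{212}\circ w_{21}$, so $N(w_{212},w_{21}^{-1}\cdot\chi_{s,\nu})\circ N(w_{21},\chi_{s,\nu})$ equals a product of normalized rank-one operators attached to $w_{212}$, each of which—by the explicit form of $w_{21}^{-1}\cdot\chi_{1/2}$ in the cubic table and \Cref{Eq: Rank one Gindikin Karpelevich} together with \Cref{Lem: Local composition of rank-one intertwining operators}—acts as the identity on the relevant (unramified-type or split) constituent. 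So $R_\nu(w_{212},\chi_{E_\nu})$ restricted to $N_\nu(w_{21},\chi_s)(I_{P_E}(\chi_{E_\nu},\tfrac12))$ is the identity.

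The main obstacle is the sign/eigenvalue bookkeeping at the Archimedean-type and ramified rank-one pieces: one must check that the constituent of the rank-one $SL_2$ induced representation in which the image of $N(w_{21},\chi_s)$ sits is precisely the $+1$-eigenspace of the relevant rank-one normalized operator (the $\pi^{(1)}$ piece, in the notation of \Cref{Subsec: Rank-one}), not the $\pi^{(-1)}$ piece. This requires tracking the spherical (or appropriate $K$-fixed) vector through the two composed operators and invoking that $N(w_0,\sigma_\nu,s)f^0_{s,\nu}=f^0_{-s,\nu}$ so that the spherical line always lands in $\pi^{(1)}$; since $I_{P_E}(\chi_{E_\nu},\tfrac12)$ has a unique irreducible quotient generated by a distinguished vector (an analogue of \Cref{Prop: Representation at 1/2 is generated by spherical vector}, valid here because $E_\nu$ is a field forces $\nu\nmid\infty$), the whole image is in the $+1$-eigenspace, giving $R(w_{212},\chi_E)v=v$ as claimed. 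I would present the argument in that order: (i) reduce to rank one via \Cref{Lemma: intertwining operator of simple reflections} and \Cref{Eq: Functional equation of intertwining operators}; (ii) read off the rank-one characters from the cubic table; (iii) apply \Cref{Eq: Rank one Gindikin Karpelevich}, \Cref{Lem: Local composition of rank-one intertwining operators} and \Cref{Cor: Keys-Shahidi} to get the scalar; (iv) pin the scalar to $+1$ using the spherical vector and the uniqueness of the irreducible quotient of $I_{P_E}(\chi_{E_\nu},\tfrac12)$.
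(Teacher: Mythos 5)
Your overall strategy --- decompose $w_{212}=w_2w_1w_2$, reduce each factor to a rank-one $SL_2$ operator via \Cref{Lemma: intertwining operator of simple reflections} and the cocycle relation, and kill the outer pair with \Cref{Lem: Local composition of rank-one intertwining operators} --- is exactly the paper's. But your analysis of the middle factor, which is where the content of the lemma lies, goes wrong. You assert that at $s=\frac12$ the relevant rank-one restriction lands on the reducible point ``$\sigma^2=\Id$, $s=0$'' with the decomposition $\pi^{(1)}\oplus\pi^{(-1)}$, and you then devote your ``main obstacle'' paragraph to deciding in which eigenspace the image of $N\bk{w_{21},\chi_s}$ sits. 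That is not the situation here. Since $\chi_E\circ\Nm_{E\rmod F}=\Id$ (and note that $\chi_E$ is \emph{cubic}, not quadratic, when $E$ is a cubic Galois field), the restriction of $w_{212}^{-1}\cdot\chi_s$ to the torus $h_{\alpha_1}\bk{t_1}h_{\alpha_3}\bk{t_1^\sigma}h_{\alpha_4}\bk{t_1^{\sigma^2}}$ is $\FNorm{t_1}_E^{s-\frac12}$, i.e.\ the rank-one character is \emph{identically trivial} at $s=\frac12$. The corresponding representation of $\Res_{E_\nu\rmod F_\nu}SL_2$ at the trivial unitary point is irreducible and unramified, $N$ fixes its spherical vector, hence $N\bk{w_1,w_{212}^{-1}\cdot\chi_{\frac12}}$ is the identity outright: there is no $\pi^{\bk{-1}}$ and no sign to track. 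The $\pi^{\bk{\pm1}}$ picture you describe is the one occurring in the \emph{other} case of \Cref{App: Local Intertwining Operators} ($E=F\times K$, $\chi^2=\Id$, $\chi\neq\Id,\chi_K$), where the conclusion is correspondingly weaker.

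Moreover, the device you propose to resolve your (spurious) obstacle is not available: you want to pin the eigenvalue to $+1$ by arguing that $I_{P_E}\bk{\chi_{E_\nu},\frac12}$ is generated by a spherical vector, as ``an analogue of \Cref{Prop: Representation at 1/2 is generated by spherical vector}''. That proposition is stated only for $\chi_\nu=\Id_\nu$ and $E_\nu$ \emph{not} a field; when $E_\nu$ is a field the degenerate principal series at $s=\frac12$ is in fact not generated by the spherical vector, and when $\chi_{E,\nu}$ is ramified there is no spherical vector in $I_{P_E}\bk{\chi_{E_\nu},\frac12}$ at all. So step (iv) of your outline rests on a false premise. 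Once the middle factor is correctly identified as the identity, none of this is needed: one gets $w_{2121}^{-1}\cdot\chi_{\frac12}=w_{212}^{-1}\cdot\chi_{\frac12}$, and the outer composite $N\bk{w_2,w_{2121}^{-1}\cdot\chi_{\frac12}}\circ N\bk{w_2,w_{21}^{-1}\cdot\chi_{\frac12}}$ is the identity by \Cref{Lem: Local composition of rank-one intertwining operators} applied to the character $\chi_{E,\nu}^2$ at the rank-one parameters $\pm\frac12$. Note also that these two outer rank-one operators are not individually the identity --- only their composition is --- so the phrase ``each of which acts as the identity'' must be corrected.
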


\begin{proof}
	We start by recalling that in a neighborhood of $s_0=\frac{1}{2}$ we have
	\[
	N\bk{w_{212},w_{21}^{-1}\cdot\chi_s} = N\bk{w_2,w_{2121}^{-1}\cdot\chi_s} \circ N\bk{w_1,w_{212}^{-1}\cdot\chi_s} \circ N\bk{w_2,w_{21}^{-1}\cdot\chi_s} .
	\]
	On the other hand, note that
	\[
	w_{212}^{-1}\cdot\chi_{s}\bk{h_{\alpha_1}\bk{t_1}h_{\alpha_3}\bk{t_1^\sigma}h_{\alpha_4}\bk{t_1^{\sigma^2}}} = \FNorm{t_1}^{s-\frac{1}{2}}
	\]
	and hence, for any $s\in\C$, $\Ind_{B_E\bk{F}}^{M_E\bk{F}} w_{212}^{-1}\cdot\chi_s$ is an unramified representation of the standard Levi subgroup $\gen{T,x_{\alpha_1}\bk{r},x_{-\alpha_1}\bk{r}\mvert r\in F}$.
	In particular
	\[
	w_{212}^{-1}\cdot\chi_{\frac{1}{2}}\bk{h_{\alpha_1}\bk{t_1}h_{\alpha_3}\bk{t_1^\sigma}h_{\alpha_4}\bk{t_1^{\sigma^2}}} = 1 .
	\]
	It follows that $N\bk{w_1,w_{212}^{-1}\cdot\chi_{\frac{1}{2}}}$ acts on $\Ind_{B_E\bk{F}}^{H_E\bk{F}} w_{212}^{-1}\cdot\chi_{\frac{1}{2}}$ as $Id$.
	The claim then follows from \Cref{Lem: Local composition of rank-one intertwining operators}, namely
	\[
	N\bk{w_2,w_{2121}^{-1}\cdot\chi_{\frac{1}{2}}} \circ N\bk{w_2,w_{21}^{-1}\cdot\chi_{\frac{1}{2}}} = Id .
	\]
\end{proof}

\subsection{$E=F\times K$}

\subsubsection{The Case of $\chi=\Id$}
\begin{Prop}
	\label{Case of trivial character E non-split}
	For any $v\in N\bk{w_{2321},\Id,\lambda_{\frac{1}{2}}}\bk{I_{P_E}\bk{\Id,\frac{1}{2}}}$ it holds that $R\bk{w_{232},\Id}v = v$.
\end{Prop}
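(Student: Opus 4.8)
The plan is to follow the blueprint of the proof of \Cref{Lem: Local intertwining Operator E is a field}: decompose $N\bk{w_{232},w_{2321}^{-1}\cdot\Id_s}$ into rank-one operators by the cocycle relation, show that at $s=\tfrac12$ the ``middle'' factor is the identity, and that the two ``outer'' factors then cancel against each other. Concretely, write $w_{232}=w_{\alpha_2}w_{\alpha_3}w_{\alpha_2}$ (a reduced expression, since $\alpha_2,\alpha_3$ are adjacent) and apply the normalized analogue of \Cref{Eq: Functional equation of intertwining operators} to get, near $s=\tfrac12$,
\[
N\bk{w_{232},w_{2321}^{-1}\cdot\Id_s}=N\bk{w_{\alpha_2},w_{232123}^{-1}\cdot\Id_s}\circ N\bk{w_{\alpha_3},w_{23212}^{-1}\cdot\Id_s}\circ N\bk{w_{\alpha_2},w_{2321}^{-1}\cdot\Id_s}.
\]

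First I would show the middle factor $N\bk{w_{\alpha_3},w_{23212}^{-1}\cdot\Id_s}$ acts as the identity at $s=\tfrac12$. From Table~\ref{Table: Action of W on characters, Quadratic} one reads off that the $\lambda$-part of $w_{23212}^{-1}\cdot\Id_{\frac12}=w_{\alpha_2}^{-1}\cdot\bk{w_{2321}^{-1}\cdot\Id_{\frac12}}$ restricts trivially to $T_{\alpha_3}$ (equivalently, $w_{23212}^{-1}\cdot\Id_{\frac12}$ is $w_{\alpha_3}$-invariant, so that $w_{232123}^{-1}\cdot\Id_{\frac12}=w_{23212}^{-1}\cdot\Id_{\frac12}$); hence by \Cref{Lemma: intertwining operator of simple reflections} this operator factors through the rank-one operator $N\bk{w_0,\cdot}$ on the $SL_2\bk{K_\nu}$-principal series induced from the trivial character at the point of irreducibility, where it fixes the spherical vector and is therefore the identity --- exactly as $N\bk{w_{\alpha_1},\cdot}$ was in the field case. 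Removing the middle factor, what remains at $s=\tfrac12$ is $N\bk{w_{\alpha_2},w_{\alpha_2}^{-1}\cdot\mu}\circ N\bk{w_{\alpha_2},\mu}$ with $\mu=w_{2321}^{-1}\cdot\Id_{\frac12}$, which equals $\operatorname{Id}$ by \Cref{Lem: Local composition of rank-one intertwining operators}. Thus $R\bk{w_{232},\Id}=\operatorname{Id}$ on all of $I_{B_E}\bk{w_{2321}^{-1}\cdot\Id_{\frac12}}$, a fortiori on $N\bk{w_{2321},\Id,\lambda_{\frac12}}\bk{I_{P_E}\bk{\Id,\frac12}}$.

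Two points require separate attention. The case $K_\nu=F_\nu\times F_\nu$, so $E_\nu=F_\nu\times F_\nu\times F_\nu$, is covered by the Remark preceding this section: there $R_\nu\bk{w_{232},\Id}=\lim_{s\to\frac12}N\bk{w_{2342},w_{23421}^{-1}\cdot\Id_s}$, and with $w_{2342}=w_{\alpha_2}w_{\alpha_3}w_{\alpha_4}w_{\alpha_2}$ one has \emph{two} middle factors $N\bk{w_{\alpha_3},\cdot}$, $N\bk{w_{\alpha_4},\cdot}$, each killed by the same coordinate computation now read off Table~\ref{Table: Action of W on characters, Split}, before the outer $w_{\alpha_2}$-operators cancel. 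Alternatively --- and this sidesteps the rank-one bookkeeping entirely --- one may note that since $I_{P_E}\bk{\Id_\nu,\frac12}$ is generated by its spherical vector (\Cref{Prop: Representation at 1/2 is generated by spherical vector}, applicable as $E_\nu$ is not a field), its image $N\bk{w_{2321},\Id,\lambda_{\frac12}}\bk{I_{P_E}\bk{\Id_\nu,\frac12}}$ is the $H_E\bk{F_\nu}$-submodule of $I_{B_E}\bk{w_{2321}^{-1}\cdot\Id_{\frac12}}$ generated by the spherical vector $f^0$; as $R\bk{w_{232},\Id}$ is $H_E\bk{F_\nu}$-equivariant and fixes $f^0$ (by \Cref{Gindikin-Karpelevich formula}), it fixes this submodule pointwise.

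The main obstacle is the reducibility forced by $\chi=\Id$: the $\alpha_2$-rank-one datum appearing in the outer operators sits at the point $s=\pm\tfrac12$ for the trivial character of $SL_2$ --- where $N\bk{w_0,\Id,s}$ is non-injective, respectively has a pole --- so \Cref{Lem: Local composition of rank-one intertwining operators} must be invoked in its limiting form, and one must check both that the would-be pole of $N\bk{w_{232},w_{2321}^{-1}\cdot\Id_s}$ at $s=\tfrac12$ genuinely cancels (so that $R\bk{w_{232},\Id}$ exists) and that the limit is \emph{exactly} $\operatorname{Id}$, with no correction arising from the first-order term of the middle factor composed with the residual part of the outer operator. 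This delicacy is absent in the field case, where the relevant rank-one character was a ramified cubic character and no reducibility occurred; it is also the step at which the cyclicity argument of the previous paragraph is most valuable, as it reaches the conclusion without the limit analysis.
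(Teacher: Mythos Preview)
Your ``alternative'' cyclicity argument is precisely the paper's proof, and the paper uses only that argument: $I_{P_E}\bk{\Id_\nu,\tfrac12}$ is generated by its spherical vector, hence so is its image under $N\bk{w_{2321},\Id,\lambda_{1/2}}$, and since the normalized operator fixes the spherical vector and is $H_E\bk{F_\nu}$-equivariant, it acts as the identity on that image. One omission in your version: \Cref{Prop: Representation at 1/2 is generated by spherical vector} requires $\nu\nmid\infty$, not merely that $E_\nu$ is not a field (which is automatic here since $E=F\times K$). For Archimedean $\nu$ the paper has to invoke \Cref{Conj: Archimedean places1}, so this proposition is in fact conditional on that conjecture at the infinite places.

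Your rank-one decomposition is a genuinely different route, modeled on \Cref{Lem: Local intertwining Operator E is a field}, and you have correctly diagnosed why it does not go through cleanly for $\chi=\Id$. The two outer $N\bk{w_{\alpha_2},\cdot}$ factors sit at the $SL_2$-points $\pm\tfrac12$ for the trivial character, where one has nontrivial kernel and the other a simple pole. The cancellation of \Cref{Lem: Local composition of rank-one intertwining operators} cannot be invoked directly, because the middle factor $N\bk{w_{\alpha_3},\cdot}$ equals the identity only \emph{at} $s=\tfrac12$; for nearby $s$ it maps between different principal series, so the value of the composite at $s=\tfrac12$ picks up a cross-term of the form $(\text{residue of outer})\circ(\partial_s\text{ of middle})\circ(\text{value of outer})$ which you would need to show vanishes. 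You flag this but do not resolve it, and the paper does not attempt it either---it bypasses the whole issue via cyclicity. The contrast with the cubic-field case is exactly as you say: there the outer characters involve $\chi_E^2\ne\Id$, so no reducibility, and the three-factor argument runs with no limits needed.
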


\begin{proof}
	When $F$ is non-Archimedean, $I_{P_E}\bk{\chi, s}$ is generated by the spherical vector $v^0$, due to \Cref{Prop: Representation at 1/2 is generated by spherical vector}.
	When $F$ is Archimedean, this is the content of \Cref{Conj: Archimedean places1}.
	Hence $N\bk{w_{2321},\chi_s}\bk{I_{P_E}\bk{\chi, s}}\neq\set{0}$ is also generated by the spherical vector.
	Indeed, the image of the normalized spherical vector of $I_{P_E}\bk{\chi, \frac{1}{2}}$ under $N_\nu\bk{w_{2321},\chi_{\nu},\lambda_{\frac{1}{2}}}$ is a non-zero spherical vector since $J\bk{w_{232},\chi_{\frac{1}{2}}}\in\C^\times$.
	By the definition of the normalized intertwining operator, $N\bk{w_{232},w_{2321}^{-1}\cdot\chi_s}v^0 = v^0$ from which the claim follows.
\end{proof}

\subsubsection{The Case of $\chi=\chi_K$}

\begin{Lem}
	\label{Lem: chi=chi_K intertwining operator}
	For any $v\in N\bk{w_{2321},\chi_K,\lambda_{\frac{1}{2}}}\bk{I_{P_E}\bk{\chi_K,\frac{1}{2}}}$ it holds that $R\bk{w_{232},\chi_K}v = v$.
\end{Lem}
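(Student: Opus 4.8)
The plan is to mimic the proof of \Cref{Lem: Local intertwining Operator E is a field}, replacing the triple $\bk{w_{212},w_{21},\alpha_1}$ attached to $E$ by the triple $\bk{w_{232},w_{2321},\alpha_3}$ attached to $K$. Writing $w_{232}=w_2w_3w_2$ and iterating the cocycle relation \Cref{Eq: Functional equation of intertwining operators} for the normalized operators, in a neighbourhood of $s_0=\frac{1}{2}$ one has
\[
N\bk{w_{232},w_{2321}^{-1}\cdot\chi_s}=N\bk{w_2,w_{232123}^{-1}\cdot\chi_s}\circ N\bk{w_3,w_{23212}^{-1}\cdot\chi_s}\circ N\bk{w_2,w_{2321}^{-1}\cdot\chi_s},
\]
so that $R\bk{w_{232},\chi_K}$ is the limit of this composition as $s\to\frac{1}{2}$. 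It therefore suffices to show that the middle factor becomes the identity at $s=\frac{1}{2}$ and that the two outer factors then compose to the identity.

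For the middle factor, I would use Table~\ref{Table: Action of W on characters, Quadratic} together with the description of the $\Gal\bk{K/F}$-action on the root datum in the $E=F\times K$ case (\Cref{Subsec: Quasi-Split Forms of D4}) to check that, restricted to the maximal torus of the rank-one subgroup $\iota_{\alpha_3}\bk{SL_2}$ (defined over $K$), the character $w_{23212}^{-1}\cdot\chi_s$ equals $\FNorm{\cdot}_K^{\,s-\frac{1}{2}}$; in particular the factor $\chi_K$ drops out, precisely because the component of $\chi_K\circ\operatorname{det}_{M_E}$ along this subtorus is $\chi_K\circ\Nm_{K\rmod F}=\Id$, which is the trivial character at every place of $F$ (inert, split or ramified, finite or archimedean) — this is the exact analogue of the identity $\chi_E\circ\Nm_{E\rmod F}=\Id$ that makes the $E$-field case work. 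Consequently $w_{23212}^{-1}\cdot\chi_{\frac{1}{2}}$ is fixed by $w_3$, and the associated rank-one principal series at $s=\frac{1}{2}$ is the induced-from-trivial principal series at its irreducible point: at finite places this is immediate, and at archimedean places one invokes the reducibility criteria for $SL_2\bk{\R}$ and $SL_2\bk{\C}$ recalled in \Cref{Subsec: Rank-one} (the trivial-character, parameter-$0$ principal series is irreducible, since $\epsilon_\nu=0$ is even, resp.\ the analogous irreducibility over $\C$). Hence, by \Cref{Lemma: intertwining operator of simple reflections} together with the fact that $N\bk{w_0,\Id,s}$ acts as the identity on an irreducible unramified principal series (\Cref{Eq: Rank one Gindikin Karpelevich}), the operator $N\bk{w_3,w_{23212}^{-1}\cdot\chi_{\frac{1}{2}}}$ acts as the identity on all of $\Ind_{B_E}^{H_E}w_{23212}^{-1}\cdot\chi_{\frac{1}{2}}$.

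Given this, at $s=\frac{1}{2}$ the outer pair reads $N\bk{w_2,w_{23212}^{-1}\cdot\chi_{\frac{1}{2}}}\circ N\bk{w_2,w_{2321}^{-1}\cdot\chi_{\frac{1}{2}}}$, because $w_{232123}^{-1}\cdot\chi_{\frac{1}{2}}=w_{23212}^{-1}\cdot\chi_{\frac{1}{2}}$ by the $w_3$-invariance just established, while $w_{2321}^{-1}\cdot\chi_{\frac{1}{2}}=w_2\cdot\bk{w_{23212}^{-1}\cdot\chi_{\frac{1}{2}}}$. Via \Cref{Lemma: intertwining operator of simple reflections} this is a composition of rank-one operators of the shape $N\bk{w_0,s_\nu,\sigma_\nu}\circ N\bk{w_0,-s_\nu,\overline{\sigma_\nu}}$, which by \Cref{Lem: Local composition of rank-one intertwining operators} (interpreted as a limit if either factor has a pole at $s=\frac{1}{2}$) equals the identity. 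Assembling the three pieces yields $R\bk{w_{232},\chi_K}=\operatorname{Id}$ on the whole induced space, and in particular $R\bk{w_{232},\chi_K}v=v$ for every $v\in N\bk{w_{2321},\chi_K,\lambda_{\frac{1}{2}}}\bk{I_{P_E}\bk{\chi_K,\frac{1}{2}}}$.

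The main obstacle is the explicit computation hidden in the phrase ``$w_{23212}^{-1}\cdot\chi_s$ restricts to $\FNorm{\cdot}_K^{\,s-\frac{1}{2}}$ on the $\alpha_3$-torus'': one must track the Weyl-group action carefully, taking account of the $\Gal\bk{K/F}$-rationality, for each admissible local shape of $\bk{K_\nu,\chi_{K,\nu}}$ (split versus inert, unramified versus ramified quadratic), and at the archimedean places confirm that the rank-one representation is genuinely the irreducible member of its reducibility line, so that the middle operator is exactly the identity and not merely a scalar on a proper constituent (this is precisely where the $\chi=\chi_K$ case differs from the generic quadratic case, in which the norm does not trivialize $\chi$ and one only obtains a single fixed line). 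Everything else is a formal consequence of the cocycle relation and \Cref{Lem: Local composition of rank-one intertwining operators}, exactly as for the $E$-field case.
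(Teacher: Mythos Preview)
Your proof is correct and follows exactly the approach the paper intends: the paper's own proof is the single sentence ``The proof of this is similar to that of \Cref{Lem: Local intertwining Operator E is a field},'' and you have carried out precisely that adaptation, replacing the triple $\bk{w_{212},w_{21},\alpha_1,\chi_E}$ by $\bk{w_{232},w_{2321},\alpha_3,\chi_K}$ and using $\chi_K\circ\Nm_{K\rmod F}=\Id$ in place of $\chi_E\circ\Nm_{E\rmod F}=\Id$. Your extra care at the Archimedean places (checking irreducibility of the trivial-parameter rank-one principal series so that the middle normalized operator is genuinely the identity and not merely a scalar on a constituent) is a detail the paper glosses over, since in \Cref{Lem: Local intertwining Operator E is a field} the local field is automatically non-Archimedean; here it is needed and you supply it.
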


The proof of this is similar to that of \Cref{Lem: Local intertwining Operator E is a field}.

\subsubsection{The Case of $\chi^2=\Id$ with $\chi\neq 1, \chi_K$}
\label{Subsubsection: Quadratic character non-split case}
Consider $\lambda=\bk{s_1,s_2,s_3}\in\mathfrak{a}_\C^\ast$.
In this case,
\[
w_{2321}^{-1}\cdot\bk{\bk{\chi\circ\operatorname{det}_{M_E}}\otimes\lambda} \bk{t_1,t_2,t_3} = 
\chi\bk{t_2} \FNorm{t_1}_F^{-s_1-2s_2-2s_3} \FNorm{t_2}_F^{s_1+s_2} \FNorm{t_3}_K^{s_3} .
\]
Write
\[
\begin{split}
& N\bk{w_{232},w_{2321}^{-1}\cdot\chi,w_{2321}^{-1}\cdot\lambda} = \\ 
& N\bk{w_2,w_{232123}^{-1}\cdot\chi,w_{232123}^{-1}\cdot\lambda} \circ N\bk{w_3,w_{23212}^{-1}\cdot\chi,w_{23212}^{-1}\cdot\lambda} \circ N\bk{w_2,w_{2321}^{-1}\cdot\chi,w_{2321}^{-1}\cdot\lambda} .
\end{split}
\]
We fix $\lambda_{\frac{1}{2}}=\bk{-1,2,-1}\in\mathfrak{a}_\C^\ast$.

Let $P_3$ be the parabolic subgroup of $H_E$ whose Levi subgroup $M_{3}$ is generated by $\alpha_3+\alpha_4^\sigma$.
The Levi $M_3$ is isomorphic to $GL_1\times \bk{Res_{K\rmod F}GL_2}^0$, where
\[
\bk{Res_{K\rmod F}GL_2}^0\bk{F} = \set{g\in GL_2\bk{K}\mvert \operatorname{det} g\in F^\times} .
\]
Note that $M_{3}\cap B_E=GL_1\times \mathcal{B}^0$, where
\[
\mathcal{B}^0 = \mathcal{B}\bk{K}\cap \bk{Res_{K\rmod F}GL_2}^0\bk{F} .
\]

As a consequence of this,
\[
\Ind_{M_3\cap B_E\bk{F}}^{M_3\bk{F}} \coset{w_{23212}^{-1}\cdot\bk{\bk{\chi\circ \operatorname{det}_{M_E}}\otimes\lambda_{\frac{1}{2}}} \res{GL_1\times \Res_{K\rmod F}SL_2}} = \pi^{\bk{1}} \oplus \pi^{\bk{-1}} ,
\]
where $\pi^{\bk{\epsilon}}$ are as in \Cref{Subsec: Rank-one}.
It follows from the discussion there that $N\bk{w_3,w_{23212}^{-1}\cdot\chi,w_{23212}^{-1}\cdot\lambda}$ acts on $\pi^{\bk{\epsilon}}$ by $\epsilon Id$.

We let
\begin{align*}
& \Pi_1 = \Ind_{P_3\bk{F}}^{H_E\bk{F}} \pi^{\bk{1}} \\
& \Pi_{-1} = \Ind_{P_3\bk{F}}^{H_E\bk{F}} \pi^{\bk{-1}}
\end{align*}
and then
\[
\Ind_{B_E\bk{F}}^{H_E\bk{F}} \bk{w_{2321}^{-1}\cdot\bk{\chi\circ \operatorname{det}_{M_E}}\otimes\lambda_{\frac{1}{2}}} = \Pi_1 \oplus \Pi_{-1}
\]
is a direct sum of two representations.
It then holds that
\begin{itemize}
	\item
	$N\bk{w_2, w_{2321}^{-1}\cdot\chi, w_{2321}^{-1}\cdot\lambda}$  is holomorphic and bijective at $\lambda_{\frac{1}{2}}$.
	\item
	$N\bk{w_{3}, w_{23212}^{-1}\cdot\chi, w_{23212}^{-1}\cdot\lambda}$ acts on $\Pi_\epsilon$ as $\epsilon Id$ due to \Cref{Lemma: intertwining operator of simple reflections}.
	\item
	$N\bk{w_2, w_{232123}^{-1}\cdot\chi, w_{232123}^{-1}\cdot\lambda}$  is holomorphic and bijective at $\lambda_{\frac{1}{2}}$.
\end{itemize}

It follows from \Cref{Langlands identity for composition of intertwining operators} that $N\bk{w_{232}, w_{2321}^{-1}\cdot\chi, w_{2321}^{-1}\cdot\lambda_{\frac{1}{2}}}$ acts on $N\bk{w_{2321}}\bk{\Pi_\epsilon}$ as $\epsilon Id$.

We now compute the action of $R\bk{w_{232},\chi}$ on $N\bk{w_{2321}}\bk{I_{P_E}\bk{\chi,\frac{1}{2}}}$ in the following lemma.
\begin{Lem}
	\label{Lem: Local ramified character non-split}
	The following holds:
	\begin{enumerate}
	\item
	If $\chi$ is unramified then there exists $v\in N\bk{w_{2321},\chi_{\frac{1}{2}}}\bk{I_{P_E}\bk{\chi, {\frac{1}{2}}}}$ such that $R\bk{w_{232},\chi}v = v$.
	\item
	If $F$ is non-Archimedean then there exists $v\in I_{P_E}\bk{\chi,\frac{1}{2}}$ such that $N\bk{w_{2321},\chi_{\frac{1}{2}}}v\neq 0$ is not an eigenvector of $R\bk{w_{232},\chi}$.
	\end{enumerate}
\end{Lem}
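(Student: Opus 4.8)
The plan is to read both parts off the structure set up just above the lemma. Recall that $N\bk{w_{2321},\chi_{\frac{1}{2}}}$ carries $I_{P_E}\bk{\chi,\frac{1}{2}}$ into
\[
\Ind_{B_E\bk{F}}^{H_E\bk{F}} \bk{w_{2321}^{-1}\cdot\bk{\chi\circ\operatorname{det}_{M_E}}\otimes\lambda_{\frac{1}{2}}} = \Pi_1\oplus\Pi_{-1},
\]
and that $R\bk{w_{232},\chi}$ acts on the part of $N\bk{w_{2321},\chi_{\frac{1}{2}}}\bk{I_{P_E}\bk{\chi,\frac{1}{2}}}$ lying in $\Pi_\epsilon$ by the scalar $\epsilon$. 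Hence a nonzero vector of the form $N\bk{w_{2321},\chi_{\frac{1}{2}}}v$ is an eigenvector of $R\bk{w_{232},\chi}$ exactly when it lies entirely in $\Pi_1$ or entirely in $\Pi_{-1}$. So part (1) amounts to exhibiting $v_{+}\in I_{P_E}\bk{\chi,\frac{1}{2}}$ with $0\neq N\bk{w_{2321},\chi_{\frac{1}{2}}}v_{+}\in\Pi_1$, and part (2) amounts to exhibiting, in addition, $v_{-}\in I_{P_E}\bk{\chi,\frac{1}{2}}$ with $0\neq N\bk{w_{2321},\chi_{\frac{1}{2}}}v_{-}\in\Pi_{-1}$: then $v_0=v_{+}+v_{-}$ has $N\bk{w_{2321},\chi_{\frac{1}{2}}}v_0\neq0$, while $R\bk{w_{232},\chi}\bk{N\bk{w_{2321},\chi_{\frac{1}{2}}}v_0}=N\bk{w_{2321},\chi_{\frac{1}{2}}}v_{+}-N\bk{w_{2321},\chi_{\frac{1}{2}}}v_{-}$ is neither $+N\bk{w_{2321},\chi_{\frac{1}{2}}}v_0$ nor $-N\bk{w_{2321},\chi_{\frac{1}{2}}}v_0$.

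To construct $v_{+}$: when $\chi$ is unramified take $v_{+}$ to be the normalized spherical vector $v^0$ of $I_{P_E}\bk{\chi,\frac{1}{2}}$. Since $N\bk{w_{2321},\chi_{\frac{1}{2}}}$ sends $v^0$ to the spherical vector of the target (the relevant local $\epsilon$-factor being trivial at an unramified place), its image is a nonzero spherical vector; and by the rank-one description in \Cref{Subsec: Rank-one} the summand $\pi^{(1)}$ is the one carrying the spherical vector, so after inducing up through $P_3$ the spherical vector of $\Pi_1\oplus\Pi_{-1}$ lies in $\Pi_1$. This already yields part (1), and supplies the needed $v_{+}$ for part (2) when $\chi$ is unramified. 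When $\chi$ is ramified, instead take $v_{+}$ to be a standard section of $I_{P_E}\bk{\chi,\frac{1}{2}}$ whose pullback, via $\iota_{\alpha_3}$ and \Cref{Lemma: intertwining operator of simple reflections}, to the rank-one $SL_2$ attached to the middle reflection $w_3$ is a nonzero vector of $\pi^{(1)}$; its image under $N\bk{w_{2321},\chi_{\frac{1}{2}}}$ then lies in $\Pi_1$ and is nonzero, by the rank-one non-vanishing statements together with the holomorphy and bijectivity of the outer $w_2$-operators at $\lambda_{\frac{1}{2}}$ recorded above.

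To construct $v_{-}$: here one invokes the rank-one fact, available because $F$ is non-Archimedean, that $\pi^{(-1)}$ is unramified with respect to $d\cdot SL_2\bk{\mO_\nu}\cdot d^{-1}$ with $d=\diag\bk{1,\unif_\nu}$. One takes $v_{-}\in I_{P_E}\bk{\chi,\frac{1}{2}}$ whose pullback to the $w_3$-copy of $SL_2$ (equivalently, whose $M_3$-component in the $\Ind_{P_3}^{H_E}$-realization) is this $d\cdot SL_2\bk{\mO_\nu}\cdot d^{-1}$-fixed vector, which lies in $\pi^{(-1)}$. Because $N\bk{w_3,\cdot}$ acts on $\pi^{(-1)}$ by $-\operatorname{Id}$ at the reducibility point and the outer $w_2$-steps are bijective at $\lambda_{\frac{1}{2}}$, the image $N\bk{w_{2321},\chi_{\frac{1}{2}}}v_{-}$ lies in $\Pi_{-1}$ and is nonzero. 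Feeding $v_{+}$ and $v_{-}$ into the device of the first paragraph finishes part (2).

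The step I expect to be hardest is ensuring that $v_{-}$ (and, for ramified $\chi$, also $v_{+}$) can be found inside the degenerate principal series $I_{P_E}\bk{\chi,\frac{1}{2}}$ itself, a proper and comparatively small subrepresentation of $I_{B_E}\bk{\chi_{\frac{1}{2}}}$: one must check that a section of $I_{P_E}\bk{\chi,\frac{1}{2}}$ can genuinely be chosen with the prescribed restriction to the $w_3$-$SL_2$, and that the composite of the (possibly singular, at $s=\frac{1}{2}$) normalized intertwining operators does not kill it. Both are settled by the explicit pullback through $\iota_{\alpha_3}$: compute the restriction of a concrete standard section of $I_{P_E}\bk{\chi,\frac{1}{2}}$ to this $SL_2$, recognize it, up to the bijective outer operators, as the relevant rank-one vector, and invoke the analysis of rank-one operators on reducible $SL_2$ principal series from \Cref{Subsec: Rank-one}. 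This also accounts for the hypothesis $\nu\nmid\infty$ in part (2): at an Archimedean place there is no element $d$, which is why at such places only the weaker choice (a vector outside the kernel of the operator) is made in the proof of \Cref{Thm: Poles of Eisenstein series}.
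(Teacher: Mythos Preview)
Your argument for part (1) is correct and is exactly what the paper does: the normalized intertwining operator sends the spherical vector to the spherical vector, and the spherical vector in the target sits in $\Pi_1$.

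For part (2) there is a genuine gap. You correctly reduce the problem to showing that the image $N\bk{w_{2321},\chi_{\frac{1}{2}}}\bk{I_{P_E}\bk{\chi,\frac{1}{2}}}$ is not contained in a single summand $\Pi_\epsilon$. But your construction of $v_-$ does not accomplish this. The decomposition $\Pi_1\oplus\Pi_{-1}$ lives in the \emph{target} $\Ind_{B_E}^{H_E} w_{2321}^{-1}\cdot\bk{\chi\otimes\lambda_{\frac12}}$, and the factorization $N(w_2)\circ N(w_3)\circ N(w_2)$ you invoke is that of $R\bk{w_{232},\chi}$, an endomorphism of the target, \emph{not} of $N\bk{w_{2321},\chi_{\frac12}}$. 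The operator $N\bk{w_{2321}}$ factors instead as $N\bk{w_1,w_{232}^{-1}\cdot\lambda_{\frac12}}\circ N\bk{w_{232},\lambda_{\frac12}}$, and it is the \emph{final} $N(w_1)$-step, not a middle $w_3$-step, that is singular. So prescribing a pullback of $v_-\in I_{P_E}\bk{\chi,\frac12}$ to the $w_3$-copy of $SL_2$ does not control which summand $N\bk{w_{2321}}v_-$ lands in; your argument that the image lies in $\Pi_{-1}$ is using the wrong factorization. You acknowledge the difficulty of producing such $v_-$ inside $I_{P_E}$, but the resolution you sketch (``compute the restriction of a concrete standard section'') is a hope, not a verification, and in fact the mechanism you propose is aimed at the wrong operator.

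The paper proves part (2) by a completely different route: it computes multiplicities of a single specific exponent $\Lambda=(\chi\circ\omega_2)\otimes(-1,1,-1)$ in the Jacquet modules $\mathcal{J}_{B_E}^{H_E}$ of the relevant representations. Using exactness of the Jacquet functor together with the factorization $N\bk{w_{2321}}=N\bk{w_1}\circ N\bk{w_{232}}$ (with $N\bk{w_{232}}$ an isomorphism and the kernel/image of $N\bk{w_1}$ identified via a short exact sequence coming from the $P_1$-induction of Steinberg versus trivial), one finds that the image carries $\Lambda$ with multiplicity $2$, while $m_\Lambda(\Pi_1)=m_\Lambda(\Pi_{-1})=1$. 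Since a subspace of $\Pi_\epsilon$ would have $m_\Lambda\le 1$, the image cannot lie in a single summand, and the existence of a non-eigenvector follows. This multiplicity argument is what replaces the explicit construction you attempt.
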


\begin{proof}
	\begin{enumerate}
	\item
	If $\chi$ is unramified then $I_{P_E}\bk{\chi,s}$ admits a non-zero spherical section $f_s^0$ and so
	\[
	N\bk{w_{232},w_{2321}^{-1}\cdot\chi_s}f_s^0 = f_s^0 \quad \forall s\in\C .
	\]
	In particular, for $v=f_{\frac{1}{2}}^0$ we have $R\bk{w_{232},\chi}v = v$.
	
	\item
	We recall a corollary to the results of \cite[Section 6.3]{Casselman}:
	\begin{Cor}
	Let $Q=L\cdot V$ be a standard parabolic subrgoup of $H_E$
	Let $\Omega$ be an admissible representation of $L$.
	Then, the Jacquet functor $\mathcal{J}_{B_E}^{H_E} \bk{Ind_{Q}^{H_E} \Omega}$ of $Ind_{Q}^{H_E} \Omega$ (normalized induction) has a composition series with factors $w^{-1}\cdot \mathcal{J}_{B_E\cap L}^{L}\Omega$, where $w$ runs over the set of minimal representatives of the cosets of $W_{L} \rmod W_{H_E}$.
	\end{Cor}
	
	We note that:
	\begin{itemize}
	\item
	$N\bk{w_{232},\chi,\lambda_{\frac{1}{2}}}$ is an isomorphism.
	\item
	$N\bk{w_{1},w_{232}^{-1}\cdot\lambda_{\frac{1}{2}}}:\Ind_{B\bk{F}}^{H_E\bk{F}}\bk{\chi\circ\alpha_1}\otimes w_{232}^{-1}\cdot\lambda_{\frac{1}{2}}\to \Ind_{B\bk{F}}^{H_E\bk{F}}\bk{\chi\circ\alpha_1}\otimes w_{2321}^{-1}\cdot\lambda_{\frac{1}{2}}$ is not an isomorphism.
	The kernel and image are the parabolic inductions from the parabolic subgroup $P_1$, whose Levi subgroup is $L_1=\gen{T,x_{\alpha_1}\bk{r},x_{-\alpha_1}\bk{r}\mvert r\in F}$, to $H_E\bk{F}$ associated with the Steinberg and trivial representations respectively.
	Namely, we have a short exact sequence
	\[
	Ind_{P_1}^{H_E}\bk{St_{L_1}\otimes \Omega} \hookrightarrow \Ind_{B\bk{F}}^{H_E\bk{F}}\bk{\chi\circ\alpha_1}\otimes w_{232}^{-1}\cdot\lambda_{\frac{1}{2}} \twoheadrightarrow Ind_{P_1}^{H_E}\bk{\Omega} ,
	\]
	where $\Omega$ is a character of $L_1$ such that $\mathcal{J}_{B_E\cap L_1}^{L_1} \Omega = \bk{\chi\circ\alpha_1}\otimes w_{232}^{-1}\cdot\lambda_{\frac{1}{2}}$.
	\item The Jacquet modules $\mathcal{J}_{B_E}^{H_E}\Pi_1$ and $\mathcal{J}_{B_E}^{H_E}\Pi_{-1}$ of $\Pi_1$ and $\Pi_{-1}$ are isomorphic.
	\end{itemize}
	The claim then follows from the fact that the Jacquet module is exact and the multiplicities of $\Lambda=\bk{\chi\circ\omega_2}\otimes\bk{-1,1,-1}$ in various representations, given as follows:
	\begin{itemize}
	\item $m_\Lambda\bk{\Ind_{B\bk{F}}^{H_E\bk{F}}\bk{\chi\circ det_{M_E}}\otimes \lambda_{\frac{1}{2}}} = m_\Lambda\bk{\Ind_{B\bk{F}}^{H_E\bk{F}}\bk{\chi\circ\alpha_1}\otimes w_{2321}^{-1}\cdot\lambda_{\frac{1}{2}}} = 2$.
	\item $m_{\Lambda}\bk{I_{P_E}\bk{\chi,\frac{1}{2}}} = 2$.
	\item $m_\Lambda\bk{Im N\bk{w_{23212},\chi,\lambda_{\frac{1}{2}}}} = 2$.
	\item $m_\Lambda\bk{\Pi_1} = m_\Lambda\bk{\Pi_{-1}} = 1$.
	\end{itemize}
	
	\end{enumerate}
\end{proof}

\subsection{$E=F\times F\times F$}

\subsubsection{The Case of $\chi=\Id$}
Here we consider two different cases; one is $N\bk{w_{21342},w_{2134}^{-1}\cdot\chi_s}$ the other is $N\bk{w_{2342},w_{23421}^{-1}\cdot\chi_s}$.

\begin{Prop}
	The following holds:
	\begin{enumerate}
		\item For any $v\in N\bk{w_{2134},\Id,\lambda_{\frac{1}{2}}}\bk{I_{P_E}\bk{\Id,\frac{1}{2}}}$ it holds that $R\bk{w_{212},\Id}v=v$.
		\item For any $v\in N\bk{w_{23421},\Id,\lambda_{\frac{1}{2}}}\bk{I_{P_E}\bk{\Id,\frac{1}{2}}}$ it holds that $R\bk{w_{232},\Id}v = v$.
\end{enumerate}
\end{Prop}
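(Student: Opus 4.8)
The plan is to reduce both statements, via the cocycle relation \Cref{Eq: Functional equation of intertwining operators} for normalized intertwining operators, to repeated applications of the rank-one identities of \Cref{Subsec: Rank-one}, exactly along the lines of \Cref{Lem: Local intertwining Operator E is a field} and \Cref{Case of trivial character E non-split}. In the split case $E=F\times F\times F$ we have $\chi=\Id$ everywhere, so $I_{P_E}\bk{\Id_\nu,\frac12}$ is generated by the spherical vector: for $\nu\nmid\infty$ this is \Cref{Prop: Representation at 1/2 is generated by spherical vector}, and for $\nu\mid\infty$ it is \Cref{Conj: Archimedean places1}. Consequently the images $N\bk{w_{2134},\Id,\lambda_{\frac12}}\bk{I_{P_E}\bk{\Id,\frac12}}$ and $N\bk{w_{23421},\Id,\lambda_{\frac12}}\bk{I_{P_E}\bk{\Id,\frac12}}$ are non-zero and spherical (they are non-zero because the corresponding Gindikin-Karpelevich factor $J\bk{w,\chi_{\frac12}}$ is a non-zero scalar, by inspection of the relevant table in \Cref{Sec: Tables}). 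On the spherical vector the normalized operator $N\bk{w,\Id,\lambda}$ acts as the identity, so $R\bk{w_{212},\Id}$ and $R\bk{w_{232},\Id}$ fix the spherical generator, hence fix the whole space, which proves both claims.

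More explicitly, for part (1) I would write, in a neighbourhood of $s_0=\frac12$,
\[
N\bk{w_{21342},w_{2134}^{-1}\cdot\chi_s} = N\bk{w_2,\ast}\circ N\bk{w_4,\ast}\circ N\bk{w_3,\ast}\circ N\bk{w_1,\ast}\circ N\bk{w_2,w_{2134}^{-1}\cdot\chi_s},
\]
where each $\ast$ denotes the appropriate $w$-translate of the data, and then check, using the coordinates of \Cref{Eq: lambda-s} and the tables describing the action of $W$ on characters, that the exponent entering each simple-reflection factor $N\bk{w_1,\ast}$, $N\bk{w_3,\ast}$, $N\bk{w_4,\ast}$ is trivial at $\lambda_{\frac12}$; by \Cref{Lem: Local composition of rank-one intertwining operators} (or directly from the fact that a normalized rank-one operator is the identity when the inducing character is unramified and the parameter is $0$) each of these acts as $\operatorname{Id}$. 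The two remaining $N\bk{w_2,\ast}$ factors are then inverse to one another at $\lambda_{\frac12}$ by \Cref{Lem: Local composition of rank-one intertwining operators}, again because the relevant rank-one character is the trivial character of $\Gm$ (recall $E_\nu=F_\nu\times F_\nu\times F_\nu$). Hence $R\bk{w_{212},\Id}$ is the identity on the image of $N\bk{w_{2134},\Id,\lambda_{\frac12}}$. Part (2) is identical with $w_{2342}$ and $w_{23421}$ in place of $w_{21342}$ and $w_{2134}$; the decomposition now reads $N\bk{w_{2342},\ast}=N\bk{w_2,\ast}\circ N\bk{w_4,\ast}\circ N\bk{w_3,\ast}\circ N\bk{w_2,\ast}$, and the same bookkeeping applies.

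The only real content is the bookkeeping: one must verify that the intermediate characters $w_{2134}^{-1}\cdot\chi_{\frac12}$, $w_{23421}^{-1}\cdot\chi_{\frac12}$ restrict trivially to the rank-one tori $T_{\alpha_1},T_{\alpha_3},T_{\alpha_4}$ at the relevant stages, so that the simple reflections indeed contribute identities, and that no factor develops a pole at $s_0=\frac12$ forcing one to pass to residues. I expect this to be the main (and only mildly delicate) obstacle; it is handled exactly as in the proof of \Cref{Lem: Local intertwining Operator E is a field}, reading off the restrictions from Tables \ref{Table: Action of W on characters, Split} and using the observation, already recorded in the Remark preceding \Cref{Thm: Results regarding local intertwining operators}, that $R_\nu\bk{w_{212},\chi_{E_\nu}}$ and $R_\nu\bk{w_{232},\chi_\nu}$ in the split case coincide with $\lim_{s\to\frac12}N\bk{w_{21342},w_{2134}^{-1}\cdot\chi_{s,\nu}}$ and $\lim_{s\to\frac12}N\bk{w_{2342},w_{23421}^{-1}\cdot\chi_{s,\nu}}$ respectively. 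Once the restrictions are confirmed trivial, holomorphy at $\lambda_{\frac12}$ of the two $N\bk{w_2,\ast}$ factors follows from \Cref{Lem: Holomorphicity of Intertwining operator over L-factor} together with \Cref{Lem: Local composition of rank-one intertwining operators}, and the proof concludes.
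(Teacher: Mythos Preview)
Your first paragraph is correct and is exactly the paper's argument: the paper simply records that the proof is ``similar to \Cref{Case of trivial character E non-split}'', i.e.\ one uses \Cref{Prop: Representation at 1/2 is generated by spherical vector} and \Cref{Conj: Archimedean places1} to conclude that $I_{P_E}\bk{\Id,\frac12}$ is generated by its spherical vector, so the image under $N\bk{w_{2134},\Id,\lambda_{\frac12}}$ (resp.\ $N\bk{w_{23421},\Id,\lambda_{\frac12}}$) is as well, and the normalized operator fixes the spherical vector by construction. Your subsequent ``more explicit'' rank-one decomposition, modelled on \Cref{Lem: Local intertwining Operator E is a field}, is an alternative the paper does not pursue here; it would require checking that each intermediate $N\bk{w_i,\ast}$ for $i\in\set{1,3,4}$ acts as the identity (not merely that the restricted character is trivial, which a priori only gives $N\bk{w_0,\Id,0}^2=\Id$), so the spherical-vector route is both shorter and cleaner.
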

The proof of this is similar to \Cref{Case of trivial character E non-split}.

\subsubsection{The Case of $\chi^2=\Id$ with $\chi\neq 1$}
This case is similar to \Cref{Subsubsection: Quadratic character non-split case}.
In this case,
\[
w_{23421}^{-1}\cdot\bk{\chi\otimes\lambda} \bk{t_1,t_2,t_3,t_4} = 
\chi\bk{\frac{t_2}{t_1^2}} \FNorm{t_1}^{-s_1-2s_2-s_3-s_4} \FNorm{t_2}^{s_1+s_2} \FNorm{t_3}^{s_4} \FNorm{t_4}^{s_3} .
\]

The Levi subgroup $M_{3,4}$ generated by $\alpha_3$ and $\alpha_4$ is isomorphic to $GL_1\times \bk{GL_2\times GL_2}^0$, where
\[
\bk{GL_2\times GL_2}^0\bk{F} = \set{\bk{g_1,g_2}\in GL_2\bk{F} \mvert \operatorname{det} g_1 = \operatorname{det} g_2} .
\]
Note that $M_{3,4}\cap B_E=GL_1\times\bk{\mathcal{B}\times \mathcal{B}}^0$.
We recall the the decomposition 
\[
\Ind_{\mathcal{B}\bk{F}}^{SL_2\bk{F}}\bk{\chi\boxtimes\chi} = \pi^{\bk{1}}\oplus\pi^{\bk{-1}}
\]
introduced \Cref{Subsubsection: Quadratic character non-split case}.

As a consequence of this,
\begin{align*}
&\Ind_{M_{3,4}\bk{F}\cap B_E\bk{F}}^{M_{3,4}\bk{F}}\bk{w_{23421}^{-1}\cdot\bk{\chi\otimes\lambda}} \res{GL_1\times SL_2\times SL_2} \\
& = \bk{\pi^{\bk{1}}\boxtimes \pi^{\bk{1}}} \oplus \bk{\pi^{\bk{-1}}\boxtimes \pi^{\bk{-1}}} \oplus \bk{\pi^{\bk{1}}\boxtimes \pi^{\bk{-1}}} \oplus \bk{\pi^{\bk{-1}}\boxtimes \pi^{\bk{1}}} .
\end{align*}

\begin{Remark}
	Note that the only two conjugacy classes of maximal compact subgroups of $\bk{GL_2\times GL_2}^0$ are of $\bk{GL_2\times GL_2}^0\bk{\mO}$ and $\bk{GL_2\times GL_2}^0\bk{\mO}^{\Delta d}$, where $\Delta$ is the diagonal embedding.
\end{Remark}

\begin{Remark}
	$N\bk{w_3}$ acts on $\pi^{\bk{\epsilon}}\boxtimes \pi^{\bk{\eta}}$ by $\epsilon\eta Id$, where $\epsilon,\eta\in\set{1,-1}$ and $N\bk{w_4}$ acts on it by $\eta Id$.
	As a result, $N\bk{w_{34}}$ acts on $\bk{\pi^{\bk{1}}\boxtimes \pi^{\bk{1}}} \oplus \bk{\pi^{\bk{-1}}\boxtimes \pi^{\bk{-1}}}$ by $Id$ and on $\bk{\pi^{\bk{1}}\boxtimes \pi^{\bk{-1}}} \oplus \bk{\pi^{\bk{-1}}\boxtimes \pi^{\bk{1}}}$ by $-Id$.
\end{Remark}

We let
\begin{align*}
& \Pi_1 = \Ind_{P_{3,4}\bk{F}}^{H_E\bk{F}} \bk{\pi^{\bk{1}}\boxtimes \pi^{\bk{1}}} \oplus \bk{\pi^{\bk{-1}}\boxtimes \pi^{\bk{-1}}} \\
& \Pi_{-1} = \Ind_{P_{3,4}\bk{F}}^{H_E\bk{F}} \bk{\pi^{\bk{1}}\boxtimes \pi^{\bk{-1}}} \oplus \bk{\pi^{\bk{-1}}\boxtimes \pi^{\bk{1}}}
\end{align*}
and then
\[
\Ind_{B_E\bk{F}}^{H_E\bk{F}} \coset{w_{23421}^{-1}\cdot \bk{\bk{\chi\circ \operatorname{det}_{M_E}}\otimes\lambda_{\frac{1}{2}}}} = \Pi_1 \oplus \Pi_{-1}
\]
is a direct sum of two irreducible representations.

We then have
\begin{itemize}
	\item
	$N\bk{w_2, w_{23421}^{-1}\cdot\chi, w_{23421}^{-1}\cdot\lambda}$ is holomorphic and bijective at $\lambda_{\frac{1}{2}}$.
	\item
	$N\bk{w_{34}, w_{234212}^{-1}\cdot\chi, w_{234212}^{-1}\cdot\lambda}$ acts on $\bk{\Pi_1}$ as $Id$ and on $\bk{\Pi_{-1}}$ as $-Id$ due to \Cref{Lemma: intertwining operator of simple reflections}.
	\item
	$N\bk{w_2, w_{23421234}^{-1}\cdot\chi, w_{23421234}^{-1}\cdot\lambda}$ is holomorphic and bijective at $\lambda_{\frac{1}{2}}$.
\end{itemize}

It follows from \Cref{Langlands identity for composition of intertwining operators} that $N\bk{w_{2342}, w_{23421}^{-1}\cdot\chi,w_{23421}^{-1}\cdot \lambda_{\frac{1}{2}}}$ acts on $\Pi_1$ as $Id$ and on $\Pi_{-1}$ as $-Id$.

\begin{Lem}
	\label{Lem: Local ramified character split}
	The following holds:
	\begin{enumerate}
	\item
	For $\chi$ unramified there exists $v\in N\bk{w_{23421},\chi_s}\bk{I_{P_E}\bk{\chi, s}}$ such that $R\bk{w_{2342},\chi}v = v$.
	\item
	If $F$ is non-Archimedean then there exists $v\in I_{P_E}\bk{\chi,\frac{1}{2}}$ such that $N\bk{w_{23421},\chi,w_{23421}\cdot\lambda_{\frac{1}{2}}}v\neq 0$ is not an eigenvector of $N\bk{w_{2342},\chi,\lambda_{\frac{1}{2}}}$.
	\end{enumerate}
\end{Lem}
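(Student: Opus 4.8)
The plan is to follow the proof of the non-split analogue, \Cref{Lem: Local ramified character non-split}, using the decomposition $\Ind_{B_E\bk{F}}^{H_E\bk{F}}\coset{w_{23421}^{-1}\cdot\bk{\bk{\chi\circ\operatorname{det}_{M_E}}\otimes\lambda_{\frac{1}{2}}}} = \Pi_1 \oplus \Pi_{-1}$ established above, on which $N\bk{w_{2342},w_{23421}^{-1}\cdot\chi,w_{23421}^{-1}\cdot\lambda_{\frac{1}{2}}}$ acts by $+1$ on $\Pi_1$ and by $-1$ on $\Pi_{-1}$; in particular its non-zero eigenvectors are exactly the non-zero elements of $\Pi_1\cup\Pi_{-1}$. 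For part (1), when $\chi$ is unramified the representation $I_{P_E}\bk{\chi,s}$ carries a non-zero spherical section $f_s^0$, and since normalized intertwining operators carry normalized spherical vectors to normalized spherical vectors, $N\bk{w_{23421},\chi_s}f_s^0\neq 0$ and $N\bk{w_{2342},w_{23421}^{-1}\cdot\chi_s}\bk{N\bk{w_{23421},\chi_s}f_s^0} = N\bk{w_{23421},\chi_s}f_s^0$ for all $s$; letting $s\to\frac{1}{2}$, the vector $v = N\bk{w_{23421},\chi_{\frac{1}{2}}}f_{\frac{1}{2}}^0$ satisfies $R\bk{w_{2342},\chi}v = v$ (equivalently, the spherical constituent lies in $\Pi_1$). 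This is identical in form to part (1) of \Cref{Lem: Local ramified character non-split} and to \Cref{Case of trivial character E non-split}.

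For part (2) it suffices to prove that $W := N\bk{w_{23421},\chi_{\frac{1}{2}}}\bk{I_{P_E}\bk{\chi,\frac{1}{2}}}$ is contained neither in $\Pi_1$ nor in $\Pi_{-1}$: granting this, $W\cap\Pi_1$ and $W\cap\Pi_{-1}$ are proper subspaces of $W$, hence their union is a proper subset of $W$, so any $v'\in W$ outside it has non-zero component in both summands, and any $v\in I_{P_E}\bk{\chi,\frac{1}{2}}$ with $N\bk{w_{23421}}v = v'$ is the vector required by the lemma. To show $W\not\subseteq\Pi_{\pm1}$ I would, as in the non-split case, pass to Jacquet modules along $B_E$, invoke Casselman's description of $\mathcal{J}_{B_E}^{H_E}$ of a representation parabolically induced from a standard parabolic (the corollary to \cite[Section 6.3]{Casselman} recalled in the proof of \Cref{Lem: Local ramified character non-split}), and use exactness of the Jacquet functor. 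Concretely, one factors $N\bk{w_{23421},\chi_{\frac{1}{2}}}$ into its rank-one constituents and locates the step at which bijectivity fails --- the split counterpart of the $N\bk{w_1}$-step giving a short exact sequence of $P_1$-inductions of $St_{L_1}$ and the trivial character --- and then fixes a character $\Lambda$ of $T_E$, the split analogue of $\bk{\chi\circ\omega_2}\otimes\bk{-1,1,-1}$, of shape $\bk{\chi\circ\omega_2}\otimes\bk{-1,1,-1,-1}$. Computing $m_\Lambda$ of $\mathcal{J}_{B_E}^{H_E}$ applied to $I_{P_E}\bk{\chi,\frac{1}{2}}$, to $W$, and to $\Pi_1$ and $\Pi_{-1}$, one expects $m_\Lambda\bk{W} = 2$ while $m_\Lambda\bk{\Pi_1} = m_\Lambda\bk{\Pi_{-1}} = 1$; since $W\subseteq\Pi_\epsilon$ would force $m_\Lambda\bk{W}\leq 1$, this gives $W\not\subseteq\Pi_{\pm1}$.

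The main obstacle is this multiplicity count. Because here the Levi $M_{3,4}$ contributes $\bk{GL_2\times GL_2}^0$, producing the four constituents $\pi^{\bk{\epsilon}}\boxtimes\pi^{\bk{\eta}}$ --- distributed between $\Pi_1$ (the pieces with $\epsilon\eta = 1$) and $\Pi_{-1}$ (those with $\epsilon\eta = -1$) --- one must track carefully how the copies of $\Lambda$ in the Jacquet module are distributed among these four constituents (more delicate than the corresponding $E = F\times K$ bookkeeping) and verify that $\Pi_1$ and $\Pi_{-1}$ each retain exactly one while $W$ retains both. The non-Archimedean hypothesis is used, as in the non-split case, to guarantee the multiplicity-one inputs --- irreducibility of the $\pi^{\bk{\pm1}}$ and the applicability of Casselman's structure theory --- needed to make the count rigorous.
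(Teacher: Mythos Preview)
Your proposal is correct and follows essentially the same approach as the paper, which simply states that the proof is similar to that of \Cref{Lem: Local ramified character non-split}. Your adaptation---using the spherical section for part (1), and for part (2) the Jacquet-module multiplicity count with the split analogue $\Lambda$, together with the observation that $\mathcal{J}_{B_E}^{H_E}\Pi_1 \cong \mathcal{J}_{B_E}^{H_E}\Pi_{-1}$ (since all four constituents $\pi^{\bk{\epsilon}}\boxtimes\pi^{\bk{\eta}}$ have isomorphic Jacquet modules along $M_{3,4}\cap B_E$)---is exactly what the paper intends.
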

The proof is similar to that of \Cref{Lem: Local ramified character non-split}.

{\LARGE \part{Applications}}
\label{Part 2} \mbox{}
\section{The Twisted Standard $\Lfun$-function of a Cuspidal Representation of $G_2$}
In this section we recall the main result of \cite{SegalRSGeneral}.

\subsection{The Group $G_2$}

Let $G$ be the simple, split group of type $G_2$ defined over $F$.
In particular, $G$ is adjoint and simply connected.
Let $B$ be a Borel subgroup of $G$ and $T$ a maximal torus in $B$.
Let $\alpha$ and $\beta$ be the short and long simple roots of $G$ with respect to $\bk{B,T}$.
The Dynkin diagram of $G$ is
\[
\xygraph{
!{<0cm,0cm>;<0cm,1cm>:<1cm,0cm>::}
!{(0.4,-1)}*{\alpha}="label1"
!{(0,-1)}*{\bigcirc}="1"
!{(0,-0.1)}="c"
!{(0.2,  0.1)}="c1"
!{(-0.2,0.1)}="c2"
!{(0.4,1)}*{\beta}="label2"
!{(0,1)}*{\bigcirc}="2"
"1"-@3"2" "c1"-"c" "c"-"c2" 
} .
\]
We have a short exact sequence
\[
1\to H_E\to \Aut\bk{H_E} \to S_E \to 1.
\]
Forming the semidirect product $H_E \rtimes S_E$ it holds that $G\cong \operatorname{Cent}_{H_E\rtimes S_E}\bk{S_E}$.
This gives a natural embedding
\[
G \hookrightarrow H_E .
\]
Moreover, $\bk{G,S_E}$ forms a dual reductive pair in $H_E\rtimes S_E$.
Under this embedding, it holds that $B$ can be chosen so that $B=G\cap B_E$.
The set of positive roots of $G$ is
\[
\Phi^{+} = \set{\alpha, \beta, \alpha+\beta, 2\alpha+\beta, 3\alpha+\beta, 3\alpha+2\beta} .
\]
For any root $\gamma$ we fix a one-parametric subgroup $x_\gamma:\Ga\to G$.
Also, let $h_\gamma:\Gm\to T$ be the coroot subgroup such that for any root $\epsilon$
\[
\epsilon\bk{h_\gamma\bk{t}} = t^{\gen{\epsilon,\check{\gamma}}} .
\]
The group $G$ contains an Heisenberg maximal parabolic subgroup $P=M\cdot U$.
The Levi subgroup $M$ is isomorphic to $GL_2$ and is generated by the simple root $\alpha$, while $U$ is a five-dimensional Heisenberg group.
It holds that $P=G\cap P_E$.

Finally, we let $\st:G\hookrightarrow GL_7$ be the standard 7-dimensional embedding.

\subsection{The Twisted Standard $\Lfun$-function and an Integral Representation}
The dual Langlands group $\ldual{G}$ of $G$ is isomorphic to $G_2\bk{\C}$.

Let $\pi=\placestimes \pi_\nu$ be an irreducible cuspidal representation of $G\bk{\A}$ and let $\chi=\placestimes\chi_\nu:F^\times\lmod \A^\times\to \C^\times$ be a Hecke character, both unramified outside of a finite subset $S\subset\Places$.
For $\nu\notin S$ we denote its Satake parameter by $t_{\pi_\nu}$.
We let
\[
\Lfun^S\bk{s,\pi,\chi,\st} = \prodl_{\nu\notin S} \frac{1}{\operatorname{det}\bk{I-\st\bk{t_{\pi_\nu}} \chi\bk{\unif_\nu} q_\nu^{-s} }} .
\]
This product converges for $\Real\bk{s}\gg0$ to an analytic function.

For factorizable data $\varphi=\placestimes \varphi_\nu\in\pi$ and $f_s=\placestimes f_{\nu}\in I_{P_E}\bk{\chi,s}$ we consider the following integral
\begin{equation}
\label{Eq:Zeta Integral}
\zint_E \bk{\chi, s, \varphi, f} = \intl_{G\bk{F}\lmod G\bk{\A}} \varphi\bk{g} \Eisen_E^\ast\bk{\chi,s,f,g} dg .
\end{equation}
It holds that
\begin{Thm}[\cite{SegalRSGeneral}]
\label{Thm: Integral representation}
Given a finite subset $S\subset\Places$ such that for any $\nu\notin S$ all data is unramified, then
\begin{equation}
\zint_E \bk{\chi, s, \varphi, f} = \Lfun^S\bk{s+\frac{1}{2},\pi,\chi,\st} d_S\bk{\chi, s,\Psi_E,\varphi_S,f_S} .
\end{equation}
Moreover, for any $s_0$ there exist vectors $\varphi_S$, $f_S$ such that $d_S\bk{\chi, s,\Psi_E,\varphi_S,f_S}$ is analytic in a neighborhood of $s_0$ and $d_S\bk{\chi,s_0,\Psi_E,\varphi_S,f_S}\neq 0$.

In particular, the family of twisted partial $\Lfun$-function $\Lfun^S\bk{s,\pi,\chi,\st}$ admits a meromorphic continuation to the whole complex plane.
\end{Thm}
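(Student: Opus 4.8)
The plan is to prove \Cref{Thm: Integral representation} by the Rankin-Selberg unfolding method and then to extract the meromorphic continuation of $\Lfun^S\bk{s,\pi,\chi,\st}$ as a formal consequence. For $\Real\bk{s}\gg0$ the series defining the Eisenstein series in \Cref{Eq:Zeta Integral} converges absolutely, so one inserts it into $\zint_E\bk{\chi,s,\varphi,f}$ and interchanges summation with integration, producing a sum over the orbits of $G\bk{F}$ on $P_E\bk{F}\lmod H_E\bk{F}$. Using the structure of the dual pair $G\times S_E\hookrightarrow H_E\rtimes S_E$ together with the compatibilities $P=G\cap P_E$ and $B=G\cap B_E$, I would enumerate these orbits: cuspidality of $\varphi$ annihilates every orbit whose $G$-stabilizer contains the unipotent radical of a proper parabolic of $G$, and the unique surviving \emph{relevant} orbit has a representative $\gamma_0$ whose stabilizer in $G$ has unipotent part equal to the Heisenberg unipotent $U$, along which $f_s$ transforms under $U$ by the character $\Psi_E$ attached to $E$. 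One thus obtains
\[
\zint_E\bk{\chi,s,\varphi,f}=\intl_{U\bk{\A}\lmod G\bk{\A}}\varphi^{\Psi_E}\bk{g}\,f_s\bk{\gamma_0 g}\,dg,\qquad
\varphi^{\Psi_E}\bk{g}=\intl_{U\bk{F}\lmod U\bk{\A}}\varphi\bk{ug}\,\overline{\Psi_E\bk{u}}\,du .
\]

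Next, for factorizable data $\varphi=\placestimes\varphi_\nu$ and $f_s=\placestimes f_\nu$ I would use the local factorization of the $\Psi_E$-Fourier coefficient --- via uniqueness of the corresponding local model, or a direct local argument --- to rewrite the integral as an Euler product $\zint_E\bk{\chi,s,\varphi,f}=\prodl_{\nu\in\Places}Z_\nu\bk{\chi_\nu,s,W_{\pi_\nu},f_\nu}$, where $W_{\pi_\nu}$ denotes the local $\Psi_E$-Whittaker function of $\pi_\nu$ and $Z_\nu$ the evident local integral over $U\bk{F_\nu}\lmod G\bk{F_\nu}$.

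The heart of the argument is the unramified computation, which I expect to be the main obstacle. For $\nu\notin S$ one takes $W_{\pi_\nu}$ and $f_\nu$ to be normalized spherical vectors; using $G\bk{F_\nu}=U\bk{F_\nu}T\bk{F_\nu}K_\nu$ the integral $Z_\nu$ reduces to a sum over the dominant coweights (i.e.\ over $T\bk{F_\nu}\rmod T\bk{\mO_\nu}$) of $W_{\pi_\nu}\bk{t}$ against the value of $f_\nu$ on $t$ --- a $\chi_\nu$-twisted power of $\FNorm{\cdot}$ prescribed by $I_{P_E}\bk{\chi,s}$ --- times the modulus character. Inserting the Casselman-Shalika formula, which expresses $W_{\pi_\nu}\bk{t}$ through irreducible characters of $\ldual{G}=G_2\bk{\C}$ evaluated at the Satake parameter $t_{\pi_\nu}$, one must show that the coweight sum telescopes, after Gindikin-Karpelevich-type cancellations, to $\operatorname{det}\bk{I-\st\bk{t_{\pi_\nu}}\chi_\nu\bk{\unif_\nu}q_\nu^{-\bk{s+\frac{1}{2}}}}^{-1}$ up to a ratio of abelian $L$-factors absorbed into the normalization of the starred Eisenstein series. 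This explicit identification of the coweight sum with the generating function of the $7$-dimensional representation of $G_2\bk{\C}$ is the $G_2$-analogue of the classical Rankin-Selberg unramified computations and would be carried out by a root-string analysis; this is the step I expect to be hardest.

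Finally, for $\nu\in S$ the integral $Z_\nu$ converges for $\Real\bk{s}\gg0$ and continues meromorphically in $s$ by a Tate-type argument (on the relevant Bruhat cells the integrand is a finite sum of products of a Schwartz-type function with powers $\FNorm{\cdot}^s$), so setting $d_S\bk{\chi,s,\Psi_E,\varphi_S,f_S}=\prodl_{\nu\in S}Z_\nu\bk{\chi_\nu,s,W_{\pi_\nu},f_\nu}$ yields the asserted identity. To see that $d_S$ can be made holomorphic and non-vanishing near any prescribed $s_0$, one chooses each $W_{\pi_\nu}$ (and $f_\nu$) supported near the relevant coset so that $Z_\nu$ becomes, up to a nonzero constant, the integral of a bump function against $\FNorm{\cdot}^s$, hence entire and non-zero at $s_0$. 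Since the starred Eisenstein series --- and therefore $\zint_E\bk{\chi,s,\varphi,f}$ --- admits a meromorphic continuation to $\C$, solving the identity for $\Lfun^S$ and choosing, near any point, data making $d_S$ holomorphic and non-vanishing there exhibits $\Lfun^S\bk{s+\frac{1}{2},\pi,\chi,\st}$, hence $\Lfun^S\bk{w,\pi,\chi,\st}$, as meromorphic on all of $\C$.
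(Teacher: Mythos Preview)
The paper does not contain a proof of this theorem: it is quoted verbatim from the companion paper \cite{SegalRSGeneral} (note the citation in the theorem header), and the present paper only uses it as a black box via \Cref{Cor: inequality on order of poles}. So there is no ``paper's own proof'' to compare your proposal against.

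That said, your outline is the correct architecture for such a Rankin--Selberg integral and matches what one expects \cite{SegalRSGeneral} (and its predecessor \cite{MR3284482}) to do: unfold the Eisenstein series, kill the degenerate orbits by cuspidality, land on a global $\Psi_E$-Fourier coefficient integral, factorize, and compute the unramified places via Casselman--Shalika. Two caveats worth flagging. First, the factorization step requires a local uniqueness statement for the $\Psi_E$-functional on $\pi_\nu$ (multiplicity one for the relevant Fourier--Jacobi or Heisenberg model); you invoke this in passing, but it is a genuine ingredient that must be supplied, not a formality. Second, your description of the surviving orbit's stabilizer (``unipotent part equal to $U$'') is schematic; in practice the stabilizer involves the centralizer of $\Psi_E$ in $M$ as well, and getting the precise shape of the unfolded integral right --- in particular which period of $\varphi$ actually appears --- is where the dependence on the \'etale algebra $E$ enters. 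Your sketch is a reasonable plan, but these two points are where the real work in \cite{SegalRSGeneral} lies.
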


For our applications, we need only the following corollary.
\begin{Cor}
\label{Cor: inequality on order of poles}
$\Lfun^S\bk{s+\frac{1}{2},\pi,\chi,\st}$ is a meromorphic function on $\C$ and for any $s_0\in \C$ it holds that
\begin{equation}
ord_{s=s_0}\bk{\Lfun^S\bk{s,\pi,\chi,\st}} \leq ord_{s=s_0}\bk{\Eisen_E\bk{\chi, f_s, s, g}} .
\end{equation}
\end{Cor}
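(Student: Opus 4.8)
The plan is to deduce \Cref{Cor: inequality on order of poles} directly from the global integral representation \Cref{Thm: Integral representation} taken from \cite{SegalRSGeneral}: all the analytic input is there, and only a comparison of orders of poles across the identity of that theorem is needed. First I would record the meromorphic continuation and pole bound of the zeta integral of \Cref{Eq:Zeta Integral}. Fix factorizable data $\varphi=\placestimes\varphi_\nu\in\pi$ and $f_s=\placestimes f_\nu\in I_{P_E}\bk{\chi,s}$. Since $\varphi$ is a cusp form it is rapidly decreasing on $G\bk{F}\lmod G\bk{\A}$, while $\Eisen_E^\ast\bk{\chi,s,f,g}$ is, away from its poles, of moderate growth; hence $\zint_E\bk{\chi,s,\varphi,f}$ converges absolutely for $\Real\bk{s}\gg0$ and, by the general theory of Eisenstein series (\cite{MR1361168}), continues meromorphically to $\C$ with all its poles among those of $s\mapsto\Eisen_E^\ast\bk{\chi,s,\cdot,\cdot}$ and of order no larger --- the leading Laurent coefficient of $\zint_E$ at a point $s_0$ is the integral of $\varphi$ against the leading Laurent coefficient of $\Eisen_E^\ast$ there, which may vanish but cannot raise the order. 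Up to the harmless shift of the spectral parameter $s\mapsto s+\tfrac{1}{2}$ matching $\Eisen_E^\ast$ with the normalization $\Eisen_E$ of \Cref{eq:IntegralRepresentation} (this being exactly why \Cref{Thm: Integral representation} evaluates $\Lfun^S$ at $s+\tfrac{1}{2}$), this reads $ord_{s=s_0}\bk{\zint_E\bk{\chi,s,\varphi,f}}\le ord_{s=s_0}\bk{\Eisen_E\bk{\chi,\cdot,s,\cdot}}$ for every $s_0\in\C$.

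Next I would invoke \Cref{Thm: Integral representation} itself. It yields the identity $\zint_E\bk{\chi,s,\varphi,f}=\Lfun^S\bk{s+\tfrac{1}{2},\pi,\chi,\st}\,d_S\bk{\chi,s,\Psi_E,\varphi_S,f_S}$ and, for each $s_0$, a choice of $\varphi_S,f_S$ --- extended to global factorizable data --- for which $d_S$ is holomorphic and non-vanishing in a neighbourhood of the value of $s$ corresponding to $s_0$. For such a choice the normalizing factor $d_S$ contributes no pole or zero, so the order of $\Lfun^S\bk{\cdot,\pi,\chi,\st}$ at $s_0$ equals the order of $\zint_E$ at the corresponding point, which by the first step is bounded by $ord_{s=s_0}\bk{\Eisen_E\bk{\chi,\cdot,s,\cdot}}$. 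As $s_0$ is arbitrary, this is the asserted inequality; the meromorphic continuation of $\Lfun^S\bk{\cdot,\pi,\chi,\st}$ --- equivalently of $\Lfun^S\bk{s+\tfrac{1}{2},\pi,\chi,\st}$ --- follows from the same argument together with the fact, also part of \Cref{Thm: Integral representation}, that $d_S\not\equiv0$.

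I do not expect a genuine obstacle here: the corollary is a bookkeeping consequence of \Cref{Thm: Integral representation}. The two points demanding care are (i) reading "$ord_{s=s_0}\bk{\Eisen_E\bk{\chi,f_s,s,g}}$" on the right-hand side as the supremum of orders over all sections $f_s$ and all $g\in H_E\bk{\A}$ --- i.e. as $ord_{s=s_0}\bk{\Eisen_E\bk{\chi,\cdot,s,\cdot}}$ in the notation of the introduction, since a single evaluation could a priori have a pole of strictly smaller order than the Eisenstein series --- and (ii) keeping the two normalizations of the Eisenstein series aligned through the half-integral shift that separates \Cref{eq:IntegralRepresentation} from the identity in \Cref{Thm: Integral representation}. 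With these understood, the corollary follows.
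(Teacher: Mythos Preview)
Your proposal is correct and follows exactly the reasoning the paper intends: the corollary is stated without a separate proof because it is an immediate consequence of \Cref{Thm: Integral representation}, and the introduction (around \Cref{eq:IntegralRepresentation} and \Cref{eq: inequality on orders of poles}) spells out precisely the argument you give --- bound the poles of $\zint_E$ by those of the Eisenstein series, then use the existence of data with $d_S$ holomorphic and non-vanishing near $s_0$ to transfer the bound to $\Lfun^S$. Your two caveats about the supremum interpretation of the right-hand side and the half-integral shift are well taken and consistent with the paper's conventions.
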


\section{A Conjecture of Ginzburg and Hundley}
In \cite{MR3359720}, D. Ginzburg and J. Hundley have constructed a doubling integral representing $\Lfun^S\bk{s,\pi,\chi,\st}$.
We recall the construction.

We first recall the computing pair $G_2\times G_2\subseteq E_8$.
Given a cuspidal representation $\pi$ of $G_2$, $\varphi\in\pi$ and $\widetilde{\varphi}\in\widetilde{\pi}$ we consider the integral
\begin{equation}
\label{eq:GinzburgHundleyIntegral}
\integral{G_2\times G_2} \varphi\bk{g_1} \widetilde{\varphi}\bk{g} 
\Eisen_{E_8}^{\Psi_1}\bk{\bk{g_1,g_2},f_{s,\chi}}.
 d\bk{g_1, g_2},
\end{equation}
where $\Eisen_{E_8}^{\Psi_1}$ is a certain Fourier coefficient of a degenerate Eisenstein series for $E_8$ associated with the maximal parabolic subgroup whose Levi factor is of type $A_7$.
In \cite{MR3359720}, Ginzburg and Hundley have shown that the integral in \Cref{eq:GinzburgHundleyIntegral} represents $\Lfun^S\bk{s,\pi,\chi,\st}$.

Considering the normalizing factor of this integral they conjectured the following:
\begin{Thm}
\label{Conj:GinzburgHundley}
The twisted partial standard $\Lfun$-function $\Lfun^S\bk{s,\pi,\chi,\st}$ can have at most a double pole at $\Real\bk{s}>0$.
\end{Thm}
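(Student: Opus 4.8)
The plan is to obtain \Cref{Conj:GinzburgHundley} as a short consequence of the pole analysis carried out in Part~1, through the Rankin--Selberg integral representation of \cite{SegalRSGeneral}. First I would fix a cuspidal representation $\pi$ of $G\bk{\A}$ and a finite order Hecke character $\chi$ of $F^\times\lmod\A^\times$. By \Cref{Thm: Integral representation} together with \cite{SegalRSGeneral} there is an \'etale cubic algebra $E$ over $F$ for which the zeta integral $\zint_E\bk{\chi,s,\varphi,f}$ of \Cref{Eq:Zeta Integral} is not identically zero and, for that $E$, the auxiliary factor $d_S$ can be taken analytic and non-vanishing near any prescribed point. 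Hence, as recorded in \Cref{Cor: inequality on order of poles},
\[
\operatorname{ord}_{s=s_0}\bk{\Lfun^S\bk{s,\pi,\chi,\st}}\ \leq\ \operatorname{ord}_{s=s_0}\bk{\Eisen_E\bk{\chi,f_s,s,g}}\qquad\text{for every } s_0\in\C .
\]
So it suffices to bound the orders of the poles, in the region $\Real\bk{s}>0$, of the degenerate Eisenstein series attached to the Heisenberg parabolic of the quasi-split form $H_E$ of $Spin_8$ --- which is exactly the subject of Part~1.

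Next I would invoke \Cref{Thm: Poles of Eisenstein series}. For $\Real\bk{s_0}>0$ it places the poles of $\Eisen_E\bk{\chi,\cdot,s,\cdot}$ at $s_0\in\set{\frac{1}{2},\frac{3}{2},\frac{5}{2}}$ only, with orders bounded by the corresponding entry of Table~\ref{Poles of Eisenstein Series}, and it asserts holomorphy at every other $s_0$ with $\Real\bk{s_0}\geq 0$ (in particular along the whole line $\Real\bk{s_0}=0$). Since every entry of Table~\ref{Poles of Eisenstein Series} is at most $2$ --- the value $2$ occurring solely for $E=F\times F\times F$ at $s_0=\frac{3}{2}$ --- the displayed inequality gives at once $\operatorname{ord}_{s=s_0}\bk{\Lfun^S\bk{s,\pi,\chi,\st}}\leq 2$ for all $s_0$ with $\Real\bk{s_0}>0$, which is the assertion. (If one keeps the normalization $\Lfun^S\bk{s+\frac{1}{2},\pi,\chi,\st}$ of \Cref{Thm: Integral representation}, the poles of $\Lfun^S$ in $\Real\bk{s}>0$ are confined to $s\in\set{1,2,3}$ and the same bounds apply at each of these points.)

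Since the deduction is otherwise immediate once Part~1 is in hand, the step I would flag as the real point of the argument is the strength of the input needed from \Cref{Thm: Poles of Eisenstein series}: the sharp bound $2$ uses Table~\ref{Poles of Eisenstein Series}, and its entry for the single configuration $E=F\times F\times F$, $\chi=\Id$, $s_0=\frac{1}{2}$ (where the true order is in fact $1$) rests on Reason~1 for cancellation of poles and therefore on \Cref{Conj: Archimedean places1}. Without that conjecture one has, in that cell, only order at most $3$ from the unconditional bounds of \Cref{Thm: Poles of Eisenstein series}, and order at most $4$ from the purely formal Table~\ref{Trivial Bounds on the order of Poles of Eisenstein Series}. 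Thus the proposed proof establishes \Cref{Conj:GinzburgHundley} conditionally on \Cref{Conj: Archimedean places1}, and unconditionally in the weaker form ``$\Lfun^S\bk{s,\pi,\chi,\st}$ has at most a triple pole for $\Real\bk{s}>0$''. I would close by remarking that the bound $2$ agrees with the expectation Ginzburg and Hundley read off in \cite{MR3359720} from the normalizing factor of their doubling integral \Cref{eq:GinzburgHundleyIntegral}.
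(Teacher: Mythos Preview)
Your argument is sound and your self-assessment is accurate: reading off Table~\ref{Poles of Eisenstein Series} yields the bound $2$ everywhere, but the entry for $E=F\times F\times F$, $\chi=\Id$, $s_0=\frac{1}{2}$ depends on \Cref{Conj: Archimedean places1}, so your deduction is conditional. The paper, however, proves \Cref{Conj:GinzburgHundley} \emph{unconditionally}, by inserting an extra argument in precisely this case.

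The paper separates according to $\mathcal{WF}\bk{\pi}$. If $\pi$ supports a Fourier coefficient for some non-split $E$, then already the \emph{unconditional} first table of \Cref{Thm: Poles of Eisenstein series} gives order $\leq 2$ (indeed $\leq 2$ for $E=F\times K$ and $\leq 1$ for $E$ a field), so \Cref{Cor: inequality on order of poles} finishes. The only case left is $\mathcal{WF}\bk{\pi}=\set{F\times F\times F}$; within it, every configuration except $\chi=\Id$ at the $\Lfun$-function point $s=1$ (Eisenstein point $s=\frac{1}{2}$) is again handled by the unconditional bound $\leq 2$, while at that one point the unconditional Eisenstein bound is $3$.

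For that single cell the paper argues by contradiction: suppose $\operatorname{ord}_{s=1}\Lfun^S\bk{s,\pi,\Id,\st}=3$. By \cite[Theorem~3]{MR1203229} this forces $\pi$ not to be nearly equivalent to a generic representation, whence \cite[Theorem~16.6]{MR2262172} identifies $\pi$ as nearly equivalent to $\Theta_{H_{F\times F\times F}}\bk{\mathbf{1}_{S_{F\times F\times F}}}$. For such $\pi$,
\[
\Lfun^S\bk{s,\pi,\Id,\st}=\zfun_F^S\bk{s-1}^2\,\zfun_F^S\bk{s+1}^2\,\zfun_F^S\bk{s}^3,
\]
and enlarging $S$ (take $\Card{S}>1$) drops the order at $s=1$ below $3$, a contradiction.

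So the overall strategy is the same---both routes go through \Cref{Cor: inequality on order of poles}---but the paper trades your appeal to \Cref{Conj: Archimedean places1} for the theta-lift classification of \cite{MR1203229,MR2262172}, which removes the conditionality.
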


\begin{proof}
Assuming that $\pi$ supports a $\Psi_E$-Fourier coefficient for non-split $E$, the claim follows from \Cref{Cor: inequality on order of poles} and \Cref{Poles of Eisenstein Series}.
Now assume that $\mathcal{WF}\bk{\pi}=\set{F\times F\times F}$.

From the same considerations it follows that the claim holds for $\chi\neq\Id$ and for $\chi=\Id$ and any $s_0\neq 1$ with $\Real\bk{s_0}>0$.
It also follows that for $\chi=\Id$ it holds that $ord_{s=1}\bk{\Lfun^S\bk{s,\pi,\chi,\st}}\leq 3$.

Assume that $ord_{s=1}\bk{\Lfun^S\bk{s,\pi,\chi,\st}}=3$.
From \cite[Theorem 3]{MR1203229} it follows that $\pi$ is not nearly-equivalent to a generic representation and hence, from \cite[Theorem 16.6]{MR2262172}, it is nearly-equivalent to $\Theta_{H_{F\times F\times F}}\bk{\mathbf{1}_{S_{F\times F\times F}}}$.
In this case,
\[
\Lfun^S\bk{s,\pi,\chi,\st} = 
\zfun^S_F\bk{s-1}^2 \zfun^S_F\bk{s+1}^2 \zfun^S_F\bk{s}^3 .
\]
The right-hand side has a pole of order $3-2\bk{\FNorm{S}-1}$.
Choosing $S\subseteq\Places$ with $\FNorm{S}>1$ bring us to a contradiction with the assumption that $ord_{s=1}\bk{\Lfun^S\bk{s,\pi,\chi,\st}}=3$.
\end{proof}

\section{CAP Representations With Respect to the Borel Subgroup}
\label{Sec: CAP representation}
We recall the definition of a \textbf{CAP} representation.
\begin{Def}
Let $Q=L\cdot V \subset G$ be a parabolic subgroup, $\sigma$ be a cuspidal unitary representation of the Levi part $L$ and $\chi$ be a character of $L$.
A cuspidal representation $\pi$ of $G\bk{\A}$ is called \textbf{CAP} (cuspidal attached to parabolic) with respect to $Q$, $\sigma$ and $\chi$ if $\pi$ is nearly equivalent to a subquotient of $\Ind_{Q\bk{\A}}^{G\bk{\A}}\sigma\otimes\chi$.
\end{Def}

\textbf{CAP} representations for $G_2$ were constructed in \cite{MR1918673} for the Borel subgroup,
 in \cite{MR1020830} for the Heisenberg parabolic subgroup $P$ and in \cite{MR2506316} for the non-Heisenberg maximal parabolic subgroup.
Using \Cref{Cor: inequality on order of poles} and \Cref{Poles of Eisenstein Series} we plan to prove that \cite{MR1918673} exhaust the list of \textbf{CAP} representations with respect to the Borel subgroup.

\begin{Thm}
\label{Thm: CAP representations}
Let $\pi$ be a cuspidal representation of $G\bk{\A}$ supporting a Fourier coefficient along $U$ corresponding to an \'etale cubic extension $E$ of $F$ which is not a non-Galois field extension.
The following are equivalent:
\begin{enumerate}
\item $\pi$ is a \textbf{CAP} representation with respect to $B$.
\item The partial $\Lfun$-function $\Lfun^S\bk{s,\pi,\chi_E,\st}$ has a pole of order $n_E$ at $s=2$ 
\item $\Theta_{S_E}\bk{\pi}\neq 0$. In particular $\pi$ is nearly equivalent to 
 $\Theta_{S_E}\bk{\Id}$, where $\Id$ here is the automorphic trivial representation of $S_E\bk{\A}$.
\end{enumerate}
In particular, for $\pi$ that satisfy this conditions we have $\mathcal{WF}\bk{\pi}=\set{E}$.
\end{Thm}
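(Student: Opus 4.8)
The plan is to establish the cycle of equivalences $(1)\Rightarrow(2)\Rightarrow(3)\Rightarrow(1)$, leaning on the integral representation \Cref{Thm: Integral representation} (via \Cref{Cor: inequality on order of poles}), the pole bounds of \Cref{Thm: Poles of Eisenstein series} (Table \ref{Poles of Eisenstein Series}), and the theta-correspondence results for the dual pair $G_2\times S_E\hookrightarrow H_E\rtimes S_E$. Throughout, $n_E=2$ when $E=F\times F\times F$ and $n_E=1$ otherwise, matching the orders in \Cref{Poles of Eisenstein Series} at $s=\frac32$ with $\chi=\chi_E$ (shifted to $s=2$ by the $\frac12$-shift in \Cref{Cor: inequality on order of poles}).

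\textbf{Step 1: $(3)\Rightarrow(1)$.} If $\Theta_{S_E}(\pi)\neq 0$, then by the tower property and Rallis' theory the first occurrence of $\pi$ in the theta tower for $S_E$ forces $\pi$ to be nearly equivalent to $\Theta_{S_E}(\mathbf{1})$, the lift of the trivial representation of $S_E(\A)$. One then identifies $\Theta_{S_E}(\mathbf{1})$ explicitly: since $\mathbf{1}$ is (an automorphic realization inside) a very small representation, its lift to $G_2$ is a constituent of a degenerate principal series, and by an unfolding computation its unramified components have Satake parameters lying on the torus of $G_2(\C)$ corresponding to an induction from a character of $T$. Hence $\pi$ is nearly equivalent to a subquotient of $\Ind_{B(\A)}^{G_2(\A)}\tau$ for the appropriate $\tau$, i.e.\ $\pi$ is \textbf{CAP} with respect to $B$. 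This step is essentially a citation to \cite{MR1918673} combined with the structure of $\Theta_{S_E}(\mathbf 1)$.

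\textbf{Step 2: $(1)\Rightarrow(2)$.} Suppose $\pi$ is \textbf{CAP} with respect to $B$, nearly equivalent to a subquotient of $\Ind_{B(\A)}^{G_2(\A)}\tau$. The Satake parameters $t_{\pi_\nu}$ then coincide (outside $S$) with those of the principal series, so one can compute $\Lfun^S(s,\pi,\chi_E,\st)$ directly as a product of Hecke $L$-functions $\zfun^S$ evaluated at shifts of $s$; the $7$-dimensional representation $\st$ of $G_2(\C)$ restricted to the relevant torus decomposes into characters giving precisely the factors appearing in the proof of \Cref{Conj:GinzburgHundley}, namely $\zfun_F^S(s-1)^2\zfun_F^S(s+1)^2\zfun_F^S(s)^3$ in the split case and the analogous product with $\zfun_E^S$ in general. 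Reading off the order of vanishing/pole at $s=2$ gives a pole of order exactly $n_E$ (after accounting for the fact that $\zfun^S_F(1)$ contributes a pole while the other factors are finite and nonzero there). Care is needed to pin down which near-equivalence class occurs so that $\chi_E$ is the correct twist; here one uses that $E\in\mathcal{WF}(\pi)$ and the matching of $E$ with the character data of $\tau$ from \cite{SegalRSGeneral}.

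\textbf{Step 3: $(2)\Rightarrow(3)$, and $\mathcal{WF}(\pi)=\{E\}$.} Assume $\Lfun^S(s,\pi,\chi_E,\st)$ has a pole of order $n_E$ at $s=2$. By \Cref{Cor: inequality on order of poles}, $\Eisen_E(\chi_E,f_s,s,g)$ has a pole of order $\geq n_E$ at $s=\frac32$; by \Cref{Thm: Poles of Eisenstein series} this order is exactly $n_E$, and the residual representation there is square-integrable and is identified (via \cite{MR1918673}) as $\Theta_{H_E}(\mathbf 1_{S_E})$, the theta lift to $H_E$ of the trivial representation of $S_E$. Then the non-vanishing of the period integral \eqref{Eq:Zeta Integral} — which produces the pole on the spectral side — is exactly the statement that $\pi$ pairs nontrivially against this residual representation restricted to $G_2\hookrightarrow H_E$, which by the standard see-saw argument for the dual pair $G_2\times S_E$ is equivalent to $\Theta_{S_E}(\pi)\supseteq\mathbf 1_{S_E}\neq 0$; the ``in particular'' clause then follows from Step 1. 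Finally, since $\pi$ is nearly equivalent to $\Theta_{S_E}(\mathbf 1)$, its Fourier coefficients along $U$ are controlled by those of the residual representation, which supports exactly the coefficient attached to $E$; hence $\mathcal{WF}(\pi)=\{E\}$.

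\textbf{Main obstacle.} The delicate point is Step 3: translating ``the period integral has a pole'' into ``$\Theta_{S_E}(\pi)\neq 0$''. This requires knowing that the leading term of $\Eisen_E$ at $s=\frac32$ is precisely the residual representation $\Theta_{H_E}(\mathbf 1_{S_E})$ (not merely some square-integrable automorphic representation), and that the unfolding identity of \cite{SegalRSGeneral} genuinely detects the $G_2$-period of this residual representation with a nonzero constant — i.e.\ that the arithmetic factor $d_S$ does not accidentally vanish at $s=\frac32$ on the relevant residual sections. Controlling $d_S$ at this specific point, together with making the see-saw/Rallis inner product argument precise in the number field setting, is where the real work lies; the rest is bookkeeping with Hecke $L$-functions and citations to \cite{MR1918673,MR2262172,MR1203229}.
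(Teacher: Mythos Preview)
Your cycle of implications matches the paper's logical structure, and your Steps~1 and~3 are essentially what the paper does (it simply cites \cite{MR1918673} for $(3)\Rightarrow(1),(2)$ and \cite{SegalRSGeneral} for $(2)\Rightarrow(3)$). The substantive content of the paper's proof is the implication $(1)\Rightarrow(2)$, and here your Step~2 has a genuine gap.

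You write down the $L$-function factorization $\zfun_F^S(s-1)^2\zfun_F^S(s+1)^2\zfun_F^S(s)^3$ and read off the pole, but this formula is valid only once you already know that $\mu_1(t)=\mu_2(t)=|t|$ (or the appropriate twist by $\chi_E$). For an \emph{arbitrary} CAP representation with respect to $B$, all you know a priori is that $\pi$ is nearly equivalent to a subquotient of $\Ind_B^G\mu$ for \emph{some} character $\mu=(\mu_1,\mu_2)$; your sentence ``care is needed to pin down which near-equivalence class occurs\ldots using that $E\in\mathcal{WF}(\pi)$ and the matching from \cite{SegalRSGeneral}'' does not supply a mechanism for this. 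The integral representation in \cite{SegalRSGeneral} ties $E$ to a non-vanishing Fourier coefficient, not to the inducing character $\mu$.

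The paper's actual argument for $(1)\Rightarrow(2)$ is a bootstrapping via the pole bounds of \Cref{Thm: Poles of Eisenstein series}. Writing $\mu_i=\eta_i|\cdot|^{z_i}$, one observes that for the twist $\chi=\mu_i|\cdot|^{-1}$ the product
\[
\Lfun^S(s,\pi,\chi,\st)=\Lfun_F^S(\mu_1\chi,s)\Lfun_F^S(\mu_1^{-1}\chi,s)\Lfun_F^S(\mu_2\chi,s)\Lfun_F^S(\mu_2^{-1}\chi,s)\Lfun_F^S(\tfrac{\mu_1}{\mu_2}\chi,s)\Lfun_F^S(\tfrac{\mu_2}{\mu_1}\chi,s)\Lfun_F^S(\chi,s)
\]
has a pole at $s=2$ (one factor becomes $\zfun_F^S(s-1)$). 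By \Cref{Cor: inequality on order of poles} this forces $\Eisen_E(\mu_i|\cdot|^{-1},\cdot,s,\cdot)$ to have a pole at $s=\tfrac32$, and Table~\ref{Poles of Eisenstein Series} then confines $(z_i,\eta_i)$ to a short list. A second pass, comparing pole orders of $\Lfun^S(s,\pi,\chi,\st)$ at $s=1$ against the (at most simple) pole of $\Eisen_E$ at $s=\tfrac12$, eliminates all surviving candidates except the one corresponding to $\Theta_{S_E}(\mathbf 1)$. This case analysis is the heart of the proof and is missing from your sketch.
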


\begin{proof} 
The fact that \textit{3} implies \textit{1} and \textit{2} was proven in \cite{MR1918673}.
The fact that \textit{2} implies \textit{3} is proven in \cite{SegalRSGeneral}.
It is left to prove that \textit{1} implies \textit{2}.

Let $\pi$ be a \textbf{CAP} representation with respect to $B$ that supports the Fourier coefficient corresponding to an \'etale cubic algebra $E$ over $F$. 
We will prove that \textit{2} holds by proving that $\pi$ is nearly equivalent to $\Theta_{H_E}\bk{\mathbf{1}_{S_E}}$ where $\mathbf{1}_{S_E}$ is the trivial representation of $S_E\bk{\A}$.
\begin{Remark}
Note that all irreducible automorphic representations of $S_E\bk{\A}$ are nearly equivalent to $\mathbf{1}_{S_E}$.
\end{Remark}

By the assumption, there exist an automorphic character $\mu$ such that $\pi$ is nearly equivalent to a subquotient of $\Ind_{B\bk{\A}}^{G\bk{\A}}\mu$, where the induction here is unitary.
Let
\begin{equation}
\mu\bk{h_{2\alpha+\beta}\bk{a}h_{3\alpha+2\beta}\bk{b}} = \mu_1\bk{a} \mu_2\bk{b} .
\end{equation}

We denote by $\mu_i\bk{x} = \eta_i\bk{x} \FNorm{x}^{z_i}$, where $\eta_i$ are unitary characters of finite order and $z_i\in\R$.
By choosing a Weyl chamber we may assume that
\begin{eqnarray}
0\leq z_2\leq z_1\leq 2z_2 .
\end{eqnarray}

According to \cite{MR1918673} we need to show:
\begin{itemize}
\item If $E=F\times F\times F$ then $\mu_1\bk{t}=\mu_2\bk{t}=\FNorm{t}$ for any $t\in \A^\times$.
\item If $E=F\times K$ then $\mu_1\bk{t}=\FNorm{t}$ and $\mu_2\bk{t}=\chi_K\bk{t}\FNorm{t}$ for any $t\in \A^\times$, or vice versa.
\item If $E\rmod F$ is a cubic Galois extension, then $\mu_1\bk{t}=\mu_2\bk{t}=\chi_E\bk{t}\FNorm{t}$ for any $t\in \A^\times$.
\end{itemize}

It holds that
\[
\Lfun^S\bk{s,\pi,\chi,\st} = 
\Lfun^S_F\bk{\mu_1\chi,s} \Lfun^S_F\bk{\mu_1^{-1}\chi,s} \Lfun^S_F\bk{\mu_2\chi,s} \Lfun^S_F\bk{\mu_2^{-1}\chi,s} \Lfun^S_F\bk{\frac{\mu_1}{\mu_2}\chi,s} \Lfun^S_F\bk{\frac{\mu_2}{\mu_1}\chi,s} \Lfun^S_F\bk{\chi,s} .
\]

For $\chi\bk{t}=\mu_1\bk{t} \FNorm{t}^{-1}$, $\Lfun^S\bk{s,\pi,\mu_1\FNorm{\cdot}^{-1},\st}$ admits a pole at $s=2$ and hence $\Eisen_E\bk{\mu_1\FNorm{\cdot}^{-1}, f_s, s, g}$ admits a pole at $s=\frac{3}{2}$.
Similarly, $\Eisen_E\bk{\mu_2\FNorm{\cdot}^{-1}, f_s, s, g}$ also admits a pole at $s=\frac{3}{2}$.

We continue by considering different kinds of $E$.

\begin{itemize}
\item \underline{$E=F\times F\times F$:}
Since $\Eisen_E\bk{\mu_1\FNorm{\cdot}^{-1}, f_s, s, g}$ and $\Eisen_E\bk{\mu_2\FNorm{\cdot}^{-1}, f_s, s, g}$ admits a pole at $s=\frac{3}{2}$, it holds that
\[
\bk{z_1,\eta_1}, \bk{z_2,\eta_2}\in \set{\bk{0,\eta}\mvert \eta^2\equiv\Id}\cup \set{\bk{1,\Id}} .
\]

We assume that $z_1=0$ and hence also $z_2=0$.
In this case $\eta_1$ and $\eta_2$ are quadratic characters.
If $\eta_1=\Id$, then
\[
\Lfun^S\bk{s,\pi,\chi,\st} = 
\Lfun^S_F\bk{\chi,s}^3 \Lfun^S_F\bk{\mu_2\chi,s}^4 .
\]
If $\eta_2=\Id$ then $\Lfun^S\bk{s,\pi,\Id,\st}$ admits a pole of order $7$ at $s=1$, while $\Eisen_E\bk{\Id, f_s, s, g}$ admits a pole of order at most $1$ at $s=\frac{1}{2}$ which brings us to a contradiction.

Assume that $\eta_2\neq\Id$ then $\Lfun^S\bk{s,\pi,\eta_2,\st}$ admits a pole of order $4$ at $s=1$ while $\Eisen_E\bk{\eta_2, f_s, s, g}$ admits a pole of order at most $1$ at $s=\frac{1}{2}$ which again brings us to a contradiction.

We now assume that $\eta_1,\eta_2\not\equiv \Id$ are quadratic characters.
In this case
\[
\Lfun^S\bk{s,\pi,\chi,\st} = 
\Lfun^S_F\bk{\eta_1\chi,s}^2 \Lfun^S_F\bk{\eta_2\chi,s}^2 \Lfun^S_F\bk{\eta_1\eta_2\chi,s}^2 \Lfun^S_F\bk{\chi,s} .
\]
$\Lfun^S\bk{s,\pi,\eta_1,\st}$ admits a pole of order at least $2$ at $s=1$, while $\Eisen_E\bk{\eta_1, f_s, s, g}$ admits a pole of order at most $1$ which again brings us to a contradiction.

In conclusion, $z_1=1$ and hence also $z_2\geq \frac{1}{2}$. In particular, $z_2=1$. We conclude that $\eta_1\equiv\eta_2\equiv\Id$ which proves the assertion.

\item \underline{$E=F\times K$, where $K\rmod F$ is a quadratic extension:}
Since $\Eisen_E\bk{\mu_1\FNorm{\cdot}^{-1}, f_s, s, g}$ and $\Eisen_E\bk{\mu_2\FNorm{\cdot}^{-1}, f_s, s, g}$ admits a pole at $s=2$, it holds that
\[
\bk{z_1,\eta_1}, \bk{z_2,\eta_2}\in \set{\bk{0,\Id}, \bk{0,\chi_K}, \bk{1,\Id}, \bk{1,\chi_K}}.
\]
The proof that $z_1,z_2\neq 0$ is similar to the split case.
It then holds that $z_1=z_2=1$.
We need to prove that $\eta_1\equiv\eta_2\equiv\Id$ or $\eta_1\equiv\eta_2\equiv\chi_K$ cannot happen.

Assume that $\eta_1\equiv\eta_2\equiv\Id$, in this case
\[
\Lfun^S\bk{s,\pi,\chi,\st} = 
\Lfun^S_F\bk{\chi,s}^3 \Lfun^S_F\bk{\chi,s-1}^2 \Lfun^S_F\bk{\chi,s+1}^2.
\]
$\Lfun^S\bk{s,\pi,\Id,\st}$ would have a pole of order at least $3$ at $s=1$ while $\Eisen_E\bk{\Id, f_s, s, g}$ admits a pole of order at most $1$ at $s=\frac{1}{2}$, which brings us to a contradiction.

Assume that $\eta_1\equiv\eta_2\equiv\chi_K$, in this case
\[
\Lfun^S\bk{s,\pi,\chi,\st} = 
\Lfun^S_F\bk{\chi,s}^3 \Lfun^S_F\bk{\chi_K\chi,s-1}^2 \Lfun^S_F\bk{\chi_K\chi,s+1}^2.
\]
$\Lfun^S\bk{s,\pi,\Id,\st}$ would have a pole of order at least $3$ at $s=1$ while $\Eisen_E\bk{\Id, f_s, s, g}$ admits a pole of order at most $1$ at $s=\frac{1}{2}$, which brings us to a contradiction.

In conclusion, $\mu_1=\FNorm{\cdot}$ and $\mu_2=\FNorm{\cdot}\chi_K$, or vice versa, which proves the assertion.

\item \underline{$E\rmod F$ is a cubic Galois extension:}
Since $\Eisen_E\bk{\mu_1\FNorm{\cdot}^{-1}, f_s, s, g}$ and $\Eisen_E\bk{\mu_2\FNorm{\cdot}^{-1}, f_s, s, g}$ admits a pole at $s=2$, it holds that
\[
\bk{z_1,\eta_1}, \bk{z_2,\eta_2}\in \set{\bk{0,\eta}\mvert \eta^2\equiv\Id}\cup \set{\bk{1,\chi_E}} .
\]
The proof that $\bk{z_1,\eta_1},\bk{z_2,\eta_2}\neq \bk{0,\eta}$ for $\eta$ a quadratic character is similar to the split case.
Hence, $\mu_1\equiv\mu_2\equiv \chi_E \FNorm{\cdot}$ which proves the assertion.
\end{itemize}

\end{proof}

\addcontentsline{toc}{part}{References}
\TOCstop
\bibliographystyle{alpha}
\bibliography{bib}
\TOCstart

\newpage

\appendix
\begin{landscape}
	{\LARGE \part*{Appendices}}
	\section{Tables of Intertwining Operators}
	\label{Sec: Tables}
	
	In this section we list useful tables containing information about the local intertwining operators, poles of global Gindikin-Karpelevich factors and the exponents of $w^{-1}\cdot\chi_s\bk{t}$ in the various cases.
	
	\subsection{Cubic Extension Case}
	Assume $E$ is a Cubic Field Extension of $F$.
	In this case we denote 
	\[
	t=h_{\alpha_1\alpha_3^\sigma\alpha_4^{\sigma^2}}\bk{t_1}h_{\alpha_2}\bk{t_2} = h_{\alpha_1}\bk{t_1} h_{\alpha_2}\bk{t_2} h_{\alpha_3}\bk{t_1^\sigma} h_{\alpha_4}\bk{t_1^{\sigma^2}} ,
	\]
	where $t_1\in E^\times, t_2\in F^\times$.
	
	In the following table we list $w^{-1}\cdot\chi_s\bk{t}$ for the various $w\in W\bk{P_E,H_E}$.
	\begin{longtable}{|c|c|}
		\hline
		$w \in W\bk{P_E,H_E}$ & $w^{-1}\cdot\chi_s\bk{t}$ \\ \hline
		\endhead
		$\coset{}$ & $\chi\bk{t_2}\frac{\FNorm{t_2}_F^{s+\frac{3}{2}}}{\FNorm{t_1}_E}$ \\ \hline
		$\coset{2}$ & $\chi\bk{\frac{\Nm_{E\rmod F}\bk{t_1}}{t_2}}\frac{\FNorm{t_1}_E^{s+\frac{1}{2}}}{\FNorm{t_2}_F^{s+\frac{3}{2}}}$ \\ \hline
		$\coset{2,1}$ & $\chi\bk{\frac{t_2^2}{\Nm_{E\rmod F}\bk{t_1}}}\frac{\FNorm{t_2}_F^{2s}}{\FNorm{t_1}_E^{s+\frac{1}{2}}}$ \\ \hline
		$\coset{2,1,2}$ & $\chi\bk{\frac{\Nm_{E\rmod F}\bk{t_1}}{t_2^2}}\frac{\FNorm{t_1}_E^{s-\frac{1}{2}}}{\FNorm{t_2}_F^{2s}}$ \\ \hline
		$\coset{2,1,2,1}$ & $\chi\bk{\frac{t_2}{\Nm_{E\rmod F}\bk{t_1}}}\frac{\FNorm{t_2}_F^{s-\frac{3}{2}}}{\FNorm{t_1}_E^{s-\frac{1}{2}}}$ \\ \hline
		$\coset{2,1,2,1,2}$ & $\chi\bk{\frac{1}{t_2}}\frac{1}{\FNorm{t_1}_E \FNorm{t_2}_F^{s-\frac{3}{2}}}$ \\ \hline
		\caption{$w^{-1}\cdot\chi_s$ for $w\in W\bk{P_E,H_E}$, $E$ is a field}
		\label{Table: Action of W on characters, Cubic}
	\end{longtable}

	In the following table we list the Gindikin-Karpelevich factor $J\bk{w,\chi,\lambda}$ and the poles of the global Gindikin-Karpelevich factor $J\bk{w,\chi_s}$ for $\Real\bk{s}>0$.
	\begin{longtable}{|c|c|c|c|c|c|c|c|c|c|c|}
		\hline
		\multicolumn{2}{|c|}{} & \multicolumn{3}{c|}{$s=\frac{1}{2}$} & \multicolumn{2}{c|}{$s=\frac{3}{2}$} & $s=\frac{5}{2}$ \\ \hline
		$w\in W\bk{P_E,H_E}$ & $J\bk{w,\chi_s}$ & $1$ & $\chi_E$ & $\chi^2=\Id$ & $1$ & $\chi_E$ & $1$ \\ \hline
		\endhead
		$\coset{}$ & $1$ & 0 & 0 & 0 & 0 & 0 & 0 \\ \hline
		$\coset{2}$ & $\frac{\Lfun_F\bk{s+\frac{3}{2},\chi}}{\Lfun_F\bk{s+\frac{5}{2},\chi}}$ & 0 & 0 & 0 & 0 & 0 & 0 \\ \hline
		$\coset{2,1}$ & $\frac{\Lfun_F\bk{s+\frac{3}{2},\chi}\Lfun_E\bk{s+\frac{1}{2},\chi\circ\Nm}}{\Lfun_F\bk{s+\frac{5}{2},\chi}\Lfun_E\bk{s+\frac{3}{2},\chi\circ\Nm}}$ & 1 & 1 & 0 & 0 & 0 & 0 \\ \hline
		$\coset{2,1,2}$ & 
		$\frac{\Lfun_F\bk{s+\frac{3}{2},\chi} \Lfun_E\bk{s+\frac{1}{2},\chi\circ\Nm}\Lfun_F\bk{2s,\chi^2}}{\Lfun_F\bk{s+\frac{5}{2},\chi}\Lfun_E\bk{s+\frac{3}{2},\chi\circ \Nm}\Lfun_F\bk{2s+1,\chi^2}}$ & 2 & 1 & 1 & 0 & 0 & 0 \\ \hline
		$\coset{2,1,2,1}$ & $\frac{\Lfun_F\bk{s+\frac{3}{2},\chi}\Lfun_E\bk{s-\frac{1}{2},\chi\circ\Nm}\Lfun_F\bk{2s,\chi^2}}{\Lfun_F\bk{s+\frac{5}{2},\chi}\Lfun_E\bk{s+\frac{3}{2},\chi\circ\Nm}\Lfun_F\bk{2s+1,\chi^2}}$ & 2 & 1 & 1 & 1 & 1 & 0 \\ \hline
		$\coset{2,1,2,1,2}$ & $\frac{\Lfun_F\bk{s-\frac{3}{2},\chi}\Lfun_F\bk{s+\frac{3}{2},\chi}\Lfun_E\bk{s-\frac{1}{2},\chi\circ\Nm}\Lfun_F\bk{2s,\chi^2}}{\Lfun_F\bk{s-\frac{1}{2},\chi}\Lfun_F\bk{s+\frac{5}{2},\chi}\Lfun_E\bk{s+\frac{3}{2},\chi\circ\Nm}\Lfun_F\bk{2s+1,\chi^2}}$ & 1 & 1 & 1 & 1 & 1 & 1 \\ \hline
		\caption{Poles of $J\bk{w,\chi_s}$ for $w\in W\bk{P_E,H_E}$, $E$ is a field}
		\label{Constant Term along the Borel, Cubic}
	\end{longtable}

	In the following table we list the Gindikin-Karpelevich factor $J\bk{w,\chi,\lambda}$.
	Here $\lambda\bk{t}=\FNorm{t_1}_{E}^{s_1}\FNorm{t_2}_F^{s_2}$.
	\begin{longtable}{|c|c|}
		\hline
		$w\in W\bk{P_E,H_E}$ & $J\bk{w,\chi,\lambda}$ \\ \hline
		\endhead
		$\coset{}$ & 1 \\ \hline
		$\coset{2}$ & $\frac{\Lfun_F\bk{s_2,\chi}}{\Lfun_F\bk{s_2+1,\chi}}$ \\ \hline
		$\coset{2,1}$ & $\frac{\Lfun_F\bk{s_2,\chi}\Lfun_E\bk{s_1+s_2,\chi\circ\Nm}}{\Lfun_F\bk{s_2+1,\chi}\Lfun_E\bk{s_1+s_2+1,\chi\circ\Nm}}$ \\ \hline
		$\coset{2,1,2}$ & $\frac{\Lfun_F\bk{s_2,\chi}\Lfun_E\bk{s_1+s_2,\chi\circ\Nm}\Lfun_F\bk{3s_1+2s_2,\chi^2}}{\Lfun_F\bk{s_2+1,\chi}\Lfun_E\bk{s_1+s_2+1,\chi\circ\Nm}\Lfun_F\bk{3s_1+2s_2+1,\chi^2}}$
		\\ \hline
		$\coset{2,1,2,1}$ & $\frac{\Lfun_F\bk{s_2,\chi}\Lfun_E\bk{s_1+s_2,\chi\circ\Nm}\Lfun_F\bk{3s_1+2s_2,\chi^2}\Lfun_E\bk{2s_1+s_2,\chi\circ\Nm}}{\Lfun_F\bk{s_2+1,\chi}\Lfun_E\bk{s_1+s_2+1,\chi\circ\Nm}\Lfun_F\bk{3s_1+2s_2+1,\chi^2}\Lfun_E\bk{2s_1+s_2+1,\chi\circ\Nm}}$ \\ \hline
		$\coset{2,1,2,1,2}$ & $\frac{\Lfun_F\bk{s_2,\chi}\Lfun_E\bk{s_1+s_2,\chi\circ\Nm}\Lfun_F\bk{3s_1+2s_2,\chi^2}\Lfun_E\bk{2s_1+s_2,\chi\circ\Nm}\Lfun_F\bk{3s_1+s_2,\chi}}{\Lfun_F\bk{s_2+1,\chi}\Lfun_E\bk{s_1+s_2+1,\chi\circ\Nm}\Lfun_F\bk{3s_1+2s_2+1,\chi^2}\Lfun_E\bk{2s_1+s_2+1,\chi\circ\Nm}\Lfun_F\bk{3s_1+s_2+1,\chi}}$ \\ \hline
		\caption{$J\bk{w,\chi,\lambda}$ for $w\in W\bk{P_E,H_E}$, $E$ is a field}
		\label{Constant Term along the Borel, Cubic, General Character}
	\end{longtable}

	In the following table we list the exponents $\Real\bk{w^{-1}\cdot\chi_s}$ for all $w\in W\bk{P_E,H_E}$ at the points $s=\frac{1}{2}$ and $\frac{3}{2}$.
	\begin{longtable}{|c|c|c|c|c|c|c|c|c|c|c|}
		\hline
		$w\in W\bk{P_E,H_E}$ & $s=\frac{1}{2}$ & $s=\frac{3}{2}$ \\ \hline
		\endhead
		$[]$ & $\coset{0,1}$ & $\Nm_{E\rmod F}\coset{1,0}+3\coset{0,1}$ \\ \hline
		$[2]$ & $-\coset{0,1}$ & $\Nm_{E\rmod F}\coset{1,0}$ \\ \hline
		$\coset{2,1}$ & $-\Nm_{E\rmod F}\coset{1,0}-\coset{0,1}$ & $-\Nm_{E\rmod F}\coset{1,0}$ \\ \hline
		$\coset{2,1,2}$ & 
		$-\Nm_{E\rmod F}\coset{1,0}-2\coset{0,1}$ & $-\Nm_{E\rmod F}\coset{1,0}-3\coset{0,1}$ \\ \hline
		$\coset{2,1,2,1}$ & $-\Nm_{E\rmod F}\coset{1,0}-2\coset{0,1}$ & $-2\Nm_{E\rmod F}\coset{1,0}-3\coset{0,1}$ \\ \hline
		$\coset{2,1,2,1,2}$ & $-\Nm_{E\rmod F}\coset{1,0}-\coset{0,1}$ & $-2\Nm_{E\rmod F}\coset{1,0}-3\coset{0,1}$ \\ \hline
		\caption{The exponents $\Real\bk{w^{-1}\cdot\chi_s}$ for $w\in W\bk{P_E,H_E}$, $E$ is a field}
		\label{Table: Exponents, Cubic}
	\end{longtable}

	\subsection{Quadratic Extension Case}
	Assume $E=F\times K$, where $K$ is a field.
	For this case we denote 
	\[
	t=h_{\alpha_1}\bk{t_1}h_{\alpha_2}\bk{t_2}h_{\alpha_3\alpha_4^\sigma}\bk{t_3} = h_{\alpha_1}\bk{t_1}h_{\alpha_2}\bk{t_2}h_{\alpha_3}\bk{t_3}h_{\alpha_3}\bk{t_3^\sigma} ,
	\]
	where $t_1,t_2\in F^\times, t_3\in K^\times$.
	
	In the following table we list $w^{-1}\cdot\chi_s\bk{t}$ for the various $w\in W\bk{P_E,H_E}$.
	\begin{longtable}{|c|c|}
		\hline
		$w \in W\bk{P_E,H_E}$ & $w^{-1}\cdot\chi_s\bk{t}$ \\ \hline
		\endhead
		$\coset{}$ & $\chi\bk{t_2}\frac{\FNorm{t_2}_F^{s+\frac{3}{2}}}{\FNorm{t_1}_F \FNorm{t_3}_K}$\\ \hline
		$\coset{2}$ & $\chi\bk{\frac{t_1\Nm_{K\rmod F}\bk{t_3}}{t_2}}\frac{\FNorm{t_1}_F^{s+\frac{1}{2}} \FNorm{t_3}_K^{s+\frac{1}{2}}}{\FNorm{t_2}_F^{s+\frac{3}{2}}}$ \\ \hline
		$\coset{2,1}$ & $\chi\bk{\frac{\Nm_{K\rmod F}\bk{t_3}}{t_1}} \frac{\FNorm{t_3}_K^{s+\frac{1}{2}}}{\FNorm{t_1}_F^{s+\frac{1}{2}} \FNorm{t_2}_F}$ \\ \hline
		$[2,3]$ & $\chi\bk{\frac{t_1t_2}{\Nm_{K\rmod F}\bk{t_3}}}\frac{\FNorm{t_1}_F^{s+\frac{1}{2}} \FNorm{t_2}_F^{s-\frac{1}{2}}}{\FNorm{t_3}_K^{s+\frac{1}{2}}}$ \\ \hline
		$[2,1,3]$ & $\chi\bk{\frac{t_2^2}{t_1\Nm_{K\rmod F}\bk{t_3}}}\frac{\FNorm{t_2}_F^{2s}}{\FNorm{t_1}_F^{s+\frac{1}{2}} \FNorm{t_3}_K^{s+\frac{1}{2}}}$ \\ \hline
		$\coset{2,3,2}$ & $\chi\bk{\frac{t_1^2}{t_2}}\frac{\FNorm{t_1}_F^{2s}}{\FNorm{t_2}_F^{s-\frac{1}{2}} \FNorm{t_3}_K}$ \\ \hline
		$\coset{2,1,3,2}$ & $\chi\bk{\frac{t_1\Nm_{K\rmod F}\bk{t_3}}{t_2^2}}\frac{\FNorm{t_1}_F^{s-\frac{1}{2}} \FNorm{t_3}_K^{s-\frac{1}{2}}}{\FNorm{t_2}_F^{2s}}$ \\ \hline
		$[2,3,2,1]$ & $\chi\bk{\frac{t_2}{t_1^2}}\frac{\FNorm{t_2}_F^{s+\frac{1}{2}}}{\FNorm{t_1}_F^{2s} \FNorm{t_3}_K}$ \\ \hline
		$\coset{2,1,3,2,1}$ & $\chi\bk{\frac{\Nm_{K\rmod F}\bk{t_3}}{t_1t_2}}\frac{\FNorm{t_3}_K^{s-\frac{1}{2}}}{\FNorm{t_1}_F^{s-\frac{1}{2}} \FNorm{t_2}_F^{s+\frac{1}{2}}}$ \\ \hline
		$[2,1,3,2,3]$ & $\chi\bk{\frac{t_1}{\Nm_{K\rmod F}\bk{t_3}}} \frac{\FNorm{t_1}_F^{s-\frac{1}{2}}}{\FNorm{t_2}_F \FNorm{t_3}_K^{s-\frac{1}{2}}}$ \\ \hline
		$[2,1,3,2,1,3]$ & $\chi\bk{\frac{t_2}{t_1\Nm_{K\rmod F}\bk{t_3}}}\frac{\FNorm{t_2}_F^{s-\frac{3}{2}}}{\FNorm{t_1}_F^{s-\frac{1}{2}} \FNorm{t_3}_K^{s-\frac{1}{2}}}$ \\ \hline
		$\coset{2,1,3,2,1,3,2}$ & $\chi\bk{\frac{1}{t_2}}\frac{1}{\FNorm{t_1}_F {\FNorm{t_2}_F^{s-\frac{3}{2}}} \FNorm{t_3}_K}$ \\ \hline
		\caption{$w^{-1}\cdot\chi_s$ for $w\in W\bk{P_E,H_E}$, \\ $E=F\times K$}
		\label{Table: Action of W on characters, Quadratic}
	\end{longtable}
	
	In the following table we list the Gindikin-Karpelevich factor $J\bk{w,\chi_s}$ and the poles of the global Gindikin-Karpelevich factor $J\bk{w,\chi_s}$ for $\Real\bk{s}>0$.
	\begin{longtable}{|c|c|c|c|c|c|c|c|c|c|c|}
		\hline
		\multicolumn{2}{|c|}{} & \multicolumn{3}{c|}{$s=\frac{1}{2}$} & \multicolumn{2}{c|}{$s=\frac{3}{2}$} & $s=\frac{5}{2}$ \\ \hline
		$w\in W\bk{P_E,H_E}$ & $J\bk{w,\chi_s}$ & $1$ & $\chi_K$ & $\chi^2=\Id$ & $1$ & $\chi_K$ & $1$ \\ \hline
		\endhead
		$\coset{}$ & $1$ & 0 & 0 & 0 & 0 & 0 & 0 \\ \hline
		$\coset{2}$ & $\frac{\Lfun_F\bk{s+\frac{3}{2},\chi}}{\Lfun_F\bk{s+\frac{5}{2},\chi}}$ & 0 & 0 & 0 & 0 & 0 & 0 \\ \hline
		$\coset{2,1}$ & $\frac{\Lfun_F\bk{s+\frac{1}{2},\chi}}{\Lfun_F\bk{s+\frac{5}{2},\chi}}$ & 1 & 0 & 0 & 0 & 0 & 0 \\ \hline
		$[2,3]$ & $\frac{\Lfun_F\bk{s+\frac{3}{2},\chi}\Lfun_K\bk{s+\frac{1}{2},\chi\circ\Nm}}{\Lfun_F\bk{s+\frac{5}{2},\chi}\Lfun_K\bk{s+\frac{3}{2},\chi\circ\Nm}}$ & 1 & 1 & 0 & 0 & 0 & 0 \\ \hline
		$[2,1,3]$ & $\frac{\Lfun_F\bk{s+\frac{1}{2},\chi}\Lfun_K\bk{s+\frac{1}{2},\chi\circ\Nm}}{\Lfun_F\bk{s+\frac{5}{2},\chi}\Lfun_K\bk{s+\frac{3}{2},\chi\circ\Nm}}$ & 2 & 1 & 0 & 0 & 0 & 0 \\ \hline
		$\coset{2,3,2}$ & $\frac{\Lfun_F\bk{s+\frac{3}{2},\chi} \Lfun_K\bk{s+\frac{1}{2},\chi\circ\Nm}\Lfun_F\bk{s-\frac{1}{2},\chi}}{\Lfun_F\bk{s+\frac{5}{2},\chi}\Lfun_K\bk{s+\frac{3}{2},\chi\circ\Nm}\Lfun_F\bk{s+\frac{1}{2},\chi}}$ & 1 & 1 & 0 & 1 & 0 & 0 \\ \hline
		$\coset{2,1,3,2}$ & $\frac{\Lfun_F\bk{s+\frac{1}{2},\chi}\Lfun_K\bk{s+\frac{1}{2},\chi\circ\Nm}\Lfun_F\bk{2s,\chi^2}}{\Lfun_F\bk{s+\frac{5}{2},\chi}\Lfun_K\bk{s+\frac{3}{2},\chi\circ\Nm}\Lfun_F\bk{2s+1,\chi^2}}$ & 3 & 2 & 1 & 0 & 0 & 0 \\ \hline
		$[2,3,2,1]$ & $\frac{\Lfun_F\bk{s+\frac{3}{2},\chi}\Lfun_K\bk{s+\frac{1}{2},\chi\circ\Nm}\Lfun_F\bk{s-\frac{1}{2},\chi}\Lfun_F\bk{2s,\chi^2}}{\Lfun_F\bk{s+\frac{5}{2},\chi}\Lfun_K\bk{s+\frac{3}{2},\chi\circ\Nm}\Lfun_F\bk{s+\frac{1}{2},\chi}\Lfun_F\bk{2s+1,\chi^2}}$ & 2 & 2 & 1 & 1 & 0 & 0 \\ \hline
		$\coset{2,1,3,2,1}$ & $\frac{\Lfun_F\bk{s-\frac{1}{2},\chi}\Lfun_K\bk{s+\frac{1}{2},\chi\circ\Nm}\Lfun_F\bk{2s,\chi^2}}{\Lfun_F\bk{s+\frac{5}{2},\chi}\Lfun_K\bk{s+\frac{3}{2},\chi\circ\Nm}\Lfun_F\bk{2s+1,\chi^2}}$ & 3 & 2 & 1 & 1 & 0 & 0 \\ \hline
		$[2,1,3,2,3]$ & $\frac{\Lfun_F\bk{s+\frac{1}{2},\chi}\Lfun_K\bk{s-\frac{1}{2},\chi\circ\Nm}\Lfun_F\bk{2s,\chi^2}}{\Lfun_F\bk{s+\frac{5}{2},\chi}\Lfun_K\bk{s+\frac{3}{2},\chi\circ\Nm}\Lfun_F\bk{2s+1,\chi^2}}$  & 3 & 2 & 1 & 1 & 1 & 0 \\ \hline
		$[2,1,3,2,1,3]$ & $\frac{\Lfun_F\bk{s-\frac{1}{2},\chi}\Lfun_K\bk{s-\frac{1}{2},\chi\circ\Nm}\Lfun_F\bk{2s,\chi^2}}{\Lfun_F\bk{s+\frac{5}{2},\chi}\Lfun_K\bk{s+\frac{3}{2},\chi\circ\Nm}\Lfun_F\bk{2s+1,\chi^2}}$ & 3 & 2 & 1 & 2 & 1 & 0 \\ \hline
		$\coset{2,1,3,2,1,3,2}$ & $\frac{\Lfun_F\bk{s-\frac{3}{2},\chi}\Lfun_K\bk{s-\frac{1}{2},\chi\circ\Nm}\Lfun_F\bk{2s,\chi^2}}{\Lfun_F\bk{s+\frac{5}{2},\chi}\Lfun_K\bk{s+\frac{3}{2},\chi\circ\Nm}\Lfun_F\bk{2s+1,\chi^2}}$ & 2 & 2 & 1 & 2 & 1 & 1 \\ \hline
		\caption{Poles of $J\bk{w,\chi_s}$ for $w\in W\bk{P_E,H_E}$, \\ $E=F\times K$}
		\label{Constant Term along the Borel, Quadratic}
	\end{longtable}

	In the following table we list the Gindikin-Karpelevich factor $J\bk{w,\chi,\lambda}$.
	Here $\lambda\bk{t}=\FNorm{t_1}_{F}^{s_1}\FNorm{t_2}_E^{s_2}\FNorm{t_3}_K^{s_3}$.
	\begin{longtable}{|c|c|}
		\hline
		$w\in W\bk{P_E,H_E}$ & $J\bk{w,\chi,\lambda}$ \\ \hline
		\endhead
		$\coset{}$ & $1$ \\ \hline
		$\coset{2}$ & $\frac{\Lfun_F\bk{s_2,\chi}}{\Lfun_F\bk{s_2+1,\chi}}$ \\ \hline
		$\coset{2,1}$ & $\frac{\Lfun_F\bk{s_2,\chi}\Lfun_F\bk{s_1+s_2,\chi}}{\Lfun_F\bk{s_2+1,\chi}\Lfun_F\bk{s_1+s_2+1,\chi}}$ \\ \hline
		$[2,3]$ & $\frac{\Lfun_F\bk{s_2,\chi}\Lfun_K\bk{s_2+s_3,\chi\circ\Nm}}{\Lfun_F\bk{s_2+1,\chi}\Lfun_K\bk{s_2+s_3+1,\chi\circ\Nm}}$ \\ \hline
		$[2,1,3]$ & $\frac{\Lfun_F\bk{s_2,\chi}\Lfun_K\bk{s_2+s_3,\chi\circ\Nm}\Lfun_F\bk{s_1+s_2,\chi}}{\Lfun_F\bk{s_2+1,\chi}\Lfun_K\bk{s_2+s_3+1,\chi\circ\Nm}\Lfun_F\bk{s_1+s_2+1,\chi}}$ \\ \hline
		$\coset{2,3,2}$ & $\frac{\Lfun_F\bk{s_2,\chi}\Lfun_K\bk{s_2+s_3,\chi\circ\Nm}\Lfun_F\bk{s_2+2s_3,\chi}}{\Lfun_F\bk{s_2+1,\chi}\Lfun_K\bk{s_2+s_3+1,\chi\circ\Nm}\Lfun_F\bk{s_2+2s_3+1,\chi}}$ \\ \hline
		$\coset{2,1,3,2}$ & $\frac{\Lfun_F\bk{s_2,\chi}\Lfun_K\bk{s_2+s_3,\chi\circ\Nm}\Lfun_F\bk{s_1+s_2,\chi}\Lfun_F\bk{s_1+2s_2+2s_3,\chi^2}}{\Lfun_F\bk{s_2+1,\chi}\Lfun_K\bk{s_2+s_3+1,\chi\circ\Nm}\Lfun_F\bk{s_1+s_2+1,\chi}\Lfun_F\bk{s_1+2s_2+2s_3+1,\chi^2}}$ \\ \hline
		$[2,3,2,1]$ & $\frac{\Lfun_F\bk{s_2,\chi}\Lfun_K\bk{s_2+s_3,\chi\circ\Nm}\Lfun_F\bk{s_2+2s_3,\chi}\Lfun_F\bk{s_1+2s_2+2s_3,\chi^2}}{\Lfun_F\bk{s_2+1,\chi}\Lfun_K\bk{s_2+s_3+1,\chi\circ\Nm}\Lfun_F\bk{s_2+2s_3+1,\chi}\Lfun_F\bk{s_1+2s_2+2s_3+1,\chi^2}}$ \\ \hline
		$\coset{2,1,3,2,1}$ & $\frac{\Lfun_F\bk{s_2,\chi}\Lfun_K\bk{s_2+s_3,\chi\circ\Nm}\Lfun_F\bk{s_1+s_2,\chi}\Lfun_F\bk{s_1+2s_2+2s_3,\chi^2}\Lfun_F\bk{s_2+2s_3,\chi}}{\Lfun_F\bk{s_2+1,\chi}\Lfun_K\bk{s_2+s_3+1,\chi\circ\Nm}\Lfun_F\bk{s_1+s_2+1,\chi}\Lfun_F\bk{s_1+2s_2+2s_3+1,\chi^2}\Lfun_F\bk{s_2+2s_3,\chi}}$ \\ \hline
		$[2,1,3,2,3]$ & $\frac{\Lfun_F\bk{s_2,\chi}\Lfun_K\bk{s_2+s_3,\chi\circ\Nm}\Lfun_F\bk{s_1+s_2,\chi}\Lfun_F\bk{s_1+2s_2+2s_3,\chi^2}\Lfun_K\bk{s_1+s_2+s_3,\chi\circ\Nm}}{\Lfun_F\bk{s_2+1,\chi}\Lfun_K\bk{s_2+s_3+1,\chi\circ\Nm}\Lfun_F\bk{s_1+s_2+1,\chi}\Lfun_F\bk{s_1+2s_2+2s_3+1,\chi^2}\Lfun_K\bk{s_1+s_2+s_3+1,\chi\circ\Nm}}$ \\ \hline
		$[2,1,3,2,1,3]$ & $\frac{\Lfun_F\bk{s_2,\chi}\Lfun_K\bk{s_2+s_3,\chi\circ\Nm}\Lfun_F\bk{s_1+s_2,\chi}\Lfun_F\bk{s_1+2s_2+2s_3,\chi^2}\Lfun_F\bk{s_2+2s_3,\chi}\Lfun_K\bk{s_1+s_2+s_3,\chi\circ\Nm}}{\Lfun_F\bk{s_2+1,\chi}\Lfun_K\bk{s_2+s_3+1,\chi\circ\Nm}\Lfun_F\bk{s_1+s_2+1,\chi}\Lfun_F\bk{s_1+2s_2+2s_3+1,\chi^2}\Lfun_F\bk{s_2+2s_3,\chi}\Lfun_K\bk{s_1+s_2+s_3+1,\chi\circ\Nm}}$ \\ \hline
		$\coset{2,1,3,2,1,3,2}$ & $\frac{\Lfun_F\bk{s_2,\chi}\Lfun_K\bk{s_2+s_3,\chi\circ\Nm}\Lfun_F\bk{s_1+s_2,\chi}\Lfun_F\bk{s_1+2s_2+2s_3,\chi^2}\Lfun_F\bk{s_2+2s_3,\chi}\Lfun_K\bk{s_1+s_2+s_3,\chi\circ\Nm}\Lfun_F\bk{s_1+s_2+2s_3,\chi}}{\Lfun_F\bk{s_2+1,\chi}\Lfun_K\bk{s_2+s_3+1,\chi\circ\Nm}\Lfun_F\bk{s_1+s_2+1,\chi}\Lfun_F\bk{s_1+2s_2+2s_3+1,\chi^2}\Lfun_F\bk{s_2+2s_3,\chi}\Lfun_K\bk{s_1+s_2+s_3+1,\chi\circ\Nm}\Lfun_F\bk{s_1+s_2+2s_3,\chi}}$ \\ \hline
		\caption{$J\bk{w,\chi,\lambda}$ for $w\in W\bk{P_E,H_E}$, $E=F\times K$}
		\label{Constant Term along the Borel, Quadratic, Gneral Character}
	\end{longtable}
	
	In the following table we list the exponents $\Real\bk{w^{-1}\cdot\chi_s}$ for all $w\in W\bk{P_E,H_E}$.
	\begin{longtable}{|c|c|c|}
		\hline
		$w\in W\bk{P_E,H_E}$ & $s=\frac{1}{2}$ & $s=\frac{3}{2}$ \\ \hline
		\endhead
		$[]$ & $\coset{0,1,0}$ & $\coset{1,0,0}+3\coset{0,1,0}+\Nm_{E\rmod F}\coset{0,0,1}$ \\ \hline
		$[2]$ & $-\coset{0,1,0}$ & $\coset{1,0,0}+\Nm_{E\rmod F}\coset{0,0,1}$ \\ \hline
		$\coset{2,1}$ & $-\coset{1,0,0}-\coset{0,1,0}$ & $-\coset{1,0,0}+\Nm_{E\rmod F}\coset{0,0,1}$ \\ \hline
		$[2,3]$ & $-\coset{0,1,0}-\Nm_{E\rmod F}\coset{0,0,1}$ & $\coset{1,0,0}-\Nm_{E\rmod F}\coset{0,0,1}$ \\ \hline
		$[2,1,3]$ & $-\coset{1,0,0}-\coset{0,1,0}-\Nm_{E\rmod F}\coset{0,0,1}$ & $-\coset{1,0,0}-\Nm_{E\rmod F}\coset{0,0,1}$ \\ \hline
		$[2,3,2]$ & $-\coset{0,1,0}-\Nm_{E\rmod F}\coset{0,0,1}$ & $\coset{1,0,0}-\coset{0,1,0}-\Nm_{E\rmod F}\coset{0,0,1}$ \\ \hline
		$\coset{2,1,3,2}$ & $-\coset{1,0,0}-2\coset{0,1,0}-\Nm_{E\rmod F}\coset{0,0,1}$ & $-\coset{1,0,0}-3\coset{0,1,0}-\Nm_{E\rmod F}\coset{0,0,1}$ \\ \hline
		$[2,3,2,1]$ & $-\coset{1,0,0}-2\coset{0,1,0}-\Nm_{E\rmod F}\coset{0,0,1}$ & $-2\coset{1,0,0}-\coset{0,1,0}-\Nm_{E\rmod F}\coset{0,0,1}$ \\ \hline
		$\coset{2,1,3,2,1}$ & $-\coset{1,0,0}-2\coset{0,1,0}-\Nm_{E\rmod F}\coset{0,0,1}$ & $-2\coset{1,0,0}-3\coset{0,1,0}-\Nm_{E\rmod F}\coset{0,0,1}$ \\ \hline
		$[2,1,3,2,3]$ & $-\coset{1,0,0}-2\coset{0,1,0}-\Nm_{E\rmod F}\coset{0,0,1}$ & $-\coset{1,0,0}-3\coset{0,1,0}-2\Nm_{E\rmod F}\coset{0,0,1}$ \\ \hline
		$[2,1,3,2,1,3]$ & $-\coset{1,0,0}-2\coset{0,1,0}-\Nm_{E\rmod F}\coset{0,0,1}$ & $-2\coset{1,0,0}-3\coset{0,1,0}-2\Nm_{E\rmod F}\coset{0,0,1}$ \\ \hline
		$\coset{2,1,3,2,1,3,2}$ & $-\coset{1,0,0}-\coset{0,1,0}-\Nm_{E\rmod F}\coset{0,0,1}$ & $-2\coset{1,0,0}-3\coset{0,1,0}-2\Nm_{E\rmod F}\coset{0,0,1}$ \\ \hline
		\caption{The exponents $\Real\bk{w^{-1}\cdot\chi_s}$ for $w\in W\bk{P_E,H_E}$, $E=F\times K$}
		\label{Table: Exponents, Quadratic}
	\end{longtable}

	\subsection{Split Case} Assume $E=F\times F\times F$.
	For this case, we denote $$t=h_{\alpha_1}\bk{t_1}h_{\alpha_2}\bk{t_2}h_{\alpha_3}\bk{t_3}h_{\alpha_4}\bk{t_4},$$ where $t_1,t_2,t_3,t_4\in F^\times$.

	In the following table we list $w^{-1}\cdot\chi_s\bk{t}$ for the various $w\in W\bk{P_E,H_E}$ and also the resulting characters $w^{-1}\cdot\chi_s$.
	\begin{longtable}{|c|c|}
		\hline
		$w \in W\bk{P_E,H_E}$ & $w^{-1}\cdot\chi_s\bk{t}$ \\ \hline
		\endhead
		$[]$ & $\chi\bk{t_2}\frac{\FNorm{t_2}^{s+\frac{3}{2}}}{\FNorm{t_1t_3t_4}}$ \\ \hline
		$[2]$ & $\chi\bk{\frac{t_1t_3t_4}{t_2}} \frac{\FNorm{t_1t_3t_4}^{s+\frac{1}{2}}}{\FNorm{t_2}^{s+\frac{3}{2}}}$ \\ \hline
		$[2,1]$ & $\chi\bk{\frac{t_3t_4}{t_1}}\frac{\FNorm{t_3t_4}^{s+\frac{1}{2}}}{\FNorm{t_2}\FNorm{t_1}^{s+\frac{1}{2}}}$ \\ \hline
		$[2,3]$ & $\chi\bk{\frac{t_1t_4}{t_3}}\frac{\FNorm{t_1t_4}^{s+\frac{1}{2}}}{\FNorm{t_2}\FNorm{t_3}^{s+\frac{1}{2}}}$ \\ \hline
		$[2,4]$ & $\chi\bk{\frac{t_1t_3}{t_4}}\frac{\FNorm{t_1t_3}^{s+\frac{1}{2}}}{\FNorm{t_2}\FNorm{t_4}^{s+\frac{1}{2}}}$ \\ \hline
		$[2,1,3]$ & $\chi\bk{\frac{t_2t_4}{t_1t_3}}\frac{\FNorm{t_2}^{s-\frac{1}{2}}\FNorm{t_4}^{s+\frac{1}{2}}}{\FNorm{t_1t_3}^{s+\frac{1}{2}}}$ \\ \hline
		$[2,1,4]$ & $\chi\bk{\frac{t_2t_3}{t_1t_4}}\frac{\FNorm{t_2}^{s-\frac{1}{2}}\FNorm{t_3}^{s+\frac{1}{2}}}{\FNorm{t_1t_4}^{s+\frac{1}{2}}}$ \\ \hline
		$[2,3,4]$ & $\chi\bk{\frac{t_2t_1}{t_3t_4}}\frac{\FNorm{t_2}^{s-\frac{1}{2}}\FNorm{t_1}^{s+\frac{1}{2}}}{\FNorm{t_3t_4}^{s+\frac{1}{2}}}$ \\ \hline
		$[2,1,3,2]$ & $\chi\bk{\frac{t_4^2}{t_2}}\frac{\FNorm{t_4}^{2s}}{\FNorm{t_1t_3} \FNorm{t_2}^{s-\frac{1}{2}}}$ \\ \hline
		$[2,1,4,2]$ & $\chi\bk{\frac{t_3^2}{t_2}}\frac{\FNorm{t_3}^{2s}}{\FNorm{t_1t_4} \FNorm{t_2}^{s-\frac{1}{2}}}$ \\ \hline
		$[2,3,4,2]$ & $\chi\bk{\frac{t_1^2}{t_2}}\frac{\FNorm{t_1}^{2s}}{\FNorm{t_3t_4} \FNorm{t_2}^{s-\frac{1}{2}}}$ \\ \hline
		$[2,1,3,4]$ & $\chi\bk{\frac{t_2^2}{t_1t_3t_4}} \frac{\FNorm{t_2}^{2s}}{\FNorm{t_1t_3t_4}^{s+\frac{1}{2}}}$ \\ \hline
		$[2,3,4,2,1]$ & $\chi\bk{\frac{t_2}{t_1^2}}\frac{\FNorm{t_2}^{s+\frac{1}{2}}}{\FNorm{t_3t_4} \FNorm{t_1}^{2s}}$ \\ \hline
		$[2,1,4,2,3]$ & $\chi\bk{\frac{t_2}{t_3^2}}\frac{\FNorm{t_2}^{s+\frac{1}{2}}}{\FNorm{t_1t_4} \FNorm{t_3}^{2s}}$ \\ \hline
		$[2,1,3,2,4]$ & $\chi\bk{\frac{t_2}{t_4^2}}\frac{\FNorm{t_2}^{s+\frac{1}{2}}}{\FNorm{t_1t_3} \FNorm{t_4}^{2s}}$ \\ \hline
		$[2,1,3,4,2]$ & $\chi\bk{\frac{t_1t_3t_4}{t_2^2}} \frac{\FNorm{t_1t_3t_4}^{s-\frac{1}{2}}}{\FNorm{t_2}^{2s}}$ \\ \hline
		$[2,1,3,4,2,1]$ & $\chi\bk{\frac{t_3t_4}{t_1t_2}}\frac{\FNorm{t_3t_4}^{s-\frac{1}{2}}}{\FNorm{t_2}^{s+\frac{1}{2}}\FNorm{t_1}^{s-\frac{1}{2}}}$ \\ \hline
		$[2,1,3,4,2,3]$ & $\chi\bk{\frac{t_1t_4}{t_2t_3}}\frac{\FNorm{t_1t_4}^{s-\frac{1}{2}}}{\FNorm{t_2}^{s+\frac{1}{2}}\FNorm{t_3}^{s-\frac{1}{2}}}$ \\ \hline
		$[2,1,3,4,2,4]$ & $\chi\bk{\frac{t_1t_3}{t_2t_4}}\frac{\FNorm{t_1t_3}^{s-\frac{1}{2}}}{\FNorm{t_2}^{s+\frac{1}{2}}\FNorm{t_4}^{s-\frac{1}{2}}}$ \\ \hline
		$[2,1,3,4,2,1,3]$ & $\chi\bk{\frac{t_4}{t_1t_3}}\frac{\FNorm{t_4}^{s-\frac{1}{2}}}{\FNorm{t_2}\FNorm{t_1t_3}^{s-\frac{1}{2}}}$ \\ \hline
		$[2,1,3,4,2,1,4]$ & $\chi\bk{\frac{t_3}{t_1t_4}}\frac{\FNorm{t_3}^{s-\frac{1}{2}}}{\FNorm{t_2}\FNorm{t_1t_4}^{s-\frac{1}{2}}}$ \\ \hline
		$[2,1,3,4,2,3,4]$ & $\chi\bk{\frac{t_1}{t_3t_4}}\frac{\FNorm{t_1}^{s-\frac{1}{2}}}{\FNorm{t_2}\FNorm{t_3t_4}^{s-\frac{1}{2}}}$ \\ \hline
		$[2,1,3,4,2,1,3,4]$ & $\chi\bk{\frac{t_2}{t_1t_3t_4}} \frac{\FNorm{t_2}^{s-\frac{3}{2}}}{\FNorm{t_1t_3t_4}^{s-\frac{1}{2}}}$ \\ \hline
		$[2,1,3,4,2,1,3,4,2]$ & $\chi\bk{\frac{1}{t_2}}\frac{1}{\FNorm{t_2}^{s-\frac{3}{2}}\FNorm{t_1t_3t_4}}$ \\ \hline
		\caption{$w^{-1}\cdot\chi_s$ for $w\in W\bk{P_E,H_E}$,  $E=F\times F\times F$}
		\label{Table: Action of W on characters, Split}
	\end{longtable}
	
	In the following table we list the Gindikin-Karpelevich factor $J\bk{w,\chi_s}$ and the poles of the global Gindikin-Karpelevich factor $J\bk{w,\chi_s}$ for $\Real\bk{s}>0$.
	\begin{longtable}{|c|c|c|c|c|c|c|c|c|c|c|}
		\hline
		\multicolumn{2}{|c|}{} & \multicolumn{2}{c|}{$s=\frac{1}{2}$} & $s=\frac{3}{2}$ & $s=\frac{5}{2}$ \\ \hline
		$w\in W\bk{P_E,H_E}$ & $J\bk{w,\chi_s}$ & $\chi=\Id$ & $\chi^2=\Id$ & $\chi=\Id$ & $1$ \\ \hline
		\endhead
		$[]$ & $1$ & 0 & 0 & 0 & 0 \\ \hline
		$[2]$ & $\frac{\Lfun\bk{s+\frac{3}{2},\chi}}{\Lfun\bk{s+\frac{5}{2},\chi}}$ & 0 & 0 & 0 & 0 \\ \hline
		$[2,1]$ & &&&& \\
		$[2,3]$ & $\frac{\Lfun\bk{s+\frac{1}{2},\chi}}{\Lfun\bk{s+\frac{5}{2},\chi}}$ & 1 & 0 & 0 & 0 \\
		$[2,4]$ & &&&& \\ \hline
		$[2,1,3]$ & &&&& \\
		$[2,1,4]$ & $\frac{\Lfun\bk{s+\frac{1}{2},\chi}^2}{\Lfun\bk{s+\frac{3}{2},\chi} \Lfun\bk{s+\frac{5}{2},\chi}}$ & 2 & 0 & 0 & 0 \\
		$[2,3,4]$ & &&&& \\ \hline
		$[2,1,3,2]$ & &&&& \\
		$[2,1,4,2]$ & $\frac{\Lfun\bk{s+\frac{1}{2},\chi}\Lfun\bk{s-\frac{1}{2},\chi}}{\Lfun\bk{s+\frac{5}{2},\chi}\Lfun\bk{s+\frac{3}{2},\chi}}$ & 2 & 0 & 1 & 0 \\
		$[2,3,4,2]$ & &&&& \\ \hline
		$[2,1,3,4]$ & $\frac{\Lfun\bk{s+\frac{1}{2},\chi}^3}{\Lfun\bk{s+\frac{3}{2},\chi}^2 \Lfun\bk{s+\frac{5}{2},\chi}}$ & 3 & 0 & 0 & 0 \\ \hline
		$[2,3,4,2,1]$ & &&&& \\
		$[2,1,4,2,3]$ & $\frac{\Lfun\bk{s-\frac{1}{2},\chi} \Lfun\bk{s+\frac{1}{2},\chi} \Lfun\bk{2s,\chi^2}}{\Lfun\bk{s+\frac{3}{2},\chi} \Lfun\bk{s+\frac{5}{2},\chi}\Lfun\bk{2s+1,\chi^2}}$ & 3 & 1 & 1 & 0 \\
		$[2,1,3,2,4]$ & &&&& \\ \hline
		$[2,1,3,4,2]$ & $\frac{\Lfun\bk{s+\frac{1}{2},\chi}^3\Lfun\bk{2s,\chi^2}}{\Lfun\bk{s+\frac{5}{2},\chi}\Lfun\bk{s+\frac{3}{2},\chi}^2\Lfun\bk{2s+1,\chi^2}}$ & 4 & 1 & 0 & 0 \\ \hline
		$[2,1,3,4,2,1]$ & &&&& \\
		$[2,1,3,4,2,3]$ & $\frac{\Lfun\bk{s-\frac{1}{2},\chi}\Lfun\bk{s+\frac{1}{2},\chi}^2\Lfun\bk{2s,\chi^2}}{\Lfun\bk{s+\frac{3}{2},\chi}^2 \Lfun\bk{s+\frac{5}{2},\chi}\Lfun\bk{2s+1,\chi^2}}$ & 4 & 1 & 1 & 0 \\
		$[2,1,3,4,2,4]$ & &&&& \\ \hline
		$[2,1,3,4,2,1,3]$ & &&&& \\
		$[2,1,3,4,2,1,4]$ & $\frac{\Lfun\bk{s-\frac{1}{2},\chi}^2\Lfun\bk{s+\frac{1}{2},\chi}\Lfun\bk{2s,\chi^2}}{\Lfun\bk{s+\frac{3}{2},\chi}^2 \Lfun\bk{s+\frac{5}{2},\chi}\Lfun\bk{2s+1,\chi^2}}$ & 4 & 1 & 2 & 0 \\
		$[2,1,3,4,2,3,4]$ & &&&& \\ \hline
		$[2,1,3,4,2,1,3,4]$ & $\frac{\Lfun\bk{s-\frac{1}{2},\chi}^3\Lfun\bk{2s,\chi^2}}{\Lfun\bk{s+\frac{3}{2},\chi}^2 \Lfun\bk{s+\frac{5}{2},\chi}\Lfun\bk{2s+1,\chi^2}}$ & 4 & 1 & 3 & 0 \\ \hline
		$[2,1,3,4,2,1,3,4,2]$ &  $\frac{\Lfun\bk{s-\frac{1}{2},\chi}^2\Lfun\bk{s-\frac{3}{2},\chi}\Lfun\bk{2s,\chi^2}}{\Lfun\bk{s+\frac{5}{2},\chi}\Lfun\bk{s+\frac{3}{2},\chi}^2\Lfun\bk{2s+1,\chi^2}}$ & 3 & 1 & 3 & 1 \\ \hline
		\caption{Poles of $J\bk{w,\chi_s}$ for $w\in W\bk{P_E,H_E}$, $E=F\times F\times F$}
		\label{Constant Term along the Borel, Split}
	\end{longtable}
	
	In the following table we list the Gindikin-Karpelevich factor $J\bk{w,\chi,\lambda}$.
	Here $\lambda\bk{t}=\FNorm{t_1}_{F}^{s_1}\FNorm{t_2}_F^{s_2}\FNorm{t_3}_F^{s_3}\FNorm{t_4}_F^{s_4}$.

	{
		\pagestyle{empty}
		\setlength\LTleft{-0.5in}
		\setlength\LTright{-0.5in}
		\begin{longtable}{@{\extracolsep{\fill}}*{2}{|c}|@{}}
			\hline
			$w\in W\bk{P_E,H_E}$ & $J\bk{w,\chi,\lambda}$ \\ \hline
			\endhead
			$[]$ & $1$ \\ \hline
			$[2]$ & $\frac{\Lfun\bk{s_2,\chi}}{\Lfun\bk{s_2+1,\chi}}$ \\ \hline
			$[2,1]$ & $\frac{\Lfun\bk{s_2,\chi}\Lfun\bk{s_1+s_2,\chi}}{\Lfun\bk{s_2+1,\chi}\Lfun\bk{s_1+s_2+1,\chi}}$ \\ \hline
			$[2,3]$ & $\frac{\Lfun\bk{s_2,\chi}\Lfun\bk{s_2+s_3,\chi}}{\Lfun\bk{s_2+1,\chi}\Lfun\bk{s_2+s_3+1,\chi}}$ \\ \hline
			$[2,4]$ & $\frac{\Lfun\bk{s_2,\chi}\Lfun\bk{s_2+s_4,\chi}}{\Lfun\bk{s_2+1,\chi}\Lfun\bk{s_2+s_4+1,\chi}}$ \\ \hline	
			$[2,1,3]$ & $\frac{\Lfun\bk{s_2,\chi}\Lfun\bk{s_1+s_2,\chi}\Lfun\bk{s_2+s_3,\chi}}{\Lfun\bk{s_2+1,\chi}\Lfun\bk{s_1+s_2+1,\chi}\Lfun\bk{s_2+s_3+1,\chi}}$ \\ \hline
			$[2,1,4]$ & $\frac{\Lfun\bk{s_2,\chi}\Lfun\bk{s_1+s_2,\chi}\Lfun\bk{s_2+s_4,\chi}}{\Lfun\bk{s_2+1,\chi}\Lfun\bk{s_1+s_2+1,\chi}\Lfun\bk{s_2+s_4+1,\chi}}$ \\ \hline
			$[2,3,4]$ & $\frac{\Lfun\bk{s_2,\chi}\Lfun\bk{s_2+s_3,\chi}\Lfun\bk{s_2+s_4,\chi}}{\Lfun\bk{s_2+1,\chi}\Lfun\bk{s_2+s_3+1,\chi}\Lfun\bk{s_2+s_4+1,\chi}}$ \\ \hline
			$[2,1,3,2]$ & $\frac{\Lfun\bk{s_2,\chi}\Lfun\bk{s_1+s_2,\chi}\Lfun\bk{s_2+s_3,\chi}\Lfun\bk{s_1+s_2+s_3,\chi}}{\Lfun\bk{s_2+1,\chi}\Lfun\bk{s_1+s_2+1,\chi}\Lfun\bk{s_2+s_3+1,\chi}\Lfun\bk{s_1+s_2+s_3+1,\chi}}$ \\ \hline
			$[2,1,4,2]$ & $\frac{\Lfun\bk{s_2,\chi}\Lfun\bk{s_1+s_2,\chi}\Lfun\bk{s_2+s_4,\chi}\Lfun\bk{s_1+s_2+s_4,\chi}}{\Lfun\bk{s_2+1,\chi}\Lfun\bk{s_1+s_2+1,\chi}\Lfun\bk{s_2+s_4+1,\chi}\Lfun\bk{s_1+s_2+s_4+1,\chi}}$ \\ \hline
			$[2,3,4,2]$ & $\frac{\Lfun\bk{s_2,\chi}\Lfun\bk{s_2+s_3,\chi}\Lfun\bk{s_2+s_4,\chi}\Lfun\bk{s_2+s_3+s_4,\chi}}{\Lfun\bk{s_2+1,\chi}\Lfun\bk{s_2+s_3+1,\chi}\Lfun\bk{s_2+s_4+1,\chi}\Lfun\bk{s_2+s_3+s_4+1,\chi}}$ \\ \hline
			$[2,1,3,4]$ & $\frac{\Lfun\bk{s_2,\chi}\Lfun\bk{s_1+s_2,\chi}\Lfun\bk{s_2+s_3,\chi}\Lfun\bk{s_2+s_4,\chi}}{\Lfun\bk{s_2+1,\chi}\Lfun\bk{s_1+s_2+1,\chi}\Lfun\bk{s_2+s_3+1,\chi}\Lfun\bk{s_2+s_4+1,\chi}}$ \\ \hline
			$[2,3,4,2,1]$ & $\frac{\Lfun\bk{s_2,\chi}\Lfun\bk{s_2+s_3,\chi}\Lfun\bk{s_2+s_4,\chi}\Lfun\bk{s_2+s_3+s_4,\chi}\Lfun\bk{s_1+2s_2+s_3+s_4,\chi^2}}{\Lfun\bk{s_2+1,\chi}\Lfun\bk{s_2+s_3+1,\chi}\Lfun\bk{s_2+s_4+1,\chi}\Lfun\bk{s_2+s_3+s_4+1,\chi}\Lfun\bk{s_1+2s_2+s_3+s_4+1,\chi^2}}$ \\ \hline
			$[2,1,4,2,3]$ & $\frac{\Lfun\bk{s_2,\chi}\Lfun\bk{s_1+s_2,\chi}\Lfun\bk{s_2+s_4,\chi}\Lfun\bk{s_1+s_2+s_4,\chi}\Lfun\bk{s_1+2s_2+s_3+s_4,\chi^2}}{\Lfun\bk{s_2+1,\chi}\Lfun\bk{s_1+s_2+1,\chi}\Lfun\bk{s_2+s_4+1,\chi}\Lfun\bk{s_1+s_2+s_4+1,\chi}\Lfun\bk{s_1+2s_2+s_3+s_4+1,\chi^2}}$ \\ \hline
			$[2,1,3,2,4]$ & $\frac{\Lfun\bk{s_2,\chi}\Lfun\bk{s_1+s_2,\chi}\Lfun\bk{s_2+s_3,\chi}\Lfun\bk{s_1+s_2+s_3,\chi}\Lfun\bk{s_1+2s_2+s_3+s_4,\chi^2}}{\Lfun\bk{s_2+1,\chi}\Lfun\bk{s_1+s_2+1,\chi}\Lfun\bk{s_2+s_3+1,\chi}\Lfun\bk{s_1+s_2+s_3+1,\chi}\Lfun\bk{s_1+2s_2+s_3+s_4+1,\chi^2}}$ \\ \hline
			$[2,1,3,4,2]$ & $\frac{\Lfun\bk{s_2,\chi}\Lfun\bk{s_1+s_2,\chi}\Lfun\bk{s_2+s_3,\chi}\Lfun\bk{s_2+s_4,\chi}\Lfun\bk{s_1+2s_2+s_3+s_4,\chi^2}}{\Lfun\bk{s_2+1,\chi}\Lfun\bk{s_1+s_2+1,\chi}\Lfun\bk{s_2+s_3+1,\chi}\Lfun\bk{s_2+s_4+1,\chi}\Lfun\bk{s_1+2s_2+s_3+s_4+1,\chi^2}}$ \\ \hline
			$[2,1,3,4,2,1]$ & $\frac{\Lfun\bk{s_2,\chi}\Lfun\bk{s_1+s_2,\chi}\Lfun\bk{s_2+s_3,\chi}\Lfun\bk{s_2+s_4,\chi}\Lfun\bk{s_1+2s_2+s_3+s_4,\chi^2}\Lfun\bk{s_2+s_3+s_4,\chi}}{\Lfun\bk{s_2+1,\chi}\Lfun\bk{s_1+s_2+1,\chi}\Lfun\bk{s_2+s_3+1,\chi}\Lfun\bk{s_2+s_4+1,\chi}\Lfun\bk{s_1+2s_2+s_3+s_4+1,\chi^2}\Lfun\bk{s_2+s_3+s_4+1,\chi}}$ \\ \hline
			$[2,1,3,4,2,3]$ & $\frac{\Lfun\bk{s_2,\chi}\Lfun\bk{s_1+s_2,\chi}\Lfun\bk{s_2+s_3,\chi}\Lfun\bk{s_2+s_4,\chi}\Lfun\bk{s_1+2s_2+s_3+s_4,\chi^2}\Lfun\bk{s_1+s_2+s_4,\chi}}{\Lfun\bk{s_2+1,\chi}\Lfun\bk{s_1+s_2+1,\chi}\Lfun\bk{s_2+s_3+1,\chi}\Lfun\bk{s_2+s_4+1,\chi}\Lfun\bk{s_1+2s_2+s_3+s_4+1,\chi^2}\Lfun\bk{s_1+s_2+s_4+1,\chi}}$ \\ \hline
			$[2,1,3,4,2,4]$ & $\frac{\Lfun\bk{s_2,\chi}\Lfun\bk{s_1+s_2,\chi}\Lfun\bk{s_2+s_3,\chi}\Lfun\bk{s_2+s_4,\chi}\Lfun\bk{s_1+2s_2+s_3+s_4,\chi^2}\Lfun\bk{s_1+s_2+s_3,\chi}}{\Lfun\bk{s_2+1,\chi}\Lfun\bk{s_1+s_2+1,\chi}\Lfun\bk{s_2+s_3+1,\chi}\Lfun\bk{s_2+s_4+1,\chi}\Lfun\bk{s_1+2s_2+s_3+s_4+1,\chi^2}\Lfun\bk{s_1+s_2+s_3+1,\chi}}$ \\ \hline
			$[2,1,3,4,2,1,3]$ & $\frac{\Lfun\bk{s_2,\chi}\Lfun\bk{s_1+s_2,\chi}\Lfun\bk{s_2+s_3,\chi}\Lfun\bk{s_2+s_4,\chi}\Lfun\bk{s_1+2s_2+s_3+s_4,\chi^2}\Lfun\bk{s_2+s_3+s_4,\chi}\Lfun\bk{s_1+s_2+s_4,\chi}}{\Lfun\bk{s_2+1,\chi}\Lfun\bk{s_1+s_2+1,\chi}\Lfun\bk{s_2+s_3+1,\chi}\Lfun\bk{s_2+s_4+1,\chi}\Lfun\bk{s_1+2s_2+s_3+s_4+1,\chi^2}\Lfun\bk{s_2+s_3+s_4+1,\chi}\Lfun\bk{s_1+s_2+s_4+1,\chi}}$ \\ \hline
			$[2,1,3,4,2,1,4]$ & $\frac{\Lfun\bk{s_2,\chi}\Lfun\bk{s_1+s_2,\chi}\Lfun\bk{s_2+s_3,\chi}\Lfun\bk{s_2+s_4,\chi}\Lfun\bk{s_1+2s_2+s_3+s_4,\chi^2}\Lfun\bk{s_1+s_2+s_3,\chi}\Lfun\bk{s_2+s_3+s_4,\chi}}{\Lfun\bk{s_2+1,\chi}\Lfun\bk{s_1+s_2+1,\chi}\Lfun\bk{s_2+s_3+1,\chi}\Lfun\bk{s_2+s_4+1,\chi}\Lfun\bk{s_1+2s_2+s_3+s_4+1,\chi^2}\Lfun\bk{s_1+s_2+s_3+1,\chi}\Lfun\bk{s_2+s_3+s_4+1,\chi}}$ \\ \hline
			$[2,1,3,4,2,3,4]$ & $\frac{\Lfun\bk{s_2,\chi}\Lfun\bk{s_1+s_2,\chi}\Lfun\bk{s_2+s_3,\chi}\Lfun\bk{s_2+s_4,\chi}\Lfun\bk{s_1+2s_2+s_3+s_4,\chi^2}\Lfun\bk{s_1+s_2+s_3,\chi}\Lfun\bk{s_1+s_2+s_4,\chi}}{\Lfun\bk{s_2+1,\chi}\Lfun\bk{s_1+s_2+1,\chi}\Lfun\bk{s_2+s_3+1,\chi}\Lfun\bk{s_2+s_4+1,\chi}\Lfun\bk{s_1+2s_2+s_3+s_4+1,\chi^2}\Lfun\bk{s_1+s_2+s_4+1,\chi}\Lfun\bk{s_1+s_2+s_4+1,\chi}}$ \\ \hline
			$[2,1,3,4,2,1,3,4]$ & $\frac{\Lfun\bk{s_2,\chi}\Lfun\bk{s_1+s_2,\chi}\Lfun\bk{s_2+s_3,\chi}\Lfun\bk{s_2+s_4,\chi}\Lfun\bk{s_1+2s_2+s_3+s_4,\chi^2}\Lfun\bk{s_1+s_2+s_3,\chi}\Lfun\bk{s_2+s_3+s_4,\chi}\Lfun\bk{s_1+s_2+s_4,\chi}}{\Lfun\bk{s_2+1,\chi}\Lfun\bk{s_1+s_2+1,\chi}\Lfun\bk{s_2+s_3+1,\chi}\Lfun\bk{s_2+s_4+1,\chi}\Lfun\bk{s_1+2s_2+s_3+s_4+1,\chi^2}\Lfun\bk{s_1+s_2+s_3+1,\chi}\Lfun\bk{s_2+s_3+s_4+1,\chi}\Lfun\bk{s_1+s_2+s_4+1,\chi}}$ \\ \hline
			$[2,1,3,4,2,1,3,4,2]$ & $\frac{\Lfun\bk{s_2,\chi}\Lfun\bk{s_1+s_2,\chi}\Lfun\bk{s_2+s_3,\chi}\Lfun\bk{s_2+s_4,\chi}\Lfun\bk{s_1+2s_2+s_3+s_4,\chi^2}\Lfun\bk{s_1+s_2+s_3,\chi}\Lfun\bk{s_2+s_3+s_4,\chi}\Lfun\bk{s_1+s_2+s_4,\chi}\Lfun\bk{s_1+s_2+s_3+s_4,\chi}}{\Lfun\bk{s_2+1,\chi}\Lfun\bk{s_1+s_2+1,\chi}\Lfun\bk{s_2+s_3+1,\chi}\Lfun\bk{s_2+s_4+1,\chi}\Lfun\bk{s_1+2s_2+s_3+s_4+1,\chi^2}\Lfun\bk{s_1+s_2+s_3+1,\chi}\Lfun\bk{s_2+s_3+s_4+1,\chi}\Lfun\bk{s_1+s_2+s_4+1,\chi}\Lfun\bk{s_1+s_2+s_3+s_4+1,\chi}}$ \\ \hline
			\caption{$J\bk{w,\chi,\lambda}$ for $w\in W\bk{P_E,H_E}$, \\ $E=F\times F\times F$}
			\label{Constant Term along the Borel, Split, Gneral Character}
		\end{longtable}
		\thispagestyle{empty}
	}
	
	In the following table we list the exponents $\Real\bk{w^{-1}\cdot\chi_s}$ for all $w\in W\bk{P_E,H_E}$.
	\begin{longtable}{|c|c|c|c|c|c|c|c|c|c|c|}
		\hline
		$w\in W\bk{P_E,H_E}$ & $s=\frac{1}{2}$ & $s=\frac{3}{2}$ \\ \hline
		\endhead
		$[]$ & $\coset{0,1,0,0}$ & $\coset{1,3,1,1}$ \\ \hline
		$[2]$ & $-\coset{0,1,0,0}$ & $\coset{1,0,1,1}$ \\ \hline
		$[2,1]$ & $-\coset{1,1,0,0}$ & $\coset{-1,0,1,1}$ \\ \hline
		$[2,3]$ & $-\coset{0,1,1,0}$ & $\coset{1,0,-1,1}$ \\ \hline
		$[2,4]$ & $-\coset{0,1,0,1}$ & $\coset{1,0,1,-1}$ \\ \hline
		$[2,1,3]$ & $-\coset{1,1,1,0}$ & $\coset{-1,0,-1,1}$ \\ \hline
		$[2,1,4]$ & $-\coset{1,1,0,1}$ & $\coset{-1,0,1,-1}$ \\ \hline
		$[2,3,4]$ & $-\coset{0,1,1,1}$ & $\coset{1,0,-1,-1}$ \\ \hline
		$[2,1,3,2]$ & $-\coset{1,1,1,0}$ & $\coset{-1,-1,-1,1}$ \\ \hline
		$[2,1,4,2]$ & $-\coset{1,1,0,1}$ & $\coset{-1,-1,1,-1}$ \\ \hline
		$[2,3,4,2]$ & $-\coset{0,1,1,1}$ & $\coset{1,-1,-1,-1}$ \\ \hline
		$[2,1,3,4]$ & $-\coset{1,1,1,1}$ & $\coset{-1,0,-1,-1}$ \\ \hline
		$[2,3,4,2,1]$ & $-\coset{1,1,1,1}$ & $\coset{-2,-1,-1,-1}$ \\ \hline
		$[2,1,4,2,3]$ & $-\coset{1,1,1,1}$ & $\coset{-1,-1,-2,-1}$ \\ \hline
		$[2,1,3,2,4]$ & $-\coset{1,1,1,1}$ & $\coset{-1,-1,-1,-2}$ \\ \hline
		$[2,1,3,4,2]$ & $-\coset{1,2,1,1}$ & $\coset{-1,-3,-1,-1}$ \\ \hline
		$[2,1,3,4,2,1]$ & $-\coset{1,2,1,1}$ & $\coset{-2,-3,-1,-1}$ \\ \hline
		$[2,1,3,4,2,3]$ & $-\coset{1,2,1,1}$ & $\coset{-1,-3,-2,-1}$ \\ \hline
		$[2,1,3,4,2,4]$ & $-\coset{1,2,1,1}$ & $\coset{-1,-3,-1,-2}$ \\ \hline
		$[2,1,3,4,2,1,3]$ & $-\coset{1,2,1,1}$ & $\coset{-2,-3,-2,-1}$ \\ \hline
		$[2,1,3,4,2,1,4]$ & $-\coset{1,2,1,1}$ & $\coset{-2,-3,-1,-2}$ \\ \hline
		$[2,1,3,4,2,3,4]$ & $-\coset{1,2,1,1}$ & $\coset{-1,-3,-2,-2}$ \\ \hline
		$[2,1,3,4,2,1,3,4]$ & $-\coset{1,2,1,1}$ & $\coset{-2,-3,-2,-2}$ \\ \hline
		$[2,1,3,4,2,1,3,4,2]$ & $-\coset{1,1,1,1}$ & $\coset{-2,-3,-2,-2}$ \\ \hline
		\caption{The exponents $\Real\bk{w^{-1}\cdot\chi_s}$ for $w\in W\bk{P_E,H_E}$, $E=F\times F\times F$}
		\label{Table: Exponents, Split}
	\end{longtable}
\end{landscape}

\end{document}